\providecommand{\noopsort}[1]{}} %year = "unpublished manuscript\setbox0=\hbox{2003}"
\numberwithin{figure}{section}
\theoremstyle{plain}
\newtheorem{theorem}{Theorem}[section]
\newtheorem{corollary}[theorem]{Corollary}  
\newtheorem{lemma}[theorem]{Lemma}
\newtheorem{proposition}[theorem]{Proposition}
\newtheorem{main}{Theorem}
\theoremstyle{definition}
\newtheorem{definition}[theorem]{Definition}
\newtheorem{thm_def}[theorem]{Theorem and Definition}
\theoremstyle{remark}
\newtheorem{remark}[theorem]{Remark}
\newtheorem{example}[theorem]{Example}
\numberwithin{equation}{section}
\newcommand{\Z}{\mathbb{Z}}\newcommand{\Q}{\mathbb{Q}}
\newcommand{\R}{\mathbb{R}}
\newcommand{\s}{\mathbb{S}}
\newcommand{\SO}{\mathrm{SO}}
\newcommand{\gO}{\mathsf{O}}
\DeclareMathOperator{\cod}{cod}
\DeclareMathOperator{\rank}{rank}
\DeclareMathOperator{\sys}{sys}
\DeclareMathOperator{\si}{si}
\newcommand{\of}[1]{\left(#1\right)}
\newcommand{\ceil}[1]{\left\lceil #1 \right\rceil}
\newcommand{\floor}[1]{\left\lfloor #1 \right\rfloor}
\newcommand{\inner}[1]{\left\langle #1 \right\rangle}
\newcommand{\st}{~|~}
\newcommand{\tensor}{\otimes}
\renewcommand{\subset}{\subseteq}
\newcommand{\Hom}{{\operatorname{Hom}}}                            % homomorphisms
\newcommand{\gS}{\mathsf{S}}
\newcommand{\RP}{\mathbb{R\mkern1mu P}}
\newcommand{\CP}{\mathbb{C\mkern1mu P}}
\newcommand{\HP}{\mathbb{H\mkern1mu P}}
\newcommand{\CaP}{\mathrm{Ca}\mathbb{\mkern1mu P}^2}
\renewcommand{\R}{{\mathbb{R}}}
\renewcommand{\Z}{{\mathbb{Z}}}
\newcommand{\gF}{\ensuremath{\operatorname{\mathsf{F}}}}
\newcommand{\gT}{{\ensuremath{\operatorname{\mathsf{T}}}}}
\renewcommand{\SO}{\ensuremath{\operatorname{\mathsf{SO}}}}
\newcommand{\gU}{\ensuremath{\operatorname{\mathsf{U}}}}
\def\con#1=#2(#3){#1 \equiv #2 \bmod{#3}}
\renewcommand{\rank}{\ensuremath{\operatorname{rk}}}
\DeclareMathOperator{\pr}{pr}
\DeclareMathOperator{\Id}{Id}
\title{Positive curvature, torus symmetry, and matroids}
\author{Lee Kennard}
\address{\hspace*{-0.3em} Department of Mathematics, Syracuse University, Syracuse, NY 13244, U.S.A.}\email{ltkennar@syr.edu}
\author{Michael Wiemeler}
\address{\hspace*{-0.3em} Mathematisches Institut, WWU M\"unster, Einsteinstr. 62, 48149 M\"unster, Germany}\email{wiemelerm@uni-muenster.de}
\author{Burkhard Wilking}
\address{\hspace*{-0.3em} Mathematisches Institut, WWU M\"unster, Einsteinstr. 62, 48149 M\"unster, Germany} \email{ wilking@uni-muenster.de}
\date{\today}
\begin{document}

%\tableofcontents\setcounter{page}{-1}\thispagestyle{empty}\newpage\thispagestyle{empty}~\newpage

\begin{abstract}
We identify a link between regular matroids and torus representations all of whose isotropy groups have an odd number of components. Applying Seymour's 1980 classification of the former objects, we obtain a classification of the latter. In addition, we prove optimal upper bounds for the cogirth of regular matroids up to rank nine, and we apply this to prove the existence of fixed-point sets of circles with large dimension in a torus representation with this property up to rank nine. Finally, we apply these results to prove new obstructions to the existence of Riemannian metrics with positive sectional curvature and torus symmetry.
\end{abstract}

\maketitle

In \cite{KWW}, the authors computed the rational cohomology of closed, oriented, positively curved Riemannian manifolds with $\gT^7$ symmetry, under the additional assumption that the odd Betti numbers vanish. In \cite{Nienhaus-PhD}, Nienhaus proved that the symmetry assumption can be relaxed to $\gT^6$. Here, we provide similar computations in the case where the topological assumption on the Betti numbers is replaced by a geometric one on the torus action.

\begin{main}\label{thm:t9}
Let $M^n$ be a closed, oriented, positively curved Riemannian manifold. Assume $\gT^d$ acts effectively by isometries with a fixed point and the property that all isotropy groups in a neighborhood of the fixed point have an odd number of components. 
	\begin{enumerate}
	\item If $d = 9$, then $M$ is a rational cohomology \(\s^n,\CP^{\frac n 2}\), or \(\HP^{\frac n 4}\).
	\item If $d = 6$, then $M$ is a rational cohomology \(\s^n,\CP^{\frac n 2}\), or \(\HP^{\frac n 4}\) up to degree $\tfrac n 3$. 	\end{enumerate}
\end{main}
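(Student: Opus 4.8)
The plan is to pass from the Riemannian data at the fixed point to a regular matroid, extract from it a circle with a large totally geodesic fixed‑point set, and feed that into the recognition machinery for positively curved manifolds with symmetry.

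\emph{Reduction to a matroid.} Let $p$ be a point fixed by $\gT^d$, and let $\nu$ be the normal space at $p$ to the component of $M^{\gT^d}$ through $p$. Effectiveness forces the isotropy representation on $\nu$ to be a faithful complex $\gT^d$‑representation, so its weights $\gt_1,\dots,\gt_m\in\Hom(\gT^d,\sone)=\zz^d$ span $\qq^d$. Using the correspondence (established elsewhere in this paper) between torus representations all of whose isotropy groups have an odd number of components and regular matroids, the weight matrix represents a regular matroid $\mathcal M$ of rank $d$ on the ground set $\{\gt_1,\dots,\gt_m\}$. If $\mathcal M$ has a coloop one already finds a circle with a codimension‑two fixed‑point set through $p$ and is in an extremely favorable position, so assume $\mathcal M$ is coloopless.

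\emph{A circle with a large fixed‑point set, and periodicity.} Feed $\mathcal M$ into the optimal cogirth bound for regular matroids of rank at most nine proved above. A smallest cocircuit is the complement of a largest hyperplane, i.e.\ of a corank‑one sublattice of $\zz^d$ containing all but at most $c$ of the weights, where $c$ is the cogirth bound; dually that hyperplane is the orthogonal complement of a circle subgroup $L\cong\sone\subset\gT^d$ on which at most $c$ of the weights are non‑trivial. Hence the component $N$ of $M^L$ through $p$ is a closed, totally geodesic submanifold with
\[\dim M-\dim N \;=\; 2\,\#\{\,i : \gt_i|_L\neq 1\,\}\;\le\; 2c\;=:\;k .\]
Since $M$ has positive sectional curvature, Wilking's connectedness lemma shows $N\embedded M$ is $(n-2k+1)$‑connected; combined with Poincar\'e duality on $M$ and on $N$, this forces (as in Wilking's periodicity theorem) $H^*(M;\qq)$ to be a $k$‑periodic algebra that agrees with $H^*(N;\qq)$ through degree $n-2k$.

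\emph{Recursion and recognition.} The residual torus $\gT^{d-1}=\gT^d/L$ acts effectively and isometrically on $N$, fixes $p$, and the odd‑component property is inherited, so $N$ again satisfies the hypotheses with torus rank one lower, dimension lower by at most $2c$, and — crucially — torus rank still at most nine. Iterating produces a descending chain $M=N_0\supset N_1\supset\cdots\supset N_\ell$ of positively curved totally geodesic submanifolds with controlled codimension drops, run until the symmetry rank of $N_\ell$ reaches $\tfrac14\dim N_\ell+1$; the classification of positively curved manifolds with large symmetry rank (Grove--Searle, Wilking) then identifies $H^*(N_\ell;\qq)$ with that of $\s^{\dim N_\ell}$, $\CP^{\dim N_\ell/2}$, or $\HP^{\dim N_\ell/4}$, the Cayley plane being excluded since the symmetry rank of $\CaP$ is $4<6$. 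Climbing back up the chain, at each stage $H^*(N_j;\qq)$ is $k_j$‑periodic with $k_j\le 2c$ and $N_{j+1}\embedded N_j$ is highly connected; when the connectivity range exceeds the period the periodic generator spreads across all of $H^*(N_j;\qq)$, and Poincar\'e duality together with Adams' solution of the Hopf‑invariant‑one problem (which forces the generator into degree $1$, $2$, or $4$) shows inductively that $N_j$, and finally $M$, is a rational cohomology $\s^n$, $\CP^{n/2}$, or $\HP^{n/4}$. For $d=9$ the rank‑$\le 9$ cogirth bounds are strong enough that the connectivity range beats the period at every step, giving the full statement (1); for $d=6$ the rank‑six bound is weaker, so the argument pins down $H^i(M;\qq)$ only for $i$ up to roughly $\tfrac n3$, which is (2).

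\emph{Main obstacle.} The structural steps above are soft; the real difficulty is the numerical dovetailing — verifying that the \emph{optimal} cogirth bounds in ranks six through nine (not merely the elementary averaging bound) are exactly what is needed for the connectivity ranges in Wilking's connectedness lemma to close the periodicity bootstrap for $d=9$ and to reach degree $\tfrac n3$ for $d=6$ — together with the separate treatment of the borderline configurations: dimensions where $d$ is close to $\lfloor\tfrac{n+1}{2}\rfloor$ (handled by maximal‑symmetry‑rank results), matroids that are disconnected in the matroid sense (handled by Seymour's decomposition, inducting over the summands), and manifolds with non‑trivial finite fundamental group.
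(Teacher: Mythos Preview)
Your outline captures the broad shape of the argument --- pass to the isotropy representation, extract a circle via the cogirth bound, invoke connectedness and periodicity, and recurse --- but several load-bearing steps are either missing or do not work as stated.

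\textbf{The base of the recursion.} You propose to iterate until the symmetry rank of $N_\ell$ exceeds $\tfrac14\dim N_\ell+1$ and then invoke the half-maximal symmetry-rank classification. Nothing in the cogirth bounds guarantees you ever reach that threshold: starting from $d=9$ and losing one dimension of torus per step, you run out of torus long before $\dim N_\ell$ drops to roughly $4d$. The paper instead takes only three descent steps (using $c(9)=\tfrac14$, $c(8)=\tfrac27$, $c(7)=\tfrac3{10}$) to reach a $\gT^3$-fixed component $F_3$, then applies the $\gS^1$-Splitting Theorem to exhibit $F_3$ as a transverse intersection inside some $N$, and invokes the Four-Periodicity Theorem (Theorem~\ref{thm:4PT}) to force four-periodic cohomology on $F_3$. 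You never mention the Four-Periodicity Theorem, which is the essential mechanism converting transverse intersections of small codimension into four-periodicity; Adams' Hopf-invariant-one theorem plays no role here.

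\textbf{Climbing back up and the exotic models.} Four-periodicity alone does not yield $\s$, $\CP$, or $\HP$: one must still exclude $\s^2\times\HP$, $\s^3\times\HP$, and (in dimensions $\equiv 2\bmod 4$) the possibility $b_3\neq 0$. The paper handles this via Lemma~\ref{lem:NoSxHP}, which uses the odd-isotropy hypothesis in an essential way. Your ``periodic generator spreads'' sentence skips this entirely. The actual lift from $F_3$ back to $M$ also requires the careful chain of Lemmas~\ref{lem:uptoc}, \ref{lem:UpToHalfPD}, and~\ref{lem:k1plus2k2}, checking at each step that the connectedness degree exceeds the codimension of the next inclusion.

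\textbf{Part (2) requires a different ingredient.} The bound $c(6)=\tfrac13$ gives one circle with $\cod N\le\tfrac n3$, hence an inclusion that is roughly $\tfrac n3$-connected and $k$-periodicity with $k\le\tfrac n3$ in a band of degrees starting near $k$. This does not by itself force periodicity \emph{from degree zero} with period at most four, which is what the conclusion asserts. The paper's proof of (2) does not use $c(6)$ at all; it rests on the refined Theorem~\ref{thm:6involutions}, which produces six involutions whose fixed-set codimensions sum to at most $2n$, followed by a substantial case analysis (Claims 1--4 and Lemma~\ref{lem:LinearlyIndependent}) on the $\Z_2^3$-representation on normal spaces, splitting into linearly independent and linearly dependent configurations of the three cheapest involutions. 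Your one-line ``for $d=6$ the bound is weaker, so we only reach degree $\tfrac n3$'' is not an argument for this.
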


The conclusion in  (2) means that up to degree $\tfrac{n}{3}$ 
the rational  cohomology of $M$ is generated by one element $x$ with 
$\deg(x)\in \{0,2,4\}$.

The condition on isotropy groups is automatic when all isotropy groups are connected, and there are a few reasons to study the latter condition. First, it generalizes the notion of free or semi-free circle actions, which are the best behaved geometrically. 

Second, a reduction to the case of connected isotropy groups and an analysis of representations with this property led to the breakthrough in \cite{KWW}. We now know of at least three proofs of the $\gS^1$ Splitting Theorem from that paper, and the first one passed through this case. This paper extends this analysis. 

Third, when an isotropy group is connected or more generally has an odd number of components, the normal bundle of each of its fixed-point components has a complex structure. This condition played an important role in the Nienhaus' work \cite{Nienhaus-PhD}.

Finally, we note that any torus action on a manifold naturally gives rise to an induced action on a submanifold having only connected isotropy groups. The construction is as follows. If we have a smooth manifold $M^n$ with an effective action of $\gT^d$, then we can always look at a maximal finite isotropy group $\gF$ and pass to the fixed point component $N$ of $\gF$ endowed with the action of $\gT^d/\gF\cong \gT^d$. If moreover $M$ has positive sectional curvature and the action is isometric, then depending on the parity of $n$, either $\gT^d$ 
or a codimension 1 subtorus has a fixed point in $N$, and Theorem~\ref{thm:t9} thus determines the rational cohomology of $N$ if $d\ge 10$.

The main technical work here is to continue the analysis of special torus representations in \cite{KWW}. In that paper, the main new tool was a splitting theorem for torus representations, and an important step in the proof was to reduce to the case where all the isotropy groups of the representation are connected. Here we classify torus representations with connected isotropy groups and, more generally, with the property that every isotropy group has an odd number of components. This classification is accomplished by observing that the matrix of weight vectors satisfies a condition on the determinants of submatrices called {\it total unimodularity}. Up to equivalence, these matrices correspond to combinatorial objects called {\it regular matroids}. In a celebrated paper \cite{Seymour80}, Seymour classified regular matroids, so by retracing our steps we obtain a {\it classification of torus representations all of whose isotropy groups have an odd number of components} (see Section \ref{sec:cigClassification}).

To prove Theorem \ref{thm:t9}, we then need a detailed analysis of the codimensions of the fixed-point sets of circles inside a torus representation without finite isotropy groups of even order. Using Seymour's theorem, the most difficult case involves representations that arise from finite graphs as follows (see Section \ref{sec:cigClassification} for details): Given a finite graph with first Betti number $b$, via cellular homology each edge $e \in E(G)$ gives rise to a homomorphism
	\[e^*:H_1(G;\Z) \to \Z,\]
which can viewed as the weight of an irreducible representation of the torus 
	\[\gT^b \cong H_1(G;\R)/H_1(G;\Z).\]
Putting together the representations from each edge, we obtain what we call a cographic representation. To account for multiplicities, we assign weights to the edges using a non-negative function $\lambda:E(G) \to \R$. The codimensions of fixed-point sets of circles in $\gT^d$ then correspond to lengths of cycles in the weighted graph $(G,\lambda)$, so our problem turns into proving upper bounds for the {\it graph systole}, 
	\[\sys(G,\lambda) = \min_C \lambda(C)/\lambda(G),\]
where the minimum runs over cycles $C$ in the graph $G$ and where $\lambda(H) = \sum \lambda(e)$ is the sum of weights of edges in a subgraph $H$ of $G$. Notice that $\sys(G,\lambda)$ equals the classical girth of $G$ divided by the number of edges in $G$ when $\lambda$ is a constant function. Upper bounds on $\sys(G,\lambda)$ that depend only on the (first) Betti number $b$ exist by a routine compactness argument, and optimal upper bounds are known for $b \leq 6$. For our purposes, we need the values for $b \leq 9$.

\begin{main}[Theorem \ref{thm:bounds-cogr-matr}]\label{thm:systole}
For $b \leq 9$, the systole of a finite, weighted graph $(G,\lambda)$ with first Betti number \(b\) satisfies $\sys(G,\lambda) \leq s(b)$, where $s(b)$ is defined as follows:
  	\[
	\begin{array}{c|c|c|c|c|c|c|c|c|c}
	b		& 1 & 2 	& 3 & 4 		& 5 	& 6 	& 7 		& 8    & 9\\\hline
	s(b)	& 1 & 2/3 	& 1/2 & 4/9	& 3/8	& 1/3	& 3/10	& 2/7 & 1/4
	\end{array}.
      \]
Moreover, these bounds are optimal.
\end{main}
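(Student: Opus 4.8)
The plan is to reduce the statement, for each fixed first Betti number $b$, to a finite linear‑programming problem, solve (or certify) it, and exhibit extremizers.

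\emph{Reductions to a finite list.} First I would record the elementary moves under which $\sup_\lambda \sys(G,\lambda)$ is invariant. An edge lying on no cycle (a bridge or pendant edge) may be given weight $0$ and hence deleted, since this only lowers $\lambda(G)$ and destroys no cycle. A vertex of degree two may be \emph{smoothed} --- its two incident edges replaced by a single edge carrying the sum of their weights --- leaving $b_1(G)$, the total weight $\lambda(G)$, and the weight of every cycle unchanged. Finally, concentrating weight on a single component together with an easy induction on $b$ (one needs that $b \mapsto 1/s(b)$ is subadditive on $\{1,\dots,9\}$, which is checked directly) reduces to connected graphs. After these reductions $G$ is connected, bridgeless, and of minimum degree at least $3$; since $2|E(G)| = \sum_v \deg v \ge 3|V(G)|$ we get $b = |E(G)| - |V(G)| + 1 \ge \tfrac13 |E(G)| + 1$, so $|E(G)| \le 3(b-1)$ and $|V(G)| \le 2(b-1)$. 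For each $b \le 9$ this leaves only finitely many multigraphs (loops, which can be created by smoothing, are handled by the same induction), with $b = 1$ the trivial case of a single cycle and $\sys = 1 = s(1)$.

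\emph{The linear program.} Fix one reduced graph $G$ and normalize $\lambda(G) = 1$. Then $\sup_\lambda \sys(G,\lambda)$ is the optimum of the linear program: maximize $t$ subject to $\lambda \ge 0$, $\lambda(G) = 1$, and $\lambda(C) \ge t$ for every cycle $C$ of $G$ --- a finite constraint set. By the minimax theorem this optimum equals $\min_p \max_e \sum_{C \ni e} p(C)$, the least possible largest ``fractional edge‑load'' of a probability distribution $p$ on the cycles of $G$; equivalently, the value of the lightest balanced fractional cycle cover. In particular the optimum is rational, and to prove $\sys(G,\lambda) \le s(b)$ for \emph{all} weightings of $G$ it is enough to produce one distribution on cycles of $G$ whose edge‑loads are all $\le s(b)$. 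The theorem is the assertion that the maximum of this LP value over all $G$ with $b_1(G) = b$ equals $s(b)$, for $b \le 9$.

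\emph{Optimality.} For the easy (lower) half I would exhibit extremizers with matching primal (weighting) and dual (cycle‑distribution) certificates. One may take, for $b = 1,\dots,9$ respectively: a cycle ($\sys = 1$); the theta graph $\Theta_3$ ($\sys = 2/3$, uniform weights); $K_4$ (its four triangles give edge‑load $1/2$); $K_{3,3}$ (its nine $4$‑cycles give edge‑load $4/9$); the Wagner graph, i.e.\ $C_8$ together with its four long diagonals, with the octagon edges weighted $1$ and the diagonals weighted $2$, so that every cycle has weight at least $\tfrac38$ of the total with the $4$‑ and $5$‑cycles attaining it, while $\tfrac12(\text{uniform on the four }4\text{-cycles}) + \tfrac12(\text{uniform on the eight }5\text{-cycles})$ is a dual certificate of edge‑load $3/8$; the Petersen graph (twelve $5$‑cycles, edge‑load $1/3$); the Heawood graph minus one edge, uniformly weighted ($\sys = 6/20 = 3/10$); the Heawood graph (twenty‑eight $6$‑cycles, edge‑load $2/7$); and $K_{4,4}$ ($\sys = 4/16 = 1/4$). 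For the edge‑transitive examples, averaging a weighting over $\Aut(G)$ shows the uniform one is optimal, giving $\sys = g(G)/|E(G)| = s(b)$; the two non‑transitive extremizers ($b = 5$ and $b = 7$) need the explicit dual certificates just described.

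\emph{The upper bound, and the main obstacle.} The hard direction is $\sys(G,\lambda) \le s(b)$ for \emph{every} reduced $G$ with $b_1(G) = b \le 9$, i.e.\ equipping each such graph with a fractional cycle cover of edge‑load at most $s(b)$. For edge‑transitive graphs this is routine --- uniform weights are optimal and one just checks $g(G)/|E(G)| \le s(b)$ --- so the real content is the graphs with little symmetry, where a balanced sub‑family of cycles must be built individually. Since the number of reduced graphs grows rapidly with $b$, I would: (a) shrink the list using the $1$‑, $2$‑, and $3$‑connectivity reductions --- the cographic analogue of the decomposition underlying Seymour's theorem --- recovering $\sup_\lambda\sys$ for a general graph from that of its $3$‑connected pieces via explicit series/parallel formulas; (b) dispose of the small‑girth $3$‑connected graphs by cycle covers assembled from their triangles, quadrilaterals, and pentagons, using the Moore bound to control how large the girth can be; and (c) treat the remaining sparse, large‑girth graphs --- which the Moore bound forces to be near‑cages --- directly, ideally via a computer enumeration that solves each LP exactly over $\mathbb{Q}$. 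I expect steps (b)--(c) --- making the finite case check genuinely complete for $b = 7, 8, 9$, together with rigorously pinning down the non‑uniform extremizers for $b = 5$ and $7$ --- to be the main obstacle, and it is precisely here that the matroid framework is essential for keeping the enumeration finite and organized.
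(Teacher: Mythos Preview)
Your framework is sound: the reductions to a finite list of graphs with minimum degree at least three, the LP/minimax formulation, and the extremizers are all correct (your $b=7$ and $b=9$ witnesses, Heawood minus an edge and $K_{4,4}$, are arguably cleaner than what the paper records). But the upper bound---which is the entire content of the theorem---is not proved in your proposal; you defer it to an unexecuted computer enumeration. That is the genuine gap.

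The paper avoids enumeration by an idea you do not have: a surface-embedding estimate. If $G$ embeds in a closed surface $\Sigma$, then summing the weights of face boundaries gives $\sys(G)^{-1}\ge\tfrac12\bigl(b-1+\chi(\Sigma)\bigr)$. Combined with the Glover--Huneke classification of the six cubic graphs minimally non-embeddable in $\mathbb{RP}^2$, this disposes of almost every cubic graph with $b=7$ at once (the bound becomes $\sys^{-1}\ge 3.5$), leaving four named graphs $F_{11},\dots,F_{14}$ and the extensions of one rank-$6$ graph $G_1$ to handle by hand. For $b=9$ one deletes a vertex to drop to $b=7$; the only case not immediately covered is when the deletion yields $F_{13}$ or $F_{14}$, and those reattachments are then shown, case by case, to embed in the torus or Klein bottle, giving $\sys^{-1}\ge 4$. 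The cases $b\le 6$ and $b=8$ fall directly to short recursive estimates (a ``small cycle'' bound obtained by deleting edges of a short cycle, and ``large girth'' bounds obtained by deleting vertex or edge neighbourhoods and inducting), with no enumeration at all.

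One further correction: your closing sentence suggests that the matroid framework is what makes the case check feasible. In fact matroids and Seymour's theorem play no role in the paper's proof of this systole bound; that machinery enters only afterwards, to pass from cographic representations to general torus representations.
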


\begin{remark}
Asymptotic estimates have been shown by Bollob\'as and Szemer\'edi \cite{BollobasSzemeredi02}, and one consequence is that the optimal bound $s(b)$ satisfies $s(b) \leq 4 \ln b / b$ for all $b \geq 2$. An estimate of this form is used in Sabourau \cite{Sabourau08} to answer a question in Gromov \cite{Gromov96} on the separating systole of a surface of large genus. Sabourau's strategy is to embed a graph on the surface, relate the genus of the surface to the Betti number of the graph, and apply the Bollob\'as-Szemer\'edi bound to derive the existence of a short closed geodesic on the surface. In contrast to this work, we embed graphs into surfaces to derive bounds on the graph. 
\end{remark}

Returning to the general case of a torus representation all of whose isotropy groups have an odd number of components, we use Theorem \ref{thm:systole} together with an analysis of the other cases in Seymour's classification to prove the following:

\begin{main}[Theorem \ref{thm:cd}]\label{thm:cdIntro}
For a representation $\rho:\gT^d \to \SO(V)$ with $d \leq 9$ and the property that all isotropy groups have an odd number of components, the quantity
	\[c(\rho) = \min_{\gS^1 \subseteq \gT^d} \cod V^{\gS^1}/\dim V\]
is bounded from above by $c(d)$, where $c(d)$ is defined as follows:
 	\[
	\begin{array}{c|c|c|c|c|c|c|c|c|c}
	d	& 1 & 2 	& 3 & 4 	& 5 	& 6 	& 7 		& 8    & 9\\\hline
	c(d)	& 1 & 2/3 	& 1/2 & 4/9	& 2/5	& 1/3	& 3/10	& 2/7 & 1/4
	\end{array}.
      \]
Moreover, these bounds are optimal.
\end{main}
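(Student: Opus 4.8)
The plan is to deduce Theorem~\ref{thm:cdIntro} from the classification of Section~\ref{sec:cigClassification} (special torus representations correspond to totally unimodular matrices, hence to regular matroids), from Seymour's structure theorem, and from the systole estimate of Theorem~\ref{thm:systole}, treating the upper bound and the optimality separately. The first step is to make the dictionary precise. After applying an automorphism of $\gT^d$ and rescaling, the weights of $\rho$ form the columns $w_1,\dots,w_m$ of a totally unimodular $d\times m$ integer matrix $W$ together with positive multiplicities $\lambda_1,\dots,\lambda_m$, so that $V=\bigoplus_i V_i^{\oplus\lambda_i}$ up to trivial summands (which only decrease $c$). A circle $\gS^1\subseteq\gT^d$ is a vector $v\in\Z^d\setminus\{0\}$, and $V_i\subseteq V^{\gS^1}$ exactly when $\langle w_i,v\rangle=0$, so that $\cod V^{\gS^1}=2\sum_{\langle w_i,v\rangle\neq 0}\lambda_i$. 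Minimising over $v$,
\[
c(\rho)=\min_{v\neq 0}\ \frac{\lambda\bigl(\{\,i:\langle w_i,v\rangle\neq 0\,\}\bigr)}{\lambda(\{1,\dots,m\})},
\]
and since the sets $\{i:\langle w_i,v\rangle\neq 0\}$ are precisely the unions of cocircuits of the regular matroid $M[W]$, the minimiser may be taken to be a single cocircuit. Thus Theorem~\ref{thm:cdIntro} is equivalent to: a weighted regular matroid of rank $d\le 9$ has normalised weighted cogirth at most $c(d)$, with equality attained.

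By Seymour's theorem every regular matroid is obtained from graphic matroids, cographic matroids, and copies of $R_{10}$ by iterated $1$-, $2$- and $3$-sums, and I would bound the normalised weighted cogirth on each building block. For a cographic representation attached to a graph $G$, a cocircuit is the edge set of a simple cycle, the rank equals $b_1(G)=d$, and the normalised weighted cogirth is exactly $\sys(G,\lambda)$; Theorem~\ref{thm:systole} gives $\sys(G,\lambda)\le s(d)$, and $s(d)\le c(d)$ by comparing the two tables (the inequality being strict only at $d=5$). For a graphic representation attached to a connected graph $H$ on $d+1$ vertices, the vector $v$ indicating a single vertex fixes every edge not incident to that vertex; since each edge lies in two vertex stars, averaging over the $d+1$ vertices produces a vertex star of $\lambda$-weight at most $\tfrac{2}{d+1}\lambda(E(H))$, so the normalised weighted cogirth is $\le\tfrac{2}{d+1}\le c(d)$. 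For $R_{10}$ (rank $5$, ten elements) one checks that every cocircuit has size $\ge 4$, that size-$4$ cocircuits occur, and that $\Aut(R_{10})$ acts transitively on the ten elements; averaging over the size-$4$ cocircuits gives normalised weighted cogirth $\le\tfrac{4}{10}=\tfrac25=c(5)$ for every positive weighting.

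Next I would propagate the bound through the sums. Put $f(d)=1/c(d)$, so $(f(1),\dots,f(9))=(1,\tfrac32,2,\tfrac94,\tfrac52,3,\tfrac{10}{3},\tfrac72,4)$ is strictly increasing and super-additive on $\{1,\dots,9\}$. For a $k$-sum $M=M_1\oplus_k M_2$ one has $\rank M=\rank M_1+\rank M_2-(k-1)$, and every cocircuit of $M$ arises, by the standard combinatorial rules for $k$-sums, from cocircuits of $M_1$ and $M_2$ (possibly involving the shared elements); tracking the $\lambda$-weights of these cocircuits against the total weights reduces the inductive step to the numerical inequality $f(d_1)+f(d_2)\ge f\bigl(d_1+d_2-(k-1)\bigr)$, valid in the required range because $f$ is increasing and super-additive. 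Inducting on the number of summands in the decomposition, together with the base cases, yields $c(\rho)\le c(d)$ for all $d\le 9$. For optimality: when $d\neq 5$, Theorem~\ref{thm:systole} provides a weighted graph with $b_1=d$ and systole exactly $s(d)=c(d)$, whose cographic representation realises $c(\rho)=c(d)$; when $d=5$, the uniformly weighted $R_{10}$ representation has $c(\rho)=\tfrac25=c(5)$ (and for $d=1,2,3$ the graphic representations of $K_2,K_3,K_4$ also realise equality).

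I expect the real obstacle to be the sum step: describing the cocircuits — and in particular their exact $\lambda$-weights — of $2$- and $3$-sums in terms of the summands, and choosing how to distribute the weighting onto the (deleted) shared elements so that the inductive estimate stays sharp. Seymour's theorem decomposes the underlying matroid, but the multiplicity function does not come with a decomposition and must be handled by hand. By contrast, the inequalities $s(d)\le c(d)$ and the super-additivity of $f$ are a finite check, the facts about $R_{10}$ are standard, and the entire difficulty of the cographic case is already absorbed into Theorem~\ref{thm:systole}.
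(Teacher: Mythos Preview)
Your overall strategy—translating to weighted cogirth of regular matroids, handling the graphic, cographic, and $R_{10}$ building blocks directly (the cographic case being exactly Theorem~\ref{thm:systole}), and then inducting through Seymour's $k$-sums—is precisely the paper's approach, and your base cases and optimality argument are correct.

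The gap is in the sum step, and it is not merely the technical obstacle you flag but a genuine failure of the inequality you are aiming for. Your reduction to $f(d_1)+f(d_2)\ge f\bigl(d_1+d_2-(k-1)\bigr)$ would follow from $c(M_1\oplus_k M_2)^{-1}\ge c(M_1)^{-1}+c(M_2)^{-1}$, but this is \emph{false}: take $M_1=M_2=M[K_3]$ (so $c(M_i)=2/3$); their $2$-sum is $M[C_4]$, for which $c(M[C_4])=1/2$, and $2=c(M[C_4])^{-1}<c(M_1)^{-1}+c(M_2)^{-1}=3$. No clever distribution of weight onto the deleted shared element rescues this, because the shared element is simply absent from $E(M)$. (Incidentally, the numerical inequality you need is \emph{sub}-additivity of $f$, not super-additivity.)

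What the paper actually proves (Lemma~\ref{sec:bounds-decomp-matr}) is the weaker bound $c(M_1\oplus_k M_2)^{-1}\ge c(M_1')^{-1}+c(M_2')^{-1}$, where $M_i'$ is a minor of $M_i$ of rank $d_i-(k-1)$: the only linear forms on $V_1\oplus V_2$ that descend to the quotient are those vanishing on the glued subspace, forcing the rank drop. The resulting numerical check is $f(d_1')+f(d_2')\ge f(d)$ with $d_1'+d_2'=d-(k-1)$, and this \emph{fails} at $d=7$, $k=3$, $(d_1',d_2')=(1,4)$, since $1+\tfrac{9}{4}=\tfrac{13}{4}<\tfrac{10}{3}$. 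The paper closes that case with an additional structural input (\cite[Corollary~13.4.6]{Oxley_matroids}): a $3$-connected regular matroid of rank $7$ is graphic, cographic, or a $3$-sum of two matroids each of rank at least $4$, and in the last case $(d_1',d_2')=(2,3)$ gives $f(2)+f(3)=\tfrac{7}{2}\ge\tfrac{10}{3}$. Your outline misses both the unavoidable rank loss in the recursion and this extra step needed at $d=7$.
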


\begin{remark}
By the correspondence between torus representations as in Theorem \ref{thm:cdIntro} and regular matroids, this theorem implies that the {\it cogirth} of a regular matroid of rank $d \leq 9$ is bounded above by $c(d)$ (see Section \ref{sec:Optimization} for a definition). Similar bounds were proved already by Crenshaw and Oxley \cite{CrenshawOxley-pre} for the wider class of binary matroids, but their bounds are not sufficient for our geometric applications (see also \cite{ChoChenDing07}).
\end{remark}

Theorem \ref{thm:cdIntro} is the main new ingredient required for the proof of Theorem \ref{thm:t9} in the case of a $\gT^9$-action. For the case of a $\gT^6$-action, a refinement of Theorem \ref{thm:cdIntro} is required (see Theorem \ref{thm:6involutions}).

This paper is structured as follows. 
In Section \ref{sec:cigClassification}, we give examples and state the classification of torus representations with connected isotropy groups. 
In Section \ref{sec:systoles}, we prove Theorem \ref{thm:systole}. 
In Section \ref{sec:cogirth}, we introduce matroids, state Seymour's theorem, explain how to deduce the classification of torus representations with connected isotropy groups, and then prove Theorem \ref{thm:cdIntro}. 
In Section \ref{sec:t4orLess}, we review the Connectedness Lemma, related cohomological periodicity results, and Nienhaus' computation of the rational cohomology of fixed-point components of isometric $\gT^4$-actions on positively curved manifolds. 
In Section \ref{sec:t9}, we apply Theorem \ref{thm:cdIntro} to prove Theorem \ref{thm:t9}.

\setcounter{tocdepth}{1}
\tableofcontents

\subsection*{Acknowledgements} 
The authors wish to thank the referees for their attention to detail and suggestions to improve the exposition. The first author was partially supported by NSF Grants DMS-2005280 and DMS-2402129, and he is grateful to the Cluster of Excellence at WWU M\"unster for their support and hospitality during a visit in Summer 2022. The second and third authors were supported by the Deutsche Forschungsgemeinschaft (DFG, German Research Foundation) under Germany's Excellence Strategy EXC 2044--390685587, Mathematics M\"unster: Dynamics--Geometry--Structure and within CRC 1442 Geometry: Deformations and Rigidity at WWU M\"unster.

\bigskip
\section{Torus representations with connected isotropy groups}
\label{sec:cigClassification}

In this section, we describe how to build all torus representations with the property that all isotropy groups are connected (see Theorem \ref{thm:cigClassification}). The proof that this list is complete requires a deep theorem of Seymour and is given in Section \ref{sec:cogirth}. 

\subsection{Graphic and cographic representations}

Let $G$ be a finite, connected, directed graph with vertex set $V$ of size $d+1$ and edge set $E$ of size $n$. Fix any vertex \(v_0\in V\). 
The long exact (co)homology sequences for the triple \((G,V,v_0)\) reduce to short exact sequences as follows:
\[0\rightarrow H^0(V,v_0;\mathbb{Z})\rightarrow H^1(G,V;\mathbb{Z})\stackrel{\iota^*}{\rightarrow} H^1(G,v_0;\mathbb{Z})\rightarrow 0\]
and
\[0\leftarrow H_0(V,v_0;\mathbb{Z})\stackrel{\delta}{\leftarrow} H_1(G,V;\mathbb{Z})\leftarrow H_1(G,v_0;\mathbb{Z})\leftarrow 0.\]

Now \(H_1(G,V;\mathbb{Z})\) is the free \(\mathbb{Z}\)-module generated by the edges \(e\in E\) of \(G\). Moreover, for \(H^1(G,V;\mathbb{Z})=\Hom(H_1(G,V;\mathbb{Z}),\mathbb{Z})\) we can pick the dual basis consisting of elements \(e^*\) for \(e\in E\). 

For each edge $e \in E$, we get two irreducible torus representations as follows. 
	\begin{enumerate}
	\item 	The image \(\iota^*(e^*) \in H^1(G,v_0;\Z)\) of the basis element \(e^*\in H^1(G,V;\Z)\) gives rise to a linear map
          \[\iota^*(e^*):H_1(G,v_0;\mathbb{Z})\rightarrow \mathbb{Z}.\]
	We may view this as a weight of an irreducible representation
		\[\rho_e^*:\gT^b \cong H_1(G,v_0;\R)/H_1(G,v_0;\Z) \to \gU(1).\]
              \item The image $\delta(e) \in H_0(V,v_0;\Z)$ of the basis element $e \in H_1(G,V;\Z)$ gives rise to a linear map
          \[\delta(e):H^0(V,v_0;\mathbb{Z})\rightarrow \mathbb{Z}.\]
	We may view this as a weight of an irreducible representation
		\[\rho_e:\gT^d \cong  H^0(V,v_0;\R)/ H^0(V,v_0;\Z) \to \gU(1).\]
	\end{enumerate}

The direct sum over all edges gives rise to the {\it graphic representation}
	\[\rho_G = \bigoplus_{e \in E} \rho_e:\gT^d \to \gU(n)\]
of $G$ and the {\it cographic representation}
	\[\rho_G^* = \bigoplus_{e \in E} \rho_e^*:\gT^b \to \gU(n)\]
of $G$. Here $d = |V| - 1$, $n = |E|$, and $b$ is the first Betti number of the graph $G$.

\begin{proposition}\label{pro:GraphicCographicCIG}
For a finite, connected, directed graph $G$ with $d+1$ vertices, $n$ edges, and Betti number $b$, the graphic representation $\rho_G:\gT^d \to \gU(n)$ and cographic representation $\rho_G^*:\gT^b \to \gU(n)$ have the property that all isotropy groups are connected. 

In addition, if \(G\) is a simple graph, i.e. does not have loops and parallel edges, then $\rho_G$ is a subrepresentation of $\rho_{K_{d+1}}$, where $K_{d+1}$ is the complete graph on $d+1$ vertices. Moreover, $\rho_G^*$ is graphic if and only if $G$ is a planar graph, in which case $\rho_G^* \cong \rho_{G^*}$, where $G^*$ is the planar dual of $G$.

Finally, these representations are dual in the following sense: a collection $B \subseteq E$ of $d$ edges in $G$ gives rise to $d$ linearly independent weights of $\gT^d$ if and only if the complement $B^* = E \setminus B$ gives rise to $b$ linearly independent weights of $\gT^b$.
\end{proposition}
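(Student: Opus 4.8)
The plan is to handle the four assertions separately, with a common reduction to the lattice combinatorics of the (reduced) incidence matrix and the fundamental-cycle matrix of $G$. The thread running through the first three is the standard description of isotropy groups of a torus representation: if $\rho\co\gT^k\to\gU(V)$ has weight vectors $w_1,\dots,w_n$ in the character lattice $\Lambda\cong\Z^k$, then the isotropy group of a vector supported on $S\subseteq\{1,\dots,n\}$ is the subgroup of $\gT^k$ annihilating $\{w_i\st i\in S\}$, and this subgroup is connected exactly when $\langle w_i\st i\in S\rangle$ is a saturated (primitive) sublattice of $\Lambda$. Thus ``all isotropy groups connected'' means ``every subset of the weight vectors spans a saturated sublattice'' --- the total unimodularity condition of Section~\ref{sec:cogirth}, from which one could also deduce the result by invoking the classical total unimodularity of incidence and fundamental-cycle matrices; I would instead check saturation directly. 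For $\rho_G$, the weight of $e$ is $\delta(e)$, the image of $\partial e=[h(e)]-[t(e)]$ in $H_0(V,v_0;\Z)$, so the weight matrix is the reduced incidence matrix of $G$; for $S\subseteq E$ the span of $\{\delta(e)\st e\in S\}$ is the image of $\partial\co\Z^S\to H_0(V,v_0;\Z)$, whose cokernel is $H_0\bigl((V,S),v_0;\Z\bigr)$ and hence free, so the span is saturated.

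For $\rho_G^*$, the weight of $e$ is the restriction of the coordinate functional $e^*$ to the cycle space $Z=H_1(G;\Z)\subseteq\Z^E=H_1(G,V;\Z)$. Using the elementary fact that $\coker\phi$ and $\coker\phi^*$ have the same torsion subgroup for a homomorphism $\phi$ of finitely generated free abelian groups, saturation of $\langle e^*|_Z\st e\in S\rangle$ in $\Hom(Z,\Z)$ reduces to torsion-freeness of the cokernel of the restriction $Z\hookrightarrow\Z^E\twoheadrightarrow\Z^S$. I would then observe that the projection $\Z^E=C_1(G)\to C_1(G')=\Z^S$ onto the $S$-coordinates is the chain map induced by contracting the edges outside $S$, where $G'=G/(E\setminus S)$; it carries cycles to cycles and is onto on first homology, so the composite above factors as $Z\twoheadrightarrow Z(G')\hookrightarrow\Z^S$ and its cokernel is $\Z^S/Z(G')$, which is free because the cycle space of a graph is a direct summand of its edge chains. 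Hence $\rho_G^*$ also has connected isotropy groups.

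For the duality statement I would argue that both conditions are equivalent to ``$B$ is a spanning tree of $G$''. The $d$ weights $\{\delta(e)\st e\in B\}$ are linearly independent over $\Q$ if and only if there is no nonzero rational cycle of $G$ supported on $B$, that is, $B$ is acyclic, which for $|B|=d=|V|-1$ means $B$ is a spanning tree. Dually, the $b$ weights $\{e^*|_Z\st e\in B^*\}$ span a lattice of full rank $b$ if and only if the map $Z\otimes\Q\to\Q^{B^*}$ reading off the $B^*$-coordinates is injective, that is, no nonzero rational cycle is supported on $E\setminus B^*=B$ --- again acyclicity of $B$. Since $B$ and $B^*$ are complementary, the two conditions agree. (Conceptually this says the matroids represented by the weight matrices of $\rho_G$ and $\rho_G^*$ are dual; written in the forms $[\,I_d\mid D\,]$ and $[\,{-}D^T\mid I_b\,]$ coming from a spanning tree, it is the classical complementary-minor identity.)

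For the remaining assertions: if $G$ is simple it is a subgraph of $K_{d+1}$ on the same vertex set, and after orienting $K_{d+1}$ so as to extend the orientation of $G$, the weight $\delta(e)$ of an edge of $G$ equals its weight in $\rho_{K_{d+1}}$; since both representations live on $\gT^{d}$, this realizes $\rho_G$ as the subrepresentation of $\rho_{K_{d+1}}$ on the coordinates indexed by $E(G)$. If $G$ is planar with plane dual $G^*$, planar duality is a bijection $E(G)\leftrightarrow E(G^*)$ carrying the cycle space $Z(G)\subseteq\Z^{E(G)}$ onto the cut space of $G^*$ inside $\Z^{E(G^*)}$ for an appropriate choice of orientations; since the cographic representation of $G$ is given by the coordinate functionals on $Z(G)$ and the graphic representation of $G^*$ by the coordinate functionals on the cut space of $G^*$, this bijection yields $\rho_G^*\cong\rho_{G^*}$. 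Conversely, if $\rho_G^*$ is graphic then the matroid it represents, namely the cographic matroid of $G$, is graphic, so $G$ is planar by the classical matroid characterization of planarity due to Whitney. I expect the genuine obstacle to be this last step, together with the care needed to match orientations so that planar duality identifies $Z(G)$ with the cut space of $G^*$ on the nose and to keep track of which lattice is the character lattice of each torus; the connectedness and duality statements are comparatively soft once the weight matrices are recognized as the (reduced) incidence matrix and the fundamental-cycle matrix of $G$.
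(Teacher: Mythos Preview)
Your proposal is correct. The paper in fact omits the proof of this proposition entirely, remarking only that it ``follows by collecting known results for regular matroids and translating them into the language of torus representations with connected isotropy groups'' or that ``one can use basic algebraic topology to prove these properties.'' Your argument is precisely the second alternative the authors allude to: you verify saturation of the weight sublattices directly via the chain-level description of the graphic and cographic weight matrices, rather than quoting the total unimodularity of incidence and fundamental-cycle matrices as a black box. The duality and planarity statements you reduce to the standard equivalences (bases of $M[G]$ are spanning trees; $M^*[G]$ graphic iff $G$ planar, by Whitney), which is exactly the matroid-theoretic content the paper points to in Section~\ref{sec:cogirth}. So your write-up supplies what the paper deliberately leaves out, and the orientation bookkeeping you flag for the planar-duality isomorphism is indeed the only place requiring genuine care.
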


Note, in particular, that $d + b = n$ by the duality property. This is consistent with the formulas $\chi(G) = (d+1) - n$ and $\chi(G) = 1 - b$ for the Euler characteristic. Proposition \ref{pro:GraphicCographicCIG} follows by collecting known results for regular matroids and translating them into the language of torus representations with connected isotropy groups (see Section \ref{sec:cogirth}). Alternatively one can use basic algebraic topology to prove these properties.
Therefore the proof is omitted.

\subsection{The sporadic representation}
In the sense defined in the next section, there is exactly one indecomposable representation that is neither graphic nor cographic and that has the property that all isotropy groups are connected. It is the representation
	\[\rho_{R_{10}}:\gT^5 \to \gU(10)\]
defined by the weights
	\[E = \{e_i^* \st 1 \leq i \leq 5\} \cup \{e_{i-1}^* - e_{i}^* + e_{i+1}^* \st 1 \leq i \leq 5\},\]
where the $e_i$ denote the standard basis vectors of $\R^5$ and $e_i^*:\R^5 \to \R$ denote the elements in the dual basis. Here we consider the indices modulo five.

\begin{proposition}
The sporadic representation $\rho_{R_{10}}$ has connected isotropy groups, is neither graphic nor cographic, and is self dual in the following sense: A subset $B \subseteq E$ of five weights is linearly independent if and only if the five weights of $E \setminus B$ are linearly independent.
\end{proposition}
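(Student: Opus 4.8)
The plan is to verify the three claimed properties of $\rho_{R_{10}}$ directly from the explicit list of ten weights, using the combinatorial characterization that for a torus representation the isotropy groups are all connected if and only if the matrix $A$ whose columns are the weight vectors is totally unimodular (this is the characterization that, per the excerpt, is established in Section~\ref{sec:cogirth}). Write $A$ for the $5 \times 10$ integer matrix whose columns are $e_1^*,\dots,e_5^*$ (an identity block $I_5$) followed by the ten-term-cycle vectors $v_i = e_{i-1}^* - e_i^* + e_{i+1}^*$ for $1 \le i \le 5$, indices mod $5$. So $A = [\,I_5 \mid C\,]$ where $C$ is the circulant matrix with first row $(1,-1,1,0,0)$ (up to the indexing convention). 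Since $A$ contains $I_5$, total unimodularity of $A$ is equivalent to total unimodularity of $C$ together with the $\pm 1, 0$ condition on entries, which is visibly satisfied; more precisely one checks every square submatrix of $A$ has determinant in $\{0,\pm 1\}$.

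First I would establish total unimodularity. The cleanest route is: by Camion's theorem (or the standard pivoting criterion for TU matrices), $A = [I_5 \mid C]$ is TU if and only if $C$ is TU, and a $0,\pm1$ matrix is TU if and only if the sum of the entries in every square submatrix with all row and column sums even is divisible by four — but for a concrete $5\times 5$ circulant it is just as fast to compute $\det$ of all square submatrices, or to invoke the known fact (this is exactly how $R_{10}$ arises in Seymour's work) that $R_{10}$ is a regular matroid, equivalently that this specific matrix is TU. I would state it this way: the matroid represented over $\Z$ by these ten vectors is the matroid $R_{10}$ of Seymour's classification, which is regular, hence $A$ is TU, hence by the characterization recalled in Section~\ref{sec:cogirth} all isotropy groups of $\rho_{R_{10}}$ are connected. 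The self-duality claim is the statement that the matroid $R_{10}$ is isomorphic to its dual: a $5$-subset $B \subseteq E$ indexes a basis (linearly independent weights of $\gT^5$) iff its complement indexes a basis of the dual representation, and $R_{10}$ is well known to be identically self-dual. Concretely, one exhibits the permutation of the ten weights swapping $e_i^* \leftrightarrow v_i$ which (after a sign change on alternate coordinates, i.e. conjugating by $\mathrm{diag}(1,-1,1,-1,1)$ on $\R^5$) carries $A$ to a matrix row-equivalent to the standard representation of the dual; independence of a $5$-set is preserved, giving the basis-complementation statement.

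Finally I would show $\rho_{R_{10}}$ is neither graphic nor cographic. By Proposition~\ref{pro:GraphicCographicCIG} and the dictionary with matroids, graphic representations correspond to graphic matroids and cographic ones to cographic matroids, so it suffices to show the matroid $R_{10}$ is neither graphic nor cographic; since $R_{10}$ is self-dual, it is graphic iff cographic, so we need only rule out one. The standard argument: $R_{10}$ has the property that deleting or contracting any element yields the same matroid up to isomorphism, namely $M(K_{3,3})$ in one case and its dual in the other — actually the simplest self-contained argument is to exhibit a forbidden minor. I would note that $R_{10}$ contains $M(K_5)$ (equivalently $M^*(K_5)$) as a minor, or contains a minor isomorphic to $M^*(K_{3,3})$, which is a known excluded minor for graphic matroids; dually it contains $M(K_{3,3})$ or $M^*(K_5)$, excluded for cographic matroids. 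Either way the conclusion follows.

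\medskip

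\noindent\textbf{Main obstacle.} The genuinely delicate point is the total unimodularity verification: a brute-force determinant check over all $\binom{10}{5}$-plus-smaller square submatrices is routine but tedious, so the real work is to package it — either by directly invoking that these weights present the regular matroid $R_{10}$ (citing Seymour), or by giving a short structural proof (e.g. that $[I_5 \mid C]$ is obtained from a network/graphic matrix by the single pivoting operation that preserves TU-ness, which is essentially the content of $R_{10}$ being one "pivot step" away from a graphic matroid). I expect to spend most of the proof making that step clean rather than the easy self-duality and non-(co)graphicness claims.
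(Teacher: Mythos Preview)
Your proposal is correct and takes essentially the same approach as the paper: the paper omits the proof entirely, stating only that ``the properties claimed are known for the sporadic matroid $R_{10}$'' and deferring to Section~\ref{sec:cogirth} and the matroid literature, which is exactly the route you take (translate to matroids, invoke regularity/self-duality/non-graphicness of $R_{10}$). One small correction to your details: $R_{10}$ cannot contain $M(K_5)$ as a proper minor since both have ten elements; the relevant fact is that every single-element deletion of $R_{10}$ is $M(K_{3,3})$ and every single-element contraction is $M^*(K_{3,3})$, which you do mention as the alternative and which suffices via Tutte's excluded-minor characterization.
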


As with Proposition \ref{pro:GraphicCographicCIG}, the properties claimed are known for the sporadic matroid $R_{10}$, so we again omit the proof (see Section \ref{sec:cogirth}).

\subsection{The classification}
\label{sec:classification}

The class of representations with connected isotropy groups is closed under an operation called $k$-sums for $k \in \{1,2,3\}$. We define this construction, and then we state the classification.

For $i \in \{1,2\}$, let $\rho_i:\gT^{d_i} \to \gU(n_i)$ be a representation with connected isotropy groups. Write $\gT^{d_i} = V_i/\Gamma_i$ for some integral lattice $\Gamma_i \subseteq V_i \cong \R^{d_i}$, and denote the subset of weights of $\rho_i$ by $S_i \subseteq \Hom(\Gamma_i,\Z)$. 

The {\it $1$-sum} is simply the product representation 
	\[\rho_1 \oplus_1 \rho_2:\gT^{d_1 \times d_2} \to \gU(n_1+n_2)\]
defined as the composition of the maps 	
	\[\gT^{d_1+d_2} \cong \gT^{d_1} \times \gT^{d_2} \to \gU(n_1) \times \gU(n_2) \subseteq \gU(n_1+n_2),\]
where the middle map is given by $(z_1,z_2) \mapsto (\rho_1(z_1),\rho_2(z_2))$. Alternatively, we may view $\gT^{d_1 + d_2} = (V_1 \oplus V_2)/(\Gamma_1 \oplus \Gamma_2)$ and describe the weights as those lying in the set
	\[\of{S_1 \times 0} \cup \of{0 \times S_2} \subseteq \Hom(\Gamma_1,\Z) \oplus \Hom(\Gamma_2,\Z) \cong \Hom(\Gamma_1 \oplus \Gamma_2, \Z).\]

The {\it $2$-sum} is dependent on a choice of weights $w_i \in S_i$ for $i \in \{1,2\}$. The dependence on this choice is suppressed in the notation. It is a representation
	\[\rho_1 \oplus_2 \rho_2 : \gT^{d_1 + d_2 - 1} \to \gU(n_1 + n_2 - 2)\]
obtained by first passing to the subrepresentation of $\rho_1 \oplus_1 \rho_2$ obtained by removing the weights $(w_1,0)$ and $(0,w_2)$ and then by restricting to the subgroup
	\[\gT^{d_1 + d_2 - 1} = \ker(\rho_{w_1} \rho_{w_2}^{-1}),\]
where $\rho_{w_i}$ is the subrepresentation of $\rho_i$ corresponding to the weight $w_i$. Alternatively, we can define the $2$-sum by identifying $\gT^{d_1 + d_2 - 1} = V/\Gamma$ where
	\[\Gamma = \ker\of{w_1 - w_2} = V \cap (\Gamma_1 \oplus \Gamma_2),\]
	and
	\[V = \ker\of{w_1 \otimes \R - w_2 \otimes \R} \subseteq V_1 \oplus V_2\]
with $w_i\otimes \R$ being the natural extension of $w_i\colon \Gamma_i\to \Z$ to an $\R$-linear map  $V_i\to \R$, and by declaring the set of weights to be 
	\[S = \{(w,0)|_\Gamma \st w \in S_1 \setminus \{w_1\}\} \cup \{(0,w)|_\Gamma \st w \in S_2 \setminus\{w_2\}\}.\]

Finally the {\it $3$-sum} is dependent on a choice of $W_i = \{w_{i,j} \st 1 \leq j \leq 3\} \subseteq S_i$ for $i \in \{1,2\}$ where the $w_{i,j}$ are non-zero and satisfy $w_{i,1} + w_{i,2} + w_{i,3} = 0$ for $i \in \{1,2\}$. It is a representation
	\[\rho_1 \oplus_3 \rho_2:\gT^{d_1 + d_2 - 2} \to \gU(n_1 + n_2 - 6)\]
obtained by setting
	\[V = \bigcap_{j=1}^{3} \ker\of{w_{1,j} \tensor \R - w_{2,j} \tensor \R} \subseteq V_1 \oplus V_2,\]
	\[\Gamma = \bigcap_{j=1}^{3} \ker\of{w_{1,j} - w_{2,j}} = V \cap (\Gamma_1 \oplus \Gamma_2),\]
and
	\[S = \{(w,0)|_\Gamma \st w \in S_1 \setminus W_1\} \cup \{(0,w)|_\Gamma \st w \in S_2 \setminus W_2\}.\]
As with the $2$-sum, this may be viewed as a subrepresentation of 
	\[\gT^{d_1 + d_2 - 2} = \bigcap_{j=1}^{3} \ker\of{\rho_{w_{1,j}} \rho_{w_{2,j}}^{-1}}.\]

\begin{remark}\label{rem:sums}\begin{enumerate}
\item[a)] The above definition of $2$-sum and $3$-sum 
 is consistent  with the corresponding definition  on matroids
in the literature. However, instead of taking the representation 
in $\gU(n_1+n_2-2)$ and $\gU(n_1+n_2-6)$, respectively, one 
could also just work with the induced representation in $\gU(n_1+n_2)$. 
In practice, there is no big difference because the weights of matroids and
representations occur with multiplicities and thus the above two potential definitions only distinguish themselves by the multiplicities of the six  involved weights.
\item[b)] The $1$-, $2$-, and $3$-sum of two graphic representations 
is again graphic. Similarly the $1$- and $2$-sum of two 
cographic representations is cographic. For the $2$-sum, one can 
see this as follows. If we have two graphs $G_1$ and $G_2$ and an (directed) edge $e_i\in G_i$, then we define $\hat G$ as the graph being  obtained from 
the disjoint union $G_1\cup G_2$ by removing the edge $e_i$ from $G_i$ and then connecting the two vertices in $G_1$ to the corresponding vertices in $G_2$ by a new edge. The cographic representation of  $\hat G$ then corresponds to alternative definition of the $2$-sum from a). The $2$-sum $\rho_{G_1}^* \oplus_2 \rho_{G_2}^*$ is the subrepresentation of the cographic representation associated to $\hat G$ obtained by contracting each of the two additional edges.
\item[c)] In an important special case, the $3$-sum of two 
cographic representations is cographic as well: 
Suppose $G_1$ and $G_2$ are graphs and we have two three-valent 
vertices $v_i\in G_i$. Assume that the three weights needed to 
define the $3$-sum are given by the three edges emanating from $v_i$ for $i=1,2$. By assumption we have also an identification of the three 
edges in $G_1$ with the corresponding ones in $G_2$.  
One then defines a graph $\hat G$  obtained from the disjoint union 
$G_1\cup G_2$ by removing the vertex $v_i$ from $G_i$, 
then add a vertex to each end of the three edges  in $G_1$ 
and join the vertices with the corresponding edges in $G_2$. 
The graph $\hat G$ then corresponds to the alternative definition 
of $3$-sum from a). The graph $G$ for the  
$3$-sum can be obtained by contracting each of the six involved edges 
in $\hat G$ to a point. 
Notice that the first Betti number $G$ is bounded above by 
the first Betti number  of $\hat G$ which in turn 
is given by $b_1(G_1)+b_1(G_2)-2$.  If the inequality $b_1(G)\le b_1(\hat G)$ is strict, then the corresponding representation of the $3$-sum has a kernel 
of positive dimension.  In this case the representation corresponding to 
the $3$-sum is just given as an ineffective subrepresentation of the cographic representation of $\hat G$.
\end{enumerate}
\end{remark}

Seymour's theorem on regular matroids and the proof of Tutte's theorem on regular matroids then imply the following (see Section \ref{sec:cogirth} and \cite[Proof of Theorem 6.6.3]{Oxley_matroids}).

\begin{theorem}[Classification of torus representations with connected isotropy groups]\label{thm:cigClassification}

A torus representation $\rho$ has the property that all isotropy groups are connected if and only if $\rho$ can be constructed from iterated $1$-, $2$-, and $3$-sums of graphic, cographic, and sporadic representations.

More generally, if $\rho:T^d\rightarrow U(n)$ has the property that all isotropy groups have an odd number of components, then \(\rho\) can be constructed from a representation \(\rho':T^d\rightarrow U(n)\) with connected isotropy groups as follows:
\begin{enumerate}
\item First pull back the weights of \(\rho'\) along a finite covering \(T^d\rightarrow T^d\) with an odd number of sheets.
\item Then multiply the weights with odd integers.
\item Then push the weights forward along a finite covering \(T^d\rightarrow T^d\) with an odd number of sheets.
\item Finally divide the weights by odd integers. The weights obtained in this way are then the weights of \(\rho\).
\end{enumerate}
\end{theorem}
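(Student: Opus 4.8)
The plan is to turn the condition on isotropy groups into a condition on the matrix of weight vectors, translate it into the language of regular matroids, and then apply Seymour's decomposition theorem; the statement about an odd number of components is then extracted by tracking what can change $2$-adically.

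First, the dictionary. Write $\gT^d = V/\Gamma$, let $w_1,\dots,w_n \in \Hom(\Gamma,\Z)$ be the weights of $\rho$, and let $A$ be the corresponding $d\times n$ integer matrix. A point of the representation space whose nonzero coordinates are indexed by $I \subseteq \{1,\dots,n\}$ has isotropy group $\bigcap_{i\in I}\ker(w_i)$, and this closed subgroup of $\gT^d$ is canonically $\Hom\bigl(\Hom(\Gamma,\Z)/\langle w_i : i\in I\rangle,\,\gU(1)\bigr)$; hence its number of components equals the order of the torsion subgroup of $\Hom(\Gamma,\Z)/\langle w_i : i \in I\rangle$. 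Therefore ``all isotropy groups connected'' is equivalent to: every set of columns of $A$ spans a primitive (saturated) sublattice of $\Hom(\Gamma,\Z)$. I would then invoke, from the proof of Tutte's theorem (see \cite[Theorem 6.6.3]{Oxley_matroids}), that this property holds if and only if $A$ becomes totally unimodular after a change of basis of $\Gamma$, sign changes of columns, and a permutation of columns, and that under this equivalence totally unimodular matrices correspond bijectively to regular matroids once the multiplicities of repeated columns are recorded. Seymour's theorem \cite{Seymour80} then writes the regular matroid $M(A)$ as an iterated $1$-, $2$-, and $3$-sum of graphic, cographic, and $R_{10}$ matroids. Since these matroid sums are set up to match the representation sums of Section \ref{sec:classification} (Remark \ref{rem:sums}), since the building-block matroids are exactly the matroids of the graphic, cographic, and sporadic representations, and since a regular matroid determines its totally unimodular representation up to the equivalence above, this decomposition lifts to the claimed decomposition of $\rho$; the reverse implication is obtained the same way, using that a $k$-sum of regular matroids is regular.

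For the general statement, the same computation of isotropy groups shows that ``all isotropy groups have an odd number of components'' is equivalent to: for every $I$ the torsion subgroup of $\Hom(\Gamma,\Z)/\langle w_i : i\in I\rangle$ has odd order; equivalently the $\ff_2$-rank and the $\Q$-rank of $\{w_i : i\in I\}$ agree; equivalently the matroid $M(\rho)$ coincides with the binary matroid of $A \bmod 2$. Being both a binary matroid and a $\Q$-representable matroid, $M(\rho)$ is regular, so it has a totally unimodular representative $B$, and the associated representation $\rho'$ has connected isotropy groups. It remains to pass from $\rho'$ to $\rho$ by the four operations. The point is that $A$ and $B$ represent the same regular matroid both over $\Q$ and over $\ff_2$; reducing to the case of a $3$-connected matroid via $1$- and $2$-sums (where a field representation is unique up to row operations and column scalings) and comparing over $\Z_{(2)}$, one gets $A = G\,B\,D$ with $G \in \GL_d(\Z_{(2)})$ and $D$ diagonal with entries in $\Z_{(2)}^{\times}$. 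Clearing denominators, the left factor $G$ is realized by pulling back the weights along a covering $\gT^d\to\gT^d$ of odd degree and then pushing them forward along another such covering, i.e.\ operations (1) and (3), and the diagonal factor $D$ is realized by multiplying the weights by odd integers and then dividing by odd integers, i.e.\ operations (2) and (4). Conversely, each of the four operations changes the number of components of every isotropy group only by odd factors, so any representation obtained from a connected-isotropy one in this way again has all isotropy groups with an odd number of components.

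The main obstacle is the matroid-representation uniqueness underlying both halves: Tutte's theorem that a matrix all of whose column subsets span primitive sublattices is equivalent to a totally unimodular one and that a regular matroid pins down its totally unimodular representation up to the standard equivalence, together with the $\Z_{(2)}$-local version needed in the odd case. These are the nonelementary inputs and rest on the excluded-minor structure of regular matroids. A secondary technical point is that Seymour's theorem is stated for $3$-connected matroids, so one must run the standard reduction through $1$- and $2$-sums and verify that the column-scaling ambiguity of a representation is compatible with it; granting this, identifying the four integral operations with $\GL_d(\Z_{(2)}) \times (\Z_{(2)}^{\times})^n$-equivalence is routine.
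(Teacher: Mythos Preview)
Your proposal is correct and follows essentially the same route the paper takes: translate the isotropy condition into a determinant/primitivity condition on the weight matrix, recognize this as regularity of the underlying matroid via Tutte's theorem, and then invoke Seymour's decomposition to obtain the $1$-, $2$-, $3$-sum description; the paper's own ``proof'' is in fact just the one-sentence pointer to Seymour and to \cite[Proof of Theorem 6.6.3]{Oxley_matroids}, with the dictionary set up later in Section~\ref{sec:cogirth} exactly as you do. Your treatment of the odd-component case via unique representability over $\Z_{(2)}$ is likewise what the reference to the proof of Tutte's theorem is meant to supply, and your factorization $A = GBD$ with $G\in\GL_d(\Z_{(2)})$, $D$ diagonal in $\Z_{(2)}^{\times}$ is the correct way to realize operations (1)--(4); the only cosmetic point is that unique representability of regular matroids holds without first reducing to the $3$-connected case, so that step can be dropped.
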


We illustrate this classification by describing all $\gT^d$-representations with $d \leq 5$ and the property that all isotropy groups are connected. To do so, it suffices to enumerate {\it simple} representations, which means $\gT^d$-representations $W$ with the following properties:
\begin{enumerate}
\item \(W^{\gT^d}=0\)
\item The multiplicity of every weight of \(W\) is one.
\item The isotropy groups of all points in \(W\) are connected.
\end{enumerate}
Moreover, it suffices to enumerate {\it maximal} simple representations, where we say $\rho \leq \hat \rho$ for two simple representations $\rho$ and $\hat \rho$ of $\gT^d$ if the set of weights for $\rho$ is a subset of the set of weights of $\hat \rho$. Given a list of maximal simple representations, all others are obtained by passing to subrepresentations, adding multiplicities, and adding copies of the trivial representation.

There is a natural order-preserving map
	\[\mathrm{si}:R\rightarrow R_0\]
with \(\mathrm{si}|_{R_0}=\Id_{R_0}\), where \(R\) denotes the set of isomorphism types of \(\gT^d\)-representations with connected isotropy groups and \(R_0\) denotes the corresponding set of simple \(T^d\)-representations. 
For a representation \(W\in R\), \(\mathrm{si}(W)\) is the simple representation whose weights are given by the non-trivial weights of \(W\) without repetitions.

Note that a simple graphic representation is always a subrepresentation of \(\rho_{K_{d+1}}\) where \(K_{d+1}\) is the complete graph on \((d+1)\)-vertices.
This is because the graph corresponding to a simple graphic representation is a simple graph, that is, does not have loops or multiple edges. Therefore, there is a unique maximal graphic representation, $\rho_{K_{d+1}}$ of $\gT^d$.

Next, we consider the cographic case, which is more involved. First we need a definition we will use later as well.

\begin{definition}\label{def:splitting}
A {\it splitting} of a simple graph $G$ at a vertex $v$ is a new simple graph $G_+$ obtained from partitioning the set of vertices adjacent to $v$ into two subsets, $A$ and $B$, removing $v$ and all edges incident to $v$, adding new vertices $a$ and $b$, connecting $a$ to every vertex in $A$ and $b$ to every vertex in $B$, and finally adding a new edge $e_+$ between $a$ and $b$.
\end{definition}

\begin{proposition}\label{pro:ReductionToCubic-reps}
If $\rho_G^*$ is both a cographic representation of $\gT^b$ and a maximal simple representation, then one of the following holds:
	\begin{enumerate}
	\item $b = 1$ and $G$ consists of a single vertex and a single loop,
	\item $b = 2$ and $G$ consists of two vertices that are connected by three edges, or
	\item $b \geq 3$ and $G$ is a $3$-valent, $3$-connected graph of girth at least $3$.
	\end{enumerate}
\end{proposition}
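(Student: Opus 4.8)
The plan is to pass to the cographic matroid $M^{*}(G)$, read off what simplicity of $\rho_{G}^{*}$ means combinatorially, dispose of $b\le 2$ by hand, and for $b\geq 3$ use a vertex-splitting operation to contradict maximality unless $G$ is cubic.

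\textbf{Step 1 (dictionary).} The weights of $\rho_{G}^{*}$ represent $M^{*}(G)$: the weight $\iota^{*}(e^{*})$ vanishes exactly when $e$ is a bridge of $G$ (equivalently a coloop of $M^{*}(G)$), and two edges $e\neq e'$ carry equal weights exactly when $\{e,e'\}$ is a bond of $G$ (equivalently a parallel pair of $M^{*}(G)$); both facts follow directly from the exact sequences defining $\rho_{G}^{*}$, or from the standard matroid dictionary. Hence $\rho_{G}^{*}$ is simple if and only if $G$ is connected and every bond of $G$ has at least three edges, i.e.\ $G$ is $3$-edge-connected (allowing the degenerate one- and two-vertex graphs). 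Call this property $(\star)$. Observe that $(\star)$ forces minimum degree $\geq 3$ unless $G$ is a single loop: a vertex of degree $1$ gives a bridge and a vertex of degree $2$ gives a $2$-element bond.

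\textbf{Step 2 (small $b$).} If $b=1$, then $M^{*}(G)$ is a simple matroid of rank $1$, so it has a single element and $G$ has a single edge; the only connected graph with one edge and $b_{1}=1$ is the single loop. If $b=2$, then $M^{*}(G)$ is a simple \emph{regular} matroid of rank $2$; since $U_{2,4}$ is not regular, the only candidates are $U_{2,2}$ and $U_{2,3}$, and as the regular representation of $U_{2,2}$ extends to one of $U_{2,3}$, maximality forces $M^{*}(G)=U_{2,3}$. Then $G$ has three edges and, since $b_{1}=2$, exactly two vertices; and $(\star)$ excludes a loop, so $G$ is two vertices joined by three edges. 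This gives cases (1) and (2).

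\textbf{Step 3 ($b\geq 3$).} It suffices to prove $G$ is $3$-valent, for then $(\star)$ together with $b\geq 3$ already forces girth $\geq 3$ (a loop at a $3$-valent vertex leaves a pendant bridge; a pair of parallel edges at $3$-valent vertices, by a short case check, either produces a $2$-element bond or forces $G$ to be the two-vertex triple edge, which has $b=2$), in particular $G$ is simple with $\geq 4$ vertices, and a $3$-valent $3$-edge-connected graph on $\geq 4$ vertices is automatically $3$-connected, since a $2$-vertex cut or a cut-vertex is seen by a degree count to produce a bond of size $\leq 2$. So suppose $G$ is not $3$-valent; by Step 1, and since $b\geq 3$ rules out the single loop, some vertex $v$ has degree $k\geq 4$. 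Invoke the splitting lemma for $3$-edge-connected graphs: $v$ can be split into two vertices $v_{1},v_{2}$ joined by a new edge $f$, the $k$ old edges being distributed with at least two on each side, so that the resulting graph $G'$ is again $3$-edge-connected. Then $G'$ is connected with $b_{1}(G')=b_{1}(G)=b$, so $\rho_{G'}^{*}$ is a simple $\gT^{b}$-representation by Step 1. Since $G=G'/f$ we have $M(G)=M(G')/f$, hence $M^{*}(G)=M^{*}(G')\setminus f$; translating through the identification $\gT^{b_{1}(G')}=\gT^{b_{1}(G)}$ induced by contracting $f$, the weight set of $\rho_{G}^{*}$ is precisely that of $\rho_{G'}^{*}$ with the weight of $f$ deleted, and this weight is nonzero and distinct from the others because $G'$ is $3$-edge-connected. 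Thus $\rho_{G}^{*}<\rho_{G'}^{*}$ among simple $\gT^{b}$-representations, contradicting maximality. Hence $G$ is $3$-valent, and the proof is complete.

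\textbf{Main obstacle.} The one substantive external ingredient is the splitting lemma used in Step 3: a vertex of degree $\geq 4$ in a $3$-edge-connected graph admits a split into two vertices of degree $\geq 3$, joined by an edge, preserving $3$-edge-connectivity. This belongs to the reduction/generation theory of $3$-edge-connected graphs (in the spirit of Mader's edge-splitting theorems and the $3$-edge-connected analogue of Tutte's wheel theorem); it can be cited, or proved directly by showing that among the partitions of the edges at $v$ into two parts of size $\geq 2$, at least one creates no bond of size $\leq 2$, using $k\geq 4$ and that $G$ itself has no such bond. Everything else — the matroid dictionary, the classification of simple regular matroids of rank $\leq 2$, and the elementary graph theory in Step 3 — is routine.
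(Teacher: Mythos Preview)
Your proof is correct and follows essentially the same route as the paper: both hinge on the vertex-splitting lemma (your Step~3 ingredient, the paper's Lemma~\ref{lem:ThreeConnectedSplitting}) to contradict maximality when $G$ has a vertex of degree $\geq 4$. Your presentation is slightly more direct in that you observe up front that simplicity of $\rho_G^*$ already forces $3$-edge-connectivity---so the paper's preliminary reductions (to connected, then $2$-edge-connected, then $3$-edge-connected) are vacuous under the hypothesis---and you handle $b\le 2$ explicitly via the classification of simple regular matroids of small rank rather than by the one-line remark the paper uses; but the substance is the same.
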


Recall that $b$ is the Betti number of the graph $G$. Additionally, the condition $3$-valent is also called cubic and means that all vertices have degree three. Three-connected implies that $G$ remains connected if any two edges are removed. Also the girth is the number of edges in a minimal length cycle of $G$.

\begin{proof}
First, we claim $\si(\rho_G^*) = \si(\rho_{G_1}^*)$ for some $1$-connected graph $G_1$. If $G$ is not connected, we may connect two components with an edge $e$ in such a way that the resulting graph, $G \cup e$, has one fewer component than $G$. Since $e$ is not part of any cycle, its induced representation $\rho_e^*$ is trivial. Therefore $\si(\rho_{G}^*) = \si(\rho_{G \cup e}^*)$. Iterating this process implies the claim.

Second, we claim $\si(\rho_{G}^*) = \si(\rho_{G_2}^*)$ for some $2$-connected graph $G_2$. To see this, we may assume $G$ is $1$-connected but has a bridge, that is, an edge $e$ whose removal results in a disconnected graph $G \setminus e$. Such an edge is not part of any simple cycle, so $\rho_e^* = 0$. In particular, if $G/e$ is the graph obtained by contracting the edge $e$ to a point, we have $\si(\rho_{G}^*) = \si(\rho_{G/e}^*)$ under the natural identification $H_1(G;\Z) \cong H_1(G/e;\Z)$. Since $G/e$ has one fewer bridge, the claim follows by iterating this process.

Third, we claim $\si(\rho_G^*) = \si(\rho_{G_3}^*)$ for some $3$-connected graph $G_3$. We may assume $G$ is $2$-connected but contains a pair of edges, $e_1$ and $e_2$, whose removal disconnects $G$. Any simple cycle containing $e_1$ also contains $e_2$ and vice versa, therefore $\rho_{e_1}^* = \rho_{e_2}^*$. It follows that the quotient map $G \to G/e_1$ obtained by contracting the edge $e_1$ to a point induces an isomorphism $\si(\rho_{G/e_1}^*) = \si(\rho_G^*)$. Once again we have reduced the number of cutsets of a certain size, so arguing iteratively implies the claim.

Fourth, for $b \geq 2$, we claim $\si(\rho_G^*) = \si(\rho_{G_c}^*)$ for some $3$-connected, cubic graph $G_c$. We may assume $G$ is $3$-connected. Since $b \geq 2$, this implies that all vertices have degree at least three. Suppose some vertex $v$ has degree $k \geq 4$. It is an elementary property of graphs that there is a splitting $G_+$ of $G$ at $v$ that maintains $3$-connectivity (see Definition \ref{def:splitting} and Lemma \ref{lem:ThreeConnectedSplitting}). Under the identification $H_1(G;\Z) \cong H_1(G_+/e_+;\Z)$, where $e_+$ is the new edge formed in the splitting, we see that $\si(\rho_G^*) \leq \si(\rho_{G_+}^*)$. By maximality, equality holds, and now iteration yields the claim.

To conclude the proof, we note that a $3$-connected graph with Betti number $b = 1$ or $b = 2$ is as claimed. For $b \geq 3$, the girth is at least three because it is $3$-connected and cubic, so the proof is complete.
\end{proof}

These observations enable us to classify maximal simple representations $\rho$ of $\gT^d$ for \(d\leq 5\).

For $d = 1$, we have no decomposable or sporadic representations for rank reasons. Moreover, if $\rho$ is cographic, then Proposition \ref{pro:ReductionToCubic-reps} implies that we may assume $\rho = \rho_G^*$ where $G$ is the the graph with a single vertex and a single loop. It is then easy to see that $\rho_G^*$ is the graphic representation $\rho_{K_2}$.

For $2 \leq d \leq 4$, we again have no sporadic representations. Moreover, by induction, any decomposable representation is a $k$-sum of graphic representations, and it is known that these are again graphic. As for cographic representations $\rho_G^*$, these are also graphic unless $G$ is non-planar. Since we may assume moreover that $G$ is cubic, we find that either $\rho_G^*$ is graphic or that $G$ is the Kuratowski graph $K_{3,3}$, which has Betti number $b = 4$. 

For $d = 5$, the sporadic representation $\rho_{R_{10}}$ is one maximal simple representation. The graphic representation $\rho_{K_6}$ is another. The cographic case reduces to $\rho_G^*$ for some cubic, $3$-connected graph $G$ with girth at least three. Moreover, we may assume $G$ is non-planar. It follows that $G$ is obtained from $K_{3,3}$ by attaching an edge to a pair of distinct edges in $K_{3,3}$. By the symmetries of $K_{3,3}$, only two graphs can arise in this way. 
One has girth three and the other has girth four, and we denote them by $G_{5,3}$ and $G_{5,4}$. (We give an alternative description below.) Finally, we claim the decomposable case leads to nothing new. Indeed, one of the summands has rank at most three and hence is both graphic and cographic, and the other has rank at most four and hence is graphic or cographic. Using Remark~\ref{rem:sums}
it then easy to check that the resulting sum is graphic or cographic, respectively.

\begin{example}\label{exa:cigUpTo5}
For $1 \leq d \leq 5$, a representation of $\gT^d$ with connected isotropy groups is, up to precomposing with an automorphism of $\gT^d$, a subrepresentation of one of the following:
	\begin{enumerate}
	\item The graphic representation $\rho_{K_{d+1}}$. 
	\item The cographic representation $\rho_{K_{3,3}}^*$ for $d = 4$.
	\item The cographic representation $\rho_{G_{5,3}}^*$ for $d = 5$.
	\item The cographic representation $\rho_{G_{5,4}}^*$ for $d = 5$.
	\item The sporadic representation $\rho_{R_{10}}$.
	\end{enumerate}
Here, $G_{5,3}$ is the graph obtained from the disjoint union of a $K_{2,3}$ and a triangle $C_3$ by choosing a one-to-one correspondence between the degree two vertices of $K_{2,3}$ and the vertices of $C_3$ and attaching corresponding vertices by an edge. In addition, $G_{5,4}$ is the M\"obius ladder on four rungs. 
\end{example}

\begin{remark} If we view the simple representations as real representations, then we can disregard signs of weights and in the above situation the number of weights 
is $\tfrac{1}{2}d(d+1)$ in the  case of $\rho_{K_{d+1}}$, nine for $\rho_{K_{3,3}}^*$, $12$ for $\rho_{G_{5,3}}^*$ and for $\rho_{G_{5,4}}^*$, and $10$ in the case of the sporadic representation.
These upper bounds remain valid for the nonzero weights of the representation $\Z_2^d\subset \gT^d$, if we only require for the $\gT^d$-representation that all  isotropy groups have an odd number of  components. 
\end{remark}

We close this section with a proof of an elementary result for $3$-connected graphs. It was used in Proposition \ref{pro:ReductionToCubic-reps} (see Definition \ref{def:splitting}).

\begin{lemma}
\label{lem:ThreeConnectedSplitting}
Let $G$ be a finite, three-connected graph. If $v$ is a vertex of degree $k \geq 4$, then there exists a $3$-connected splitting $G_+$ of $G$ at $v$. In particular, $G$ can be transformed into a three-connected, cubic graph by a sequence of splittings.
\end{lemma}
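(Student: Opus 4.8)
The plan is to analyze directly which choices of splitting can fail and to exhibit one that does not. Recall that a splitting of $G$ at a vertex $v$ of degree $k$ is determined by a partition $E_v = E_1 \sqcup E_2$ of the set of edges at $v$ with $|E_1|, |E_2| \geq 2$: one replaces $v$ by two new vertices $v_1, v_2$ joined by a new edge $e_+$, with $E_i$ attached to $v_i$. Since $k \geq 4$, both $v_1$ and $v_2$ then have degree at least three, and contracting $e_+$ recovers $G$. The heart of the lemma is that for a suitable partition the graph $G_+$ is again $3$-connected; granting this, the ``in particular'' follows by always peeling off exactly two edges and inducting on $\sum_{w}\max(0,\deg(w)-3)$, which strictly decreases at each such step while $3$-connectivity — and hence the bound $\delta(G_+) \geq 3$ — is preserved, so the process terminates at a graph that is both $3$-connected and cubic.

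To find a good partition I would first note that $G$ has at least five vertices, since the only $3$-connected graph on four vertices is $K_4$, which is already cubic; hence $G_+$ has at least six vertices and can fail to be $3$-connected only through an honest $2$-element separating set $\{a,b\}$. I would then run through the possibilities according to $\{a,b\} \cap \{v_1,v_2\}$. If this intersection is empty, then $v_1$ and $v_2$ remain in one component of $G_+ - \{a,b\}$ because $e_+$ joins them, so contracting $e_+$ produces a $2$-separation of $G$, contradicting $3$-connectivity; the case $\{a,b\} = \{v_1,v_2\}$ is immediate since $G_+ - \{v_1,v_2\} = G - v$ is connected. The remaining case $\{a,b\} = \{v_1,x\}$ with $x \neq v_2$ (and its mirror image) is the decisive one: $G_+ - \{v_1,x\}$ is the connected graph $G - \{v,x\}$ together with the extra vertex $v_2$ attached via those edges of $E_2$ that do not terminate at $x$, and this is disconnected precisely when every edge of $E_2$ runs from $v$ to $x$.

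Consequently a splitting fails to be $3$-connected only when one of the two parts of the partition consists entirely of parallel edges from $v$ to a single vertex. I would conclude by choosing the partition to avoid this. In a simple graph there are no parallel edges, so \emph{every} partition into parts of size at least two works; this already settles the case needed in Proposition \ref{pro:ReductionToCubic-reps}, since the graphs there are simple. In general, $3$-connectivity forces the edges at $v$ to reach at least three distinct vertices, and a short case check then produces a partition in which neither part lies inside a single parallel class. I expect the analysis of the ``mixed'' separating set $\{v_1,x\}$ to be the only step requiring genuine care; the rest is bookkeeping, and in the simple-graph case of interest even that step is trivial.
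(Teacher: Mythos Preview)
Your argument is correct and takes a genuinely different route from the paper. The paper proceeds constructively: it tries the specific partition $\{e_1,e_2\}\mid\{e_3,\dots,e_k\}$, uses $3$-connectivity to produce a path $p$ from $v_1$ to $v_2$ in $G\setminus\{e_1,e_2\}$, and if $p$ passes through $v$ it assembles three edge-disjoint paths between the two new vertices of $G_+$, certifying the splitting; if not, a second such path $q$ (for the pair $\{e_3,e_4\}$) is found, and the ``crossed'' partition $\{e_2,e_3\}\mid\{e_1,e_4,\dots,e_k\}$ is then shown to work using both $p$ and $q$. You instead classify all possible $2$-vertex separators of $G_+$ by their intersection with $\{v_1,v_2\}$ and extract the precise obstruction---one side of the partition lying entirely in a single parallel class at $v$---which makes the simple-graph case (the one actually needed in Proposition~\ref{pro:ReductionToCubic-reps}) immediate and yields a clean if-and-only-if criterion in general. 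Two small remarks: the paper's surrounding reductions use ``$3$-connected'' in the sense of $3$-\emph{edge}-connected, whereas your step ``$G-\{v,x\}$ is connected'' invokes the vertex version, so strictly speaking you are proving a variant of the lemma (a stronger conclusion from a stronger hypothesis); and your claim that $G$ has at least five vertices fails for multigraphs (take $K_4$ with one doubled edge), though nothing later depends on it.
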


\begin{proof}
Label the edges at $v$ by $e_1,\ldots,e_k$ and label the endpoint of $e_i$ not at $v$ by $v_i$ for $1 \leq i \leq k$. Consider the splitting $G_+$ for which the edges attached at one end of the new edge $e_+$ are $\{e_1,e_2\}$ and those attached at the other end are $\{e_3,\ldots,e_k\}$. Since $G$ is $3$-connected, $\{e_1,e_2\}$ is not a cutset of $G$, so there exists a path $p_1$ in $G \setminus \{e_1,e_2\}$ connecting $v_1$ and $v_2$. If $p_1$ passes through $v$, then the splitting $G_+$ has the property that the endpoints of $e_+$ are connected by three disjoint paths. This property implies that $e_+$ is not a bridge and, moreover, that $e_+$ is not part of a cutset of size two. Since any other cutset with two edges in $G_+$ would be a cutset of $G$, there is no such cutset. Hence $G_+$ is $3$-connected in this case. 

Now suppose $p_1$ does not pass through $v$. We repeat this argument with a splitting $G_+'$ based on the partition $\{e_3,e_4\}$ and $\{e_1,e_2,e_5,\ldots,e_k\}$. This gives us a path in $G$ from $v_3$ to $v_4$ that, without loss of generality avoids $v$. By an extension of the same argument, we may assume moreover that $q$ avoids $p$. But now we look at the splitting $G_+''$ based on the partition $\{e_2,e_3\}$ and $\{e_1,e_4,\ldots,e_k\}$ and note that the endpoints of the new edge $e_+''$ are connected by three disjoint paths, namely, one given by $e_+''$, one involving $p$, and one involving $q$. As before, this implies that $e_+''$ is not part of any cutset of size at most two and that $G_+''$ is $3$-connected. 
\end{proof}

\subsection{An optimization problem on fixed-point sets of circles}

For our purposes, we use the classification of torus representations $\gT^d \to \SO(V)$ with connected isotropy groups to prove upper bounds on
\begin{equation}
  \label{eq:opti}
c(\rho) = \min_{\gS^1 \subseteq \gT^d} \cod V^{\gS^1} / \dim V  
\end{equation}
that depend only on the rank of the torus and are strictly less than one. Such bounds do not exist if one removes the assumption on finite isotropy, since generically a sequence of representations $\rho_i:\gT^d \to \SO(V_i)$ with $\dim V_i \to \infty$ will satisfy $\cod V_i^{\gS^1} \geq \dim V_i - 2(d-1)$ for all circles and hence satisfy $c(\rho_i) \to 1$ as $i \to \infty$. 

In Section \ref{sec:systoles}, we prove Theorem \ref{thm:cdIntro} in the case of cographic representations. Let $\rho_G^*:\gT^b \to \gT^n$ be a cographic representation associated to a graph $G$ with Betti number $b$ and with $n$ edges. The irreducible subrepresentations are indexed by the edges $e$ of $G$, and they are allowed now to have multiplicities $\lambda(e)$. For an embedded circle $\gS^1 \subseteq \gT^b$, we get a cycle
	\[v = \sum_{e \in E(G)} a_e e \in \iota_* H_1(G;\Z)\subset H_1(G,V;\Z)\]
for some $a_i \in \Z$. The (complex) codimension $V^{\gS^1} \subseteq V$ is then
	\[\sum_{e^*(v) \neq 0} \lambda(e) = \sum_{a_e \neq 0} \lambda(e) \geq \lambda(C),\]
where the two sums are over edges $e \in E(G)$ satisfying $e^*(v) \neq 0$ and $a_e \neq 0$, respectively, and where $C \subseteq E(G)$ is a (simple) cycle in $G$ consisting of edges $e$ for which $a_e \neq 0$. 

Minimizing over $\gS^1 \subseteq \gT^d$ results in minimizing over cycles $C$ in $G$, and we find that
	\[\min_{\gS^1 \subseteq \gT^d} \frac{\cod V^{\gS^1}}{\dim V} = \min_{C} \frac{\lambda(C)}{\lambda(G)}.\]
Since we seek upper bounds on this quantity, we furthermore consider the maximum over edge weights $\lambda:E(G) \to \R$, restricting without loss of generality to those with total weight $\lambda(G) = 1$. This implies
	\[c(\rho) = \max_\lambda \min_C \lambda(C),\]
where $C$ runs over cycles of the graph $G$ and where $\lambda$ runs over functions $\lambda:E(G) \to \R^{\geq 0}$ with $\lambda(G) = 1$. The right-hand side is called the {\it systole} of the graph $G$ and is denoted by $\sys(G)$. 

In Section \ref{sec:cogirth}, we complete the proof of Theorem \ref{thm:cd} assuming the result in the cographic case. This is not difficult once we have established preliminaries about matroids and stated Seymour's classification theorem. Indeed, the decomposable case gives rise to recursive bounds and follows by induction on the rank of the torus, the graphic case reduces to one case (namely, when $G$ is the complete graph), and the sporadic case similarly is just one representation.

\bigskip
\section{Systole bounds for graphs of small Betti number}
\label{sec:systoles}

For a finite, connected graph $G$, we let 
	\[\sys(G) = \max_\lambda \min_C \lambda(C)\]
be the systole of $G$, where the maximum runs over edge weight functions $\lambda:E(G) \to \R^{\geq 0}$ with $\sum_{e \in E(G)} \lambda(e) = 1$ and where $C$ runs over (simple) cycles $C$ of $G$. The main result is Theorem \ref{thm:systole} from the introduction, rephrased slightly here for convenience:

\begin{theorem}[Theorem \ref{thm:systole}]\label{thm:bounds-cogr-matr}
For any $b \leq 9$, we have $\max_G \sys(G) = s(b)$ where the maximum is taken over finite graphs $G$ with Betti number $b$ and where $s(b)$ is defined as in the following table:
	\[
	\begin{array}{c|c|c|c|c|c|c|c|c|c}
	b 		& 1 & 2 	& 3 & 4 		& 5 		& 6 & 7 		& 8    & 9\\\hline
	s(b)^{-1}	& 1 & 1.5 	& 2 & 2.25		& \displaystyle 2.\bar 6	& 3 & 3.\bar 3 	& 3.5 & 4
	\end{array}
	\]
Here $2.\bar 6$ and $3.\bar 3$ denote $8/3$ and $10/3$, respectively. 
\end{theorem}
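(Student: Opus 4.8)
The plan is to establish the equality $\max_G \sys(G) = s(b)$ for $b \le 9$ in two halves: the lower bound, which is a matter of exhibiting explicit extremal graphs with an optimal weighting, and the upper bound, which is the substantive content. For the lower bound, for each $b$ I would take the relevant candidate graph — for small $b$ the complete graph $K_{b+1}$ or a small cyclic/prism graph, and in general the graph achieving the extremum — put the uniform weight $\lambda(e) = 1/|E(G)|$ on every edge, and observe that the shortest cycle then has weight $g/|E(G)|$ where $g$ is the girth. Checking that this matches $s(b)$ in the table is a finite, routine verification for the listed graphs (for instance, for $b=9$ the Petersen graph has $15$ edges and girth $5$, giving $5/15 \cdot 3 = 1$... so one uses a $K_5$-type or Petersen-type extremizer and matches $1/4$). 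In each case, once the extremal graph is fixed, one may need to perturb the uniform weighting slightly so that \emph{every} shortest cycle has the same weight, but finding such a weighting is a small linear-algebra/LP computation.

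For the upper bound, the strategy is induction on $b$ together with a sequence of reductions that shrink the graph without decreasing $\sys$. First, reduce to the case that $G$ is connected, $2$-connected, and cubic ($3$-valent): this parallels Proposition~\ref{pro:ReductionToCubic-reps}, since bridges and degree-one or degree-two vertices can be suppressed (contracted) without changing the systole, and a vertex of degree $\ge 4$ can be split into two vertices joined by a new edge of weight $0$, which can only increase $\sys$. (If splitting drops the Betti number we are in a lower case and invoke induction.) So we may assume $G$ is cubic with exactly $2b-2$ vertices and $3b-3$ edges. Next, dispose of the small-girth regime: if $G$ has girth $2$ (a double edge) or a short cycle, one can contract or delete to reduce to a smaller graph; more precisely, a cycle $C$ with $\lambda(C)$ small combined with the fact that contracting $C$ to a point gives a graph of Betti number $b - (|C|-1) \cdot 0$... here one argues that the presence of a short cycle forces $\sys(G)$ below $s(b)$ directly.

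The heart of the matter is the case of cubic, $3$-connected graphs of girth $\ge 3$ with $b \le 9$, and here the plan is the embedding trick advertised in the remark after Theorem~\ref{thm:systole}: embed $G$ (or a suitable minor) into a surface of controlled genus, so that $G$ together with its faces gives a CW structure, and then relate $\sys(G,\lambda)$ to the dual graph and to face lengths via an averaging argument over a suitable collection of cycles (e.g.\ face boundaries or a cycle double cover). The point is that $|E| = 3b-3$ while $\sum_{\text{faces } f} \ell(f) = 2|E|$, so the average face length is $2|E|/(\#\text{faces})$; if one can realize the shortest cycle as bounded above by a weighted average of face lengths and control the number of faces via the genus (itself bounded in terms of $b$ since $G$ is small), one extracts $\sys(G) \le 2|E|/(\#\text{faces} \cdot |E|)$, and optimizing the surface genus over the finitely many combinatorial types gives exactly $s(b)$. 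The main obstacle — and where the real work lies — is making this averaging sharp: a crude average gives a bound weaker than $s(b)$, so one must choose the family of cycles cleverly (not just faces, but possibly shorter "separating" cycles coming from the embedding, as in Sabourau's argument run in reverse) and, for each $b$ from $3$ to $9$, check that the finitely many cubic $3$-connected graphs of that Betti number with large girth all satisfy the bound. Because the number of such graphs grows with $b$, for the larger values ($b = 7, 8, 9$) one likely needs a clean structural dichotomy — either $G$ has a short cycle, in which case a direct contraction argument wins, or $G$ has girth $\ge g_0(b)$, in which case the graph is forced to be large enough that $g_0(b)/|E(G)| \le s(b)$ already from the girth-vs-edges inequality — rather than a graph-by-graph check. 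Establishing the precise threshold $g_0(b)$ that makes this dichotomy close exactly at $s(b)$ is the crux of the argument.
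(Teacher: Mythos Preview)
Your proposal has the right skeleton --- reduce to cubic, $3$-connected graphs, use induction on $b$, and exploit surface embeddings --- and your averaging-over-faces computation is exactly the paper's Proposition~\ref{pro:embeds}: an embedding into a surface $\Sigma$ with $\Sigma\setminus G$ a union of discs gives $\sys(G)^{-1}\ge \tfrac{b-1+\chi(\Sigma)}{2}$. But the closing dichotomy you propose is not valid as stated. The inequality $g_0(b)/|E(G)|\le s(b)$ controls only the \emph{uniform} weighting; it says nothing about $\sys(G)=\max_\lambda \min_C \lambda(C)$, where $\lambda$ is allowed to be arbitrary. Large girth does not by itself force a small systole (indeed, the extremizers for $b=6,7,8$ are precisely large-girth graphs). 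Your lower-bound discussion is also garbled: the Petersen graph is the extremizer for $b=6$, not $b=9$; for $b=9$ one can take e.g.\ the M\"obius--Kantor graph, and for several of the intermediate values (e.g.\ $b=5$, $b=7$) the optimal $\lambda$ is \emph{not} uniform.

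What actually closes the argument in the paper is a different recursive tool you do not mention: the \emph{large-girth estimates} (Lemma~\ref{lem:largegirth}), obtained by deleting the ball of radius one or two around a vertex or edge, bounding $\sys(G,\lambda)\le (1-\lambda(B))\,s(b')$ with $b'\in\{b-2,b-3,b-5\}$, and then \emph{averaging over all such balls}. This, together with a companion small-cycle estimate, handles $b\le 6$ and $b=8$ directly. The cases $b=7$ and $b=9$ genuinely require more: for $b=7$ one invokes the Glover--Huneke classification of the six cubic graphs minimal for non-embeddability in $\RP^2$, so that all but a handful of graphs embed in $\RP^2$ (giving $\sys^{-1}\ge 7/2$), and the exceptions $F_{11},\ldots,F_{14},G_1$ are treated individually. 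For $b=9$ one removes a vertex, applies the refined $b=7$ result, and is left with graphs built from $F_{13}$ or $F_{14}$ by attaching a tripod; these are then shown, case by case, to embed in the torus or Klein bottle, yielding $\sys^{-1}\ge 4$. So the ``clean structural dichotomy'' you hope for does not exist; what replaces it is the ball-removal averaging plus a finite but nontrivial list of obstructions coming from forbidden-minor theory for the projective plane.
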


The proof requires the rest of this section. 

\subsection{Reductions and recursive bounds}
\label{sec:cographic_estimates}

In this section, we assume the Betti number $b$ of the graph $G$ satisfies $b \geq 2$. The first step is to reduce the problem to the case of trivalent, or cubic, graphs. 

\begin{lemma}[Reduction to three-connected, cubic graphs]
\label{lem:ReductionToCubic-systole}
For any finite graph $G$ with Betti number $b \geq 2$, there exists a finite, 3-connected, cubic graph $G_c$ with Betti number $b$ such that $\sys(G) \leq \sys(G_c)$. Moreover, $G_c$ has girth at least two for $b = 2$ and at least three for $b \geq 3$.
\end{lemma}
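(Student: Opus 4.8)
The plan is to prove Lemma~\ref{lem:ReductionToCubic-systole} by a sequence of graph modifications, each of which does not decrease the systole and strictly decreases some complexity measure, until we arrive at a $3$-connected cubic graph of the required girth. The steps parallel those in the proof of Proposition~\ref{pro:ReductionToCubic-reps}, but now with the weight function $\lambda$ in tow. First I would dispose of the connectivity issues. If $G$ is disconnected, I work with a component realizing the maximum systole; more precisely, for any $\lambda$ with $\lambda(G)=1$, scaling $\lambda$ to be supported on the single component containing a shortest cycle and renormalizing only increases $\min_C \lambda(C)$, so we may assume $G$ is connected. Next, if $G$ is connected but has a bridge $e$ (an edge in no cycle), then $e$ contributes nothing to any cycle weight, so setting $\lambda(e)=0$ and renormalizing shows $\sys(G) \le \sys(G/e)$, and $G/e$ has the same Betti number $b$ and one fewer bridge. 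Iterating, we reduce to $G$ being $2$-connected. For the step from $2$-connected to $3$-connected: if $\{e_1,e_2\}$ is a $2$-edge cutset, then every cycle through $e_1$ also passes through $e_2$ and conversely, so for the optimal $\lambda$ we may move all the weight of $e_1$ onto $e_2$ (this does not decrease any cycle weight: cycles using both $e_1,e_2$ are unaffected in total, cycles using neither are unaffected, and there are no cycles using exactly one). Then $\lambda(e_1)=0$ and contracting $e_1$ gives $\sys(G)\le \sys(G/e_1)$ with the Betti number preserved and the number of small cutsets reduced; iterate.

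Having reduced to a $3$-connected graph $G$, the final step is to make it cubic. Since $b\ge 2$ and $G$ is $3$-connected, every vertex has degree $\ge 3$; if some vertex $v$ has degree $k\ge 4$, Lemma~\ref{lem:ThreeConnectedSplitting} produces a $3$-connected splitting $G_+$ of $G$ at $v$, with a new edge $e_+$. Here is where the weights must be handled carefully: given $\lambda$ on $G$, I define a weight function $\lambda_+$ on $G_+$ by keeping $\lambda$ on all old edges and setting $\lambda_+(e_+)=0$. Every cycle $C$ of $G$ either does not pass through $v$ — in which case it is literally a cycle of $G_+$ with the same weight — or passes through $v$ using two of the edges at $v$; such a cycle lifts to a cycle of $G_+$ that possibly also traverses $e_+$, but since $\lambda_+(e_+)=0$, its weight is unchanged. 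Hence every cycle weight of $(G,\lambda)$ is matched by a cycle weight of $(G_+,\lambda_+)$ of total weight $1$, giving $\sys(G)\le\sys(G_+)$; and $G_+$ has Betti number $b$ and strictly smaller "excess degree" $\sum_v (\deg v - 3)$. Iterating terminates at a $3$-connected cubic graph $G_c$ with $\sys(G)\le\sys(G_c)$.

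It remains to check the girth claim. A $3$-connected cubic graph cannot have a loop or, for $b\ge 2$, a vertex of degree $<3$, so the girth is at least $2$; and it can have a double edge (girth $2$) only when $b=2$ — indeed a double edge $\{e_1,e_2\}$ between $u$ and $w$ forces, by $3$-connectedness and cubicity, that $u$ and $w$ each have exactly one further edge, and tracing these shows the graph is the theta graph with $b=2$ unless there is a third parallel edge, which is the $b=2$ case of Proposition~\ref{pro:ReductionToCubic-reps}; for $b\ge 3$ no double edges or loops can occur, so the girth is at least $3$.

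The main obstacle I anticipate is the degree-reduction step: one must be certain that the splitting operation, combined with the assignment $\lambda_+(e_+)=0$, genuinely does not destroy the lower bound — i.e., that no cycle of $G$ is "lost" upon lifting to $G_+$ and that no spurious short cycle in $G_+$ with smaller weight is created. The first is handled by the case analysis above (a cycle through $v$ uses exactly two of the $k$ edges at $v$, hence survives the split); the second is automatic since $\sys$ is a $\max$–$\min$ and we only need the inequality $\sys(G)\le\sys(G_c)$, so producing any valid $\lambda_+$ that does as well as $\lambda$ suffices. A secondary point requiring care is ensuring each reduction preserves the Betti number exactly (contraction of a non-loop edge and splitting a vertex both do), so that the final $G_c$ lands in the right class.
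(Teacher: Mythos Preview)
Your overall strategy matches the paper's proof (connect, kill bridges, kill $2$-edge cutsets, split high-degree vertices via Lemma~\ref{lem:ThreeConnectedSplitting}), but two of your steps contain genuine errors.

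\textbf{Disconnected case.} Passing to a single component does \emph{not} preserve the Betti number: if $G = G_1 \sqcup G_2$ with $b_1(G_1), b_1(G_2) \ge 1$, then any component has Betti number strictly less than $b$, so your final $G_c$ lands in the wrong class and the lemma is not proved. (Your closing remark about Betti-number preservation checks contraction and splitting but omits this step.) The paper fixes this the other way: add a weight-zero edge joining two components. Such an edge lies in no cycle, so $\sys(G,\lambda)=\sys(G\cup e,\tilde\lambda)\le\sys(G\cup e)$, and $b$ is unchanged.

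\textbf{Splitting step.} Your argument shows that every cycle of $G$ lifts to a cycle of $G_+$ with the same $\lambda_+$-weight. That yields $\min_{C'}\lambda_+(C') \le \min_C \lambda(C)$, i.e.\ $\sys(G_+,\lambda_+)\le\sys(G,\lambda)$, which is the wrong inequality. What you actually need is that every cycle $C'$ of $G_+$ has $\lambda_+(C')\ge\min_C\lambda(C)$; this is exactly the ``no spurious short cycle'' concern you dismiss as automatic. It is not automatic, but it is easy: since $G=G_+/e_+$, any cycle $C'$ of $G_+$ contracts to a closed walk in $G$ containing a simple cycle $C$ with $\lambda(C)\le\lambda_+(C')$ (the possible extra edge $e_+$ has weight $0$). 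This gives $\sys(G,\lambda)\le\sys(G_+,\lambda_+)\le\sys(G_+)$, as required. The paper simply asserts the equality $\sys(G,\lambda)=\sys(G_+,\lambda_+)$; both directions are straightforward, but you proved only the irrelevant one.

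The remaining steps (bridges, $2$-edge cutsets, and the girth claim) are fine and agree with the paper.
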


\begin{proof}
The proof follows a strategy similar to that for Proposition \ref{pro:ReductionToCubic-reps}. The first step is to show $\sys(G) \leq \sys(G_1)$ for some $1$-connected graph $G_1$. If $G$ is disconnected, we may add an edge $e$ connecting an edge in one component to either an edge in another component or an isolated vertex. For any weight function $\lambda$ on $E(G)$, we can extend it to a weight function $\tilde \lambda$ on the resulting graph $G \cup e$ by declaring $\tilde \lambda(e) = 0$. Since $e$ is not involved in any cycles,
	\[\sys(G,\lambda) = \sys(G \cup e, \tilde \lambda) \leq \sys(G \cup e).\]
Maximizing over $\lambda$ and iterating, we conclude the claim.

Second, we claim $\sys(G) \leq \sys(G_2)$ for some $2$-connected graph $G_2$. We may assume $G$ is connected but has an edge $e$ whose removal results in a disconnected graph $G\setminus e$. Note that $e$ is not part of any cycle. We may contract $e$ to obtain a graph $G/e$, and we define a weight function $\bar \lambda = (1 - \lambda(e))^{-1} \lambda$ using the natural identification of $E(G/e)$ with $E(G) \setminus \{e\}$. Note also we may assume $\lambda(e) \neq 1$, since otherwise there is a cycle of zero length in $G$, in which case the upper bound we prove below holds trivially. Hence
	\[\sys(G,\lambda) \leq (1 - \lambda(e)) \sys(G/e, \bar \lambda) \leq \sys(G/e).\]
Now maximizing over $\lambda$ and iterating as needed, the claim follows.

Third, we claim $\sys(G) \leq \sys(G_3)$ for some $3$-connected graph $G_3$. We may assume $G$ is $2$-connected, and we may assume there exists a pair of edges $e_1$ and $e_2$ such that $G \setminus (e_1 \cup e_2)$ is disconnected. Note that any cycle passing through $e_1$ also passes through $e_2$. Contracting $e_1$ and replacing the value of $\lambda(e_2)$ by $\lambda(e_1) + \lambda(e_2)$ for any weight function $\lambda$ on $E(G)$, we find that $\sys(G) \leq \sys(G/e_1)$. This process reduces the number of pairs of edges whose removal disconnects the graph, so the claim hold by iteration.

Fourth, we claim $\sys(G) \leq \sys(G_c)$ for some cubic graph $G_c$. We may assume $G$ is $3$-connected, and we may assume $v$ is a vertex in $G$ with degree $k \geq 4$. Moreover, we may choose a $3$-connected splitting $G_+$ of $G$ at $v$ (see Lemma \ref{lem:ThreeConnectedSplitting}). Let $e_+$ denote the new edge formed in the splitting. For any $\lambda:E(G) \to \R^{\geq 0}$, we can extend the definition to $\lambda_+:E(G_+) \to \R^{\geq 0}$ by setting $\lambda_+(e_+) = 0$. This shows
	\[\sys(G,\lambda) = \sys(G_+, \lambda_+) \leq \sys(G_+).\]
Taking the maximum over $\lambda$ yields $\sys(G) \leq \sys(G_+)$. Note that $G_+$ has the same Betti number as $G$ and that it either has smaller maximum degree or a smaller number of vertices with maximum degree. By iterating, the claim holds.

Finally, a $3$-connected, cubic graph is easily seen to have girth at least two for $b = 2$ and girth at least three for $b \geq 3$.
\end{proof}

By Lemma \ref{lem:ReductionToCubic-systole}, it suffices for $b \geq 2$ to prove our systole bounds for $3$-connected, cubic graphs $G$. Our proof strategy will be by induction over the Betti number, and we implicitly assume throughout this section that the theorem holds for all smaller Betti numbers. The following two lemmas give strong recursive estimates on the systole as a function of the Betti number. The first considers the case of small girth.

\begin{lemma}[Small cycle estimate]
Let $G$ be a three-connected graph with Betti number $b \geq 2$. If $G$ contains a cycle $C$ of length $g \geq 1$, then
	\[\sys(G)^{-1} \geq \frac{h}{g} + s(b-h)^{-1}\]
for all $1 \leq h \leq \min(g,b-1)$.
\end{lemma}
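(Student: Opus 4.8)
The plan is to establish the pointwise version of the claim: for \emph{every} weight function $\lambda\colon E(G)\to\R^{\ge0}$ with $\lambda(G)=1$, writing $\sigma=\min_{C'}\lambda(C')$ for the minimum over all simple cycles of $G$, one has $\sigma\bigl(\tfrac hg+s(b-h)^{-1}\bigr)\le1$. Taking the supremum over $\lambda$ then yields $\sys(G)^{-1}\ge\tfrac hg+s(b-h)^{-1}$ with no appeal to compactness. One may assume $\sigma>0$, since otherwise there is nothing to prove. The mechanism is to split the unit mass $\lambda(G)$ into the mass carried by a carefully chosen set of $h$ edges of the given cycle $C$ and the mass left over on a subgraph of first Betti number $b-h$, to which the known systole bound $s(b-h)$ applies.

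First I would let $D\subseteq E(C)$ consist of the $h$ heaviest edges of $C$. Since the $h$ largest of $g$ nonnegative numbers have average at least the overall average, and since $C$ is itself one of the cycles over which $\sigma$ is taken,
\[
\lambda(D)\ \ge\ \tfrac hg\,\lambda(C)\ \ge\ \tfrac hg\,\sigma .
\]
Next I would pass to $G'=G\setminus D$. Every cycle of $G'$ is a cycle of $G$, hence has $\lambda$-weight at least $\sigma$; moreover $G'$ has first Betti number $b-h$, which lies in $\{1,\dots,b-1\}$ by the hypothesis $h\le b-1$, so $G'$ does contain a cycle and $\lambda(G')>0$. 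The restriction of $\lambda$ to $E(G')$ is then a feasible weighting showing $\sys(G')\ge\sigma/\lambda(G')$, and combining this with the systole bound for Betti number $b-h<b$, namely $\sys(G')\le s(b-h)$ (available by the inductive hypothesis, equivalently by the definition of $s(b-h)$ as the maximal systole in that Betti number), gives
\[
\lambda(G')\ \ge\ \sigma\, s(b-h)^{-1}.
\]
Since $D$ and $E(G')$ partition $E(G)$, adding the two displays produces
\[
1=\lambda(G)=\lambda(D)+\lambda(G')\ \ge\ \sigma\Bigl(\tfrac hg+s(b-h)^{-1}\Bigr),
\]
which is exactly what is claimed.

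The one step that needs genuine care — and where I expect the main obstacle to lie — is the assertion that $G'=G\setminus D$ has first Betti number $b-h$ and that $s(b-h)$ legitimately applies to it. Deleting $h$ edges from a connected graph always drops the first Betti number by at least $h$, with equality precisely when the result stays connected; since $G$ is $3$-connected, hence $3$-edge-connected, this is automatic for $h\le2$, so in the most important case $h=1$ nothing needs to be said. For larger $h$ one must either argue directly that a set of $h\le b-1$ edges lying on a single cycle of a $3$-connected graph cannot be an edge cut — the bound $h\le b-1$ being essential, since deleting an entire cycle of a $3$-connected graph can disconnect it once the cycle is longer than $b-1$ — or else sidestep the point: if $G'$ breaks into components of first Betti numbers $\beta_1,\dots,\beta_c$ (so $\sum_i\beta_i=b-h+(c-1)\ge b-h$, with each $\beta_i\le b\le9$), then applying $\sys\le s(\beta_i)$ on each component with $\beta_i\ge1$ yields $\lambda(G')\ge\sigma\sum_{\beta_i\ge1}s(\beta_i)^{-1}$, and a brief check that $b\mapsto s(b)^{-1}$ is nondecreasing and subadditive on $\{1,\dots,9\}$ gives $\sum_{\beta_i\ge1}s(\beta_i)^{-1}\ge s(b-h)^{-1}$, so the final estimate survives verbatim. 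Everything else is the averaging inequality for $\lambda(D)$ together with the definition of the graph systole.
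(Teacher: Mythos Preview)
Your argument is the paper's: remove the $h$ heaviest edges $D$ of $C$, bound $\lambda(D)\ge\tfrac hg\,\sigma$ using $C$ itself, bound $\lambda(G\setminus D)\ge\sigma\,s(b-h)^{-1}$ via the systole inequality in Betti number $b-h$, and add.

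You are right to flag the connectivity of $G\setminus D$, and in fact you are more careful here than the paper. The paper asserts that three-connectivity yields three disjoint paths between any two vertices, ``so even removing the entire cycle $C$ does not disconnect $G$''; this is false (remove a Hamiltonian $4$-cycle from $K_4$). Your first proposed fix --- that $h\le b-1$ edges lying on a single cycle of a $3$-connected graph can never form an edge cut --- also fails: in the cube $Q_3$ ($3$-connected, $b=5$) the four ``vertical'' edges form a cut and all lie on a common Hamiltonian $8$-cycle, with $h=4=b-1$. A parity argument does settle $h\le 3$ (an edge cut contained in a cycle has even size, hence size at least $4$ by $3$-edge-connectivity), and this covers every application of the lemma later in the paper. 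For general $h$, your subadditivity sidestep is the correct repair; note that the closed walk that $C$ induces on the component multigraph of $G\setminus D$ is Eulerian, which forces every $\beta_i\le b-1$ and keeps the induction honest.
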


\begin{proof}
Fix a $g$-cycle $C$, and fix a weight function $\lambda$. Let $e$ be an edge in $C$ with maximal weight $\lambda(e)$. By considering the cycle $C$ on one hand and all cycles not containing the edge $e$ on the other, we get two estimates on the systole:
	\[\sys(G,\lambda) \leq \lambda(C) \leq g \lambda(e)\]
and
	\[\sys(G,\lambda) \leq (1 - \lambda(e)) \sys(G\setminus e,\lambda') \leq  \of{1-\lambda(e)} s(b-1),\]
where $\lambda' = (1 - \lambda(e))^{-1} \lambda|_{E(G) \setminus \{e\}}$, and where the last inequality follows by induction because $G \setminus e$ has the same number of components as $G$ and hence has Betti number $b-1$. Combining these inequalities to eliminate $\lambda(e)$ implies
	\[s(b-1) \sys(G, \lambda) + g \sys(G, \lambda) \leq g s(b-1).\]
Dividing by $g$, $s(b-1)$, and $\sys(G, \lambda)$ and then maximizing over $\lambda$ implies the bound for $h = 1$.
        
Generalizing this idea, for any $1 \leq h \leq g$, we may remove $h$ of the edges in $C$ with the largest weights and consider only cycles that do not share an edge with these edges. The only additional observation needed for this case is that $G$ remains connected after removing the $h$ edges. Indeed, since $G$ is three-connected, any two vertices are connected by three disjoint paths in $G$, so even removing the entire cycle $C$ does not disconnect $G$. 
\end{proof}

This lemma is strongest when $G$ has a cycle of small length (i.e., $G$ has small girth). The following lemma provides bounds in the complementary case.

\begin{lemma}[Large girth estimates]\label{lem:largegirth}
Suppose $G$ is a cubic graph with Betti number $b$ such that every cycle has length at least $g$. The following hold:
	\begin{enumerate}
	\item If $g \geq 2$ and $b \geq 3$, then $\sys(G)^{-1} \geq \frac{b-1}{b-2} s(b-2)^{-1}$.
	\item If $g \geq 3$ and $b \geq 4$, then $\sys(G)^{-1} \geq \frac{3b-3}{3b-8} s(b-3)^{-1}$.
	\item If $g \geq 4$ and $b \geq 6$, then $\sys(G)^{-1} \geq \frac{b-1}{b-4} s(b-5)^{-1}$.
	\end{enumerate}
\end{lemma}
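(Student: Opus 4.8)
The three estimates all follow the same template, which is a refinement of the "small cycle estimate" strategy: instead of deleting the single heaviest edge of one cycle, we exploit a large-girth hypothesis to locate a \emph{subgraph} of small Betti-number that carries a definite fraction of the total weight, delete an edge inside it, and recurse. Concretely, fix a weight function $\lambda$ with $\lambda(G)=1$. The plan for part~(1) is: since $G$ is cubic with $b\ge 3$, a short combinatorial argument (handshake plus $|E|-|V|=b-1$) produces a cycle $C$ together with a vertex $v\in C$ whose third incident edge leaves $C$; more useful is that one can find a "theta-subgraph'' or a pair of cycles meeting in a path whose union $H$ has $b_1(H)=2$. Among the edges of $H$ pick the heaviest edge $e$; because a cubic $3$-connected graph remains connected after deleting any one edge and the girth bound forces $H$ to have at least a controlled number of edges, one gets $\lambda(C)\le |C|\,\lambda(e)$ on one hand and, on the other, $\sys(G,\lambda)\le(1-\lambda(e))\,s(b-1)$ — but to get the stated denominator $b-2$ we instead delete two suitably chosen edges so that the remaining graph has Betti number $b-2$ while the two deleted edges together carry weight $\ge \tfrac{2}{b-1}$ of the appropriate cycle. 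Balancing the two inequalities $\sys\le W$ (weight of a cycle through the deleted edges) and $\sys\le (1-W')\,s(b-2)$ and optimizing over $\lambda$ yields $\sys(G)^{-1}\ge \tfrac{b-1}{b-2}s(b-2)^{-1}$. Parts~(2) and~(3) are the same scheme with a cycle of length $\ge g$ ($g=3$, resp.\ $g=4$): deleting $3$ (resp.\ $5$) edges drops the Betti number by that amount, the girth bound guarantees these deleted edges lie on cycles of length $\ge 3$ (resp.\ $\ge 4$), producing the fractions $\tfrac{3}{3b-8}$ (from $3$ edges out of a length-$\ge 3$ structure in a graph with $3b-3$ edges, since cubic $\Rightarrow |E|=3(b-1)$) and $\tfrac{3}{b-4}$ respectively, and the arithmetic of combining $\sys\le(\text{cycle weight})$ with $\sys\le(1-\cdot)\,s(b-k)$ gives exactly the displayed coefficients.

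In more detail for part~(2): $G$ cubic with $b_1=b$ means $|V|=2b-2$ and $|E|=3b-3$. Fix $\lambda$. Every cycle has length $\ge 3$, so any cycle $C$ we choose has $\lambda(C)\le \sum_{i=1}^{3}\lambda(e_i)$ where $e_1,e_2,e_3$ are its three heaviest edges only after we know $|C|\ge 3$ — rather, the inequality we want is $\sys(G,\lambda)\le \lambda(C)$ and we bound the heaviest three edges of $C$ from below by $\tfrac{3}{|C|}\lambda(C)$. Removing those three edges gives a graph $G'$ with the same number of components (three-connectivity again) hence $b_1(G')=b-3$, and $\sys(G,\lambda)\le (1-\lambda(e_1)-\lambda(e_2)-\lambda(e_3))\,s(b-3)$. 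Writing $t=\lambda(e_1)+\lambda(e_2)+\lambda(e_3)$ we have $\sys\le \min\{\,|C|\,t/3\cdot(\text{something})\,,\ (1-t)s(b-3)\,\}$; choosing $C$ to be a \emph{shortest} cycle makes $|C|$ as small as possible, but we do not have a girth \emph{upper} bound, so the key trick is to instead sum over several edge-disjoint short structures so that the total number of deleted edges relative to $|E|=3b-3$ is controlled — this is where the coefficient $3b-8 = 3(b-3)+1$ comes from, matching "delete $3$ edges, recurse on $b-3$, and pick up one extra unit from the girth-$3$ constraint.'' The clean way to organize this is: let $e$ be the globally heaviest edge of $G$; it lies on some cycle, necessarily of length $\ge 3$, so there are two further edges $f,g$ with $\lambda(f),\lambda(g)\le\lambda(e)$ on that cycle, giving $\sys\le \lambda(e)+\lambda(f)+\lambda(g)\le 3\lambda(e)$ and $\sys\le(1-\lambda(e))s(b-1)$; iterating this downward three steps and tracking the $\le 3\lambda(e)$ slack produces the fraction. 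I would carry out parts~(1),(2),(3) in that order, since each is a strictly heavier bookkeeping version of the previous.

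\textbf{Main obstacle.} The genuinely delicate point is not the optimization over $\lambda$ — that is a one-variable calculus exercise each time — but \emph{guaranteeing the existence of the right subgraph}: that a three-connected cubic graph of Betti number $b$ and girth $\ge g$ contains a cycle (or union of cycles) of the needed length on which we may delete $h$ edges \emph{and keep the graph connected with Betti number exactly $b-h$}. Connectivity after deletion is handled by the same remark used in the small-cycle estimate (three-connectivity $\Rightarrow$ three disjoint paths between any two vertices, so deleting up to two edges — and in fact an entire cycle — never disconnects), but for part~(3) we delete \emph{five} edges, so I expect the crux to be a short lemma asserting that a cubic $3$-connected graph with girth $\ge 4$ and $b\ge 6$ has a cycle $C$ and an edge structure around $C$ such that removing five edges leaves a connected graph; here one uses that the five edges can be chosen to consist of a $4$-cycle plus one chordal edge, or that $G$ minus a $4$-cycle is still connected and has $b_1=b-4$, then one more edge removal (lying on a cycle of length $\ge 4$ in the remainder) drops it to $b-5$ while contributing the extra "$-4$'' in the denominator $b-4$. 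Pinning down precisely which five edges, and verifying the girth hypothesis is actually \emph{used} (it is what makes the heaviest-edge bound $\sys\le 4\lambda(e)$ rather than $\le 3\lambda(e)$), is the step I would write most carefully.
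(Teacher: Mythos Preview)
Your proposal has a genuine gap: the ``heaviest edges on one cycle'' scheme cannot produce the stated coefficients, because (as you yourself note) you have no upper bound on the girth, so the fraction $h/|C|$ you extract from a single cycle can be arbitrarily small. You recognize this and gesture toward ``summing over several edge-disjoint short structures,'' but you never identify what those structures are or why the arithmetic closes up to give exactly $\tfrac{b-1}{b-2}$, $\tfrac{3b-3}{3b-8}$, $\tfrac{b-1}{b-4}$. The iterated heaviest-edge idea in your last paragraph does not converge to these numbers either.

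The paper's argument is a clean averaging trick that bypasses all of this. For part~(1): for \emph{each} vertex $v$, let $B_1(v)$ be the three edges incident to $v$ (distinct since $g\ge 2$); any cycle avoiding $v$ lives in $G\setminus B_1(v)$, which has Betti number $\ge b-2$, so $\sys(G,\lambda)\le (1-\lambda(B_1(v)))\,s(b-2)$. Now sum this inequality over all $2b-2$ vertices. Since each edge lies in exactly two of the $B_1(v)$, the sum $\sum_v \lambda(B_1(v))=2$, and you get $(2b-2)\sys(G,\lambda)\le(2b-4)\,s(b-2)$, which is the claim. Part~(2) is the same with $B_1(e)$ the five-edge neighborhood of an edge $e$ (five distinct edges since $g\ge 3$; Betti number drops by $3$; each edge lies in five such neighborhoods; $|E|=3b-3$). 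Part~(3) uses the nine-edge ball $B_2(v)$ of radius two (nine distinct edges since $g\ge 4$; Betti number drops by $5$; each edge lies in six such balls). No connectivity subtleties arise because you are only using $\sys(G\setminus B)\le s(b')$ where $b'$ is the Betti number of the remainder, and $s$ is monotone.

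The missing idea is thus: do not optimize over one cleverly chosen subgraph, but write down the trivial inequality for \emph{every} local neighborhood and sum. The girth hypothesis is used only to ensure the neighborhood has the expected number of distinct edges.
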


As the proof shows, this lemma has straightforward generalizations, but we do not require them for the proof so we omit them for simplicity.

\begin{proof}
Fix $G$ as in the lemma, and fix a weight function $\lambda:E(G) \to \R^{\geq 0}$. To prove (1), fix a vertex $v$ and let $B_1(v)$ denote the vertex together with the edges incident at $v$. Note that $g \geq 2$ implies that $B_1(v)$ contains three distinct edges. Let $\lambda(B_1(v))$ denote the sum of the weights on the three edges in $B_1(v)$. By considering cycles in $G$ that do not contain $v$, we find that
	\[\sys(G,\lambda) \leq (1 - \lambda(B_1(v)))\sys(G \setminus B_1(v)) \leq (1 - \lambda(B_1(v))) s(b-2),\]
where for the second inequality we used that $G \setminus B_1(v)$ has Betti number at least $b-2$. Summing over all vertices $v$, we have
	\[(2b-2) \sys(G, \lambda) \leq \of{2b-4} s(b-2),\]
where we have used the fact that each edge appears in exactly two $B_1(v)$. Maximizing over $\lambda$ implies the first claim of the lemma.

To prove (2), we fix an edge $e$ and consider $G \setminus B_1(e)$, the graph that results by removing $e$ and all edges that touch $e$. Note that this results in removing five edges in all since $g \geq 3$. Note that $G \setminus B_1(e)$ has rank at least $b-3$, so estimating as above gives
	\[\sys(G,\lambda) \leq  (1 - \lambda(B_1(e))) s(b-3).\]
Summing over edges and using the fact that each edge appears in exactly five of the $B_1(e)$, we have
	\[(3b-3) \sys(G, \lambda) \leq (3b - 8) s(b-3),\]
and the claim follows.

The proof of (3) is similar to (1), except that we remove subsets $B_2(v)$ consisting of $v$, the three edges and vertices adjacent to $v$, and the six edges adjacent to one of these three vertices. Note that $G \setminus B_2(v)$ has rank $b-5$ and that each edge in $G$ appears in exactly six of the $B_2(v)$. The claim follows.
\end{proof}

\subsection{Optimal systole estimates for $b \not\in \{7,9\}$}
\label{sec:cographic_not7or9}

In this section, we prove Theorem \ref{thm:bounds-cogr-matr} for $b \leq 6$ and $b = 8$. In addition, we prove $s(10) \leq 1/4$, which is sufficient to prove Theorem \ref{thm:t9} if the $\gT^9$-symmetry assumption is strengthened to a $\gT^{10}$-symmetry assumption. To finish the proof, we need the values $s(7)$ and $s(9)$. These are difficult computations that are postponed until the next two sections.

Fix a finite graph $G$ with Betti number $b$. By Lemma \ref{lem:ReductionToCubic-systole}, we may assume without loss of generality that $G$ is $3$-connected and moreover cubic if $b \geq 2$ and girth at least three if $b \geq 3$. We use that connected, cubic graphs with Betti number $b$ have $2(b-1)$ vertices and $3(b-1)$ edges. This follows from the Handshaking Lemma and the Euler characteristic formula $|V| - |E| = 1 - b$.

For $b = 1$, there is only one such graph, a cycle with one vertex. Clearly $\sys(G) = 1$ in this case, so \[s(1)^{-1} = 1.\]

For $b = 2$, $G$ has two vertices, three edges, and girth at least two. Therefore $G$ is uniquely determined and is called the theta graph $\Theta$. Either computing explicitly or applying the small cycle estimate with $h = 1$, we find $s(G)^{-1} \geq 1.5$. Moreover, equality holds for this graph by taking all weights equal to $1/3$. Hence 
	\[s(2)^{-1} = \sys(\Theta)^{-1} = 1.5.\]

For $b = 3$, $G$ has four vertices, six edges, and girth at least three. Therefore $G = K_4$, the complete graph on four vertices. Given an edge weight function $\lambda$, we sum over all four $3$-cycles to obtain $\sys(G,\lambda) \leq 1/2$. Moreover, equality holds if all edges have weight $1/4$, so 
	\[s(3)^{-1} = \sys(K_4)^{-1} = 2.\]

For $b = 4$, Lemma \ref{lem:largegirth}.(1) implies that
	$\sys(G)^{-1} \geq \tfrac{4-1}{4-2} s(2)^{-1} = \frac 9 4.$
Moreover, equality holds for $G = K_{3,3}$ by putting equal weights on each of the nine edges. The smallest cycles have four edges and hence have weight $4/9$. Hence 
	\[s(4)^{-1} = \sys(K_{3,3})^{-1} = 2.25.\]

For $b = 5$, Lemma \ref{lem:largegirth}.(1) implies that
	$\sys(G)^{-1} \geq \tfrac{5-1}{5-2}s(3)^{-1} = \frac 8 3.$ 
Moreover, equality is attained if we consider a M\"obius ladder $G_{5,4}$. Putting weights \(1/16\) on each edge on the side of the ladder and weights \(2/16\) on the rungs of the ladder, we find that both the four- and five-cycles have length 6/16 = 3/8. It follows easily that 
	\[s(5)^{-1} = \sys(G_{5,4})^{-1} = 2.\bar 6.\]

For $b = 6$, Lemma \ref{lem:largegirth}.(2) implies that
	$\sys(G)^{-1} \geq \tfrac{3(6)-3}{3(6)-8}s(3)^{-1} = 3.$ 
For equality, we consider the Petersen graph $P$, which is the unique cubic graph on ten vertices with girth five. Putting equal weights on all $15$ of its edges shows $\sys(P) \geq \frac{5}{15}$. This proves that 
	\[s(6)^{-1} = \sys(P)^{-1} = 3.\]

For $b = 8$, Lemma \ref{lem:largegirth}.(2) implies that
	$\sys(G)^{-1} \geq \tfrac{3(8)-3}{3(8)-8} s(5)^{-1} = \frac 7 2.$ 
To prove equality, we put equal weights on the $21$ edges of the Heawood graph \(H\), which is the unique cubic graph on $14$ vertices with girth six. It follows that
	\[s(8)^{-1} = \sys(H)^{-1} =  3.5.\]

For $b = 10$, we use the small cycle estimate with $h = 2$ if $g = 3$ to conclude $s(10)^{-1} \geq \tfrac 2 3 + s(8)^{-1} > 4$, and we use Lemma \ref{lem:largegirth}.(3) if $g \geq 4$ to conclude $s(10)^{-1} \geq \tfrac{(10) - 1}{(10)-4} s(5)^{-1} = 4$. Together these imply
	\[s(10)^{-1} \geq 4.\]
As we will see, $s(9)^{-1} = 4$. In addition $s(b)^{-1}$ is strictly increasing in $b$ by the small cycle lemma, so the estimate on $s(10)$ is not sharp. On the other hand, Theorem~\ref{thm:t9} can now be proved if one is willing to replace the $\gT^9$ by a $\gT^{10}$ in the assumption.

The value $s(8) = \tfrac 2 7 > \tfrac 1 4$ also shows that we cannot replace the $\gT^9$ by $\gT^8$ in Theorem~\ref{thm:t9}. Indeed, our proof requires that a $\gT^b$ action as in Theorem~\ref{thm:t9} has $b$ large enough so that $s(b) \leq \frac 1 4$.

\subsection{Optimal systole estimates for $b = 7$}
\label{sec:cographic_7}

In this section we prove Theorem \ref{thm:bounds-cogr-matr} in the case \(b=7\).
The proof requires an additional estimate.

\begin{proposition}
  \label{pro:embeds}
  Let \(G\) be a connected graph with Betti number $b$. If $G$ embeds into a closed connected surface \(\Sigma\) with Euler characteristic \(\chi(\Sigma)\), then
  \[\sys(G)^{-1} \geq \frac{b - 1 + \chi(\Sigma)}{2}.\]
\end{proposition}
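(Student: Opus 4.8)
The plan is to exploit the embedding $G \embedded \Sigma$ by choosing a weight function $\lambda$ on $E(G)$ that realizes the systole, passing to the closed complementary regions (the faces of the embedding), and applying a double-counting / averaging argument over the faces rather than over the vertices or edges as in Lemma \ref{lem:largegirth}. Concretely, after the reductions of Lemma \ref{lem:ReductionToCubic-systole} we may assume $G$ is $3$-connected and cubic, so that the embedding $G \embedded \Sigma$ is a cellular embedding (its faces are disks) provided we first replace $\Sigma$ by the surface obtained by filling in the complementary regions; this only increases $\chi(\Sigma)$, which is the direction we want since the claimed bound on $\sys(G)^{-1}$ is increasing in $\chi(\Sigma)$. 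Write $F$ for the set of faces. Euler's formula for the cellular embedding gives $|V| - |E| + |F| = \chi(\Sigma)$, and since $G$ is cubic, $3|V| = 2|E|$, so $|V| = 2(b-1)$ and $|E| = 3(b-1)$ (because $\chi(G) = 1-b = |V|-|E|$), whence $|F| = \chi(\Sigma) + b - 1$.

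The key step is that the boundary of each face $f \in F$ is a closed walk in $G$, and therefore contains a (simple) cycle $C_f$ of $G$, so $\sys(G,\lambda) \leq \lambda(C_f) \leq \lambda(\partial f)$ for every face $f$, where $\lambda(\partial f)$ denotes the total weight of the edges traversed by the boundary walk of $f$ (counted with multiplicity if an edge is traversed twice). Summing this inequality over all $|F| = \chi(\Sigma) + b - 1$ faces and using the fact that each edge of $G$ lies on the boundary of exactly two faces (counting multiplicity — an edge may bound the same face on both sides), we obtain
\[
\of{\chi(\Sigma) + b - 1}\sys(G,\lambda) \;\leq\; \sum_{f \in F}\lambda(\partial f) \;=\; 2\sum_{e \in E(G)}\lambda(e) \;=\; 2.
\]
Dividing through and then taking the maximum over all admissible weight functions $\lambda$ with $\lambda(G)=1$ yields exactly $\sys(G)^{-1} \geq \tfrac{1}{2}\of{b - 1 + \chi(\Sigma)}$, as claimed. (If instead $b = 1$, the statement is vacuous or immediate since the only relevant graph is a single loop.)

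The main obstacle I anticipate is the reduction to a \emph{cellular} embedding and the careful bookkeeping that each edge is counted exactly twice in $\sum_f \lambda(\partial f)$: a non-cellular embedding can have a complementary region that is not a disk (e.g. an annulus or a higher-genus piece), and such a region need not supply a cycle of $G$ at all, nor does the ``each edge on exactly two faces'' identity hold verbatim. The clean fix is the one indicated above — fill in the complementary regions to pass to a surface $\Sigma'$ with $\chi(\Sigma') \geq \chi(\Sigma)$ in which $G$ \emph{is} cellularly embedded — but one must check this does not disconnect anything or change $G$, and that the monotonicity of the bound in $\chi$ makes the substitution harmless. A secondary subtlety is that a face boundary walk of a cubic $3$-connected graph need not be a simple cycle in general (though for $3$-connected planar graphs it is, by Whitney); however, we only need that it \emph{contains} a cycle, which is automatic for any closed walk of positive length, and a face of positive $\lambda$-weight boundary exists unless some cycle already has weight zero, in which case $\sys(G,\lambda)=0$ and the inequality is trivial.
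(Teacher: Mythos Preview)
Your argument is essentially the paper's: pass to a cellular embedding by surgery on $\Sigma$ (which only raises $\chi(\Sigma)$), then average over the face boundaries, using that each edge lies on exactly two face-sides, together with Euler's formula $|F|=\chi(\Sigma)+b-1$. The paper phrases the last step as ``some $\lambda(C_i)\le 2/k$'' rather than summing $|F|\cdot\sys(G,\lambda)\le 2$, but these are the same averaging.

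There is, however, one unnecessary and unjustified detour in your write-up: the invocation of Lemma~\ref{lem:ReductionToCubic-systole} to replace $G$ by a $3$-connected cubic graph $G_c$. That lemma gives $\sys(G)\le\sys(G_c)$, so the inequality goes the right way, but you never check that $G_c$ still embeds in~$\Sigma$. The vertex-splitting step in Lemma~\ref{lem:ThreeConnectedSplitting} is chosen to preserve $3$-connectivity, not compatibility with the cyclic edge-order at the vertex coming from the embedding; an incompatible split can destroy embeddability in $\Sigma$, forcing $G_c$ into a surface of strictly smaller Euler characteristic, and then your bound for $G_c$ is weaker than the one claimed for $G$. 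Moreover, being cubic and $3$-connected does not by itself make an embedding cellular (e.g.\ $K_4$ embeds non-cellularly in the torus), so you need the surgery step regardless. Finally, the only place you use the cubic hypothesis is to write $|V|=2(b-1)$ and $|E|=3(b-1)$, but what you actually need is just $|V|-|E|=1-b$, valid for any connected $G$. Simply delete the cubic reduction and argue directly, as the paper does.

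A minor correction: it is not true that ``any closed walk of positive length contains a cycle'' --- the walk $v\to w\to v$ along a single edge does not. What is true, and suffices here, is that for a cellular embedding of a connected graph with $b\ge 1$ in a connected surface, every face boundary contains a cycle: if some face had boundary a pure tree-walk, both sides of every edge in that tree would lie in that single face, forcing the tree to be all of $G$ and contradicting $b\ge 1$.
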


\begin{remark}
Note that a graph $G$ is planar if and only if it embeds into the $2$-sphere, which has Euler characteristic two, so this bound implies $\sys(G)^{-1} \geq \tfrac{b+1}{2}$ for planar graphs $G$. This is consistent with the fact that the cographic matroid $M^*[G]$ of a planar graph $G$ is isomorphic to the graphic matroid $M[G^*]$ of the dual graph $G^*$ to $G$ obtained by interchanging the roles of vertices and faces in an embedding of $G$ in $\R^2$. Combined with the estimate on the cogirth of graphic matroids given in Section \ref{sec:cogirth}, we similarly obtain the bound $\tfrac{b+1}{2}$ in the planar case.
\end{remark}

\begin{proof}
  First note that we can assume that the complement of \(G\) in \(\Sigma\) is a disjoint union of open discs $\bigcup_{i=1}^k D_i$, since otherwise we can do surgery on \(\Sigma\) to embed \(G\) in a surface with higher Euler-characteristic. Moreover, note that the boundaries $C_i = \partial D_i$ of the components of the complement are closed paths in $G$. Since each edge $e$ in $G$ is contained in at most two of these boundaries, we have
  	\[\sum_{i=1}^k \lambda(C_i) \leq 2 \sum \lambda(e) = 2,\]
where the sum is over the edges $e$ in $G$. Hence $\lambda(C_i) \leq 2/k$ for some $i$.

Now $C_i$ need not be a cycle. However by removing double points in $C_i$, if necessary, we get a cycle $C$ with $\lambda(C) \leq 2/k$. Hence $\sys(G) \leq 2/k$, and the claim follows by Euler's formula since
	\[\chi(\Sigma) = |V| - |E| + k = 1 - b + k,\]
where $|V|$, $|E|$, and $b$ are the number of vertices, the number of edges, and the Betti number of $G$, respectively.
\end{proof}

As it turns out, most graphs with Betti number $b = 7$ embed into the real projective plane $\RP^2$. Therefore Proposition \ref{pro:embeds} covers most cases of the following lemma. Note that we do not assume $3$-connectedness, since our proof of $s(9)^{-1} = 4$ in the following section requires the more detailed statement proved here.

\begin{lemma}[Calculation of $s(7)$]\label{sec:d=7-not-sufficient}
  Let \(G\) be a cubic graph with Betti number \(7\). Then one of the following two cases holds:
  \begin{enumerate}
  \item \(\sys(G)^{-1} \geq 3.5\).
  \item \(G\) is $F_{13}$ or $F_{14}$, respectively, $\sys(G)^{-1}$ equals $3.375$ or $3.\bar 3$.
  \end{enumerate}
\end{lemma}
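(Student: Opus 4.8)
### Proof Proposal for Lemma (Calculation of $s(7)$)

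The plan is to reduce to finitely many cubic graphs of Betti number $7$ and then handle the cases that resist the surface-embedding bound by hand. By Lemma~\ref{lem:ReductionToCubic-systole} it suffices to treat $3$-connected cubic graphs; however, as noted, I will keep track of the sharper statement for general cubic $G$ because the $b=9$ argument will need it. A cubic graph with $b_1(G) = 7$ has $|V| = 2(b-1) = 12$ vertices and $|E| = 3(b-1)/\dots$ — more precisely $|E| = 18$, $|V| = 12$ — and there are only finitely many such graphs, so in principle this is a finite check; the point of the proof is to organize the check so it does not require a computer search. The first step is to split by girth $g$ of $G$. If $g = 3$, the small cycle estimate with $g = 3$, $h = 1$ gives $\sys(G)^{-1} \geq \tfrac{1}{3} + s(6)^{-1} = \tfrac{1}{3} + 3 = 3.\bar 3$; to upgrade this to $3.5$ I would instead use $h = 2$, which requires $G$ to have a $3$-cycle and gives $\sys(G)^{-1} \geq \tfrac{2}{3} + s(5)^{-1} = \tfrac{2}{3} + 2.\bar 6 = 3.\bar 3$ — not quite enough — so for $g = 3$ I would combine the small cycle estimate ($h=1$) with Proposition~\ref{pro:embeds}: a $3$-connected cubic graph of small girth and Betti number $7$ that does not embed in $\RP^2$ must embed in a surface of Euler characteristic $\le 0$, and then Proposition~\ref{pro:embeds} gives $\sys(G)^{-1} \geq \tfrac{b-1+\chi}{2} = \tfrac{6 + \chi}{2} \geq 3$, which still needs refinement; the honest route is to enumerate the few $g=3$ cases and check each directly or via a small-cycle-plus-embedding hybrid.

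The cleaner dichotomy is girth-based in the other direction. If $g \geq 4$: then $\chi(\Sigma) \le 1$ forces, via Proposition~\ref{pro:embeds}, $\sys(G)^{-1} \geq \tfrac{7 - 1 + \chi(\Sigma)}{2}$; if $G$ embeds in $\RP^2$ (where $\chi = 1$) this gives $\sys(G)^{-1} \geq \tfrac{6+1}{2} = 3.5$, which is exactly case (1). So the work concentrates on cubic graphs of Betti number $7$ and girth $\geq 4$ that do not embed in $\RP^2$. Among girth-$5$ and girth-$6$ cubic graphs on $12$ vertices this is a very short list, and among girth-$4$ cubic graphs on $12$ vertices one uses Lemma~\ref{lem:largegirth}.(2): $g \geq 3$, $b = 7$ gives $\sys(G)^{-1} \geq \tfrac{3b-3}{3b-8} s(b-3)^{-1} = \tfrac{18}{13} s(4)^{-1} = \tfrac{18}{13} \cdot \tfrac{9}{4} = \tfrac{162}{52} = \tfrac{81}{26} \approx 3.115$ — again not enough, but Lemma~\ref{lem:largegirth}.(3) with $g \geq 4$, $b = 7$ gives $\sys(G)^{-1} \geq \tfrac{b-1}{b-4} s(b-5)^{-1} = \tfrac{6}{3} s(2)^{-1} = 2 \cdot 1.5 = 3$, still short. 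Hence for girth exactly $4$ I would fall back on the embedding bound plus a direct analysis of the (few) exceptional graphs, isolating $F_{13}$ and $F_{14}$ as precisely those cubic graphs of Betti number $7$ that (i) have girth $\geq 3$, (ii) do not embed in $\RP^2$, and (iii) are not covered with room to spare by the recursive estimates; for these two I compute $\sys$ directly by exhibiting an optimal weighting (equal weights, or a two-value weighting of the kind used for the Möbius ladder and Heawood graph) and a matching cycle-cover lower bound, obtaining $\sys(F_{13})^{-1} = 3.375 = 27/8$ and $\sys(F_{14})^{-1} = 3.\bar 3 = 10/3$.

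The main obstacle, and where most of the real content lies, is pinning down that $F_{13}$ and $F_{14}$ are the \emph{only} exceptions — i.e., verifying that every other cubic graph of Betti number $7$ satisfies $\sys(G)^{-1} \geq 3.5$. The recursive lemmas (small cycle estimate, Lemma~\ref{lem:largegirth}) are individually too weak for several girth-$3$ and girth-$4$ graphs, so the argument must interleave them with Proposition~\ref{pro:embeds} and with a genus/non-orientable-genus classification of the relevant $12$-vertex cubic graphs. Concretely, I expect to: (a) show any girth-$\geq 4$ cubic graph of Betti number $7$ either embeds in $\RP^2$ (done by (1)) or is one of a short explicit list, which I check by hand; (b) for girth $3$, contract a triangle's worst edge as in the small-cycle proof to drop to Betti number $6$, apply $s(6)^{-1} = 3$, and track the loss to see it only fails to reach $3.5$ for graphs built from small pieces, which I again enumerate; (c) confirm the two survivors are $F_{13}$, $F_{14}$ and compute their systoles explicitly. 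The delicate point throughout is the embedding step: deciding which cubic graphs of Betti number $7$ fail to embed in $\RP^2$ is the crux, and it is what forces the exceptional list to be nonempty.
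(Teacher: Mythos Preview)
Your overall framework is right --- Proposition~\ref{pro:embeds} with $\chi(\RP^2)=1$ gives $\sys(G)^{-1}\geq 3.5$ for any connected cubic $G$ with $b_1=7$ that embeds in $\RP^2$, so the problem is exactly to identify and handle the non-embeddable ones --- but you are missing the one ingredient that turns this into a proof: the \emph{classification of cubic irreducible graphs for $\RP^2$}. By results of Glover--Huneke and Milgram (see also the Archdeacon/Glover--Huneke--Wang list of $103$ irreducible graphs), a cubic graph fails to embed in $\RP^2$ if and only if it contains one of six specific cubic graphs as a subgraph: $E_{42}$ (rank~$8$), $F_{11},F_{12},F_{13},F_{14}$ (rank~$7$), and $G_1$ (rank~$6$). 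Since our $G$ has rank~$7$, either $G\in\{F_{11},F_{12},F_{13},F_{14}\}$ or $G$ contains $G_1$ and hence equals $G_1$ with one extra edge attached. This is the finite list you were hoping for, and without this classification your plan to ``enumerate the few exceptions'' has no mechanism for producing the list.

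A second point: your girth-based organisation is unnecessary and is what leads you into the weak recursive bounds. Proposition~\ref{pro:embeds} has no girth hypothesis --- it applies to any connected $G$ --- so the correct dichotomy is simply connected-and-$\RP^2$-embeddable versus the six graphs above (plus the disconnected case, which you should dispatch first: there $\sys(G)^{-1}\geq s(b_1)^{-1}+s(b_2)^{-1}\geq 4$). For the surviving cases, the paper does not use recursive girth bounds at all: for $G_1\cup e$ and $F_{11}$ it finds an edge whose removal leaves a planar graph of rank~$6$, so Proposition~\ref{pro:embeds} with $\chi(\s^2)=2$ gives $3.5$; for $F_{12}$ it exploits a $D_4\times\Z_2$ symmetry and an explicit weighted cycle-sum to get exactly $3.5$; and for $F_{13},F_{14}$ it uses their dihedral symmetries to reduce to two or three weight parameters and compute $\sys^{-1}$ directly as $27/8$ and $10/3$.
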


The graphs in the second statement coincide with the set of cubic graphs with Betti number $7$ that are connected and have girth five. There are no such graphs of girth larger than five.

\begin{proof}
First, if $G$ is not connected, we argue as in the proof of the Small Cycle Estimate. By restricting to cycles in one component $H$ or its complement $G \setminus H$, we get the estimates $\sys(G,\lambda) \leq \lambda(H) \sys(H)$ and $\sys(G,\lambda) \leq \lambda(G\setminus H) \sys(G\setminus H)$. Since $\lambda(H) + \lambda(G\setminus H) = 1$, these estimates imply
	\[\sys(H)^{-1} + \sys(G\setminus H)^{-1} \leq \sys(G, \lambda)^{-1}.\]
Letting $b_1 = b(H)$ and $b_2 = b(G\setminus H)$, we have by induction that
	\[s(b_1)^{-1} + s(b_2)^{-1} \leq \sys(G, \lambda)^{-1}.\]
Finally since $b_1 + b_2 = b$, we have
	\[\sys(G,\lambda) \geq \min(s(1)^{-1} + s(6)^{-1}, s(2)^{-1} + s(5)^{-1}, s(3)^{-1} + s(4)^{-1}) = 4.\] 

Second, if $G$ is connected and embeds into real projective space $\RP^2$, then Proposition \ref{pro:embeds} implies that $\sys(G)^{-1} \geq 3.5$.

We may assume that $G$ is a connected, cubic graph that does not embed into $\RP^2$. Note that property of embeddability in $\RP^2$ is preserved under removing edges in the graph. The full set of subgraph-minimal cubic graphs that are not embeddable in $\RP^2$ has been classified and consists of six graphs (see \cite{GloverHuneke75,Milgram73}). These graphs are the six cubic graphs appearing in the list of $103$ graphs (see \cite{Glover_et_al,archdeacon} or \cite[Appendix A]{MoharThomassen-book}) that are similarly not embeddable in $\RP^2$ and minimally so with respect to passing to subgraphs and graph minors. In the notation of \cite{Glover_et_al}, the six cubic graphs are
	\[E_{42}, F_{11}, F_{12}, F_{13}, F_{14}, \mathrm{and~} G_1.\]
Moreover $E_{42}$ has rank 8, so it cannot be a subgraph of $G$. The other graphs have rank $7$ for the $F_i$ and $6$ for $G_1$. Therefore we have to check the cases $G = F_i$ and the graphs $G$ which can be constructed from \(G_1\) by attaching an edge. We denote the last case by $G = G_1 \cup e$, since the inverse operation of removing an edge is attaching a new edge by connecting its endpoints to the interiors of edges in $G_1$.

\smallskip\noindent\textbf{Case 1: $G = G_1 \cup e$.}

\begin{figure}[ht]
  \includegraphics[height=1.5in,angle=90]{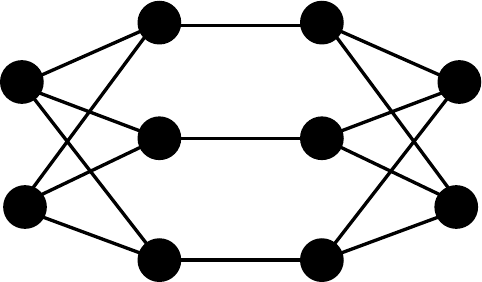}
  \caption{The graph \(G_1\).}\label{fig:g1}
\end{figure}	

Note that $G_1$ consists of two $K_{2,3}$ subgraphs connected by three edges, which we will label $f_1$, $f_2$, and $f_3$ (see Figure \ref{fig:g1}). After relabeling, we may assume that $e$ is not attached to $f_1$ or to an edge adjacent to $f_1$. In all but one case, there is a planar embedding of $G \setminus f_1$, the graph of $G$ with $f_1$ removed. In the remaining case, $e$ is connected to two opposite edges of a four-cycle in one of the copies of $K_{2,3}$. For this graph, the graph $G \setminus f_2$ is planar. Hence in any case, there exists an edge $f_i$ such that
	\[\sys(G)^{-1} \geq \sys(G \setminus f_i)^{-1} \geq \frac{6 - 1 + \chi(\s^2)}{2} = 3.5.\]
This completes the proof in Case 1.

\smallskip\noindent\textbf{Case 2: $G = F_{11}$.}

The graph $F_{11}$, shown on the left of Figure~\ref{fig:c74-5-2'}, contains edges $e_1$ and $e_2$ such that $F_{11} \setminus (e_1 \cup e_2)$ is disconnected. Removing $e_1$ gives rise to a planar graph $F_{11} \setminus e_1$, so Proposition \ref{pro:embeds} implies that
	\[\sys(F_{11})^{-1} \geq \sys(F_{11} \setminus e_1)^{-1} \geq 3.5.\]

\begin{figure}[ht]
  \raisebox{-0.5\height}{\includegraphics[height=1.5in]{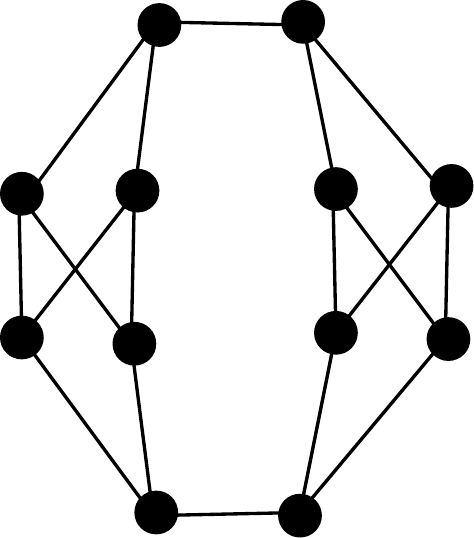}}
  \hspace{0.8cm}
  \raisebox{-0.5\height}{\includegraphics[height=1.8in]{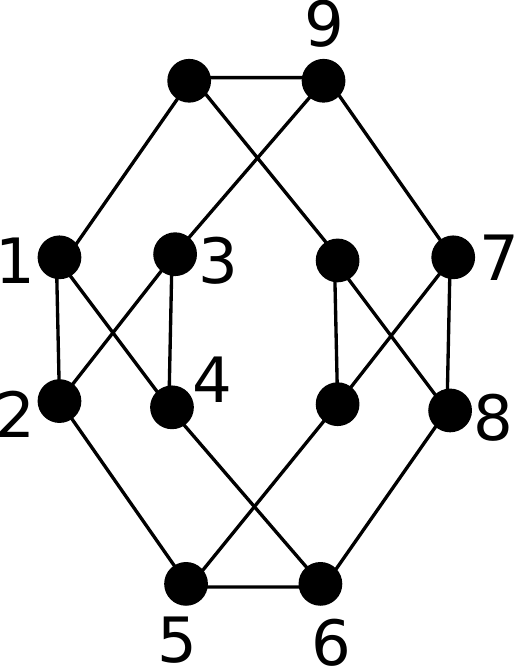}}
  \caption{The graphs \(F_{11}\) and \(F_{12}\).}\label{fig:c74-5-2'}
\end{figure}

\smallskip\noindent\textbf{Case 3: $G = F_{12}$.}

The graph $F_{12}$ is shown on the right of Figure \ref{fig:c74-5-2'}.
 It has $D_4\times \Z_2$ symmetry generated by the horizontal reflection, the vertical reflection, and by a symmetry determined by the properties that the top two vertices are fixed and the bottom two are swapped.

Assume for a moment that the weights are constant along the orbits of the action by $D_4\times \Z_2$. Label the edge weights by $\lambda$, $\mu$, and $\nu$. There are cycles $C_1$ (with vertices \(1,2,3,4\)), $C_2$ (with vertices \(1,2,5,6,4\)), and $C_3$ (with vertices \(3,4,6,8,7,9\)) with total weights $4 \mu$, $2 \lambda + 2 \mu + \nu$, and $4\lambda + 2\mu$, respectively. Since $\sys(F_{12}) \leq \lambda(C_i)$ for all $i$, we obtain the estimate
	\[7 \sys(F_{12}) \leq \lambda(C_1) + 4\lambda(C_2) + 2\lambda(C_3) =  2(8\lambda + 8\mu + 2\nu) = 2.\]
Hence $\sys(F_{12})^{-1} \geq 3.5$. In fact, if we take $\lambda = 1/28$, $\mu = 2/28$, and $\nu = 3/28$, we find that $\lambda(C) \geq 2/7$ for all cycles $C$. Hence $\sys(F_{12})^{-1} = 3.5$.

Finally, we justify the assumption that the edge weights are constant along the orbits. Indeed, we may sum over not only weights of $C_1$, $C_2$, and $C_3$, but over the collection of cycles obtained under the action of $D_4\times \Z_2$. Summing the resulting weights gives rise to estimates as above where $\lambda$, $\mu$, and $\nu$ are replaced by the average weight of the edges in the respective orbits.

\smallskip\noindent\textbf{Case 4: $G = F_{13}$.}

We start by analyzing the symmetry of $F_{13}$. By inspection, we find that $F_{13}$ contains a $9$-cycle whose vertices are precisely those that are part of four $5$-cycles. We call the remaining three vertices {\it tripod vertices} and we call their open unit balls {\it tripods}. Drawing the $9$-cycle as a circle and connecting the tripods, we find that $F_{13}$ may be drawn as in Figure \ref{fig:F13}. 

\begin{figure}[ht]
\includegraphics[width=2in]{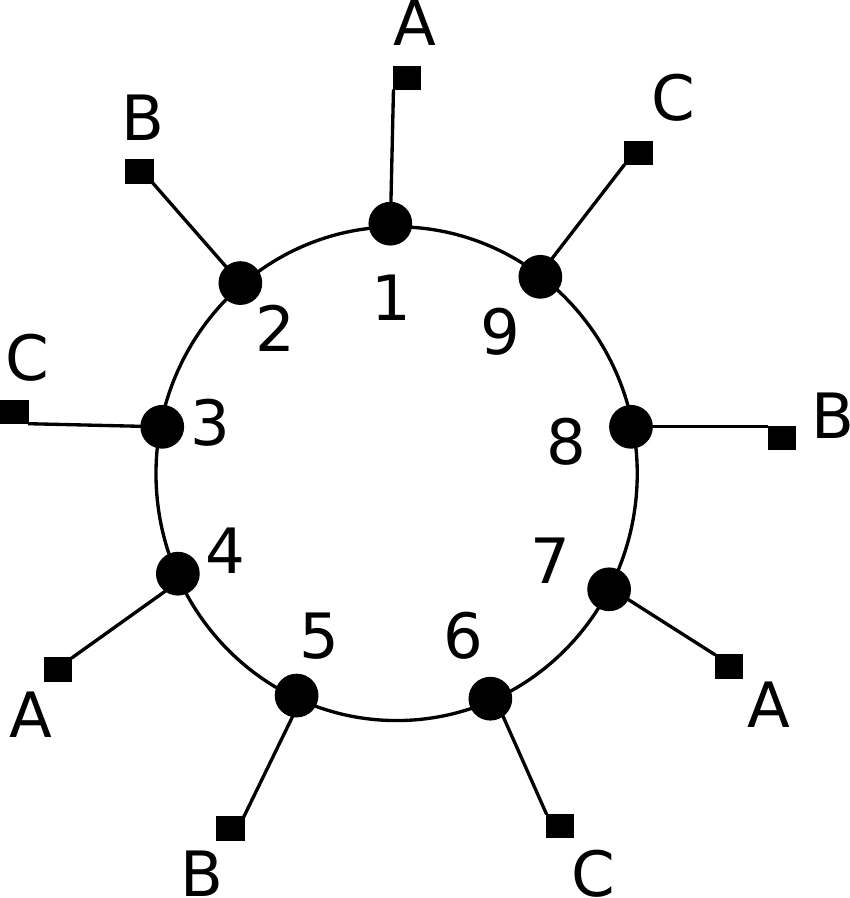}\caption{A drawing of \(F_{13}\) showing the $D_9$ symmetry. In this drawing the squared vertices with the same labels have to be identified.}\label{fig:F13}
\end{figure}

As in the previous case, we can assume that the weights of edges in the same \(D_9\)-orbit are equal. Hence there are only two different weights \(\lambda\) and \(\mu\) with \(9\lambda + 9 \mu=1\).
Moreover there are cycles \(C_1\) and $C_2$ with vertex set $\{1,2,3,4,A\}$ and $\{A,1,2,B,5,4\}$, respectively, have total weight \(2\lambda+3\mu\) and \(4\lambda+2\mu\), respectively. It follows that at least one of the \(C_i\) has weight less than or equal to \(\frac{8}{27}\).
Hence, \(\sys(F_{13})^{-1}\geq \frac{27}{8}=3.375\).
Note that equality holds for \(\mu=\frac{2}{27}\) and \(\lambda=\frac{1}{27}\).

\smallskip\noindent\textbf{Case 5: $G = F_{14}$.}

\begin{figure}[ht]
\includegraphics[width=2in]{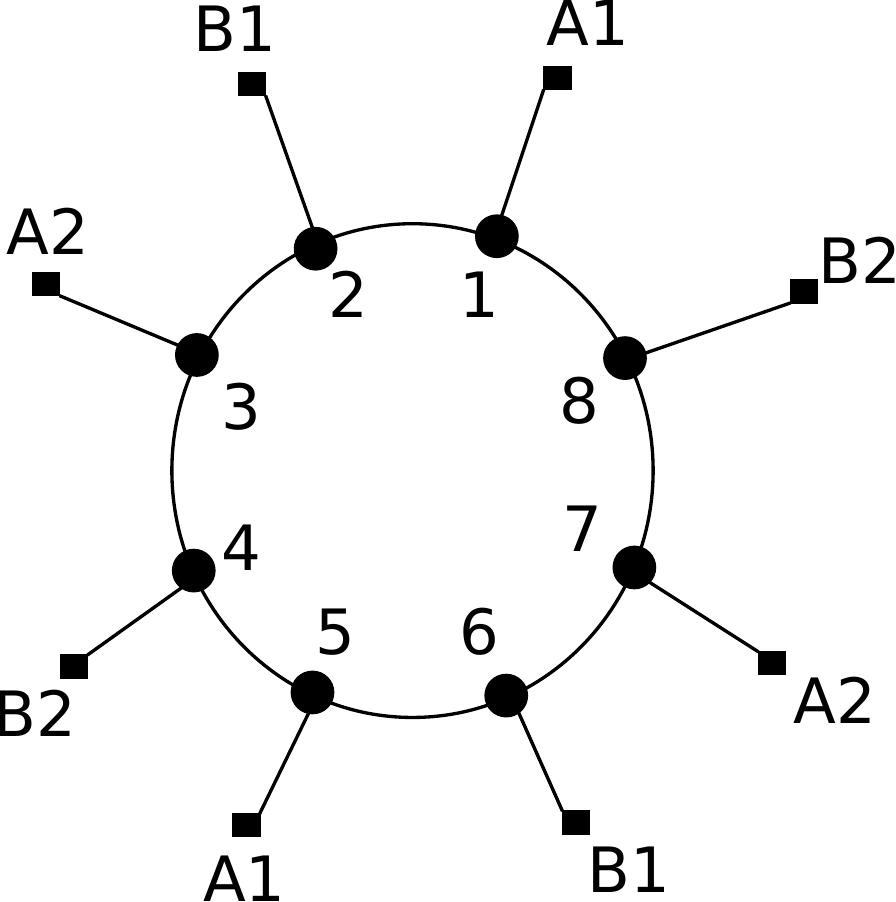}\caption{A drawing of \(F_{14}\) showing the $D_8$ symmetry. In this drawing the squared vertices with the same labels have to be identified. Moreover, between the vertices A1 and A2 there is an edge $H$ which is not drawn and similarly there is an edge $H'$ between B1 and B2.}\label{fig:F14}
\end{figure}

The graph $F_{14}$ contains an $8$-cycle $C$ consisting of precisely those vertices that are contained in exactly three $5$-cycles. The other four vertices are parts of four $5$-cycles. The latter points come in pairs of vertices that are connected by edges, which we call $h$ and $h'$. Let $H$ denote the open ball of radius one about the edge $h$, and likewise for $H'$. Note that  \(H\) consists of those edges which are connected to the vertices A1 and A2 in Figure \ref{fig:F14}, similarly \(H'\) consists of those edges which are connected to B1 and B2. The other edges form a cycle \(C\) whose vertices are labeled with numbers.

The graph $F_{14}$ has dihedral symmetry group $D_8$, corresponding to the symmetries of the $8$-cycle $C$. Note in addition that the subgroup $D_4$ acts on $F_{14}$ while preserving $h$ and $h'$. (To be clear, the $D_4$ subgroup has an index two subgroup that also fixes the endpoints of $h$ and $h'$, while the other elements in $D_4$ swap the endpoints.)

As in the previous cases we can assume that the weights are equal along an orbit of the \(D_8\)-action on \(F_{14}\).
Hence we may assume there are three different weights \(\lambda, \mu,\nu\) with \(2\lambda+8\mu+8\nu=1\).
Moreover, there are cycles $C_1$, $C_2$, and $C_3$ on the vertex sets $\{1,2,3,4,5,A1\}$, $\{1,2,B1,6,5,A1\}$, and $\{1,2,3,A2,A1\}$, respectively, have total weights \(2\mu + 4\nu\), \(4\mu + 2\nu\), and \(\lambda+2\mu+2\nu\), respectively. Summing these total weights shows that at least one of the \(C_i\) has total weight less than or equal to \(\frac{3}{10}\).
Hence \(\sys(F_{14})^{-1}\geq \frac{10}{3}=3.\overline{3}\). Moreover equality holds for \(\lambda=\frac{1}{10}\) and \(\mu=\nu=\frac{1}{20}\).
 \end{proof}

\subsection{Optimal systole estimates for $b = 9$}
\label{sec:cographic_9}

In this section we complete the proof of Theorem \ref{thm:bounds-cogr-matr}. By Lemma \ref{lem:ReductionToCubic-systole}, it suffices to show that a $3$-connected, cubic graph $G$ with Betti number $b = 9$ satisfies $\sys(G)^{-1} \geq 4$.

First, $3$-connectedness implies $G$ has girth at least three. Moreover, if equality holds, then we may apply the small cycle estimate with $h = g = 3$ to obtain
	\[\sys(G)^{-1} \geq 1 + s(6) = 4.\]
Therefore we may assume that $G$ has girth at least four.

Second, suppose for a moment that $G \setminus v$ is not isomorphic to \(F_{13}\) or \(F_{14}\) for any vertex $v$. By $G \setminus v$, we mean the cubic graph obtained by removing $v$ along with the three edges incident to $v$ and then suppressing the three degree two vertices. Lemma \ref{sec:d=7-not-sufficient} implies that $\sys(G\setminus v)^{-1} \geq 3.5$ for all vertices $v$. Combined with the proof of the large girth estimate (Lemma \ref{lem:largegirth}), we have
	\[\sys(G)^{-1}\geq  \frac{8}{7} \min_{v} \sys(G \setminus v)^{-1} \geq 4.\]
Therefore we may assume that removing some vertex from $G$ gives rise to a graph denoted by $G \setminus v$ which is isomorphic to $F_{13}$ or $F_{14}$.

Inverting this operation, and keeping in mind that $G$ has girth at least four, we see that $G$ is built from some $F \in \{F_{13}, F_{14}\}$, together with a choice of three pairwise distinct edges $e_i$ in $F$. Indeed, the $e_i$ are split into two edges, their midpoints $v_i$ are introduced as new vertices, and the $v_i$ are connected to a new vertex $v_0$. 

As a first step, we prove $\sys(G)^{-1} \geq 4$ in cases where $G$ has girth four. This happens precisely when two of the highlighted edges meet at a vertex. The proof uses the small cycle lemma once more.

\begin{lemma}\label{lem:e1e2touch}
If $e_1$ and $e_2$ share a vertex $v$, then $\sys(G)^{-1} \geq 4$.
\end{lemma}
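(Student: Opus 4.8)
The plan is to exploit the short cycle that the hypothesis forces and then sharpen the small cycle estimate using the exceptional graphs $F_{13},F_{14}$. Recall that $G$ arises from $F=G\setminus v_0\in\{F_{13},F_{14}\}$ by subdividing three edges $e_1,e_2,e_3$ of $F$ at new vertices $v_1,v_2,v_3$ and adjoining a vertex $v_0$ with edges $v_0v_1,v_0v_2,v_0v_3$. If $e_1,e_2$ both meet a vertex $v$ of $F$, then $v,v_1,v_0,v_2$ span a $4$-cycle $D$ in $G$ with edge set $\{a,b,c,d\}=\{vv_1,v_1v_0,v_0v_2,v_2v\}$; since $G$ has girth at least four, $D$ is a shortest cycle, and since $G\setminus v_0$ is a subdivision of the girth-five graph $F$, it is the only $4$-cycle unless a further pair of the $e_i$ meet.

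First I would record the consequence of the small cycle estimate's proof that is actually needed: for a fixed weight $\lambda$ and any two edges $e,f$ of $D$,
\[
\sys(G,\lambda)\ \le\ \bigl(1-\lambda(e)-\lambda(f)\bigr)\,\sys\bigl(G\setminus\{e,f\}\bigr).
\]
Because $G$ is $3$-connected, $G\setminus\{e,f\}$ is connected with Betti number $7$, so Lemma~\ref{lem:ReductionToCubic-systole} provides a cubic graph $G_{ef}$ of Betti number $7$ with $\sys(G\setminus\{e,f\})\le\sys(G_{ef})$, and Lemma~\ref{sec:d=7-not-sufficient} gives $\sys(G_{ef})\le\tfrac27$ unless $G_{ef}\in\{F_{13},F_{14}\}$. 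Taking $\{e,f\}$ to be the two heaviest edges of $D$ we have $\lambda(e)+\lambda(f)\ge\tfrac12\lambda(D)\ge\tfrac12\sys(G,\lambda)$, so in the generic alternative the displayed bound yields $\sys(G,\lambda)\le\bigl(1-\tfrac12\sys(G,\lambda)\bigr)\tfrac27$, hence $\sys(G,\lambda)\le\tfrac14$. (Note that the small cycle estimate by itself only gives $\sys(G)^{-1}\ge\tfrac12+s(7)^{-1}=\tfrac{23}{6}<4$, which is why this refinement is needed.)

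It then remains to treat those $\lambda$ for which deleting the two heaviest edges of $D$ reduces to $F_{13}$ or $F_{14}$. The distinguished source of this is the pair $\{b,c\}$ of edges of $D$ incident to $v_0$, since deleting it and suppressing the resulting vertices of degree at most two recovers $F=G\setminus v_0$ itself. When $\{b,c\}$ are the two heaviest, I would instead delete the heavier member of one of the ``complementary classes'' $\{a,b\}\leftrightarrow\{c,d\}$ and $\{a,c\}\leftrightarrow\{b,d\}$: in each class the two pairs sum to $\lambda(D)$, so the heavier has weight at least $\tfrac12\lambda(D)\ge\tfrac12\sys(G,\lambda)$, and it suffices to know that it can be chosen so as not to reduce to $F_{13}$ or $F_{14}$. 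Establishing this is done by an inspection of $F_{13}$ and $F_{14}$ organized by the number of coincidences among $e_1,e_2,e_3$: if all three meet $v$, then $v,v_0,v_1,v_2,v_3$ form a $K_{2,3}$ in $G$, and combining the three $4$-cycle bounds with the auxiliary estimate $\sys(G,\lambda)\le(1-W)\,s(5)$, where $W$ is the $\lambda$-weight of the six edges of that $K_{2,3}$, already gives $\sys(G)^{-1}\ge4$; if exactly two of the pairs meet, $G$ has two $4$-cycles through $v_0$ and one argues in the same spirit; and if only $e_1,e_2$ meet, one checks the finitely many possible positions of the third subdivided edge in $F_{13}$ and in $F_{14}$ and exhibits, in each, a pair of edges of $D$ of combined weight at least $\tfrac12\lambda(D)$ whose deletion does not reduce to $F_{13}$ or $F_{14}$.

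The main obstacle is exactly this last bookkeeping: ruling out, for every choice of $e_1,e_2,e_3$ in the two exceptional graphs, the simultaneous occurrence of bad reductions across a full set of complementary edge-pairs of $D$. This is where the explicit combinatorics of $F_{13}$ and $F_{14}$ becomes unavoidable, and it is the only part of the argument that is not a short computation.
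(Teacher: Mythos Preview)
Your approach is on the right track and shares the paper's core idea: exploit the $4$-cycle $D$ together with the refined Betti-$7$ bound from Lemma~\ref{sec:d=7-not-sufficient}, and worry about the exceptional reductions to $F_{13},F_{14}$. But the proof is genuinely incomplete at the step you flag, and the paper's organization avoids precisely this bookkeeping.

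The structural difference is that you seek, for each $\lambda$, a single pair of edges of $D$ of weight at least $\tfrac12\lambda(D)$ whose deletion does not reduce to $F_{13}$ or $F_{14}$; since which pair is heaviest depends on $\lambda$, you are pushed into a $\lambda$-dependent case analysis across your complementary classes that you do not carry out. The paper instead fixes the partition of $D$ into the two adjacent-edge pairs $\{e_1',f_1\}$ and $\{e_2',f_2\}$ (your $\{a,b\}$ and $\{c,d\}$, one of your complementary classes) and shows that \emph{both} deletions yield Betti-$7$ graphs with $\sys^{-1}\ge 3.5$. Combining $\sys(G,\lambda)\le\lambda(D)$ with $\sys(G,\lambda)\le(1-\lambda(e_i')-\lambda(f_i))\,\sys(G\setminus\{e_i',f_i\})$ for $i=1,2$ then gives, independently of $\lambda$,
\[
\sys(G)^{-1}\ \ge\ \tfrac12\bigl(1+\sys(G\setminus\{e_1',f_1\})^{-1}+\sys(G\setminus\{e_2',f_2\})^{-1}\bigr)\ \ge\ \tfrac12(1+3.5+3.5)=4.
\]

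To certify that neither deletion reduces to $F_{13}$ or $F_{14}$, the paper does not enumerate placements of $e_3$. Instead it argues by distance: if there is a path of length at most two from $v$ to $e_3$ avoiding $e_1',e_2'$ (Case~1), or short paths from the endpoints of $e_3$ to $v_1$ and $v_2$ (Case~2), then an explicit cycle of length at most six through $v_0$ survives each deletion and collapses to length at most four under cubic reduction; a Betti-$7$ graph of girth at most four cannot be $F_{13}$ or $F_{14}$. When neither short-path condition holds, $e_3$ lies at distance at least three in $F$ from one of $e_1,e_2$; this is impossible in $F_{13}$ and forces a unique edge pair in $F_{14}$, for which the intermediate Betti-$8$ graph is the Heawood graph and Lemma~\ref{lem:Heawood} finishes via a torus embedding. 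Thus the ``unavoidable combinatorics'' you anticipate is replaced by a short distance argument plus one residual case.
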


\begin{proof}
Let $v$ denote the shared vertex of $e_1$ and $e_2$, and let $v_1$ and $v_2$ denote the other two vertices. For $i \in \{1,2\}$, let $e_i'$ denote the portion of $e_i$ in $F$ containing $v$ that becomes an edge in $G$ after attaching $v_0$.

{\bf Case 1:} There exists a path in $G \setminus  (e_1' \cup e_2')$ from $v$ to $e_3$ of length at most two. For $i \in \{1,2\}$, let $C_i$ denote the cycle made up of this path, $e_i'$, $f_i$, $f_3$, and one additional edge that connects the path to the midpoint of $e_3$. Note that the $C_i$ has length at most six. Note moreover that $C_1$ becomes a cycle of length at most four upon removing $e_2'$ and $f_2$ from $G$, and likewise for $C_2$ upon removing $e_1'$ and $f_1$. In particular, the $G \setminus (e_i' \cup f_i)$ have girth at most four, are not $F_{13}$ or $F_{14}$, and hence satisfy $\sys(G \setminus (e_i' \cup f_i))^{-1} \geq 3.5$. Applying the proof of the small cycle estimate to the $4$-cycle consisting of $e_1'$, $f_1$, $e_2'$, and $f_2$, we have
	\[\sys(G)^{-1} \geq \frac{1}{2}\of{1 + \sys(G \setminus (e_1' \cup f_1))^{-1} + \sys(G \setminus (e_2' \cup f_2))^{-1}} \geq 4.\]

{\bf Case 2:} For $i \in \{1,2\}$, there exists a path $p_i$ of length at most two connecting $v_i$ to $e_3$. The paths may intersect, but this does not affect the argument that follows. We again apply the small cycle estimate to the $4$-cycle $C$ from Case 1. This time, note that there exists a cycle containing $f_3$, $f_1$, and $p_1$ that has length at most six and becomes a cycle of length at most four upon removing $e_1'$ and $f_2$. There is similarly a cycle containing $f_3$, $f_2$, and $p_2$ that becomes a cycle of length at most four upon removing $e_2'$ and $f_1$. Therefore $\sys(G)^{-1} \geq 4$ by the same argument as in Case 1.

If neither Case 1 nor Case 2 occurs, then $e_3$ has distance at least three from one of the edges, say, $e_1$. In $F_{13}$, there is no such pair of edges. In $F_{14}$, there is a unique pair of edges of distance three, so $G$ is realized by first attaching an edge $f$ to $e_1$ and $e_3$ to obtain a rank 8 graph $H$ and then attaching an edge to $f$ and $e_2$. Note that $H$ has girth six and hence is the Heawood graph. Therefore the proof in this case follows from the next lemma.
\end{proof}

\begin{lemma}\label{lem:Heawood}
If $G$ arises by attaching an edge $e$ to the Heawood graph $H$, then $\sys(G)^{-1} \geq 4$.
\end{lemma}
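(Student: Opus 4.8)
The plan is to analyze how $G$ arises from the Heawood graph $H$ by attaching a single edge $e$. Recall $H$ is cubic with girth six, $14$ vertices, $21$ edges, and Betti number $b=8$; the graph $G$ is obtained by subdividing two distinct edges $a_1, a_2$ of $H$ with new vertices $v_1, v_2$ and joining $v_1, v_2$ by the new edge $e$, so $G$ is cubic with Betti number $9$. Since we may assume $G$ has girth at least four, the edges $a_1, a_2$ are non-adjacent in $H$; moreover the attachment creates new short cycles only through $e$, and because $H$ has girth six, the shortest cycle through $e$ has length at least $d_H(a_1,a_2) + 2$, where $d_H$ denotes the distance between the edges in $H$.

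First I would record the key structural fact about $H$: its edge-distances are small and highly symmetric. Concretely, I would check (using the vertex-transitivity and the standard incidence description of the Heawood graph, or its $D_{14}$-type automorphism group) that any two non-adjacent edges of $H$ are at distance at most two, so every cycle through $e$ in $G$ has length at most $2 + 2 + \text{(length of the connecting path)}$, i.e. the girth contribution from $e$ is controlled. The plan is then to split into two cases according to $d_H(a_1,a_2)\in\{1,2\}$ — which, after subdivision, means $e$ lies on a cycle of length $3$ or $4$ in $G$ (girth $\ge 4$ rules out the length-$3$ case), or more carefully on cycles of length $4$ or $5$. In the girth-four case, I would apply the small cycle estimate (Lemma, "Small cycle estimate") to a $4$-cycle $C$ through $e$: removing an optimal edge of $C$ yields a graph of Betti number $8$ whose girth has dropped below six, hence which is \emph{not} the Heawood graph, so its systole inverse is at least $s(8)^{-1}=3.5$ unless it is $F_{13}$ or $F_{14}$ (rank $7$, impossible here). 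Thus $\sys(G)^{-1}\ge \frac{1}{4}\bigl(1 + 3\cdot 3.5\bigr)$ — or more precisely the version of the estimate with $h=g=4$ giving $\sys(G)^{-1}\ge 1 + s(5)^{-1} = 1 + 8/3 > 4$, which already settles this branch.

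For the remaining case, where the shortest cycle through $e$ has length five (so $G$ has girth at least four, and the two subdivided edges of $H$ are at distance two), I would again use a small cycle $C$ of length $5$ together with the small cycle estimate: for $h=1$, $\sys(G)^{-1}\ge \frac{1}{5} + s(8)^{-1} = \frac15 + \frac72$, which is less than $4$, so this crude bound is not enough. Instead I would exploit that removing $e$ from $G$ and contracting returns the Heawood graph $H$ with known systole $s(8)=2/7$, and combine this with a second, independent cycle estimate: any weight function on $G$ restricts to one on $H$, and the edge $e$ together with its two short connecting arcs forms a cycle of length $5$, so a weighted averaging argument over the (few) $D$-symmetric translates of this $5$-cycle and over the $5$-cycles of $H$ forces $\sys(G)^{-1}\ge 4$. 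Concretely I would set up the linear program: maximize $t$ subject to $\lambda(C)\ge t$ for all cycles $C$, $\lambda \ge 0$, $\lambda(G)=1$, observe that the automorphism group of $G$ (inherited from that of $H$ fixing the pair $a_1,a_2$, or acting transitively on such pairs) lets us assume $\lambda$ is constant on a small number of orbit-classes of edges, and then exhibit an explicit dual certificate — a non-negative combination of a handful of $5$-cycles (including ones through $e$) summing to $\le 4$ times $\lambda(G)$ — proving $\sys(G)^{-1}\ge 4$.

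The main obstacle I expect is the last step: verifying that the few edge-orbits of $G$ under its residual symmetry admit a genuinely optimal dual certificate, i.e. pinning down exactly which $5$-cycles of $G$ to sum, since the short cycles created by subdivision interact with the Heawood $5$-cycles (of which there are none — $H$ has girth six, so the relevant short cycles in $G$ are the new ones through $e$ of length $5$ plus the $6$-cycles of $H$, and one must balance these two lengths correctly). In particular I must be careful that the certificate uses \emph{only} cycles present in $G$ and respects girth $\ge 4$; the accounting is the kind of finite but fiddly linear-algebra check done for $F_{12}, F_{13}, F_{14}$ in the previous subsection, and here it is complicated by the need to classify, up to symmetry, the possible distance-two edge pairs of $H$ along which $e$ can be attached. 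If direct symmetrization leaves too many orbit-classes, the fallback is to enumerate the (finitely many, by the symmetry of $H$) resulting graphs $G$ explicitly and check $\sys(G)^{-1}\ge 4$ for each by the same summing-of-cycles method used throughout this section.
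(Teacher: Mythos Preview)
Your proposal has a genuine gap. In the girth-four branch you assert that the small cycle estimate with $h=g=4$ yields
\[
\sys(G)^{-1} \geq 1 + s(5)^{-1} = 1 + \tfrac{8}{3} > 4,
\]
but $1 + \tfrac{8}{3} = \tfrac{11}{3} < 4$. None of the other admissible values of $h$ help either: for $h=1,2,3,4$ one gets $\tfrac{15}{4}$, $\tfrac{23}{6}$, $\tfrac{15}{4}$, $\tfrac{11}{3}$, all strictly below $4$. So the small cycle estimate alone cannot close the girth-four case, and your alternative formula $\tfrac{1}{4}(1+3\cdot 3.5)$ does not correspond to any version of that lemma. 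In the girth-five branch you correctly observe that the crude bound falls short and then defer to an unspecified dual certificate over symmetry-reduced edge classes; this is not carried out, and (as you note) the Heawood graph has no $5$-cycles, so the ``balancing'' you describe between new $5$-cycles and Heawood $6$-cycles would need to be made precise from scratch.

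The paper sidesteps all of this with a single geometric observation. Any two edges of $H$ lie on a path of length at most four (since edge-distances in $H$ are at most two), and $H$ is $4$-arc transitive, so up to automorphism the attachment pair sits on the boundary of one hexagonal face in the standard toroidal embedding of $H$. Attaching $e$ inside that face gives a torus embedding of $G$, and Proposition~\ref{pro:embeds} then yields $\sys(G)^{-1} \ge \tfrac{9-1+\chi(T^2)}{2} = 4$ directly. This surface-embedding idea is the missing ingredient: once you have it, no case split on $d_H(a_1,a_2)$ or linear-programming certificate is needed.
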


\begin{proof}
Let $e_1$ and $e_2$ be the two edges in $H$ to which we attach $e$. By inspection of $H$, the distance between $e_1$ and $e_2$ is at most two. Therefore $e_1$ and $e_2$ lie in an arc $p$ of length at most four.

It is a remarkable property of $H$ that every path of length four can be moved by an automorphism to any other such path \cite{conder_morton95}. In particular, up to automorphism, we may assume that $p$ lies on any given cycle. 

Another good property of $H$ is that it can be embedded in the torus $T^2$. Fix a cycle $C$ in $H$ that bounds a hexagon in $T^2$ \cite{heawood1890}. Using the 4-arc transitivity of $H$ discussed in the previous paragraph, we may assume that $G$ arises by attaching $e$ to the boundary of this hexagon. Therefore $G$ can also be embedded in $T^2$, and
	\[\sys(G)^{-1} \geq \frac{9 - 1 + \chi(T^2)}{2} = 4.\]
\end{proof}

The proof of Lemma \ref{lem:Heawood} previews how the rest of the proof of $\sys(G)^{-1} \geq 4$ will go. It involves multiple cases, so we outline the strategy now.

Fix $F \in \{F_{13}, F_{14}\}$, and assume $G$ is built from $F$ by adding a vertex and attaching that vertex to each of three pairwise distinct edges $e_1$, $e_2$, and $e_3$ in $F$. By Lemmas \ref{lem:e1e2touch} and \ref{lem:Heawood}, we may assume that these edges are vertex-disjoint. In particular, since $F_{13}$ and $F_{14}$ have girth five, and since we are not introducing new cycles of length at most four in building $G$, we have already finished the proof of $\sys(G)^{-1} \geq 4$ in the case where $G$ has girth at most four.

To complete the proof, we list the ways in which $G$ can be built according to these rules. In every case, we prove the existence of an embedding into either the torus or the Klein bottle. Since these surfaces have Euler characteristic zero, the estimate $\sys(G)^{-1} \geq 4$ follows from Proposition \ref{pro:embeds}. 

We start with graphs built from $F_{13}$.

\begin{lemma}\label{lem:F13}
If $G$ is a graph built from $F_{13}$ as described above, then $G$ admits an embedding into the Klein bottle and hence satisfies $\sys(G)^{-1} \geq 4$.
\end{lemma}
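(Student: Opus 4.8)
The strategy will be to exploit the $D_9$-symmetry of $F_{13}$ to reduce, up to automorphism, the number of distinct ways one can attach a new trivalent vertex $v_0$ to three pairwise vertex-disjoint, pairwise distance-$\geq 2$ edges $e_1,e_2,e_3$. Having already dealt with girth $\leq 4$ in Lemmas~\ref{lem:e1e2touch} and \ref{lem:Heawood}, I may assume the three chosen edges are mutually non-adjacent, so only finitely many $D_9$-orbits of triples $\{e_1,e_2,e_3\}$ remain; by inspection of the drawing of $F_{13}$ in Figure~\ref{fig:F13}, the group $D_9$ acts on the nine ``rim'' edges of the $9$-cycle and on the nine ``spoke/tripod'' edges, so the orbit count is small (a handful of cases). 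For each such case, I will produce an explicit embedding of the resulting graph $G$ into the Klein bottle and then invoke Proposition~\ref{pro:embeds}: since $\chi(\text{Klein bottle})=0$ and $b(G)=9$, we immediately get $\sys(G)^{-1}\geq \tfrac{9-1+0}{2}=4$.

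The key steps, in order, are: (1) recall that $F_{13}$ itself embeds in $\RP^2$ with complement a disjoint union of discs — indeed, $F_{13}$ is one of the subgraph-minimal non-planar but $\RP^2$-embeddable cubic graphs from Lemma~\ref{sec:d=7-not-sufficient} — and fix such an embedding so that the face boundaries (the $5$-cycles and the $9$-cycle) are visible; (2) enumerate the $D_9$-orbits of admissible edge-triples $\{e_1,e_2,e_3\}$, using the symmetry to cut this down to a short list; (3) for each orbit representative, describe how attaching $v_0$ and subdividing $e_1,e_2,e_3$ changes the embedding: subdividing an edge keeps the surface and the face structure, while adding the three new edges from $v_0$ into faces of the current embedding forces a handle or crosscap addition. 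The point is that adding a vertex of valence three joined into the interior(s) of one or more faces of an $\RP^2$-embedding either keeps us in $\RP^2$ (impossible here since $b$ went up by $1$ and cubic graphs of Betti number $9$ do not embed in $\RP^2$ — one can check via Proposition~\ref{pro:embeds} that $\RP^2$ would give only $\sys^{-1}\geq 4.5$, which is consistent, so this heuristic must be replaced by a direct construction) or lands us in a surface of Euler characteristic $\chi(\RP^2)-1=0$, i.e.\ the Klein bottle (one handles/crosscaps worth of extra topology). So the honest content of step~(3) is: exhibit, for each orbit representative, three faces $D_i,D_j,D_k$ (with repetition allowed) of the fixed $\RP^2$-embedding and a point in each, such that the three arcs from $v_0$ to the subdivision points $v_1,v_2,v_3$ can be drawn disjointly after a single crosscap-splitting, i.e.\ after passing from $\RP^2$ to the Klein bottle. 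Concretely I will realize this by taking the $\RP^2$ as a disc with antipodal boundary identification, routing two of the three new arcs inside visible faces, and routing the third across the identified boundary; the extra crossing that this introduces is resolved by the one crosscap/handle modification that converts $\RP^2$ to the Klein bottle.

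The main obstacle I anticipate is step~(2)–(3) bookkeeping: making sure the orbit enumeration is complete and that in each case the three attachment faces can genuinely be chosen so that the arcs are realized on the Klein bottle and not forced onto a surface of lower Euler characteristic (which would still give a valid but weaker bound — note that any surface with $\chi\leq 0$ already yields $\sys(G)^{-1}\geq 4$, so in fact \emph{overshooting} the genus is harmless). This observation is what makes the lemma tractable: I do not need the \emph{optimal} embedding, only \emph{some} embedding into a surface with $\chi\leq 0$, and since $G$ is non-planar and not $\RP^2$-embeddable (which I will verify by noting $G$ contains a subdivision of one of the forbidden cubic minors, or simply by the Euler-characteristic count together with the known structure), the first available surface is already good enough. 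Thus the real work is just: for each of the finitely many $D_9$-classes of edge-triples, draw $G$ on the Klein bottle. I will present these as a short table of cases with accompanying figures, each figure showing the fundamental polygon of the Klein bottle with $G$ drawn in, and in each case the verification that the complement is a union of discs (needed to apply Proposition~\ref{pro:embeds} cleanly, though as remarked in that proof one can always do surgery to reduce to this situation). Having dispatched $F_{13}$, the parallel statement for $F_{14}$ will follow by the same method in the next lemma.
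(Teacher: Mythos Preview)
Your proposal contains two concrete errors that break the argument.

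\textbf{First, and fatally:} you assert that $F_{13}$ embeds in $\RP^2$ and describe it as ``one of the subgraph-minimal non-planar but $\RP^2$-embeddable cubic graphs from Lemma~\ref{sec:d=7-not-sufficient}.'' This is exactly backwards. The whole point of that lemma is that $F_{13}$ is one of the six cubic graphs that are \emph{minimal with respect to not embedding in $\RP^2$}; that is precisely why the cases $F_{13}$ and $F_{14}$ had to be treated separately there. So your plan of starting from an $\RP^2$-embedding of $F_{13}$ and adding one crosscap to reach the Klein bottle cannot even begin: there is no $\RP^2$-embedding to start from. You must instead construct Klein bottle embeddings of $F_{13}$ directly.

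\textbf{Second:} your claim that ``any surface with $\chi\le 0$ already yields $\sys(G)^{-1}\ge 4$, so overshooting the genus is harmless'' has the inequality the wrong way. Proposition~\ref{pro:embeds} gives $\sys(G)^{-1}\ge \tfrac{b-1+\chi}{2}=\tfrac{8+\chi}{2}$, which equals $4$ only when $\chi=0$ and is strictly smaller for $\chi<0$. So overshooting the genus is \emph{not} harmless; you genuinely need the torus or the Klein bottle, and nothing worse will do.

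The paper's approach avoids both pitfalls. It constructs three explicit Klein bottle embeddings of $F_{13}$, each arranged so that a particular cycle ($C_8$, $C_8'$, or $C_{10}$) bounds a disc. If the three chosen edges $e_1,e_2,e_3$ all lie on such a cycle, the new vertex $v_0$ can simply be placed inside that disc and joined to the three subdivision points without leaving the Klein bottle. The remaining work is then a case analysis, using the $D_9$-symmetry you correctly identified, showing that every admissible triple of pairwise vertex-disjoint edges can be mapped by an automorphism of $F_{13}$ onto one of these three cycles. Your instinct to reduce via $D_9$ was right; what you are missing is the correct target for the reduction (cycles bounding discs in a pre-built Klein bottle embedding) rather than an $\RP^2$-embedding that does not exist.
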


\begin{proof}
Let $e_1$, $e_2$, and $e_3$ be pairwise vertex-disjoint edges of $F_{13}$. Recall that $G$ is obtained by adding a vertex $v_0$ and by adding an edge from $v_0$ to the midpoint of $e_i$ for $1 \leq i \leq 3$. 

First, suppose that the edges $e_1$, $e_2$, and $e_3$ lie on one of the cycles $C_8$, $C_8'$, or $C_{10}$ shown in Figures \ref{fig:F13-C8}, \ref{fig:F13-genus2maybe}, and \ref{fig:F13-C10}, respectively. In this case, the figures show an embedding of $F_{13}$ into the Klein bottle with the property that the images of $C_8$, \(C_8'\), and $C_{10}$ bound a disc. Hence this embedding of $F_{13}$ extends to an embedding of $G$.

\begin{figure}[ht]
  \raisebox{-0.5\height}{\includegraphics[height=2in]{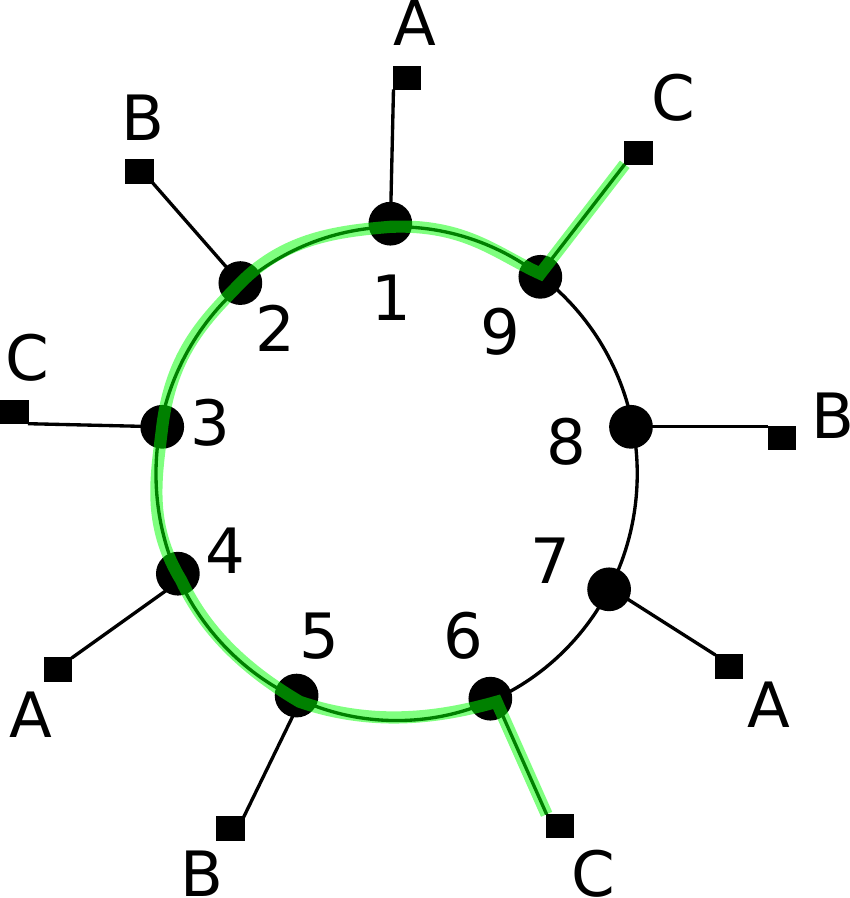}}
  \hspace{0.8cm}
 \raisebox{-0.5\height}{\includegraphics[height=1in]{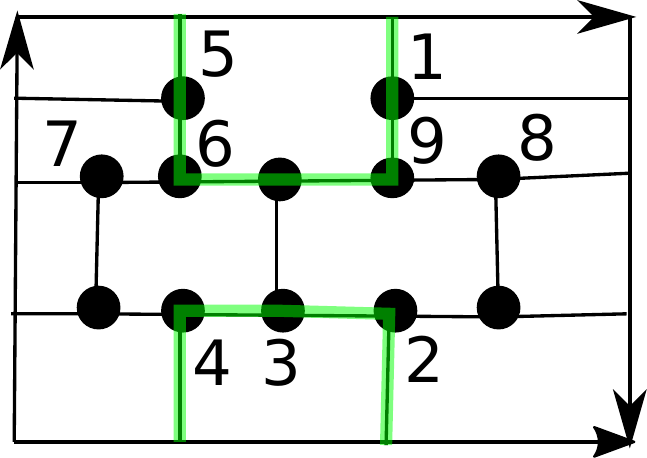}}
  \caption{An embedding of $F_{13}$ into the Klein bottle with the property that the $8$-cycle $C_{8}$ shown has a image bounding a disc.}\label{fig:F13-C8}
\end{figure}

\begin{figure}[ht]
   \raisebox{-0.5\height}{\includegraphics[height=2in]{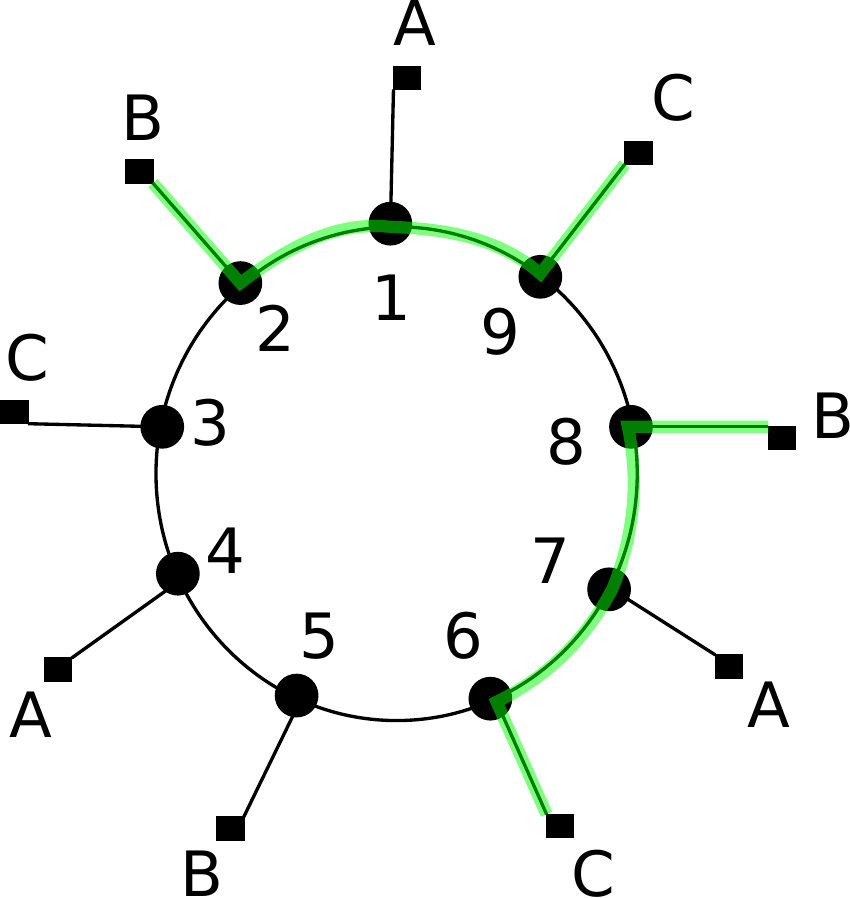}}
\hspace{0.8cm}
   \raisebox{-0.5\height}{\includegraphics[height=1in]{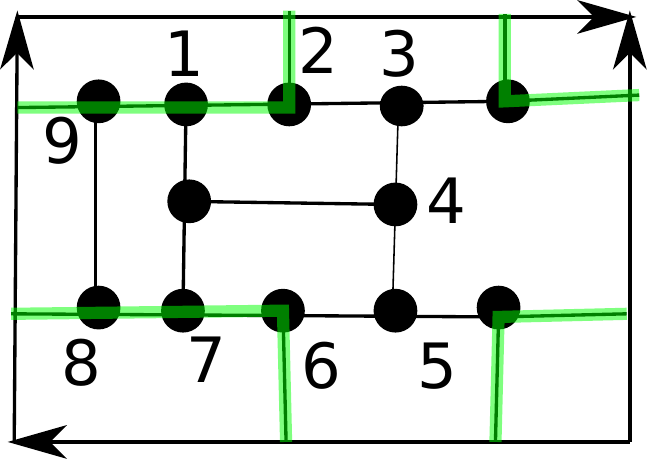}}\caption{The figure shows an embedding of \(F_{13}\) into the Klein bottle such that the cycle \(C_8'\) lies on the boundary of a disc.}\label{fig:F13-genus2maybe}
\end{figure}

\begin{figure}[ht]
   \raisebox{-0.5\height}{\includegraphics[height=2in]{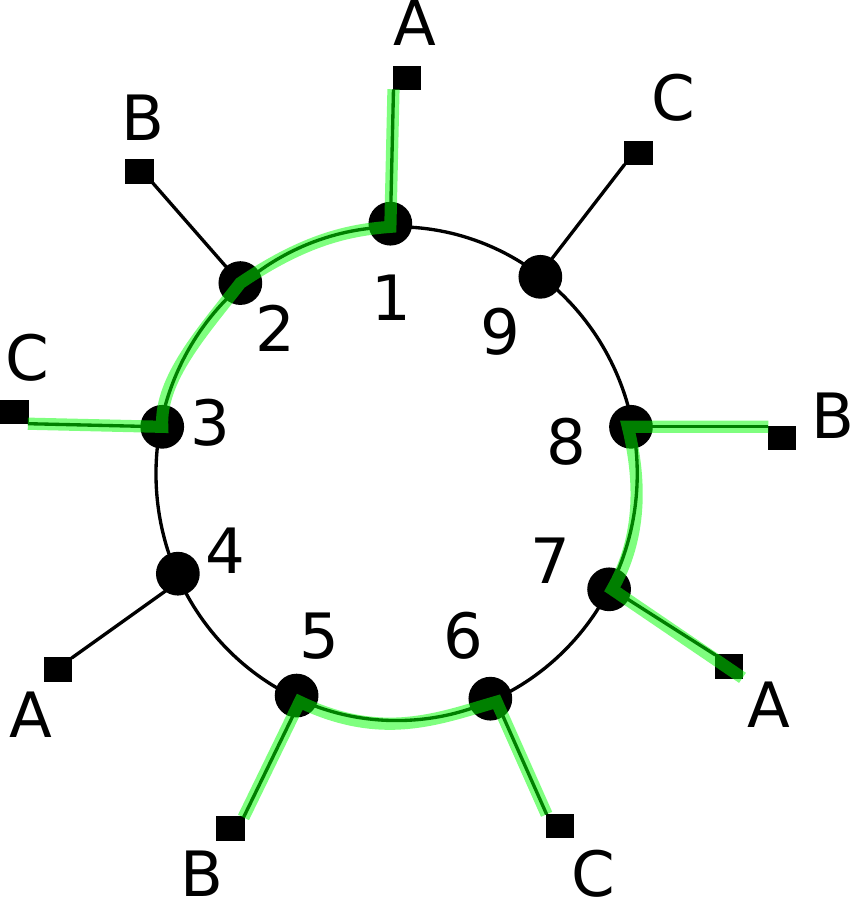}}
  \hspace{0.8cm}
 \raisebox{-0.5\height}{\includegraphics[height=1in]{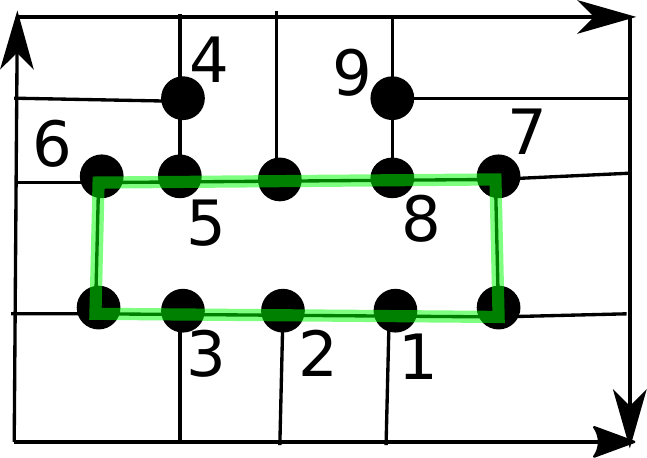}}
  \caption{An embedding of $F_{13}$ into the Klein bottle with the property that the $10$-cycle $C_{10}$ shown has a image bounding a disc.}\label{fig:F13-C10}
\end{figure}

Second, note that we may pre-compose these embeddings with symmetries of $F_{13}$. We claim that, for every triple $\{e_1,e_2,e_3\}$ of pairwise vertex-disjoint edges in $F_{13}$, there exists a symmetry of $F_{13}$ that maps this triple into $C_8$, $C_8'$, or $C_{10}$.

By the above remarks we only have to prove the claim. To arrange the cases, let $C$ denote the $9$-cycle with vertex labels $1,2,\ldots,9$ in the above figures. For any pair of edges $e$ and $e'$ in $F_{13}$, we denote by $d_C(e,e')$ the length of the shortest path in $C$ that connects $e$ and $e'$. Note that $d_C(e_i, e_j) \geq 1$ for all $1 \leq i < j \leq 3$ since these edges are vertex-disjoint.

Case 1: $e_1, e_2, e_3 \not\in C$. There are three possibilities up to symmetries of $F_{13}$, and in all cases, there is a symmetry mapping this triple of edges into $C_{10}$.

Case 2: $e_1, e_2 \not\in C$ and $e_3 \in C$. 
First, if $d_C(e_1, e_2) = 1$ and $e$ is an edge connecting $e_1$ and $e_2$, then there is a symmetry mapping $e$ to the edge with vertices $5,6$ while at the same time mapping $e_3$ onto $C_{10}$. 
Second, if $d_C(e_1, e_2) = 2$, then we can map a connecting path of length two onto the path with vertex labels $6,7$ and $8$ while at the same time mapping $e_3$ onto $C_8'$ or $C_{10}$.
Finally, if neither of these possibilities occurs, the fact that $e_1$ and $e_2$ are vertex-disjoint implies that $d_C(e_1,e_2) = 4$. There are three possible subgraphs $\{e_1,e_2,e_3\}$ in this case up to symmetry, and in each case we can map the subgraph into $C_{10}$.

Case 3: $e_1 \not\in C$ and $e_2, e_3 \in C$.
Assume without loss of generality that $d_C(e_1,e_2) \leq d(e_1,e_3))$.
If $d_C(e_1,e_3) \geq 3$, then there is a symmetry mapping $e_1$ to the edge adjacent to $6$ and $C$ and the triple $\{e_1,e_2,e_3\}$ into $C_8$.
Similarly, if $d_C(e_1, e_3) = 2$, then there is a symmetry mapping $e_1$ to the edge adjacent to $8$ and $B$ or to the edge adjacent to $1$ and $A$ and the triple $\{e_1,e_2,e_3\}$ into $C_{10}$.
Finally, if $d_C(e_1, e_3) = 1$, then we can map \(e_1,e_2,e_3\) to \(C_8'\) by a symmetry of \(F_{13}\). 

Case 4: $e_1,e_2,e_3 \in C$.
We may assume that $d_C(e_1, e_2) \leq d_C(e_2, e_3) \leq d_C(e_1,e_3)$. 
Note that the ordered pair of the first two of these distances is $(1,1)$, $(1,2)$, or $(2,2)$. 
In the first two cases, there is a symmetry mapping the triple into $C_8$. 
In the last two cases, there is a symmetry mapping the triple into $C_{10}$. 
\end{proof}

With the calculation of $\sys(G)$ complete for graphs built from $F_{13}$, it suffices to consider graphs built from $F_{14}$. We do this now.

\begin{lemma}\label{lem:F14}
If $G$ is a graph built from $F_{14}$ as described above, then $G$ embeds into the torus or the Klein bottle and hence satisfies $\sys(G)^{-1} \geq 4$.
\end{lemma}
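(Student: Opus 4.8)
The plan is to run the same argument used for $F_{13}$ in Lemma \ref{lem:F13}, now with a supply of embeddings of $F_{14}$ into both the torus $T^2$ and the Klein bottle. Concretely, I would first fix a short list of \emph{distinguished cycles} $C^{(1)}, C^{(2)}, \dots$ in $F_{14}$, each of which bounds a face in one of these surface embeddings. Whenever the three chosen edges $e_1, e_2, e_3$ all lie on one distinguished cycle $C^{(k)}$, the corresponding embedding of $F_{14}$ extends to an embedding of $G$: place the new vertex $v_0$ in the interior of the face bounded by $C^{(k)}$ and join it by three pairwise-disjoint arcs to the midpoints of $e_1, e_2, e_3$ on $C^{(k)}$. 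Since the torus and the Klein bottle both have Euler characteristic zero, Proposition \ref{pro:embeds} then gives $\sys(G)^{-1} \geq \tfrac{9 - 1 + 0}{2} = 4$, which is exactly what is claimed. So the whole lemma reduces to a purely combinatorial statement about $F_{14}$.

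That statement is: every triple $\{e_1, e_2, e_3\}$ of pairwise vertex-disjoint edges of $F_{14}$ can be moved by a symmetry in the dihedral group $D_8$ of $F_{14}$ onto a subset of one of the distinguished cycles $C^{(k)}$. By Lemmas \ref{lem:e1e2touch} and \ref{lem:Heawood} we have already disposed of the configurations in which two of the $e_i$ share a vertex or in which deleting $v_0$ produces the Heawood graph, so here the $e_i$ are genuinely vertex-disjoint and in particular $d_C(e_i, e_j) \geq 1$ for those indices with $e_i, e_j$ on the distinguished $8$-cycle $C$ with numbered vertices $1,\dots,8$. I would stratify the verification by the number $m \in \{0, 1, 2, 3\}$ of edges $e_i$ lying on $C$, and within each stratum by the cyclic distances $d_C(e_i, e_j)$ together with the positions relative to $C$ of the spoke edges and of the two special edges $h$, $h'$ used by the $e_i$ that lie off $C$. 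As in the proof of Lemma \ref{lem:F13}, each stratum contains only a handful of orbit representatives under $D_8$, and for each representative one exhibits an explicit symmetry carrying it into a distinguished cycle, using a torus embedding when one is available and a Klein-bottle embedding otherwise; drawings analogous to those accompanying Lemma \ref{lem:F13} make each extension visible.

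The hard part is not any individual case but ensuring the list of distinguished cycles — and hence of surface embeddings — is complete, i.e.\ that the union of the $D_8$-orbits of their edge sets contains every vertex-disjoint triple. The safe way to do this is an exhaustive enumeration: $F_{14}$ has finitely many pairwise vertex-disjoint triples of edges, the $D_8$-action reduces them to a short list of orbit representatives, and one checks each representative by hand. I expect that two or three torus embeddings together with two or three Klein-bottle embeddings of $F_{14}$ suffice, paralleling the three Klein-bottle embeddings needed for $F_{13}$. Once this enumeration is carried out, the bound $\sys(G)^{-1} \geq 4$ follows in every case from Proposition \ref{pro:embeds}; combined with Lemma \ref{lem:F13} and the reductions preceding it, this completes the proof that a $3$-connected cubic graph of Betti number $9$ satisfies $\sys(G)^{-1} \geq 4$, and hence finishes Theorem \ref{thm:bounds-cogr-matr}.
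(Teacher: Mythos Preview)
Your plan is exactly the paper's approach: reduce to a combinatorial claim about $F_{14}$, stratify triples by their position relative to the distinguished $8$-cycle $C$ and the two subgraphs $H$, $H'$, and cover every $D_8$-orbit by a handful of surface embeddings with a large bounding face. What you have written, however, is a plan and not a proof: you explicitly defer the enumeration and only ``expect'' that a few embeddings suffice.

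For comparison, the paper carries this out with precisely three distinguished cycles --- an $8$-cycle $C_8$ and a $10$-cycle $C_{10}$ each bounding a face in a torus embedding, and a $9$-cycle $C_9$ bounding a face in a Klein-bottle embedding --- and a four-case analysis (Case~1: $e_1,e_2\in H$; Case~2: $e_1\in H$ vertex-disjoint from $C$; Case~3: $e_1\in H$, $e_2\in H'$; Case~4: $e_1,e_2\in C$), each case splitting into at most seven subcases. So your guess of ``two or three torus embeddings together with two or three Klein-bottle embeddings'' is slightly pessimistic: two torus embeddings and one Klein-bottle embedding are enough. To turn your proposal into a proof you must actually produce these embeddings (with figures or explicit vertex/edge data) and verify the coverage case by case; nothing conceptual is missing, but the content of the lemma is precisely this verification.
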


\begin{proof}
The proof is similar to the case for $F_{13}$. Let $e_1$, $e_2$, and $e_3$ be edges of $F_{14}$ that are pairwise vertex-disjoint. Recall that $G$ is obtained by adding a vertex $v_0$ and by adding an edge from $v_0$ to the midpoint of $e_i$ for $1 \leq i \leq 3$.

Suppose for a moment that all three of the edges lie one one of the cycles $C_8$, $C_9$, or $C_{10}$ shown in Figures \ref{fig:F14-C8}, \ref{fig:F14-C9}, or \ref{fig:F14-C10}, respectively. The figures show an embedding of $F_{14}$ into either the torus or the Klein bottle with the property that the image of $C_8$, $C_{9}$, or $C_{10}$ bounds a disc. It follows that the embedding of $F_{14}$ extends to an embedding of $G$, and the proof is complete in this case.

\begin{figure}[ht]
  \raisebox{-0.5\height}{\includegraphics[height=2in]{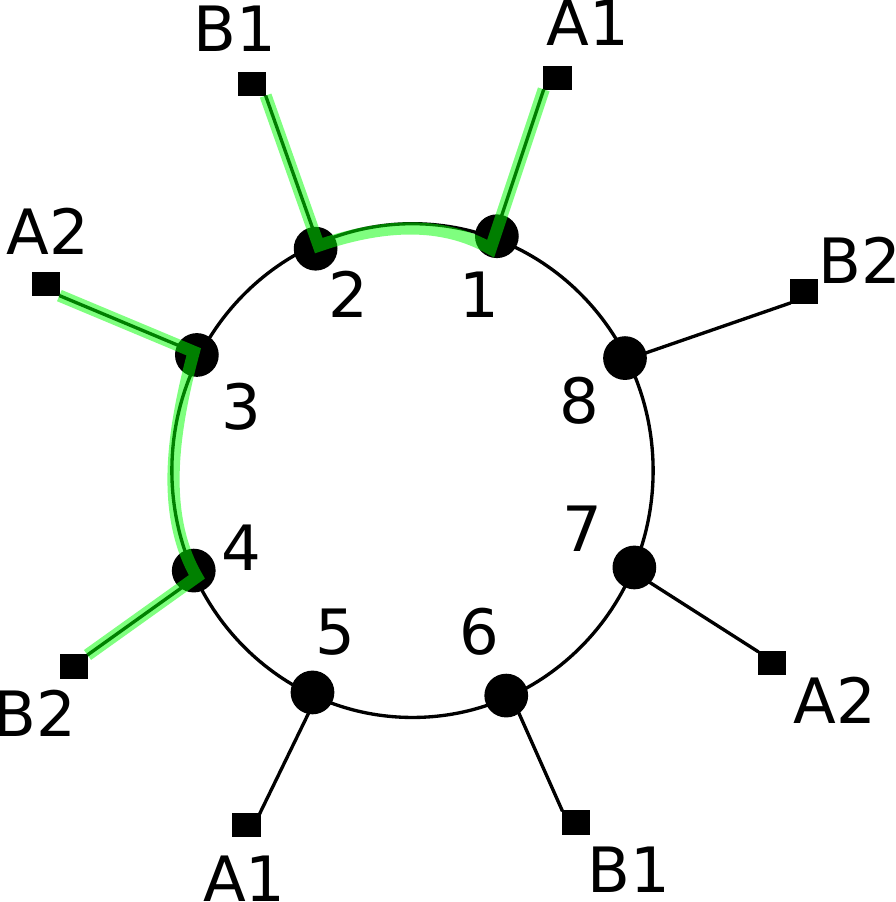}}
  \hspace{0.8cm}
   \raisebox{-0.5\height}{\includegraphics[height=1in]{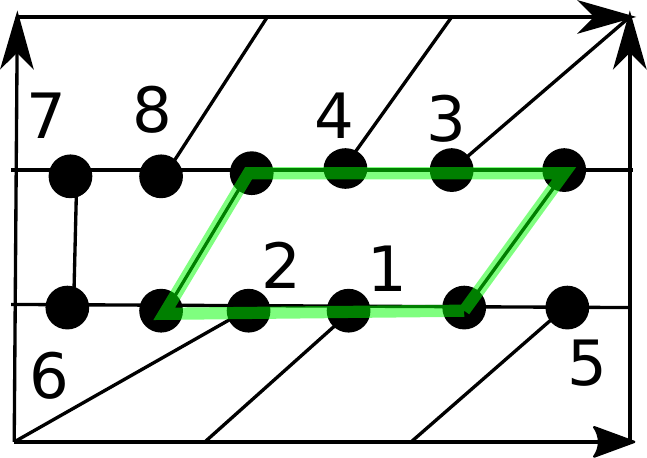}}
\caption{An embedding of $F_{14}$ into the torus with the property that the image of the highlighted $8$-cycle $C_{8}$ bounds a disc.}\label{fig:F14-C8}
\end{figure}

\begin{figure}[ht]
   \raisebox{-0.5\height}{\includegraphics[height=2in]{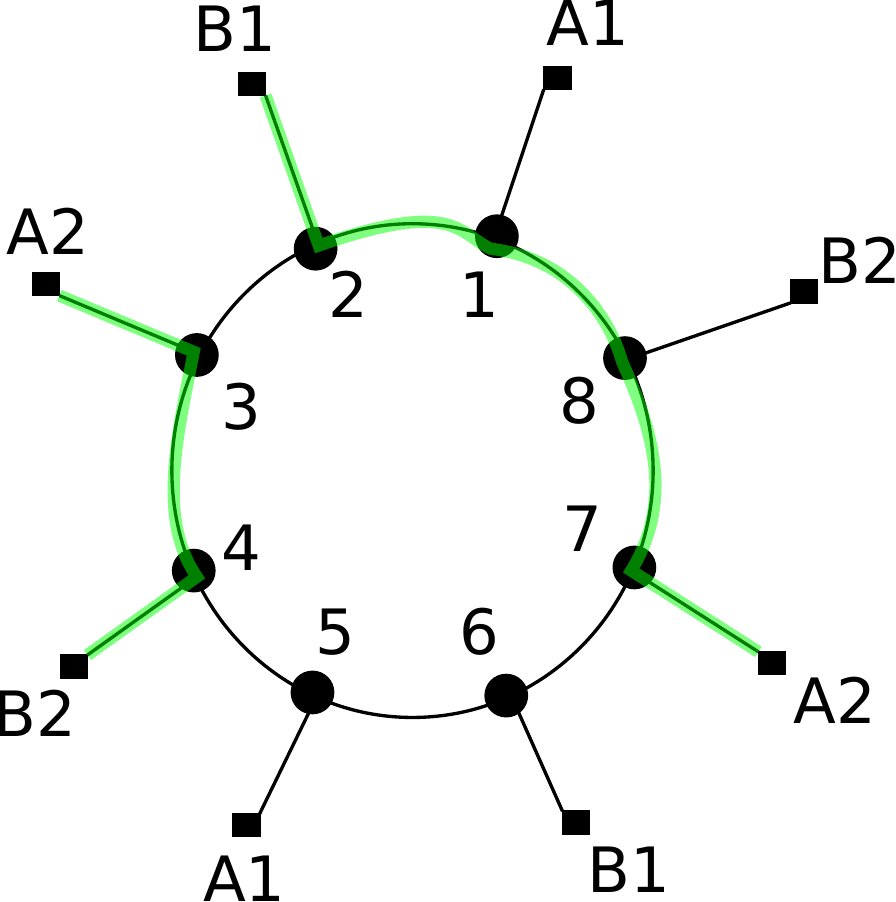}}
  \hspace{0.8cm}
   \raisebox{-0.5\height}{\includegraphics[height=1in]{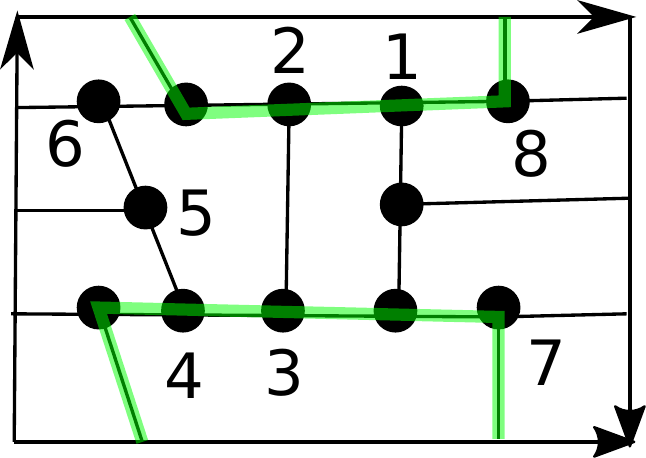}}\caption{An embedding of $F_{14}$ into the Klein bottle with the property that the image of the highlighted $9$-cycle $C_9$ bounds a disc.}\label{fig:F14-C9}
\end{figure}

\begin{figure}[ht]
   \raisebox{-0.5\height}{\includegraphics[height=2in]{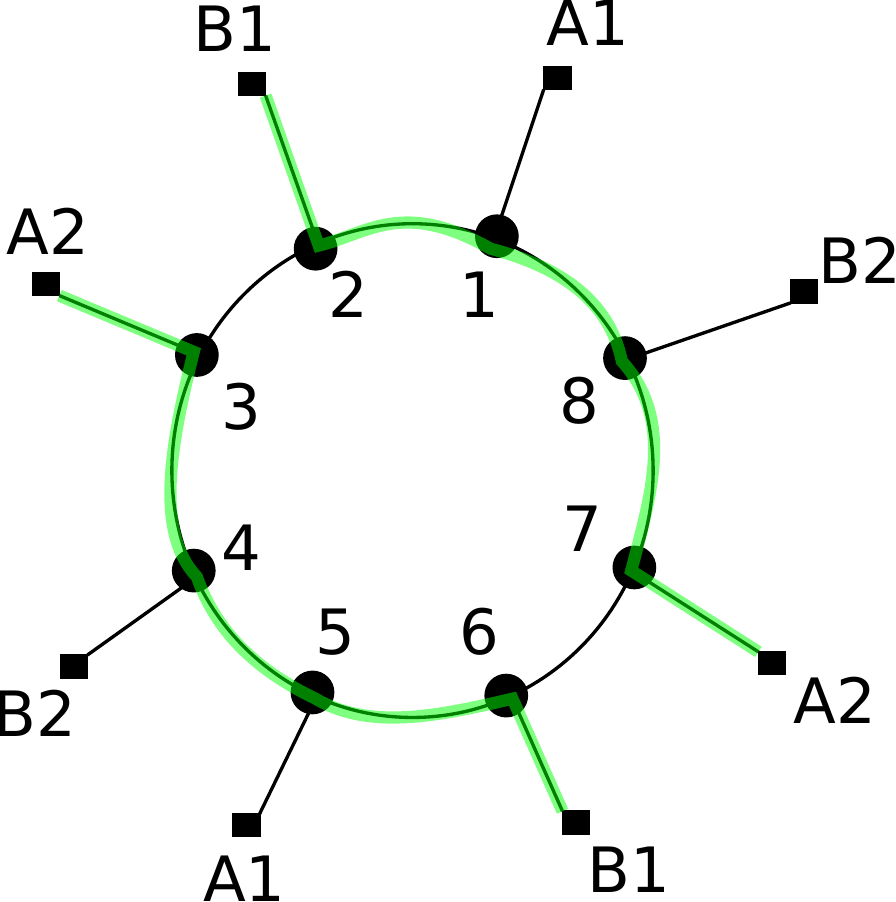}}
  \hspace{0.8cm}
   \raisebox{-0.5\height}{\includegraphics[height=1in]{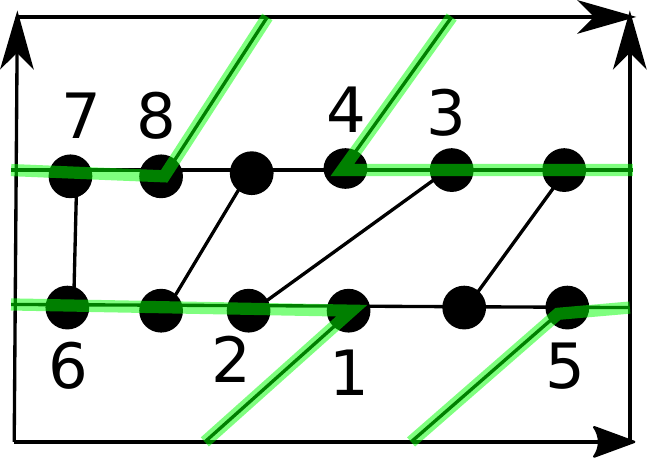}}
\caption{An embedding of $F_{14}$ into the torus with the property that the image of the highlighted $10$-cycle $C_{10}$ bounds a disc.}\label{fig:F14-C10}
\end{figure}

To finish the proof, it suffices to work with the graph $F_{14}$ and to prove that, for any three pairwise vertex-disjoint edges in $F_{14}$, there exists a symmetry of the graph mapping these edges into one of the cycles $C_8$, $C_9$, or $C_{10}$.

We summarize the case-by-case analysis here. Denote by $C$ the $8$-cycle with vertex labels $1,2,\ldots,8$, and denote by $H$ and $H'$ the two connected components of $F_{14} \setminus C$.

Case 1: $e_1, e_2 \in H$. We may assume that $e_1$ and $e_2$ are adjacent to the vertices labeled $4$ and $2$, respectively. Using symmetry once more, there are six possibilities for $e_3$. In two cases, the three edges can be mapped into $C_8$, and in the other four cases, they can be mapped into $C_9$. 

Case 2: $e_1 \in H$ and is vertex-disjoint from $C$. Since the edges are vertex-disjoint, neither $e_2$ nor $e_3$ is in $H$. Moreover by Case 1, we may assume that $e_3 \in C$. Using symmetry, we may assume $e_1$ is on the cycle $C_9$. There are seven possibilities for $e_2$ and $e_3$. In all but one, there is a symmetry mapping the three edges into $C_9$. In the remaining case, $e_2 \in H'$ and is vertex-disjoint from $C$, and we can map them into $C_8$. (Notice in the last case, $G$ is obtained by adding an edge to the Heawood graph, so there is also an embedding into the torus by Lemma \ref{lem:Heawood}.)

Case 3: $e_1 \in H$ and $e_2 \in H'$. By Case 2, we may assume that $e_1$ and $e_2$ share a vertex with $C$, and by Case 1, we may assume that $e_3 \in C$. There are three cases, and we can move each of them into $C_{9}$. (We can also use $C_{10}$ for all three cases here.)

Case 4: $e_1, e_2 \in C$. If $e_3$ is also in $C$, there are two possible graphs, and if $e_3 \not\in C$, then there are five possible graphs. In all seven cases, we can map the triple of edges into $C_{10}$.

Since $H$ and $H'$ are the same up to symmetry, these four cases are exhaustive and the proof is complete.
\end{proof}

\bigskip
\section{Cogirth bounds for regular matroids of small rank}
\label{sec:cogirth}

  In this section we generalize the bounds in Theorem~\ref{thm:bounds-cogr-matr} for cographic representations to all torus representations without finite isotropy groups of even order.
  To do so, we have to introduce some notions from matroid theory.
  Let us recall the quantity which we want to bound.

\begin{definition}\label{def:c}
For a torus representation $\rho:\gT^d\to\SO(V)$ without finite isotropy groups of even order, let
	\[c(\rho) = \min \cod(V^{\gS^1})/\dim(V),\]
where the minimum runs over all circles $\gS^1 \subseteq \gT^d$, and let 
	\[c(d) = \max c(\rho)\]
where the maximum runs over all such representations.
\end{definition}

Upper bounds on $c(d)$ imply the existence of fixed-point sets with large dimension. For example, our calculations in this section show that $c(3) = \frac 1 2$. This implies that, for any $\gT^3$-representation on $V$ without even-order finite isotropy groups, there exists a circle in $\gT^3$ whose fixed-point set has codimension at most $\frac 1 2 \dim V$.

\begin{theorem}\label{thm:cd}
If $\rho:\gT^d\to\SO(V)$ is a torus representation without finite isotropy groups of even order, then $c(\rho)\leq c(d)$ where
        	\[
	\begin{array}{c|c|c|c|c|c|c|c|c|c}
	d 			& 1 & 2 	& 3 & 4 		& 5 		& 6 & 7 		& 8    & 9\\\hline
	 c(d)	& 1 & 2/3 	& 1/2 & 4/9		& 2/5	& 1/3 &  3/10 	& 2/7 & 1/4
	\end{array}
      \]
Moreover the upper bound of $c(d)$ is optimal for all $d \leq 9$.
\end{theorem}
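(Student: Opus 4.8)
The plan is to reduce the statement to the three building blocks in Seymour's classification (Theorem \ref{thm:cigClassification}) — cographic, graphic, and sporadic representations — plus the behavior under $k$-sums, and to handle each piece separately. First I would record the reduction to the connected-isotropy case: by Theorem \ref{thm:cigClassification}, any $\rho$ all of whose isotropy groups have an odd number of components is obtained from a representation $\rho'$ with connected isotropy groups by pulling back and pushing forward along odd-sheeted coverings and rescaling weights by odd integers. None of these operations changes which subsets of weights are linearly independent, nor the real codimensions of fixed-point sets of subtori (they only rescale weights by nonzero reals and reparametrize the torus), so $c(\rho) = c(\rho')$ and it suffices to bound $c(\rho')$ for $\rho'$ with connected isotropy groups. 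Next, since adding copies of the trivial representation, passing to subrepresentations, and taking $\si(\rho')$ can only decrease $\dim V$ or the number of nonzero weights, I would argue that $c(\rho')$ is maximized on \emph{simple} representations, and then invoke Theorem \ref{thm:cigClassification} to write any such $\rho'$ as an iterated $1$-, $2$-, $3$-sum of graphic, cographic, and sporadic pieces.

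The heart of the argument is then an induction on $d$ with three cases. For the \textbf{cographic case}, $\rho' = \rho_G^*$ for a graph $G$ of Betti number $b = d$, and the computation in Section \ref{sec:cigClassification} (the optimization on fixed-point sets of circles) identifies $c(\rho_G^*) = \sys(G,\lambda)$ for the multiplicity weighting $\lambda$; taking the supremum over $\lambda$ gives $c(\rho_G^*) \le \sys(G) \le s(d)$ by Theorem \ref{thm:bounds-cogr-matr}, and the table value of $s(d)$ agrees with the claimed $c(d)$ in every entry except $d=5$ (where $s(5) = 2/\bar 6 = 3/8 < 2/5$), so the cographic case is even better than required there. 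For the \textbf{graphic case}, $\rho' \le \rho_{K_{d+1}}$, and I would compute $c(\rho_{K_{d+1}})$ directly: the circles correspond to cocycles (cuts) of $K_{d+1}$, the codimension of a fixed-point set is the number of edges crossing a vertex partition $V = A \sqcup B$, and with uniform weights this is minimized by a single isolated vertex, giving $c(\rho_{K_{d+1}}) = d/\binom{d+1}{2} = 2/(d+1)$, which is $\le c(d)$ for all $d \le 9$ (with equality at $d=2$). For the \textbf{sporadic case} ($d=5$), $\rho_{R_{10}}$ is a single explicit $\gT^5$-representation on $\gU(10)$; I would just check by a finite inspection of the $10$ weights that the minimal codimension of a circle fixed-point set is $4$, giving $c(\rho_{R_{10}}) = 4/10 = 2/5 = c(5)$ — this is the representation achieving optimality at rank $5$.

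For the \textbf{decomposable case}, I would show that a $k$-sum $\rho_1 \oplus_k \rho_2$ with summands of rank $d_1, d_2$ (so $d = d_1 + d_2 - (k-1)$) satisfies a recursive bound of the shape $c(\rho_1 \oplus_k \rho_2)^{-1} \ge$ a weighted combination of $c(d_i)^{-1}$ that is $\ge c(d)^{-1}$; concretely, a circle in the $k$-sum restricts to circles (or trivial subgroups) in the two factors, its fixed-point codimension is the sum of the contributions from $V_1$ and $V_2$, and the total dimension is (essentially) $\dim V_1 + \dim V_2$, so the worst case interpolates between $c(d_1)$ and $c(d_2)$; since $c$ is decreasing (visible from the table, or provable from the small-cycle-type estimates), the sum is bounded by $\max(c(d_1), c(d_2)) \le c(d)$ whenever $d_1, d_2 < d$, which holds for $k \in \{2,3\}$, while the $1$-sum needs the genuinely additive estimate $c(\rho_1\oplus_1\rho_2)^{-1}\ge c(d_1)^{-1}+c(d_2)^{-1}$ and one checks $\min_{d_1+d_2=d} (c(d_1)^{-1}+c(d_2)^{-1}) \ge c(d)^{-1}$ from the table. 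Finally, \textbf{optimality}: for each $d \le 9$ I would exhibit the extremal representation — the uniformly-weighted cographic representation of the extremal graph from Theorem \ref{thm:bounds-cogr-matr} for $d \in \{1,2,3,4,6,7,8,9\}$ (these realize $s(d) = c(d)$ in those cases), and $\rho_{R_{10}}$ for $d = 5$. I expect the \textbf{main obstacle} to be the bookkeeping in the decomposable case: verifying that the $k$-sum really does add codimensions and dimensions as claimed (the $3$-sum removes six weights and cuts the torus rank by two, so one must be careful that passing to the effective quotient, as in Remark \ref{rem:sums}(c), does not spoil the dimension count), and checking that the resulting finitely many numerical inequalities among the $c(d)^{-1}$ all hold — a routine but error-prone case check that also secretly relies on monotonicity of $c$, which itself should be extracted cleanly from the small-cycle estimate before the induction begins.
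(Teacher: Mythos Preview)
Your overall architecture matches the paper's: reduce to regular matroids, handle graphic, cographic, and sporadic cases directly, and treat the decomposable case by a recursive estimate. The graphic, cographic, and sporadic computations, and the optimality witnesses, are all correct and agree with the paper.

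The gap is in your decomposable step for $k\in\{2,3\}$. You write that the $k$-sum is ``bounded by $\max(c(d_1), c(d_2)) \le c(d)$ whenever $d_1, d_2 < d$,'' but this inequality goes the wrong way: since $c$ is \emph{decreasing}, $d_i < d$ gives $c(d_i) \ge c(d)$, not $\le$. The interpolation heuristic you sketch actually yields the \emph{additive} inverse bound $c(\rho)^{-1} \ge c(d_1')^{-1} + c(d_2')^{-1}$, as in your $1$-sum case, but with a crucial correction: for $k=2,3$ a circle that acts trivially on the $V_2$-weights must lie in the kernel of $w_1$ (resp.\ of the span $W_1$), so the effective rank on that side drops to $d_1' = d_1 - (k-1)$, and likewise for $d_2'$. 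This is exactly what the paper proves in Lemma~\ref{sec:bounds-decomp-matr}, giving
\[
c_{de}(d)^{-1} \;\ge\; \min\bigl\{\,c(d_1')^{-1}+c(d_2')^{-1} : d_1'+d_2'=d-2,\ d_i'\ge 1\,\bigr\}.
\]
Without this rank drop your recursion does not close.

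Even with the correct recursion there is one more subtlety you miss at $d=7$: the minimum above is attained at $(d_1',d_2')=(1,4)$, giving $1+\tfrac{9}{4}=\tfrac{13}{4} < \tfrac{10}{3}=c(7)^{-1}$, so the recursive bound alone is \emph{insufficient}. The paper closes this by invoking a structural fact about $3$-connected regular matroids (Oxley, Corollary~13.4.6): a rank-$7$ regular matroid that is a $3$-sum with one summand of rank~$3$ but is not itself a $1$- or $2$-sum must in fact be graphic, cographic, or a $3$-sum with both summands of rank~$\ge 4$, and each of these alternatives gives the required $c(7)^{-1}\ge\tfrac{10}{3}$. Your plan would need an analogue of this step.
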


Our proof of Theorem~\ref{thm:cd} uses a deep classification result of Seymour for regular matroids and Theorem~\ref{thm:bounds-cogr-matr}. In Sections \ref{sec:Background} and \ref{sec:Optimization}, we introduce the definitions and results on matroids that we use to prove Theorem \ref{thm:cd} and recast the desired bound in Theorem \ref{thm:cd} in terms of an optimization problem on regular matroids. Given this setup, we prove Theorem \ref{thm:cd} in the sporadic, graphic, cographic, and decomposable cases of Seymour's theorem in Sections \ref{sec:graphic} and \ref{sec:decomposable}. Finally, in Section \ref{sec:6involutions}, we prove Theorem \ref{thm:6involutions}, which is a refinement of Theorem \ref{thm:cd} for $\gT^6$-representations that is required in the proof of Theorem \ref{thm:t9}.

\subsection{Background on matroid theory and Seymour's theorem}
\label{sec:Background}

Finding upper bounds for the codimension of a fixed-point set of a subgroup of a torus in a representation of that torus without finite isotropy groups of even order can be translated to an optimization problem for regular matroids.
In this and the next section, we describe how this translation works.
We start by recalling the basic definitions of matroid theory.
As a general reference for this subject we refer the reader to the book \cite{Oxley_matroids} and the survey articles \cite{welsh_matroids,seymour_survey}.

We start with the definition of a matroid.

\begin{definition}
  A matroid \(M\) is a pair \((E,\mathcal{I})=(E(M),\mathcal{I}(M))\), where \(E\) is a finite set and \(\mathcal{I}\) is a family of subsets of \(E\), such that
  \begin{enumerate}
  \item \(\emptyset\in \mathcal{I}\).
  \item If \(I\in \mathcal{I}\) and \(I'\subset I\), then \(I'\in \mathcal{I}\).
  \item If \(I_1\) and \(I_2\) are in \(\mathcal{I}\) and \(|I_1|<|I_2|\), then there exists an element \(e\in I_2\setminus I_1\) such that \((I_1\cup \{e\})\in \mathcal{I}\).
  \end{enumerate}
  The elements of \(\mathcal{I}\) are called the independent subsets, and the subsets of \(E\) which are not in \(\mathcal{I}\) are called the dependent subsets.
\end{definition}

The two basic examples of matroids are as follows (see \cite[Chapter 1]{Oxley_matroids}).

\begin{example}
  Let \(\mathbb{F}\) be a field, \(V\) an \(\mathbb{F}\)-vector space, and \(E\) a finite collection of vectors in \(V\).
  Denote by \(\mathcal{I}\) the linearly independent subsets of \(E\).

  Then \(M[E]=(E,\mathcal{I})\) is called an \(\mathbb{F}\)-regular or \(\mathbb{F}\)-representable matroid.
  A matroid which is \(\mathbb{F}\)-regular for every field \(\mathbb{F}\) is just called regular.
\end{example}

\begin{example}
  Let \(G\) be a finite directed graph, \(E\) the set of edges of \(G\), and
  \[\mathcal{I}=\{F\in \mathcal{P}(E)\st F \text{ is a forest in } G\}.\]
  Then \(M[G]=(E,\mathcal{I})\) is called a graphic matroid.

  Graphic matroids are \(\mathbb{F}\)-representable for every field \(\mathbb{F}\).
  A representation can be constructed as follows:
  Let \(v_1,\dots,v_n\) be the vertices of \(G\) and \(e_1,\dots,e_m\) be the edges of \(G\). Then let \(f_1,\dots,f_m\in \mathbb{F}^n\) be the vectors with entries
  \begin{equation*}
    f_{ij}=
    \begin{cases}
      1&\text{if } e_i \text{ ends in } v_j,\\
      -1&\text{if } e_i \text{ starts in } v_j,\\
      0& \text{else},
    \end{cases}
  \end{equation*}
  if \(e_i\) is not a loop and \(f_i=0\) if \(e_i\) is a loop.
  Then it can be shown that \(M[G]\) and \(M[f_1,\dots,f_m]\) are isomorphic.
  Hence \(M[G]\) is regular.

  Note that the matrix with columns \(f_1,\dots,f_m\) represents the boundary operator in the cellular chain complex \(C_*(G)\) of \(G\) (viewed as a CW-complex) with respect to the standard basis.
\end{example}

There is the following characterization of regular matroids which goes back to Tutte (see \cite[Theorem 6.6.3]{Oxley_matroids}).

\begin{theorem}
  \label{sec:char-regular-matroid}
  A matroid is regular if and only if it is representable over \(\mathbb{Z}_2\) and $\Q$.
\end{theorem}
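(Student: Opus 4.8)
The forward direction needs no work: a regular matroid is by definition $\mathbb{F}$-representable for \emph{every} field $\mathbb{F}$, hence in particular over $\Z_2$ and over $\Q$. For the converse, the plan is to reduce everything to two observations. First, a matroid represented over $\Q$ by a \emph{totally unimodular} matrix $N$ — one all of whose square submatrices have determinant in $\{0,\pm 1\}$ — is regular: for any field $\mathbb{F}$, reducing $N$ modulo the characteristic of $\mathbb{F}$ represents the same matroid, since a set of columns is independent in $M$ exactly when it contains a maximal square submatrix of nonzero determinant, and such a determinant, being $\pm 1$, stays nonzero over $\mathbb{F}$, while a dependent set has all of its maximal square subdeterminants equal to $0$. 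Second, it therefore suffices to prove: \emph{if $M$ is representable over both $\Z_2$ and $\Q$, then $M$ admits a totally unimodular representation over $\Q$} (the argument uses $\Q$ only through $\mathrm{char}=0\neq 2$, so any field of characteristic other than two would do).

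To construct such a representation I would fix a basis $B$ of $M$ and write the given representations in standard form $[I_r\mid A]$ over $\Z_2$ and $[I_r\mid D]$ over $\Q$, both with respect to $B$. The support of the $j$-th column of a standard matrix is precisely the fundamental circuit of the $j$-th nonbasis element with respect to $B$, and that circuit is an invariant of $M$; hence $\operatorname{supp}(D)=\operatorname{supp}(A)$, and in particular $D\bmod 2=A$ still represents $M$. The next step invokes the rigidity (uniqueness of representations) of binary matroids: rescaling the rows and columns of $[I_r\mid D]$ by nonzero rationals alters neither $M$ nor the support of $D$, and normalizing so that a spanning forest of the bipartite support graph of $D$ carries the value $1$ forces every entry of $D$ to become $0$ or $\pm 1$. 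Writing $N=[I_r\mid D']$ for the resulting $\{0,\pm 1\}$-matrix, it represents $M$ over $\Q$, reduces modulo $2$ to $[I_r\mid A]$, and — by the same uniqueness applied over $\Z_3$ — reduces modulo $3$ to another representation of $M$.

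What remains, and what carries the real weight of Tutte's theorem, is to show that $N$ is totally unimodular. The classical argument shows that a square submatrix of $N$ of minimal size with determinant outside $\{0,\pm1\}$ must have exactly two nonzero entries in each row and column, hence determinant $\pm 2$; one then plays the reductions of $N$ modulo $2$ and modulo $3$ against each other and against an induction on $|E(M)|$ — legitimate because deletion and contraction preserve all the hypotheses — to rule this out. Equivalently, one may finish through Tutte's homotopy theorem and the chain-group formalism, or by combining the above setup with Tutte's excluded-minor characterization of regular matroids: $M$ has no $U_{2,4}$ minor because it is binary, and no $F_7$ or $F_7^{\ast}$ minor because $\Q$-representability is minor-closed while neither $F_7$ nor $F_7^{\ast}$ is $\Q$-representable.

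I expect this final step — establishing total unimodularity of the normalized $\{0,\pm1\}$-matrix — to be the main obstacle. The reduction to totally unimodular matrices and the standard-form bookkeeping are routine, and the binary uniqueness input can be cited as a black box; but the passage from "$N$ represents $M$ over $\Z_2$ and $\Q$" to "all subdeterminants of $N$ lie in $\{0,\pm 1\}$" is exactly where the congruence information alone is insufficient and one must bring in an inductive, minor-theoretic argument or Tutte's homotopy machinery.
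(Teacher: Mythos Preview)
The paper does not give its own proof of this theorem: it is stated as a classical result due to Tutte and simply cited from Oxley's textbook \cite[Theorem 6.6.3]{Oxley_matroids}. So there is no ``paper's proof'' to compare against.

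Your proposal is a reasonable sketch of the standard proof found in Oxley, via totally unimodular matrices. One point to tighten: normalizing along a spanning forest of the bipartite support graph makes those particular entries equal to $1$, but it does \emph{not} by itself force the remaining entries of $D'$ to lie in $\{0,\pm 1\}$; that already requires a nontrivial argument exploiting binarity (essentially that every cycle in the support graph has an even number of $-1$'s, which one proves inductively using the $\mathbb{Z}_2$-representation). You correctly identify that the passage to total unimodularity is where the real content lies, and your two suggested routes --- the inductive subdeterminant argument or the excluded-minor characterization ruling out $U_{2,4}$, $F_7$, $F_7^*$ --- are exactly the standard ones. Either would complete the proof, but both require substantial machinery that the paper (reasonably) treats as a black box.
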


As for sets of vectors, there is a rank function for the matroid \(M=(E,\mathcal{I})\). It assigns to a subset \(A\subset E\) the cardinality of a maximal independent subset \(I\subset A\).
This cardinality is independent of the choice  of the maximal independent subset of \(A\).
In particular, the maximal independent subsets of \(E\) all have the same cardinality.
These subsets are called {\it bases} of \(M\).
The rank \(\rank E\) of $E$ is also denoted \(\rank M\).

Moreover, motivated by the situation of a representable matroid, the maximal sets \(H\subset E\) with \(\rank(H)=\rank(M)-1\) are called {\it hyperplanes} of \(M\). They will play a special role in our optimization problem. Similarly, motivated by the situation of a graphic matroid the minimal dependent subsets of a matroid are called {\it circuits}. For graphic matroids these are just the (simple) cycles of the graph.

For every matroid, there is the notion of a {\it dual matroid}.
It is defined as follows (see \cite[Chapter 2]{Oxley_matroids}):

\begin{thm_def}
  Let \(M\) be a matroid on a set \(E\), and let $\mathcal B$ be the set of bases of $M$. The set $\mathcal B^* = \{E \setminus B \st B \in \mathcal B\}$ is a set of bases for a matroid on $E$. This matroid is denoted by $M^*$ and is called the dual of $M$.     In particular, the independent sets in $M^*$ are the subsets of the form $E \setminus A$ such that $A\supseteq B$ for some $B \in \mathcal B$.

\end{thm_def}

Duality is an important notion, and we state some properties we need later. First, by definition, the dual of the dual is the original matroid. 

Second, $A \subseteq E$ is a hyperplane of $M$ if and only if $E \setminus A \subseteq E$ is a circuit of $M^*$ (see \cite[Proposition 2.1.6]{Oxley_matroids}).

Third, the duals of \(\mathbb{F}\)-representable matroids are \(\mathbb{F}\)-representable.

Fourth, the dual of a graphic matroid \(M[G]\) is not graphic in general. It is graphic if and only if \(G\) is a planar graph, and in this case the matroid $M[G]$ is called planar and its dual is the graphic matroid of the dual graph of $G$. The duals of graphic matroids are called {\it cographic}.

Using cellular cohomology a representation over a field \(\mathbb{F}\) for a cographic matroid \(M^*[G]\) can be constructed as follows.
Let \(V=H^1(G;\mathbb{F})=C^1(G;\mathbb{F})/d^*(C^0(G;\mathbb{F}))\)
Then we identify the edges of \(G\) with the images of the elements of the standard basis of \(C^1(G;\mathbb{F})\) in \(H^1(G;\mathbb{F})\).
Using the relation of \(M[G]\) to the boundary operator in the cellular chain complex for \(G\) and the definition of the dual of a matroid, we see that this identification, leads to a representation of \(M^*[G]\) in \(V\). Note in particular that $M^*[G]$ has rank equal to the first Betti number of the graph $G$.

There is a deep classification result for regular matroids due to Seymour \cite{Seymour80}.
To state it, we have to define certain constructions of new matroids from old ones called $k$-sums for $k \in \{1,2,3\}$. Here we only describe these constructions for \(\mathbb{F}_2\)-regular matroids.
For a general discussion see \cite{Oxley_matroids}, \cite{seymour_survey}, or \cite{Seymour80}.

Let \(M_i=(E_i,\mathcal{I}_i)\), \(i=1,2\) be \(\mathbb{F}_2\)-representable matroids, say \(E_i\subset V_i\) for \(\mathbb{F}_2\)-vector spaces \(V_i\).

\begin{enumerate}
\item The {\it $1$-sum} \(M_1\oplus_1 M_2\) is given by \(M[E_1\cup E_2]\), where the \(E_i\) are viewed as collections of vectors in \(V_1\oplus V_2\).

\item Assume that $|E_i| \geq 2$ and that there are non-zero elements \(v_i\in E_i\) for $i \in \{1,2\}$.  The {\it $2$-sum} \(M_1\oplus_2 M_2\) at the \(v_i\) is defined as \(M[E']\) where \(E'\) is image of \(E_1\cup E_2\setminus \{v_1,v_2\}\) in \((V_1\oplus V_2)/\langle v_1-v_2\rangle\)  (see \cite[Proposition 7.1.20]{Oxley_matroids}).

\item Assume that $|E_i| \geq 7$ and that there are two-dimensional subspaces \(W_i\subset V_i\) such that \(W_i\setminus\{0\}\subset E_i\) for $i \in \{1,2\}$, and fix an identification \(W_i\cong \mathbb{F}^2_2\). The {\it $3$-sum} \(M_1\oplus_3M_2\) is defined as \(M[E'']\), where \(E''\) is the image of \((E_1\cup E_2)\setminus(W_1\cup W_2)\) in \((V_1\oplus V_2)/W\) and where \(W\) is the diagonal \(\mathbb{F}_2^2\) in \(W_1\oplus W_2\cong \mathbb{F}_2^2\oplus \mathbb{F}^2_2\) (see \cite[Lemma 9.3.3]{Oxley_matroids}).
\end{enumerate}

Note that the last two constructions depend on the choice of the \(v_i\) and \(W_i\) (and the identifications of the latter), respectively.
Note, moreover, that \(M_1\oplus_k M_2\) for $k \in \{1,2,3\}$ is graphic (or cographic) if and only if \(M_1\) and \(M_2\) are graphic (or cographic, respectively). 
However it is not true that the dual of a \(3\)-sum is the \(3\)-sum of the duals of the summands.

Now we can state Seymour's theorem.

\begin{theorem}[Seymour's classification of regular matroids, \cite{Seymour80}]
\label{sec:backgr-matr-theory}
  Every regular matroid is (at least) one of the following:
  \begin{enumerate}
  \item a graphic matroid, 
  \item a cographic matroid, 
  \item the sporadic matroid $R_{10}$, or
  \item a (non-trivial) $k$-sum of regular matroids for some $k \in \{1,2,3\}$.
  \end{enumerate}
\end{theorem}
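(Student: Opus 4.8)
This is Seymour's decomposition theorem, so the plan is to invoke \cite{Seymour80} directly rather than to reprove it; for orientation we only indicate the shape of Seymour's argument, which explains why a result of this depth is needed here.

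First I would reduce to the $3$-connected case. By the Cunningham--Edmonds tree decomposition (see \cite{Oxley_matroids}), every matroid is obtained by iterated $1$- and $2$-sums from $3$-connected matroids together with circuits and cocircuits, which are all visibly graphic; moreover a $1$- or $2$-sum of regular matroids is regular and, conversely, the pieces in the decomposition of a regular matroid are again regular. Hence it suffices to prove that every $3$-connected regular matroid is graphic, cographic, isomorphic to $R_{10}$, or a genuine $3$-sum of two smaller regular matroids.

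The main tool for the $3$-connected case is Seymour's Splitter Theorem: if $N$ is a $3$-connected proper minor of a $3$-connected matroid $M$, with $N$ not a small wheel or whirl, then $M$ can be reached from $N$ by a chain of single-element extensions and coextensions that is $3$-connected at every stage. One applies this with $N=R_{10}$ and, in the harder part, with $N$ one of $M(K_5)$, $M(K_{3,3})$ or their duals, while tracking the $3$-connected regular matroid $R_{12}$ (which is neither graphic nor cographic and carries a $3$-separation). The key facts to establish along the way are: (i) $R_{10}$ is a \emph{splitter} for the class of regular matroids --- it has no $3$-connected regular single-element extension or coextension --- so any $3$-connected regular matroid with an $R_{10}$-minor equals $R_{10}$; and (ii) any $3$-connected regular matroid with an $R_{12}$-minor admits an exact $3$-separation that displays it as a $3$-sum of two smaller regular matroids.

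The remaining, and by far the hardest, step is to show that a $3$-connected regular matroid $M$ with no minor isomorphic to $R_{10}$ or $R_{12}$ is graphic or cographic. Using Tutte's excluded-minor characterization --- recall from Theorem~\ref{sec:char-regular-matroid} that regular means representable over both $\mathbb{F}_2$ and $\mathbb{Q}$, equivalently binary with no $F_7$ or $F_7^*$ minor --- one first locates inside $M$ a graphic or cographic minor such as $M(K_5)$, $M(K_{3,3})$, $M^*(K_5)$ or $M^*(K_{3,3})$; then, building $M$ up from this minor via the Splitter Theorem, one analyzes every way a $3$-connected single-element extension or coextension can fail to remain graphic or cographic, and shows that each such failure forces an $R_{10}$- or $R_{12}$-minor, a contradiction. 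Organizing this analysis --- around $\Delta$-$Y$ and $Y$-$\Delta$ exchanges, around the structure of $3$-separations, and around the interaction between the graphic and cographic sides --- is long and delicate; this is the main obstacle and constitutes the heart of \cite{Seymour80}. Assembling the three steps yields the stated classification.
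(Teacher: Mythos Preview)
Your proposal is correct and matches the paper's approach: the paper does not prove this theorem at all but simply cites \cite{Seymour80} as background and moves on, which is exactly what you do. Your additional sketch of the structure of Seymour's argument (reduction to the $3$-connected case, the Splitter Theorem, the roles of $R_{10}$ and $R_{12}$) is accurate and goes beyond what the paper provides, but it is expository rather than a different proof.
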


Here the sporadic matroid $R_{10}$ can be represented by the following matrix:
\[A_{R_{10}} = \left(\begin{matrix}1&0&0&0&0&-1&1&0&0&1\\0&1&0&0&0& 1&-1&1&0&0 \\0&0&1&0&0& 0&1&-1&1&0 \\0&0&0&1&0& 0&0&1&-1&1\\0&0&0&0&1&1&0&0&1&-1\end{matrix}\right),\]

Note in particular that regular matroids of rank six or larger are graphic, cographic, or decomposable as a $k$-sum. On the other hand, it is not hard to see that all (simple) regular matroids of rank at most four are graphic with one exception, the cographic matroid $M^*[K_{3,3}]$ associated to the Kuratowski graph $K_{3,3}$ (see Example \ref{exa:cigUpTo5}).

\subsection{An optimization problem for matroids}
\label{sec:Optimization}

Having introduced the necessary definitions and results from matroid theory, we can state our optimization problem. We first do this for \(\mathbb{Q}\)-representable matroids. Let
\[H=[h_1\,\dots\,h_{n+d}] \in \mathbb{Z}^{d\times (d+n)}\]
a matrix with \(d\) rows and \(d+n\) columns such that every invertible \(d\times d\) submatrix \(H'\) of \(H\) has odd determinant.

So for any \(d\times d\) submatrix \(H'\) of \(H\) we have 
\begin{equation}
  \label{eq:submatrix}
  \det H'\in \{0 \} \cup (2\mathbb{Z}+1).
\end{equation}

Let, moreover, \(\lambda=(\lambda_1,\dots,\lambda_{n+d})\in [0,1]^{n+d}\) such that
\[\sum_{i=1}^{n+d} \lambda_i =1.\]
Define
\(f_{H,\lambda}:(\mathbb{Q}^d)^*\rightarrow [0,1]\) by
\begin{equation}
  \label{eq:1}
  f_{H,\lambda}(v)=\sum_{\; \langle v, h_i \rangle \neq 0} \lambda_i=1-\sum_{\; \langle v, h_i \rangle = 0}\lambda_i
\end{equation}
and define
\[c(H,\lambda)= \min_{v\in (\mathbb{Q}^d)^*-\{0\}} f_{H,\lambda}(v).\]

Note that this minimum is zero for any choice of $\lambda$ if the columns of \(H\) do not span \(\mathbb{Q}^d\). Therefore, in the following, we assume that \(\rank H = d\). Note, moreover, that we can define a similar optimization problem for matrices \(H\) with coefficients in any field and weight vectors \(\lambda\) as above.

We are interested in finding {\it upper bounds} \(c(d)\) for \(c(H,\lambda)\) which {\it only depend on the dimension} \(d\). We can translate this optimization problem to the language of general matroids.
This goes as follows.

Let \(M\) be the matroid represented by \(H\). By Condition \eqref{eq:submatrix}, a subset of columns of \(H\) are linearly independent if and only if their reductions modulo two are linearly independent. Therefore \(M\) is both $\Q$-representable and $\mathbb{Z}_2$-representable. By Theorem \ref{sec:char-regular-matroid}, it follows that $M$ is a regular matroid.

Next, \(\lambda\) can be interpreted as a weight vector \(\lambda\in \mathbb{R}^{E(M)}\) or equivalently a probability measure on \(E(M)\). We denote this measure also by \(\lambda\). Finding \(c(H,\lambda)\) is equivalent to finding the minimum of
	\[1-\lambda(A)=\lambda(E(M)\setminus A),\]
where \(A\) runs through the hyperplanes of \(M\), that is, maximal subsets $A \subseteq E(M)$ having rank $\rank(M) - 1 = d-1$.

We then define, for a regular matroid \(M\), the invariant
	\[c(M)=c(H)=\sup_\lambda c(H,\lambda)=\sup_\lambda \min_A \lambda(E(M)\setminus A),\]
where $H$ is a matrix representing $M$ and where the supremum is taken over all probability measures \(\lambda\) on \(E(M)\) and \(A\) runs through the hyperplanes of \(M\).

As mentioned before, \(A\subset E(M)\) is a hyperplane of \(M\) if and only if \(E(M)\setminus A\) is a circuit of the dual matroid \(M^*\). Therefore we also have
\[c(M)=\sup_\lambda \min_C \lambda(C),\]
where the supremum is taken over all probability measures \(\lambda\) on \(E(M)\) and \(C\) runs through the circuits of \(M^*\). For this reason, the matroid invariant $c(M)$ is called the cogirth (see \cite{CrenshawOxley-pre}). 

We also note that the cogirth $c(M)$ of a cographic matroid $M = M^*[G]$ associated to a graph $G$ equals the systole
	\[\sys(G) = \sup_\lambda \min_C \lambda(C)\]
        of the graph $G$, where the supremum runs over probability measures $\lambda$ on the set of edges in $G$ and the infimum runs over cycles $C$ in the graph $G$. 
This is because the circuits of a graphic matroid \(M[G]\) are precisely the cycles of the graph \(G\).

\begin{example}
\label{sec:an-optim-probl}
  We are in particular interested in the following situation:
  Let $\rho$ be a faithful representation of $\gT^d$ on $W$ without finite isotropy groups of even order. 
  Let \(H\) be the \emph{weight matrix} of \(W\), that is, the matrix whose columns are the weights of \(W\). The assumption on isotropy groups is equivalent to Condition \eqref{eq:submatrix} on the determinants of submatrices. In particular, $H$ represents a regular matroid $M(\rho)$. 
  Finally, let
  	\[\lambda_i=\dim W_i/\dim W,\]
where \(W_i\) is the weight space of \(W\) corresponding to the \(i\)-th column of \(H\). The $\lambda_i$ may be viewed as normalized multiplicities of the irreducible subrepresentations of $\rho$. 
Given this setup, we have that the geometric quantity
	\[c(\rho) = \min_{\gS^1\subset \gT^d} \cod W^{S^1}/\dim W\]
from Definition \ref{def:c} that we wish to bound is related to our optimization problem by
	\[c(\rho) = c(H,\lambda).\]
In particular, $c(\rho) > 0$ if and only if $W$ is a faithful representation. Therefore, to prove Theorem \ref{thm:cd} on the codimensions of fixed-point sets of these special types of torus representations, it suffices to replace $\rho:\gT^d \to \SO(W)$ by an arbitrary regular matroid $M$ of rank $d$, and to prove $c(\rho)$ is bounded above by a constant $C$, it suffices to prove that $c(H) = \sup_\lambda c(H,\lambda) \leq C$.
\end{example}

Having translated the claim in Theorem \ref{thm:cd} into the language of matroids, we are ready to prove Theorem \ref{thm:cd} in Sections \ref{sec:graphic} and \ref{sec:decomposable}. Finally Section \ref{sec:6involutions} contains the proof of Theorem \ref{thm:6involutions}.

\subsection{The graphic, cographic, and sporadic cases}
\label{sec:graphic}

In this section we prove Theorem~\ref{thm:cd} in the graphic and sporadic cases. 

\begin{proposition}
\label{sec:graphic-1}
  For \(d\geq 1\) we have
  \[c_g(d)\leq \frac{2}{d+1}.\]
  Here \(c_{g}(d)\) is defined as
  \[c_{g}(d)=\sup\{c(M[G])\st  M[G] \text{ is a graphic matroid of rank } d\}.\]
\end{proposition}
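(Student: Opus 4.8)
The plan is to unwind the definition of $c(M[G])$ into a statement about edge cuts of $G$ and then run a handshake (double-counting) argument over the vertices. Recall from Section~\ref{sec:Optimization} that $c(M)=\sup_\lambda \min_C \lambda(C)$, where $\lambda$ ranges over probability measures on $E(M)$ and $C$ ranges over the circuits of the dual matroid $M^*$; and that for a graphic matroid $M=M[G]$ the dual $M^*[G]$ is cographic, with circuits exactly the \emph{bonds} of $G$, i.e. the minimal non-empty edge cuts (equivalently, $B$ is a bond iff $E(G)\setminus B$ is a hyperplane of $M[G]$). Thus it suffices to show: for every probability measure $\lambda$ on $E(G)$ there is a bond $B$ with $\lambda(B)\le \tfrac{2}{d+1}$, where $d=\rank M[G]$.

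First I would make two harmless reductions. Deleting all loops of $G$ changes neither $\rank M[G]$ nor the set of bonds of $G$ (loops lie in no bond), hence does not change $c(M[G])$; likewise, deleting isolated vertices changes nothing. After these reductions $G$ is loopless, has no isolated vertices, its components $G_1,\dots,G_c$ each carry at least one edge, and $\rank M[G]=|V(G)|-c$; in particular $|V(G)|\ge d+1$.

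Now for a vertex $v$ let $\mathrm{st}(v)\subseteq E(G)$ denote the set of edges incident to $v$. Since $G$ is loopless, each edge has exactly two endpoints, so $\sum_{v\in V(G)}\lambda(\mathrm{st}(v))=2$, whence some vertex $v$ satisfies $\lambda(\mathrm{st}(v))\le 2/|V(G)|\le 2/(d+1)$. The set $\mathrm{st}(v)$ is a non-empty edge cut of $G$: indeed its complement $E(G)\setminus \mathrm{st}(v)=E(G-v)$ spans a flat of $M[G]$ of rank at most $d-1$, because a spanning forest of $G-v$ has at most $|V(G)|-1-c=d-1$ edges — here one uses that removing a vertex of degree $\ge 1$ does not decrease the number of components. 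Extending that flat to a hyperplane $H$ of $M[G]$, the set $B=E(G)\setminus H$ is a non-empty bond of $G$ contained in $\mathrm{st}(v)$, so by non-negativity of $\lambda$ we get $\min_C\lambda(C)\le \lambda(B)\le \lambda(\mathrm{st}(v))\le 2/(d+1)$. Taking the supremum over $\lambda$ gives $c(M[G])\le 2/(d+1)$, and hence $c_g(d)\le 2/(d+1)$.

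There is no serious obstacle here; the content is essentially the counting identity $\sum_v\lambda(\mathrm{st}(v))=2$, once the matroid translation is in place. The only points needing a little care are the identification of circuits of $M^*[G]$ with bonds of $G$ and the bookkeeping in the disconnected case (which only strengthens the inequality, since then $|V(G)|>d+1$). For completeness I would remark that the bound is attained by $G=K_{d+1}$ with the uniform weight $\lambda\equiv\binom{d+1}{2}^{-1}$: its minimal bonds are the vertex stars, each of weight $d\binom{d+1}{2}^{-1}=2/(d+1)$ — though sharpness is not needed for the proposition itself.
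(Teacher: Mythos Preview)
Your proof is correct and follows essentially the same approach as the paper: both identify the cocircuits of $M[G]$ with the bonds of $G$, observe that each vertex star contains a bond, and use the handshake identity $\sum_v \lambda(\mathrm{st}(v))=2$ to find a vertex star of weight at most $2/(d+1)$. Your version is somewhat more careful in handling loops and disconnected graphs (the paper tacitly assumes $G$ connected on $d+1$ vertices), and you spell out the passage from a cut to a minimal cut via extension to a hyperplane, but the core argument is identical.
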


This proposition is well known (see \cite{CrenshawOxley-pre}). But for the sake of completeness we also give a proof here which was communicated to us by James Oxley.

\begin{proof}
  Let \(G\) be a connected graph with \(d+1\) vertices and edge set \(E\).
  Then the circuits of \(M^*[G]\) are the minimal cut-sets of \(G\), i.e. the minimal sets \(A\subset E\) such that \(G\setminus A\) is disconnected.
  If \(v\) is a vertex of \(G\) then the set \(B_1(v)\setminus v\) of edges adjacent to \(v\) is a cut-set of \(G\). Therefore for every probability measure \(\lambda\) on \(E\) we have
  \[c(M[G],\lambda)\leq \lambda(B_1(v)\setminus v).\]
  Summing over all \(d+1\) vertices of \(G\) and maximizing over \(\lambda\) now gives the result.
\end{proof}

As we already explained the cocircuits of a cographic matroid \(M^*[G]\) are the circuits of $M[G]$ and hence the cycles of the graph \(G\). Moreover, the rank of the cographic matroid \(M^*[G]\) is the first Betti number of \(G\) (see Section \ref{sec:Background}). Therefore the optimal bound on the cogirth of a weighted cographic matroid is equivalent to the systole bound for graphs discussed in Theorem~\ref{thm:bounds-cogr-matr}.

\begin{proposition}
  \label{sec:sporadic-1}
  For the sporadic matroid  \(R_{10}\) we have \(c(R_{10})= \frac{2}{5}\).
\end{proposition}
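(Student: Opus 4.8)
The plan is to work directly with the representing matrix $A_{R_{10}}$ and the optimization problem set up above. Writing $h_1,\dots,h_{10}$ for the columns of $A_{R_{10}}$ and $v=(v_1,\dots,v_5)\in(\Q^5)^*$, and reading the indices of the $v_i$ modulo $5$, the ``covector'' $\big(\langle v,h_1\rangle,\dots,\langle v,h_{10}\rangle\big)$ equals
\[\big(v_1,\;v_2,\;v_3,\;v_4,\;v_5,\;v_5-v_1+v_2,\;v_1-v_2+v_3,\;v_2-v_3+v_4,\;v_3-v_4+v_5,\;v_4-v_5+v_1\big),\]
so that $f_{A_{R_{10}},\lambda}(v)$ is the $\lambda$-weight of the set of nonzero entries of this tuple, and $c(R_{10})=\sup_\lambda\min_{v\neq0}f_{A_{R_{10}},\lambda}(v)$. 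I will first show that every nonzero covector has at least four nonzero entries, with equality attained (so the cogirth of $R_{10}$ is $4$), which yields $c(R_{10})\geq\frac25$ via the uniform weight. Then, for every $\lambda$, I will exhibit a covector whose set of nonzero entries has $\lambda$-weight at most $\frac25$.

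For the lower bound I would split into cases according to the number $k$ of nonzero entries among $v_1,\dots,v_5$. If $k\geq4$, the first five coordinates already furnish four nonzero entries. If $k=3$, the covector cannot have exactly three nonzero entries: its last five coordinates are the image of $v$ under the $5\times5$ circulant matrix with first row $(-1,1,0,0,1)$, whose eigenvalues $1$ and $2\cos(2\pi j/5)-1$ for $j=1,\dots,4$ are all nonzero, so $v\neq0$ forces at least one of these coordinates to be nonzero as well. If $k=2$, the cyclic symmetry $e_i\mapsto e_{i+1}$ — which permutes the columns of $A_{R_{10}}$ and is therefore a matroid automorphism — reduces to $v$ supported on $\{1,2\}$ or on $\{1,3\}$, and in each case a direct substitution gives a support of size $4$ or $6$. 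If $k=1$, the same symmetry reduces to $v=t\,e_1$, whose covector $(t,0,0,0,0,-t,t,0,0,t)$ has support $\{1,6,7,10\}$ of size exactly $4$. Hence the cogirth of $R_{10}$ is $4$, and taking $\lambda_j=\frac1{10}$ gives $c(A_{R_{10}},\lambda)=\frac{4}{10}=\frac25$, so $c(R_{10})\geq\frac25$.

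For the upper bound I would fix an arbitrary probability weight $\lambda$ on the ten columns and evaluate $f_{A_{R_{10}},\lambda}$ at the five covectors $v=e_i+e_{i+1}$, $i\in\Z/5$. Substituting into the displayed formula shows that the corresponding sets of nonzero entries are
\[E_1=\{1,2,8,10\},\quad E_2=\{2,3,6,9\},\quad E_3=\{3,4,7,10\},\quad E_4=\{4,5,6,8\},\quad E_5=\{1,5,7,9\},\]
and one checks that each of the ten columns lies in exactly two of $E_1,\dots,E_5$. Therefore
\[\sum_{i\in\Z/5}f_{A_{R_{10}},\lambda}(e_i+e_{i+1})=\sum_{i\in\Z/5}\lambda(E_i)=2\sum_{j=1}^{10}\lambda_j=2,\]
so one of the five terms is at most $\frac25$, whence $c(A_{R_{10}},\lambda)\leq\frac25$; taking the supremum over $\lambda$ gives $c(R_{10})\leq\frac25$.

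Apart from these two inputs the argument is routine. The only step that needs a small idea is locating the five covector supports $E_1,\dots,E_5$ — equivalently, a family of five cocircuits of size four covering the ground set exactly twice — which is more or less forced by the cyclic symmetry once one observes that the single-coordinate covectors $e_1,\dots,e_5$ by themselves cover the first five columns unevenly. (Alternatively, the lower bound can be obtained by quoting the known facts from matroid theory that $R_{10}$ is self-dual of girth $4$.)
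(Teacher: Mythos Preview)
Your proof is correct and follows essentially the same approach as the paper: both use the five covectors $e_i+e_{i+1}$, $i\in\Z/5$, and average to get the upper bound $c(R_{10})\leq\tfrac25$, then observe that the uniform weight realizes equality. Your argument is more detailed than the paper's on the lower bound---the paper simply asserts that equal weights attain $\tfrac25$, implicitly relying on the known fact that $R_{10}$ has cogirth $4$, whereas you supply a self-contained verification via the case analysis on the support size $k$ and the invertibility of the circulant.
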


\begin{proof}
  Denote by \(e_1,\dots,e_5\) the standard basis of \(\mathbb{F}_2^5\), by \(e_1^*,\dots,e_5^*\) the dual basis of  \((\mathbb{F}_2^5)^*\) and by \(f_1,\dots,f_5\) the last five columns of \(A_{R_{10}}\).
  Let \(\lambda\) be a probability measure on the set of columns of \(A_{R_{10}}\).
  Then we have, for $i \in \{1,\ldots,5\}$,
  \(f_{A_{R_{10}},\lambda}(e_i^*+e_{i+1}^*)=\lambda(e_i)+\lambda(e_{i+1})+\lambda(f_{i-1})+\lambda(f_{i+2}),\)
  where we think of the indices as elements of \(\mathbb{Z}/5\mathbb{Z}\).
  Therefore by averaging over \(i=1,\dots,5\) we get the upper bound.
  It is attained if all columns of \(A_{R_{10}}\) have the same weight.
\end{proof}

Since $R_{10}$ is the only sporadic matroid in Seymour's classification, the computation of $c(R_{10})$ is all we need to do for $c(d)$ in the sporadic case. Hence Theorem \ref{thm:cd} holds in the sporadic case.

\subsection{The decomposable case}
\label{sec:decomposable}

In this section, we prove Theorem \ref{thm:cd} in rank $d$ in the decomposable case of Seymour's theorem, under the assumptions that $d \leq 9$ and that Theorem \ref{thm:cd} holds for ranks less than $d$. The main step is the following (see \cite{ChoChenDing07} for the case when $k = 1$):

\begin{proposition}
  \label{pro:bounds-decomp}
  For \(d\geq 4\) we have
  \begin{equation*}
    c_{de}(d)^{-1}\geq \min \{c(d_1)^{-1}+c(d_2)^{-1}\st  d-2 = d_1+d_2 \mathrm{~and~}d_1,d_2\geq 1\}.
  \end{equation*}
  Here \[c_{de}(d)=\sup\{c(M)\st  M \text{ is a decomposable regular matroid of rank } d\}.\]
  Here by a decomposable matroid we mean a matroid \(M\) such that the following holds:
  \begin{enumerate}
  \item Any two elements of \(E(M)\) are independent.
  \item \(M=M_1\oplus_k M_2\) decomposes as a \(k\)-sum, \(k\in \{1,2,3\}\), such that \(M\) is not isomorphic to \(M_1\) or \(M_2\).
  \end{enumerate}
  
  If in the definition of \(c_{de}(d)\) we restrict to graphic (or cographic) matroids, then we can replace the \(c(d_i)\) on the right hand side of the above inequality by the corresponding bound for graphic (or cographic, respectively) matroids.
\end{proposition}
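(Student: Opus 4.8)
The plan is to prove, assuming Theorem~\ref{thm:cd} in all ranks smaller than $d$, that every non-trivial $k$-sum $M=M_1\oplus_k M_2$ of regular matroids with $\rank M=d$ (where $k\in\{1,2,3\}$) satisfies the recursion
\[
c(M)^{-1}\ \geq\ c\big(\rank M_1-(k-1)\big)^{-1}+c\big(\rank M_2-(k-1)\big)^{-1}.
\]
Granting this, the Proposition follows: for a $k$-sum one has $\rank M_1+\rank M_2=d+(k-1)$, so the two arguments on the right add up to $d-(k-1)\geq d-2$; since the values $c(1),c(2),\dots$ are non-increasing, the quantity $\min\{c(a)^{-1}+c(b)^{-1}:a+b=e,\ a,b\geq1\}$ is a non-decreasing function of $e$ (replacing a minimizing pair $(a,b)$ for $e$ by $(a{+}1,b)$ can only increase the sum), so the recursion dominates the right-hand side of the Proposition, with the worst case $k=3$ giving equality. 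The restriction $\rank M_i-(k-1)\geq1$ holds because the size hypotheses in the definition of a $k$-sum force $\rank M_i\geq k$ for a non-trivial simple $M$, and $\rank M_i-(k-1)<d$, so the inductive hypothesis applies. By Example~\ref{sec:an-optim-probl} it suffices to prove the displayed inequality in the torus language, for a representation $\rho=\rho_1\oplus_k\rho_2$ of $\gT^d$; after passing to effective quotients we may assume each $\rho_i$ is faithful (the weight or weights used in the $k$-sum descend, so $\rho$ is unchanged).

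The geometric core is the following. Let $K_1\subseteq\gT^{d_1}$ be the codimension-$(k-1)$ subtorus cut out by the weights used on the first side of the construction: $K_1=\gT^{d_1}$ if $k=1$, $K_1=\ker\rho_{w_1}$ if $k=2$, and $K_1=\ker\rho_{w_{1,1}}\cap\ker\rho_{w_{1,2}}$ if $k=3$ (the third weight $w_{1,3}=-(w_{1,1}{+}w_{1,2})$ imposing no new condition). Under the inclusion of tori built into the definition of the $k$-sum, $K_1$ is a subtorus of $\gT^d$, and the weight formulas in Section~\ref{sec:classification} show that every weight of $\rho$ coming from the second side vanishes on $K_1$. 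Hence $K_1$ acts trivially on every second-side weight space, so for each circle $\gS^1\subseteq K_1$ the fixed set $V^{\gS^1}$ contains all of those weight spaces and $\cod V^{\gS^1}$ equals the codimension of the $\gS^1$-fixed subspace in the first-side subrepresentation $\rho_1'$, namely the restriction of $\rho_1$ to $K_1$ with the weight(s) used in the $k$-sum deleted. Because $\dim K_1=d_1-(k-1)$, because the deleted weights span exactly the $(k-1)$-dimensional annihilator of $K_1$, and because $\rho_1$ is faithful, the representation $\rho_1'$ is faithful of torus-rank $d_1-(k-1)$; its weight matrix is a minor of that of $\rho_1$ and hence represents a regular matroid of that rank.

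Now fix a weight function $\lambda$ on the weights of $\rho$, and let $t\in[0,1]$ be the fraction of $\dim V$ carried by the first-side weight spaces, so that $1-t$ is the fraction on the second side (the two sides partition the weights of the faithful representation $\rho$). Restricting the minimization defining $c(\rho)$ to circles inside $K_1$, and applying the inductive hypothesis to the rank-$(d_1-(k-1))$ representation $\rho_1'$, we obtain
\[
c(\rho)\ \leq\ \min_{\gS^1\subseteq K_1}\frac{\cod V^{\gS^1}}{\dim V}\ =\ t\cdot c(\rho_1')\ \leq\ t\cdot c\big(d_1-(k-1)\big),
\]
and symmetrically $c(\rho)\leq(1-t)\,c(d_2-(k-1))$. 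Optimizing over $t$ gives
\[
c(\rho)\ \leq\ \max_{t\in[0,1]}\min\!\Big(t\,c(d_1{-}(k{-}1)),\ (1-t)\,c(d_2{-}(k{-}1))\Big)\ =\ \Big(c(d_1{-}(k{-}1))^{-1}+c(d_2{-}(k{-}1))^{-1}\Big)^{-1},
\]
which is the recursion. The same computation proves the graphic (resp.\ cographic) variant: a graphic (resp.\ cographic) matroid that decomposes as a $k$-sum does so into graphic (resp.\ cographic) summands, whose first- and second-side representations are minors of $\rho_i$ and hence again graphic (resp.\ cographic), so the inductive hypothesis supplies the graphic (resp.\ cographic) bound throughout.

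The step I expect to be the main obstacle is the verification in the second paragraph — that $K_1$ annihilates all second-side weights and that $\rho_1'$ has rank exactly $d_1-(k-1)$, not less. For $k=1$ this is trivial, and for $k=2$ it is immediate from the explicit weights (and can also be read off from the fact that the dual of a $2$-sum is the $2$-sum of the duals). For $k=3$ it rests on the single linear relation $w_{1,1}+w_{1,2}+w_{1,3}=0$ among the chosen weights, which forces them to span a plane matching the codimension of $K_1$, together with faithfulness of $\rho_1$; it is perhaps cleanest to carry this out on the matroid side, identifying $\rho_1'$ with the minor of $M_1$ obtained by contracting $\{w_{1,1},w_{1,2}\}$ and deleting $w_{1,3}$, and invoking the minor description of $k$-sums in \cite{Oxley_matroids}. (The case $k=1$ of the recursion is \cite{ChoChenDing07}.)
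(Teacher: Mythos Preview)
Your proof is correct and follows essentially the same approach as the paper's. The paper packages the core step as a separate lemma (Lemma~\ref{sec:bounds-decomp-matr}) phrased over $\mathbb{F}_2$, using the dual-space splitting $((V_1\oplus V_2)/W)^*\cong(V_1/W_1)^*\oplus(V_2/W_2)^*\oplus(\mathbb{F}_2^{k-1})^*$ and restricting to linear forms in the first summand; your subtorus $K_1\subseteq\gT^d$ is exactly the torus-side incarnation of that summand, and your estimate $c(\rho)\leq t\cdot c(\rho_1')$ is the paper's $c(M)\leq c(M_1')\lambda(E_1\setminus W_1)$. One small point: the rank identity you use should in general read $\rank M_1+\rank M_2\geq d+(k-1)$ (the paper's lemma states only $\rank M\leq\rank M_1+\rank M_2-(k-1)$), but since you immediately invoke monotonicity of $c$ this does not affect the argument.
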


Before proving this result, we explain how to conclude Theorem \ref{thm:cd} in the decomposable case (assuming the theorem in smaller ranks). 

First, if a decomposable matroid is also graphic, cographic, or sporadic, then Theorem \ref{thm:cd} holds by the proof in these cases. In particular, since all regular matroids of rank at most five are one of these types, we may assume the rank $d \geq 6$.

Second, we may assume inductively that Theorem \ref{thm:cd} holds in ranks less than $d$. Hence for $d = 6$, for example, Proposition \ref{pro:bounds-decomp} implies that
	\[c_{de}(6)^{-1} \geq \min\of{c(1)^{-1} + c(3)^{-1}, c(2)^{-2} + c(2)^{-1}}.\]
Since $c(1) = 1$, $c(2) = 2/3$, and $c(3) = 1/2$ by the inductive hypothesis, we derive that
	\[c_{de}(6)^{-1} \geq 3.\]
This is equivalent to the claimed bound for $d = 6$. The proof for $d \in \{7,8,9\}$ is similarly straightforward in all cases with one exception.

From the above proposition, the only possibility that the bound for \(c_{de}(d)\) does not hold is the case that $d = 7$ and there is a rank-\(7\) regular matroid \(M\) which decomposes as a $3$-sum of a rank-\(3\) matroid and a rank-\(6\) matroid but does not decompose as a $1$- or $2$-sum. Fortunately we can avoid this case. Indeed, since $M$ is not a $1$-sum or $2$-sum, \cite[Section 8.3]{Oxley_matroids} implies that \(M\) is \(3\)-connected, which implies by \cite[Corollary 13.4.6]{Oxley_matroids} that one of the following holds:
        \begin{itemize}
        \item \(M\) is graphic
        \item \(M\) is cographic
        \item \(M\) decomposes as a three-sum of two matroids of ranks at least \(4\).
        \end{itemize}
In all of these cases it follows from the above propositions that the bound \(c(7)\) given in the table hold for \(M\). Hence Theorem \ref{thm:cd} in the decomposable case follows from the result in smaller ranks together with Proposition \ref{pro:bounds-decomp}.

We proceed to the proof of Proposition \ref{pro:bounds-decomp}. We need the following lemma.

\begin{lemma}
  \label{sec:bounds-decomp-matr}
  Let \(M=M_1\oplus_k M_2\) for \(\mathbb{F}_2\)-regular matroids \(M_1\) and \(M_2\) and \(k=1,2,3\).
  Then:
  \begin{enumerate}
  \item  \(\rank(M)\leq\rank(M_1)+\rank(M_2)-k+1\),
  \item \(c(M)^{-1}\geq c(M_1')^{-1}+c(M_2')^{-1}\), where \(M_i'\) is a minor of \(M_i\) with \(\rank M_i'=\rank M_i-k+1\), \(i=1,2\).
  \end{enumerate} 
\end{lemma}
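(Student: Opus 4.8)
The plan is to handle the two claims separately, treating the three cases $k\in\{1,2,3\}$ in parallel since the constructions of the $k$-sums are uniform once phrased in terms of the ambient $\mathbb{F}_2$-vector spaces $V_1,V_2$ and the glued quotient space. For (1), I would simply count dimensions: in each $k$-sum, the ground set $E(M)$ is the image of $(E_1\cup E_2)\setminus(\text{glued elements})$ inside a quotient of $V_1\oplus V_2$ by a subspace of dimension $k-1$ (namely $\langle v_1-v_2\rangle$ for $k=2$, the diagonal $\mathbb{F}_2^2$ for $k=3$, and the zero subspace for $k=1$). Since $E_i$ spans a subspace of dimension $\rank M_i$, the span of $E(M)$ has dimension at most $\rank M_1+\rank M_2-(k-1)$, which is the asserted bound. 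This step is routine.

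For (2), the idea is to produce, from a probability measure $\lambda$ on $E(M)$ that nearly realizes $c(M)$, measures $\lambda_i$ on the ground sets of suitable minors $M_i'$ of $M_i$ so that every circuit of $M_i'$ pulls back to (or contains) a circuit of $M$ of no larger $\lambda$-weight. Concretely, I expect $M_i'$ to be obtained from $M_i$ by deleting the elements of $E_i$ that got glued away and then contracting the $k-1$ "interface" elements that collapse in the quotient — this is exactly why $\rank M_i' = \rank M_i - (k-1) = \rank M_i - k + 1$. The key combinatorial fact to verify is that a circuit $C$ of $M$ decomposes along the sum as $C = C_1 \sqcup C_2$ (up to interface elements) where $C_i$ is either a circuit of $M_i'$ or independent, and at least one $C_i$ is a genuine circuit; dually this is the statement that circuits of $M^*=M_1^*\oplus_k M_2^*$-type objects glue from circuits of the factors. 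Granting this, for the optimal $\lambda$ write $p_i = \lambda(E(M_i'))$ with $p_1+p_2 \ge 1$ (equality up to interface weight, which I would argue can be taken to be zero by a limiting argument as in the proofs of Lemma~\ref{lem:ReductionToCubic-systole} and Lemma~\ref{sec:d=7-not-sufficient}), set $\lambda_i = p_i^{-1}\lambda|_{E(M_i')}$, and observe
\[
c(M) \;\le\; \min_C \lambda(C) \;\le\; \min\bigl(p_1 c(M_1'),\, p_2 c(M_2')\bigr),
\]
so that $c(M)^{-1} \ge p_1^{-1} c(M_1')^{-1}$ and $c(M)^{-1} \ge p_2^{-1} c(M_2')^{-1}$; adding the reciprocal-weighted versions (or rather: since $c(M)\le p_i c(M_i')$ forces $c(M)^{-1}\ge \max_i c(M_i')^{-1}/p_i$, and $p_1+p_2\ge 1$ yields $c(M)^{-1} \ge c(M_1')^{-1}+c(M_2')^{-1}$ by the standard reciprocal inequality) gives the claim. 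I would model the bookkeeping on the proof of the disconnected case in Lemma~\ref{sec:d=7-not-sufficient}, which is the cographic shadow of exactly this argument for $k=1$.

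The main obstacle I anticipate is the circuit-decomposition step, i.e.\ correctly identifying which minors $M_i'$ of $M_i$ arise and proving that every circuit of $M$ restricts to a circuit (or an independent set) in each $M_i'$ with controlled weight, for all three values of $k$ simultaneously. For $k=1$ this is immediate (a $1$-sum is a direct sum of matroids, circuits lie entirely in one factor). For $k=2$ and $k=3$ one must track what happens to circuits passing through the glued elements $v_i$ (resp.\ the planes $W_i$): a circuit of the $2$-sum through the interface corresponds to circuits $C_1\ni v_1$, $C_2\ni v_2$ of $M_1,M_2$, and contracting $v_i$ turns $C_i\setminus v_i$ into a circuit of the contraction $M_i/v_i$; the weight bound then follows because $\lambda(C) \ge \lambda(C_i\setminus v_i)$. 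The $k=3$ case is the genuinely delicate one because the interface is two-dimensional and the decomposition of circuits is governed by the structure of $\mathbb{F}_2^2$; here I would lean on the description of $3$-sums and their circuits in \cite[Chapter 9]{Oxley_matroids}. Once these decompositions are in hand, assembling inequality (2) is the same reciprocal-addition trick used repeatedly in Section~\ref{sec:systoles}.
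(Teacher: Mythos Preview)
Your overall architecture is right: part (1) is a dimension count, and for (2) the endgame is exactly the reciprocal inequality $c(M)\le \min\bigl(p_1\,c(M_1'),\,p_2\,c(M_2')\bigr)$ with $p_1+p_2=1$, from which $c(M)^{-1}\ge c(M_1')^{-1}+c(M_2')^{-1}$ follows. The identification of $M_i'$ as the contraction of $M_i$ by the interface (so $\rank M_i'=\rank M_i-(k-1)$) is also correct, and the worry about ``interface weight'' is unnecessary since the interface elements are already deleted from $E(M)$.

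The gap is in your combinatorial justification. The quantity $c(M)$ is a minimum over \emph{cocircuits} of $M$ (complements of hyperplanes), not circuits. What you actually need is: every cocircuit of $M_i'$ is a cocircuit of $M$, equivalently every hyperplane of $M_i'$ extends, together with all of $E(M_j')$, to a hyperplane of $M$. Your proposal instead discusses how circuits of $M$ decompose, and then tries to dualize by writing $M^*=M_1^*\oplus_k M_2^*$. That identity holds for $k=1,2$ but is \emph{false} for $k=3$; the paper even remarks explicitly that the dual of a $3$-sum is not the $3$-sum of the duals. So the route through circuit decomposition of $M^*$ breaks down precisely in the case you flag as delicate.

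The paper sidesteps this entirely by staying on the hyperplane side and using the linear-algebra description of the $k$-sum. Since $M$ is represented in $V=(V_1\oplus V_2)/W$ with $W$ the diagonal interface, one has a splitting
\[
V^*\;\cong\;(V_1/W_1)^*\oplus (V_2/W_2)^*\oplus(\mathbb{F}_2^{\,k-1})^*,
\]
and any nonzero $v\in (V_i/W_i)^*$ automatically annihilates every element coming from $E_j\setminus W_j$. Thus $f_{M,\lambda}(v)$ only sees the $E_i$-part of $\lambda$, and minimizing over such $v$ gives $c(M,\lambda)\le c(M_i')\,\lambda(E_i\setminus W_i)$ directly, for all $k\in\{1,2,3\}$ at once. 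This replaces your circuit-decomposition step with a one-line observation about which linear functionals vanish where, and requires no separate analysis of the $3$-sum case.
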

\begin{proof}
  First assume that \(k=1\).
  We use the same notation as in the definition of the one-sum.
  The first claim is obvious from the definition of the one-sum.
  Let \(\lambda\) be a probability measure on \(M\), then by considering vectors \(v\in V_i^*\subset (V_1\oplus V_2)^*\) in the sums (\ref{eq:1}) we find:
  \[c(M)\leq \min (c(M_1)\lambda(E_1),c(M_2)\lambda(E_2))=\min (c(M_1)\lambda(E_1),c(M_2)(1-\lambda(E_1))\]
  The second claim now follows as in the proof of the Small Cycle Estimate (see also the proof of Lemma \ref{sec:d=7-not-sufficient}).

  Next assume \(k=2\).
  We use the same notation as in the definition of the two-sum.
  The first claim is obvious from the definition of the two-sum.
  Let \(\lambda\) be a probability measure on \(M\), then by considering vectors \(v\) in one of the first two summands of  the splitting \[((V_1\oplus V_2)/\langle v_1-v_2 \rangle)^*\cong (V_1/\langle v_1 \rangle)^*\oplus (V_2/\langle v_2\rangle)^*\oplus \mathbb{F}_2^*\] in the sums (\ref{eq:1}) we find:
  \begin{align*}c(M) & \leq \min (c(M_1')\lambda(E_1\setminus \{v_1\}),c(M_2')\lambda(E_2\setminus \{v_2\}))\\ & =\min (c(M_1')\lambda(E_1\setminus \{v_1\}),c(M_2')(1-\lambda(E_1\setminus \{v_1\})).\end{align*}
  Here \(M_i'\) is the matroid with \(M_i'=M[E_i']\) where \(E_i'\) is the image of \(E_i\setminus \{v_i\}\) in \(V_i/\langle v_i\rangle\), \(i=1,2\).
  The second claim now follows as in the previous case.

Last assume \(k=3\).
  We use the same notation as in the definition of the three-sum.
  The first claim is obvious from the definition of the three-sum.
  Let \(\lambda\) be a probability measure on \(M\), then by considering vectors \(v\) in one of the first two summands of  the splitting \[((V_1\oplus V_2)/W)^*\cong (V_1/W_1)^*\oplus (V_2/W_2)^*\oplus (\mathbb{F}_2^2)^*\] in the sums (\ref{eq:1}) we find:
  \begin{align*}c(M)&\leq \min (c(M_1')\lambda(E_1\setminus W_1),c(M_2')\lambda(E_2\setminus W_2))\\ &=\min (c(M_1')\lambda(E_1\setminus W_1),c(M_2')(1-\lambda(E_1\setminus W_1)).\end{align*}
  Here \(M_i'\) is the matroid with \(M_i'=M[E_i']\) where \(E_i'\) is the image of \(E_i\setminus W_i\) in \(V_i/W_i\) for \(i=1,2\).
  The second claim now follows as in the previous case.
\end{proof}

From the above lemma we get
  \begin{equation*}
    c_{de}(d)^{-1}\geq \min \{c(d_1)^{-1}+c(d_2)^{-1}\st  d-2 \leq d_1+d_2, d_1,d_2\geq 1\}.
  \end{equation*}
  The lower bounds for the \(d_i\) follow because we can assume that \(\rank M_i\geq k\) for $i \in \{1,2\}$ if a matroid \(M\) decomposes non-trivially as \(M=M_1\oplus_k M_2\) for some \(k\in\{1,2,3\}\).
  Here by a non-trivial decomposition we mean a decomposition such that \(M\) is not isomorphic to one of the \(M_i\), \(i=1,2\).
   This follows from the fact that we can assume that any two elements of \(E(M)\) are independent and the lower bounds for \(|E(M_i)|\) given in the definitions of the \(k\)-sum, \(k=1,2,3\).

  It is easy to see that \(c(d)\) is a non-increasing function of \(d\).
  Indeed if \(M=M[E]\), \(E\subset V\) is a regular matroid of \(\rank d\), where \(V\) is a vector space and \(E\) a finite multiset.
  Then for a non-zero element \(e\) of \(E\) we can look at \(M_1=M[\bar{E}]\), where \(\bar{E}\) is the image of \(E\) in \(V/\langle e\rangle\).
  Since the preimage of every hyperplane in \(M_1\) is a hyperplane in \(M\), we clearly have
  \[c(M)\leq c(M_1)\leq c(d-1).\]
  Maximizing over \(M\) gives \(c(d)\leq c(d-1)\).
  
Therefore Proposition \ref{pro:bounds-decomp} follows from the above lemma.
  The claim about the graphic and cographic matroids follows because minors of these are graphic and cographic, respectively.

\subsection{Choosing six involutions in $\gT^6$ }
\label{sec:6involutions}

In this section we prove a refinement of Theorem~\ref{thm:cd} which is needed in the proof of Theorem~\ref{thm:t9}.

\begin{theorem}
  \label{thm:6involutions}
 If \(W\) is an almost effective \(\gT^6\)-representation with no even-order finite isotropy groups, then there exist pairwise distinct, non-trivial involutions \(\iota_1 , \ldots , \iota_6 \in \gT^6\) whose fixed-point sets \(W^{\iota_i}\) satisfy
\(k_1 + \dots + k_6 \leq 2n\),
where \(k_i\) is the codimension of \(W^{\iota_i}\) in \(W\) and \(n\) is the dimension of \(W\).
\end{theorem}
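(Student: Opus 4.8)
The plan is to translate the problem into the matroid language of Sections~\ref{sec:Background}--\ref{sec:Optimization}: the representation $W$ gives a regular matroid $M = M(\rho)$ of rank $6$, with weight matrix $H \in \Z^{6\times(n+d)}$ satisfying the submatrix condition \eqref{eq:submatrix}, and normalized multiplicities $\lambda_i = \dim W_i/\dim W$. A non-trivial involution $\iota \in \gT^6$ is a non-zero element of $\tfrac12\Gamma/\Gamma \cong (\Z_2)^6$, i.e. a non-zero vector $v \in (\mathbb{F}_2^6)^* \setminus\{0\}$; its fixed-point set $W^\iota$ has codimension $\sum_{\langle v,h_i\rangle \neq 0}\lambda_i \cdot n = f_{H,\lambda}(v)\cdot n$ where we reduce $H$ mod $2$. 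So the statement to prove is: there exist six pairwise distinct non-zero vectors $v_1,\dots,v_6 \in (\mathbb{F}_2^6)^*$ with $\sum_{j=1}^6 f_{H,\lambda}(v_j) \leq 2$. Equivalently, writing $g(v) = 1 - f_{H,\lambda}(v) = \sum_{\langle v,h_i\rangle = 0}\lambda_i = \lambda(H^v)$ where $H^v$ is the hyperplane of $M$ killed by $v$, we need six distinct $v_j$ with $\sum_j g(v_j) \geq 4$; that is, six hyperplanes of $M$ (equivalently, six circuits of $M^*$) of large total $\lambda$-mass.

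The natural approach is an averaging/counting argument over a carefully chosen family of involutions, mirroring the proofs of Propositions~\ref{sec:graphic-1} and \ref{sec:sporadic-1} but now needing a \emph{coordinated} choice of six hyperplanes rather than an average over all of them. First I would invoke Seymour's theorem (Theorem~\ref{sec:backgr-matr-theory}) to reduce to the graphic, cographic, sporadic, and $k$-sum cases in rank $6$. In the \textbf{graphic} case $M = M[K_7]$ (the maximal rank-$6$ graphic matroid, by Example~\ref{exa:cigUpTo5} and the remark that a simple graphic matroid embeds in $M[K_{d+1}]$): here hyperplanes of $M[G]$ correspond to vertex cuts, so I would pick the six "star" cuts $B_1(v_i)\setminus v_i$ for six of the seven vertices; each edge lies in exactly two of these, so $\sum g(v_i) = 2\cdot 1 = 2 < 4$ — this is too weak, so instead I would take a cleverer family: all $\binom{7}{\le 3}$-type bipartition cuts and pick six whose characteristic vectors in $\mathbb F_2$ are distinct and whose multiplicities sum up correctly, arranging that each edge of $K_7$ is "cut" at most $4$ times in total. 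The combinatorics here amounts to: six proper bipartitions of $7$ vertices so each edge is separated by at most $4$ of them; taking the six cuts $\{i\}$ vs rest for $i=1,\dots,6$ gives each edge separated exactly $1$ or $2$ times, and adding structure from $3$-$4$ splits raises the count — I would optimize this to hit the bound $2n$. In the \textbf{cographic} case $M = M^*[G]$ with $b(G) = 6$: by Lemma~\ref{lem:ReductionToCubic-systole} reduce to a $3$-connected cubic $G$, which for $b=6$ (girth $\geq 3$, and as in Section~\ref{sec:cographic_not7or9} the extremal case is the Petersen graph) — here the six involutions correspond to six cycles of $G$, and the $s(6)^{-1}=3$ bound together with a sharper choice of six short cycles covering each edge at most $4$ times should give $\sum \lambda(C_j) \geq 4$. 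For the \textbf{sporadic} case $M = R_{10}$: its rank is $5 < 6$, so an effective $\gT^6$-representation is not $R_{10}$ itself; if $R_{10}$ appears as a summand it is handled in the decomposable step.

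The \textbf{decomposable} case is where I would spend most care, and I expect it to be the main obstacle. Here $M = M_1 \oplus_k M_2$ with $\rank M_i \leq 6$ and, by Lemma~\ref{sec:bounds-decomp-matr}(1), $\rank M_1 + \rank M_2 \ge d + k - 1 = 5+k$, so the summands have small rank and we may apply Theorem~\ref{thm:cd} (and possibly an inductive version of the present theorem) to each. The idea is to produce the six involutions for $M$ by combining involutions of $M_1$ and $M_2$: using the splitting of $((V_1\oplus V_2)/W)^*$ as in the proof of Lemma~\ref{sec:bounds-decomp-matr}, a vector $v_1$ in the first summand gives an involution of $M$ with $g$-value $\lambda(E_2) + \lambda_1(H^{v_1})$ (the entire second block survives in the hyperplane), and symmetrically. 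So from $a$ involutions of $M_1'$ and $b = 6-a$ of $M_2'$ with $a,b\ge 1$, I get $\sum_{j=1}^6 g(v_j) \geq a\,\lambda(E_2) + b\,\lambda(E_1) + \big(\text{partial sums inside each block}\big)$. Choosing $a$ and $b$ to balance against $\lambda(E_1), \lambda(E_2)$ and using the rank-$\le 5$ bounds $c(d_i) = c_{d_i}$ from Theorem~\ref{thm:cd} (with the key small values $c(1)=1$, $c(2)=2/3$, $c(3)=1/2$, $c(4)=4/9$, $c(5)=2/5$), one checks by case analysis on $(k,\rank M_1,\rank M_2)$ that the six can be chosen with total $g$-mass $\geq 4$. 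The delicate point is that a single summand might only admit fewer than the needed number of \emph{distinct} hyperplanes with large mass (e.g. a rank-$1$ summand has essentially one involution), so the split $a+b=6$ must respect how many useful distinct involutions each $M_i'$ has; I would handle the extreme low-rank summands ($\rank M_i \in\{1,2\}$) separately, where the other summand has rank $\geq 4$ or $5$ and carries almost all of the six involutions, and verify the inequality directly from the tabulated values of $c$.
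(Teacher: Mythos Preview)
Your translation to the matroid setting and the overall Seymour-case split are the same as the paper's. However, there are two genuine problems.

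First, a sign slip in the graphic case leads you to abandon the correct construction. With the six star cuts $B_1(v_1),\dots,B_1(v_6)$ in $K_7$, each edge lies in \emph{at most two} of the cuts, so $\sum_{j=1}^6 f(v_j)=\sum_e \lambda_e\,|\{j:e\in B_1(v_j)\}|\le 2$, i.e. $\sum_j g(v_j)\ge 4$, which is exactly what you need. You computed $2$ instead of $4$ by conflating ``edge lies in the cut'' with ``edge lies in the hyperplane''. The six stars already prove the graphic case; no $3$--$4$ splits are needed. The same confusion reappears in the cographic paragraph, where you write ``$\sum\lambda(C_j)\ge 4$'' when the goal is $\sum\lambda(C_j)\le 2$.

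Second, and more seriously, your decomposable strategy rests on the scalar cogirth bounds $c(d_i)$ from Theorem~\ref{thm:cd}. Those bounds control a \emph{single} best involution of $M_i'$; they say nothing about the existence of $a$ distinct involutions with small total $f$-mass, which is what your inequality $a\lambda(E_2)+b\lambda(E_1)+\cdots$ needs. The paper avoids this by proving a $\lambda$-free structural statement: for each building block of rank $d$ one exhibits $n\in\{d,d+1\}$ codimension-one subspaces of the ambient $\mathbb F_2$-space such that \emph{every} element of $E(M)$ lies in at least $n-2$ of them (Lemma~\ref{lem:6involutions-BuildingBlocks}). This ``each element missed by at most two'' property is exactly additive enough to survive $k$-sums: one shows (Lemma~\ref{lem:6involutions-Decomposables}) that if $M_i$ admits $n_i$ such subspaces then $M_1\oplus_kM_2$ admits $n_1+n_2-k$ of them (or $n_1+n_2$ for $k=1$), by carefully combining the linear forms so that they descend to the quotient. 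The extra $+1$ in the graphic count is essential: it compensates for the loss of $k$ forms in a $2$- or $3$-sum. The one residual case (a $3$-sum of two non-graphic cographic rank-$4$ pieces) is handled by observing it is again cographic. Your plan does not uncover this structural invariant, and without it the recursive step does not close.
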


To prove this theorem we look at the regular weighted matroid \(M\) of \(W\). By the almost effectiveness of the action it has rank \(6\).
We view it as represented in the \(6\)-dimensional vector space \((\mathbb{Z}_2^6)^*\) over \(\mathbb{Z}_2\).

Recall from Example \ref{sec:an-optim-probl} that an involution $\iota \in \gT^6$ has fixed-point component $W^\iota$ with codimension satisfying $\cod(W^\iota) \leq f_\lambda(v) n$, where $v = \iota \in \Z_2^6$ and $f_\lambda:\Z_2^6 \to \R$ is the function
	\[f_\lambda(v) = \sum_{\inner{e,v} \neq 0} \lambda(e) = \sum_{e \in E - v^\perp} \lambda(e).\]
We claim we can find $6$ codimension-one subspaces $v_i^\perp\subset (\mathbb{Z}_2^6)^*$ such that each $e \in E(M)$ is in at most two $E(M) - v_i^\perp$. Given this claim, it would follow that
	\[\sum_{i=1}^{6} \cod(W^{\iota_i}) = \sum_{i=1}^{6} \sum_{e \in E(M) - v_i^\perp} \lambda(e) n = \sum_e \sum \lambda(e)n,\]
where the unlabeled sum is over $i$-values such that $e \in E(M) - v_i^\perp$. Since $\lambda(e)$ does not depend on $i$ and since there are at most two $i$-values in this sum, the right-hand side is at most $\sum_e 2 \lambda(e) n = 2n$.

To prove the claim, we need to look at all regular matroids of rank $d \leq 6$.
In the following let \(M\) be a regular matroid of rank \(d\) represented over \(\mathbb{Z}_2\) in a \(d\)-dimensional vector space \(V\).
Note that since we are working over \(\mathbb{Z}_2\) the representation of \(M\) in \(V\) is unique up to automorphisms of \(V\).

The first lemma considers the cases of \(M\) being graphic, cographic, or sporadic.

\begin{lemma}\label{lem:6involutions-BuildingBlocks}
If $M$ is graphic of rank \(d\geq 2\), then there are $d+1$ pairwise distinct codimension-one vector subspaces of $V$ such that each element of $E(M)$ is contained in at least $d-1$ of the subspaces.

Similarly, if $M$ is a cographic matroid of rank $d \leq 6$ or the sporadic matroid of rank $d = 5$, then there are $d$ such subspaces such that every element of $E(M)$ is contained in at least $d-2$ of them.
\end{lemma}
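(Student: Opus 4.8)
The plan is to treat the three cases separately, producing in each case an explicit list of codimension-one subspaces of $V$ (equivalently, a list of nonzero covectors $v_i \in V^*$, with $v_i^\perp$ the hyperplane) and checking the covering condition on $E(M)$ directly from the combinatorial description of the matroid. The unifying principle is \emph{duality}: a vector $h \in E(M) \subseteq V$ lies in the hyperplane $v_i^\perp$ precisely when $\langle v_i, h\rangle = 0$, so ``$e$ is contained in at least $d-1$ (resp.\ $d-2$) of the subspaces'' is the same as ``$e$ is \emph{not} contained in at most one (resp.\ at most two) of them'', i.e.\ the complementary sets $E(M)\setminus v_i^\perp$ cover each edge at most once (resp.\ twice). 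This is exactly the covering statement needed to run the averaging argument in the proof of Theorem~\ref{thm:6involutions}.

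\textbf{Graphic case.} Here $M = M[G]$ for a connected graph $G$ on $d+1$ vertices, represented in $V = (\mathbb{Z}_2^{d+1}/\langle(1,\dots,1)\rangle)$ by the signed-incidence vectors $f_e = \chi_{v} - \chi_{w}$ of the edges (reduced mod $2$, so $f_e = \chi_v + \chi_w$). For each vertex $v$ of $G$, let $H_v = \{x : x_v = 0\}$; this is a codimension-one subspace of $V$ (well-defined after the quotient, since we may normalize representatives), and there are $d+1$ of them. An edge $e = \{v,w\}$ fails to lie in $H_u$ exactly when $u \in \{v,w\}$, i.e.\ for exactly two of the $d+1$ subspaces; hence $e$ lies in the other $d-1$. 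This gives the first assertion with no case analysis.

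\textbf{Cographic and sporadic cases.} For $M = M^*[G]$ cographic, realized in $V = H^1(G;\mathbb{Z}_2)$ with edges $\mapsto$ images of the dual basis of $C^1(G;\mathbb{Z}_2)$, the dual vectors $v_i \in V^* = H_1(G;\mathbb{Z}_2)$ are represented by cycles in $G$, and $\langle v_i, e\rangle \neq 0$ iff the edge $e$ appears in the cycle $v_i$. So I need $d$ cycles (elements of $H_1(G;\mathbb{Z}_2)$) in the graph $G$ such that every edge of $G$ lies on at most two of them. By Proposition~\ref{pro:ReductionToCubic-reps} and the enumeration in Example~\ref{exa:cigUpTo5}, for rank $d \leq 6$ it suffices to handle finitely many graphs $G$ (for $d\le 5$: $K_2$-type, the theta graph, $K_4$, $K_{3,3}$, $G_{5,3}$, $G_{5,4}$, the Petersen graph for $d=6$, and graphs obtained from these via the reductions in that proposition, which do not decrease the required quantity), so the cographic statement reduces to exhibiting an explicit family of $\le 2$-fold-covering cycles on each of these graphs --- e.g.\ for $K_{3,3}$ one uses four carefully chosen $4$- or $6$-cycles, and for the Petersen graph six $5$- or $6$-cycles. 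For the sporadic case $M = R_{10}$, the matrix $A_{R_{10}}$ is given explicitly, and I would directly write down $5$ covectors $v_1,\dots,v_5 \in (\mathbb{Z}_2^5)^*$ (the natural choice being $e_i^* + e_{i+1}^*$, indices mod $5$, which already appeared in the proof of Proposition~\ref{sec:sporadic-1}) and verify from the columns of $A_{R_{10}}$ that each of the ten columns pairs nontrivially with at most two of them; by the symmetry of $R_{10}$ under the cyclic shift, it is enough to check one standard-basis column and one column $f_i$.

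\textbf{Main obstacle.} The conceptual content is light; the real work --- and the place where an error could hide --- is the finite but nontrivial bookkeeping in the cographic case: one must be sure the list of graphs $G$ coming out of Proposition~\ref{pro:ReductionToCubic-reps}/Example~\ref{exa:cigUpTo5} is genuinely exhaustive for ranks up to $6$ (including the non-$3$-connected and non-cubic graphs, for which one argues that the reductions there only make the covering condition easier, so it suffices to check the maximal $3$-connected cubic representatives), and then exhibit, for each, an explicit set of $d$ cycles covering every edge at most twice and verify the covering by hand. I expect the Petersen graph (rank $6$) to be the tightest case, since it is $3$-regular with $15$ edges and one needs $6$ cycles; a natural source is the six $5$-cycles obtained from an outer pentagon/inner pentagram decomposition, and checking the at-most-twice condition there is the crux of the argument.
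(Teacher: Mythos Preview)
Your treatment of the graphic and sporadic cases matches the paper's. One small technical wrinkle in the graphic case: the functionals $x \mapsto x_v$ do \emph{not} descend to the quotient $\Z_2^{d+1}/\langle(1,\dots,1)\rangle$, since $(1,\dots,1)_v = 1$. The paper avoids this by taking $V = \ker\epsilon \subseteq \Z_2^{d+1}$ rather than the quotient, and restricting the coordinate functionals $\epsilon_j$ to this kernel; these restrictions are pairwise distinct precisely when $d \geq 2$. This is easily repaired and does not affect the argument.

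The cographic case is where your approach diverges from the paper's, and where there is a genuine gap. Your reduction to $3$-connected cubic graphs is valid (each reduction in Proposition~\ref{pro:ReductionToCubic-reps} either deletes zero weights, identifies equal weights, or embeds the weight set into that of a larger graph with the same $V$), but your enumeration for $d = 6$ is incomplete: the Petersen graph is \emph{not} the only $3$-connected cubic graph with Betti number $6$. There are in fact many such graphs on ten vertices (Petersen is singled out only by having girth five), and Example~\ref{exa:cigUpTo5} stops at $d = 5$. So to carry out your plan you would need to list all of them and exhibit six suitable cycles in each, which is feasible but tedious.

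The paper sidesteps this entirely with a uniform surface-embedding argument. If $G$ embeds in $\RP^2$, one takes a cellular embedding and uses the boundaries of $d$ of the $2$-cells: these give $d$ independent classes in $H_1(G;\Z_2)$ (since $b_1(\RP^2;\Q) = 0$ forces $d$ of the face-boundary relations to be independent), and each edge lies on at most two face boundaries. If $G$ does not embed in $\RP^2$, then by the classification of cubic obstructions cited in Section~\ref{sec:cographic_7} and the bound $b_1(G) \leq 6$, the graph $G$ must contain $G_1$ as a subgraph, and one uses the six $4$-cycles in the two disjoint $K_{2,3}$ blocks of $G_1$. This handles all graphs at once with no case-by-case verification.
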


\begin{proof}
  The sporadic case follows from an inspection of the arguments in the proof of Proposition~\ref{sec:sporadic-1}.

  So assume that \(M=M[G]\) is graphic with a connected graph \(G\) on \(d+1\) vertices \(v_1,\dots,v_{d+1}\).
  Then we may assume that \(V=\ker \epsilon\) with
  \[\epsilon:\bigoplus_{i=1}^{d+1}\Z_2v_i\rightarrow \Z_2,\quad\quad \sum_{i=1}^{d+1}a_iv_i\mapsto \sum_{i=1}^{d+1} a_i.\]
  Moreover, we may assume that each edge of \(G\) is represented in \(V\) by the sum of its initial and terminal vertices.
  
  If \(d\geq 2\), then the \(d+1\) codimension-one subspaces \(\ker\epsilon\cap \ker\epsilon_j\), \(j=1,\dots,d+1\) with
  \[\epsilon_j:\bigoplus_{i=1}^{d+1}\Z_2v_i\rightarrow \Z_2,\quad\quad \sum_{i=1}^{d+1}a_iv_i\mapsto a_j.\]
  are pairwise distinct and have the desired property.
  
  Next we may assume $M$ is cographic of rank $d \leq 6$, and let $G$ denote the graph such that $M \cong M^*[G]$. 
  It suffices to find \(d\) pairwise distinct (simple) cycles of \(G\) such that each edge of \(G\) is contained in at most two of them.
  
Suppose first that $G$ does not embed into the real projective plane $\RP^2$. By the arguments in Section \ref{sec:cographic_7}, $G$ must contain the graph $G_1$ from Figure \ref{fig:g1} as a subgraph. Hence the six $4$-cycles in the subgraph consisting of two disjoint copies of \(K_{2,3}\) in \(G_1\) have the desired properties.

Assume now that $G$ does embed in $\RP^2$.
We can assume that \(\mathbb{R} P^2-G\) is a union of discs.
Therefore \(\RP^2\) has a structure as a CW-complex with one-skeleton \(G\).
We look at the induced cellular chain complex \(C_*(\mathbb{R} P^2;\mathbb{Q})\).
  Since \(b_1(\mathbb{R} P^2;\mathbb{Q})=0\) and \(b_1(G;\mathbb{Q})=d\), there are \(d\) boundaries of discs which form a basis of \(H_1(G;\mathbb{Q})\). After removing double points we can assume that these boundaries are simple cycles in \(G\). Therefore the claim follows.
\end{proof}

Lemma \ref{lem:6involutions-BuildingBlocks} already proves Theorem  \ref{thm:6involutions} in the graphic, cographic, and sporadic cases. By Seymour's classification, the only other possibility for $M$ is that it decomposes as a $k$-sum for some $k \in \{1,2,3\}$. Before finishing the proof, we first need a recursive statement for $k$-sums.

\begin{lemma}
\label{lem:6involutions-Decomposables}
Let $M \cong M_1 \oplus_k M_2$ decompose as a $k$-sum with $k \in \{1,2,3\}$, and assume that $k$ is minimal. 
Assume that $M_i$ has rank $d_i$ and is represented over $\Z_2$ in $V_i$, \(\dim V_i=d_i\), and assume there exist $n_i$ codimension-one vector subspaces of $V_i$ such that each element of $E(M_i)$ is contained in at least $n_i - 2$ of them. 

For $k \in \{2,3\}$, $M$ has rank $d = d_1 + d_2 - k + 1$ and is represented over $\Z_2$ in a $d$-dimensional quotient space $V$ of $V_1 \oplus V_2$, and there exist $n_1 + n_2 - k$ pairwise distinct codimension-one vector subspaces of $V$ such that every element of $E(M)$ is contained in all but at most two of them. Similarly, the statement holds for $k=1$ with $n_1 + n_2-k$ replaced by $n_1 + n_2$.
\end{lemma}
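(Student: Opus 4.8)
The plan is to build the desired codimension-one subspaces of $V$ by pulling back and combining the given ones in $V_1$ and $V_2$. The starting point is that in each $k$-sum the space $V$ is a quotient of $V_1\oplus V_2$ into which both $V_1$ and $V_2$ embed and in which they meet in a common subspace $Z$ of dimension $k-1$ (so $Z$ is $0$, a line, or a plane for $k=1,2,3$); for $k=3$ the two copies of $Z$ are identified by the gluing built into the definition of the $3$-sum. Consequently a functional on $V$ is the same datum as a pair $(\phi,\psi)$ of functionals on $V_1$ and $V_2$ that agree on $Z$, and the decisive observation is that whether a given $e\in E_1\setminus Z$ lies in the hyperplane $\ker(\phi,\psi)$ depends only on $\phi$ (and symmetrically for $E_2\setminus Z$). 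Hence it suffices to produce $n_1+n_2-k$ such pairs ($n_1+n_2$ when $k=1$) in which each of the $n_i$ given functionals on $V_i$ is used at most once as the $i$-th coordinate: then every $e\in E(M)$ fails at most two of the resulting hyperplanes of $V$, because it failed at most two of the given hyperplanes on the relevant side.

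For $k=1$ I would simply extend each given functional on $V_1$ by zero on $V_2$ and each given functional on $V_2$ by zero on $V_1$; this gives $n_1+n_2$ pairwise distinct hyperplanes, and each $e\in E_1$ lies in every hyperplane coming from $V_2$ and in all but at most two of those coming from $V_1$. For $k=2$, with $Z$ spanned by the distinguished elements $v_i\in E_i$: applying the hypothesis to $v_1\in E(M_1)$ shows that at most two of the given functionals on $V_1$ fail to vanish on $v_1$; extend the others by zero, do the symmetric thing on $V_2$, and pair the at most two ``bad'' functionals on each side into compatible pairs $(\phi,\psi)$ with $\phi(v_1)=\psi(v_2)=1$. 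Any bad functional left over when the two sides have unequal numbers of them is simply discarded — dropping hyperplanes only decreases the failure count of each $e$ — and the count is $n_1+n_2-\max\geq n_1+n_2-2$, with distinctness clear.

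The case $k=3$ is the heart of the matter and the step I expect to be the main obstacle. Here $Z$ is the plane spanned by the (identified) triangles $W_i\setminus 0\subseteq E_i$. I would classify each given functional on $V_1$ by its restriction to $W_1\cong\mathbb{F}_2^2$: either it is zero there, or it equals one of the three nonzero functionals $\alpha_1,\alpha_2,\alpha_3$, which satisfy $\alpha_1+\alpha_2+\alpha_3=0$; write $A_0,A_{\alpha_1},A_{\alpha_2},A_{\alpha_3}$ for the corresponding index sets and $B_0,B_{\alpha_t}$ for the analogous ones on $V_2$. Applying the hypothesis to the three nonzero vectors of $W_1$, all of which lie in $E(M_1)$, gives $|A_{\alpha_s}|+|A_{\alpha_t}|\leq 2$ for $s\neq t$, so the profile $(|A_{\alpha_1}|,|A_{\alpha_2}|,|A_{\alpha_3}|)$ is, up to permutation, one of $(0,0,0),(1,0,0),(2,0,0),(1,1,0),(1,1,1)$, and likewise for $B$. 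A functional in $A_0$ extends by zero and yields a hyperplane containing the image of $E_2$; a functional in $A_{\alpha_t}$ can be used only by pairing it with one in the matching class $B_{\alpha_t}$. Using $A_0$ and $B_0$ as singletons and matching $A_{\alpha_t}$ against $B_{\alpha_t}$ already produces $n_1+n_2-\sum_t\max(|A_{\alpha_t}|,|B_{\alpha_t}|)$ hyperplanes, and the list of profiles shows this max-sum is at most $4$. When it equals $4$ I would recover the deficit by combining leftover functionals using $\alpha_1+\alpha_2+\alpha_3=0$: two leftovers in the same class $A_{\alpha_t}$ sum to a functional vanishing on $W_1$, so pair it with $0$; a leftover in $A_{\alpha_s}$ together with one in $A_{\alpha_t}$ sum to one restricting to $\alpha_r$ on $W_1$, so pair it with a leftover in $B_{\alpha_r}$ (or with $\psi+\psi'$ for a suitable pair of leftover $B$-functionals), and so on. A short finite check over the handful of profiles shows the total always reaches $n_1+n_2-3$. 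The point requiring care is that a combined coordinate $\phi+\phi'$ makes $e$ fail its hyperplane exactly when $e$ fails exactly one of the two underlying hyperplanes of $V_1$; since the classes $A_{\alpha_t}$ are pairwise disjoint, $e$ fails at most two of the $n_1$ given hyperplanes of $V_1$ altogether, and the functionals used inside combined coordinates occur nowhere else, so $e$ still fails at most two of our hyperplanes of $V$. This last step also uses that the elements of $W_i\setminus 0$ are deleted in forming the $3$-sum, so they never have to be controlled.

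It then remains to observe that the constructed subspaces are pairwise distinct — routine, since distinct coordinates on one side already force distinctness and the rare coincidences among combined coordinates can be absorbed by passing to an equivalent collection — and that the assertions ``$M$ has rank $d=d_1+d_2-k+1$'' and ``$M$ is represented over $\mathbb{Z}_2$ in the $d$-dimensional quotient $V$ of $V_1\oplus V_2$'' follow directly from the definition of the $k$-sum together with Lemma~\ref{sec:bounds-decomp-matr}(1). To summarize, the hard part will be the $k=3$ bookkeeping: organizing the restriction profiles and the recombination of leftover functionals so as to reach the bound $n_1+n_2-3$ while never making any element of $E(M)$ fail three hyperplanes.
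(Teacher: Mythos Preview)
Your proposal is correct and follows essentially the same approach as the paper. Both arguments identify functionals on $V$ with compatible pairs $(\phi,\psi)$ on $V_1\oplus V_2$, extend those functionals that vanish on the gluing locus $Z$ by zero, and then handle the at most two (for $k=2$) or three (for $k=3$) remaining functionals on each side by pairing or summing them so that the compatibility condition on $Z$ is met. Your profile classification $(|A_{\alpha_1}|,|A_{\alpha_2}|,|A_{\alpha_3}|)$ and the bound $\sum_t\max(|A_{\alpha_t}|,|B_{\alpha_t}|)\le 4$ give a cleaner bookkeeping device than the paper's explicit Case~1/2/3 enumeration, but the actual recombination moves (using $\alpha_1+\alpha_2+\alpha_3=0$ to convert leftover functionals into compatible pairs) are identical. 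The one place you are slightly more cavalier than the paper is the verification of pairwise distinctness of the resulting hyperplanes; in every profile the second coordinate (or the restriction to $Z$ of the first) separates the combined functional from the singletons and matched pairs, so your claim that this is routine is justified, but it does merit the sentence or two you promise it.
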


Notice that the conclusion for $k \in \{2,3\}$ gives one fewer codimension-one subspace than required to prove our claim, but we will overcome this issue thanks to the extra codimension-one subspace obtained in the graphic case in Lemma \ref{lem:6involutions-BuildingBlocks}.

\begin{proof}[Proof of Lemma \ref{lem:6involutions-Decomposables} for $k \in \{1,2\}$]
Let $M_i$ and $V_i$ be as in the lemma. Choose linear forms $f_i:V_1 \to \Z_2$ for $1 \leq i \leq n_1$ whose kernels are the $n_1$ codimension-one subspaces of $V_1$ as in the assumption, and similarly choose $g_i:V_2 \to \Z_2$ for $1 \leq j \leq n_2$.

If $k = 1$, then $V = V_1 \oplus V_2$, and the result follows easily by using the linear forms $(f_i, 0)$ for $1 \leq i \leq n_1$ and $(0,g_j)$ for $1 \leq j \leq n_2$. Note that every $e \in E(M)$ is of the form $(\tilde e, 0)$ or $(0, \tilde e)$, so $e$ is in the kernel of at least $n_1 + n_2 - 2$ of these linear forms, as required. 

Next, assume $k = 2$, and let $v_1 \in E(M_1)$ and $v_2 \in E(M_2)$ be the elements such that $V$ is the quotient of $V_1 \oplus V_2$ by the subspace spanned by $(v_1, v_2)$. Applying the condition on the $f_i$ and $g_j$, we may relabel these linear forms so that
	\begin{itemize}
	\item $f_i(v_1) = 0$ for $1 \leq i \leq n_1 - 2$.
	\item $g_j(v_1) = 0$ for $1 \leq j \leq n_2 - 2$.
	\end{itemize}

Let $\mathcal F$ be the collection of functionals on $V_1 \oplus V_2$ given by $(f_i,0)$ or $(0,g_j)$ for $1 \leq i \leq n_1 - 2$ or $1 \leq j \leq n_2 - 2$, respectively. Our task is to find two additional functionals such that, when added to $\mathcal F$, we get a collection of $n_1 + n_2 - 2$ functionals on $V_1 \oplus V_2$ that descend to $V$ and have the property that every $e \in E(M)$ lies in the kernel of at least $n_1 + n_2 - 4$ of them. This involves three cases.

\bigskip{\bf Case 1:} $f_{n_1-1}(v_1) = 0$ and $f_{n_1}(v_1) = 0$. We add the functionals $(f_{n_1 - 1}, 0)$ and $(f_{n_1}, 0)$ to $\mathcal F$. These functionals clearly descend to functionals on $V$, since for example $(f_i, 0)$ evaluated on $(v_1,0) - (0,v_2)$ gives $(f_i(v_1), 0) - (0,0) = (0,0)$. In addition, given $e \in E(M)$, we evaluate these functionals on $e$ by lifting to a preimage $\tilde e \in V_1 \oplus V_2$, which is of the form $(\tilde e_1, 0)$ or $(0, \tilde e_2)$ for some $\tilde e_i \in E(M_i) - \{v_i\}$. In the first case, the element is in the kernel of at least $n_1 - 2$ of the $n_1$ functionals of the form $(f_i, 0)$ and is in the kernel of all $n_2 - 2$ functionals of the form $(0, g_i)$. In the second case, the element $(0,\tilde e_2)$ is in the kernel of all $n_1$ functionals of the form $(f_i, 0)$ and at least $n_2 - 4$ of the $n_2 - 2$ functionals of the form $(0,g_j)$. (The worst case here looks like $g_1(\tilde e_2) = 1$ and $g_2(\tilde e_2) = 1$, in which case $\tilde e_2$ is in the kernel of $g_3,\ldots,g_{n_2-2}$.) In either case, it follows that $e$ lies in at least $n_1 + n_2 - 4$ kernels, as required.

\bigskip{\bf Case 2:} $f_{n_1 - 1}(v_1) = 0$ and $f_{n_1}(v_1) = 1$. By the previous case, we may assume that $g_{n_2}(v_2) = 1$. In this case, we add the functionals $(f_{n_1 - 1}, 0)$ and $(f_{n_1}, g_{n_2})$ to $\mathcal F$ and argue similarly. Indeed note, in particular, that the latter functional is well defined since it maps $(v_1,0) - (0,v_2)$ to $f_{n_1}(v_1) - g_{n_2}(v_2) = 1 - 1 = 0$. In addition, it is easy to find $n_1 + n_2 - 4$ kernels containing $e \in E(M)$ if $e$ lifts to an element of the form $(\tilde e_1, 0)$, or if it lifts to an element of the form $(0, \tilde e_2)$ such that $g_{n_2}(\tilde e_2) = 0$. In the remaining case, note that the lift $(0, \tilde e_2)$ with $g_{n_2}(\tilde e_2) = 1$ is in the kernel of all $n_1 - 1$ functionals of the $(f_i,0)$ as well as at least $n_2 - 3$ of the $n_2 - 2$ functionals of the form $(0,g_i)$. 

\bigskip{\bf Case 3:} $f_{n_1 - 1}(v_1) = 1$ and $f_{n_1}(v_1) = 1$. By the previous cases, we may assume $g_{n_2-1}(v_2) = 1$ and $g_{n_2}(v_2) = 1$. Adding the functionals $(f_{n_1 - 1}, g_{n_2 - 1})$ and $(f_{n_1}, g_{n_2})$ to $\mathcal F$ and arguing as in the previous case, the claim follows once again.

After possibly permuting the labels of $f_{n_1 - 1}$ and $f_{n_1}$, one of these cases occurs, so the proof is complete.
\end{proof}

\begin{proof}[Proof of Lemma \ref{lem:6involutions-Decomposables} for $k = 3$]
Let $k = 3$, and let $M_i$ and $W_i \subseteq E(M_i) \subseteq V_i$ with $W_i \cong \Z_2^2 - \{0\}$ be as in the lemma. Choose linear forms $f_i:V_1 \to \Z_2$ for $1 \leq i \leq n_1$ whose kernels are the $n_1$ codimension-one subspaces of $V_1$ as in the assumption, and similarly choose $g_i:V_2 \to \Z_2$ for $1 \leq j \leq n_2$.

Consider the restrictions of the $f_i$ to $W_1$. Since $W_1$ consists of three non-zero vectors that sum to zero, and since $f_i$ vanishes on each $v \in W_1$ for at least $n_1 - 2$ values of $i$, we find that $f_i|W_1 = 0$ for at least $n_1 - 3$ values of $i$. After relabeling and arguing similarly with the $g_j$, we may assume
	\begin{itemize}
	\item $f_4 = \ldots = f_{n_1} = 0$ on $W_1$, and
	\item $g_4 = \ldots = g_{n_2} = 0$ on $W_2$.
	\end{itemize}
Moreover, these arguments imply that either $f_3|_{W_1} = 0$ or that the linear forms $\{f_i|_{W_1}\}_{i=1}^3$ are pairwise distinct and hence equal the set of non-zero elements in the dual space of $W_1$. Similar comments hold for the restrictions of $g_1,g_2,g_3$ to $W_2$, and we use this frequently.

Set $\mathcal F = \{(f_i,0)\}_{i=4}^{n_1} \cup \{(0, g_j)\}_{j=4}^{n_2}$. Our task is to find three additional linear functionals on $V_1 \oplus V_2$ such that, together with those in $\mathcal F$ we get $n_1 + n_2 - 3$ linear functionals on $V_1 \oplus V_2$ that descend to maps on the quotient space $V$ and have the property that every $e \in E(M)$ lies in at least $n_1 + n_2 - 5$ of their kernels. We do this in cases.

\bigskip{\bf Case 1:} $g_3 \neq 0$ on $W_2$, and the non-zero restrictions $f_i|_{W_1}$ are pairwise-distinct. To $\mathcal F$, we add for each $1 \leq i \leq 3$ the functional $(f_i, 0)$ if $f_i|_{W_1} = 0$ or the functional $(f_i, g_{j_i})$ otherwise, where $j_i \in \{1,2,3\}$ is the index such that $f_i|_{W_1} = g_{j_i}|_{W_2}$, and where we have fixed an identification of $W_1 \cong \Z_2^2 - \{0\} \cong W_2$. Note that $j_i$ exists by the rigidity discussed above and that the $j_i$ are pairwise distinct. Notice that these functionals vanish on the diagonal subspace by which we take the quotient to get $V$, so we get well defined functionals on $V$. Note moreover that, for $e \in E(M)$ that lift to an element of the form $(\tilde e, 0)$, $e$ is contained in $n_1 - 2$ of the kernels of the form $(f_i, *)$ and all $n_2 - 3$ kernels of the form $(0,g_j)$, for a total of $n_1 + n_2 - 5$, as desired. Similarly lifts of the form $(0, \tilde e)$ lie in all $(n_1 - 3) + z$ of the kernels of functionals of the form $(f_i, 0)$ and additionally in $n_2 - z - 2$ of the kernels of the other functionals, again for a total of $n_1 + n_2 - 5$.

\bigskip{\bf Case 2:} $g_3 \neq 0$ on $W_2$, and after relabeling $f_1 = f_2$ when restricted to $W_1$. Note in this case that $f_3 = 0$ on \(W_1\). This time, the three functionals we add to $\mathcal F$ are $(f_3, 0)$, $(f_2, g_2 + g_3)$, and $(f_1, g_1)$, where we have relabeled the $g_j$ for $j \in \{1,2,3\}$ so that $f_1|_{W_1} = g_1|_{W_2}$. One can again check that these are well defined and the condition on the number of kernels containing each $e \in E(M)$.

\bigskip{\bf Case 3:} $g_3 = 0$ on \(W_2\). By the previous cases, we may assume that $f_3 = 0$ on \(W_1\). The first two functionals we add to $\mathcal F$ are $(f_3, 0)$ and $(0, g_3)$, and we need to add one more. 
If $f_2 = 0$ on \(W_1\), then we add $(f_2, 0)$. 
If $f_1 = f_2$ on $W_1$ and $g_1 = g_2$ on $W_2$, then we add $(f_1 + f_2, g_1 + g_2)$. 
If neither of these occurs, then we may assume that $g_1$, $g_2$, and $g_1 + g_2$ are non-zero on $W_2$. Since moreover we may assume $f_2 \neq 0$, it follows that $f_2|_{W_1} = g|_{W_2}$ for some $g \in \{g_1, g_2, g_1 + g_2\}$. In this case, we add $(f_2, g)$ to $\mathcal F$. In each case, one can again easily check the conditions of well defined and the number of kernels.
\end{proof}

The proof of Theorem~\ref{thm:6involutions} is now completed by the following.

\begin{corollary}
  Let \(M\) be a regular matroid of rank \(6\) represented in a six-dimensional vector space \(V\) over \(\mathbb{Z}_2\). Then there are six pairwise distinct codimension-one subspaces of \(V\) such that each element of \(E(M)\) is contained in at least four of them.
\end{corollary}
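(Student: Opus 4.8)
The plan is to prove, by induction on $d$ simultaneously for all $d\le 6$, the following strengthening: \emph{every regular matroid $M$ of rank $d\le 6$ admits $d$ pairwise distinct codimension-one subspaces of its (essentially unique) $\Z_2$-representation space $V$ such that every element of $E(M)$ lies in at least $d-2$ of them; moreover, if $M$ is graphic and $d\ge 2$, then it admits $d+1$ such subspaces with every element lying in at least $d-1$ of them.} The Corollary is the case $d=6$. Loops lie in every codimension-one subspace and parallel elements lie in exactly the same ones, so it is harmless to pass to simplifications when convenient; the cases $d\le 1$ are trivial.

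For the inductive step, I would invoke Seymour's classification (Theorem~\ref{sec:backgr-matr-theory}). If $M$ is graphic, cographic (here $d\le 6$ is used), or isomorphic to $R_{10}$ (so $d=5$), then Lemma~\ref{lem:6involutions-BuildingBlocks} gives the conclusion outright — it even gives the bonus statement when $M$ is graphic, and discarding one subspace recovers the main statement there. So suppose $M$ is none of these and decomposes non-trivially as a $k$-sum $M=M_1\oplus_k M_2$ with $k\in\{1,2,3\}$ minimal. Writing $d_i=\rank M_i$, one has $d=d_1+d_2-k+1$, and since $\rank M_i\ge k$ for a non-trivial $k$-sum, both $d_i<d$, so the inductive hypothesis applies to $M_1$ and $M_2$.

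When $k=1$, the inductive hypothesis gives $n_i=d_i$ subspaces for each $M_i$ with every element in at least $n_i-2$ of them, and Lemma~\ref{lem:6involutions-Decomposables} (case $k=1$) produces $n_1+n_2=d$ subspaces of $V$ with every element of $E(M)$ outside at most two, as required. When $k\in\{2,3\}$, Lemma~\ref{lem:6involutions-Decomposables} applied with $n_i=d_i$ yields only $n_1+n_2-k=d-1$ subspaces, one short; the fix is to show that one summand, say $M_1$, is graphic and to feed in the \emph{bonus} count $n_1=d_1+1$ from Lemma~\ref{lem:6involutions-BuildingBlocks} (together with $n_2=d_2$), whereupon Lemma~\ref{lem:6involutions-Decomposables} delivers $n_1+n_2-k=(d_1+1)+d_2-k=d$ subspaces, exactly as needed. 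A graphic summand exists by a rank count: $d_1+d_2=d+k-1\le 7$ when $k=2$ and $\le 8$ when $k=3$, so $\min(d_1,d_2)\le 3$ unless $k=3$ and $d_1=d_2=4$ (forcing $d=6$); and a regular matroid of rank at most $3$ is graphic (the only exception in ranks $\le 4$, namely $M^*[K_{3,3}]$, has rank $4$). In the residual case $d_1=d_2=4$, each $M_i$ is graphic or isomorphic to $M^*[K_{3,3}]$; if neither were graphic, both would be $M^*[K_{3,3}]$, whose only three-element circuits (the two-dimensional subspaces spanned by elements of $E(M_i)$) are the six vertex stars of $K_{3,3}$, so the $3$-sum is taken along vertex stars and Remark~\ref{rem:sums}(c) shows $M$ is cographic, contradicting our standing assumption. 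Hence a graphic summand always exists and the induction closes.

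The main obstacle is precisely this $k\in\{2,3\}$ step: Lemma~\ref{lem:6involutions-Decomposables} is structurally one subspace short, and recovering it rests on (i) the rank bookkeeping that enumerates all possible rank splittings of the summands in ranks $\le 6$, and (ii) recognizing the lone genuinely exceptional configuration $M^*[K_{3,3}]\oplus_3 M^*[K_{3,3}]$ as cographic, so that it is absorbed by the earlier case rather than remaining in the decomposable one.
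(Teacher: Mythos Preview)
Your proof is correct and follows essentially the same route as the paper's: reduce to the graphic, cographic, and sporadic building blocks via Lemma~\ref{lem:6involutions-BuildingBlocks}, then handle the decomposable case by combining Lemma~\ref{lem:6involutions-Decomposables} with the extra ``graphic bonus'' subspace, and dispose of the residual $M^*[K_{3,3}]\oplus_3 M^*[K_{3,3}]$ configuration via Remark~\ref{rem:sums}(c). Your inductive framing over all $d\le 6$ is equivalent to the paper's direct argument at $d=6$ (since every regular matroid of rank at most $5$ is already graphic, cographic, or $R_{10}$, the induction below rank $6$ is absorbed by Lemma~\ref{lem:6involutions-BuildingBlocks}); the one minor imprecision is that in the residual case the $M_i$ need only have \emph{simplification} $M^*[K_{3,3}]$, i.e.\ $M_i\cong M^*[G_i]$ for $G_i$ a subdivision of $K_{3,3}$, but your identification of the three-point lines with vertex stars and the appeal to Remark~\ref{rem:sums}(c) go through unchanged.
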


\begin{proof}
  We may assume that \(M\) is simple, i.e. that all subsets of \(E(M)\) of cardinality two or less are independent. 
 Moreover, we may assume that $M$ is neither graphic nor cographic, since otherwise  Lemma \ref{lem:6involutions-BuildingBlocks} implies the claim.
By Seymours theorem $M$ is decomposable as a $k$-sum  $M_1 \oplus_k M_2$ for some $k \in \{1,2,3\}$, and we can assume 
\(
d_i=\rank M_i\ge k\) since the sum is non-trivial. We also assume that the sum $k$ is minimal with the above properties. 
  Since \(k\) is minimal, we have \(6=d_1+d_2-k+1\), and therefore \(d_i\leq 5\) for \(i\in\{1,2\}\). 
  Hence the \(M_i\) are graphic, cographic, or have simplifications isomorphic to the sporadic matroid.

If \(k=1\), then the claim follows directly from Lemmas~\ref{lem:6involutions-BuildingBlocks} and \ref{lem:6involutions-Decomposables}. 
Hence assume \(k=2,3\).

If $M_1$ is graphic, we may choose $n_1 = d_1 + 1$ codimension-one subspaces of $V_1$ and $n_2 = d_2$ codimension-one subspaces for $V_2$, as in Lemma \ref{lem:6involutions-BuildingBlocks}. By Lemma \ref{lem:6involutions-Decomposables}, it follows that the vector space $V$ representing $M$ has $6$ pairwise distinct codimension-one subspaces such that every $e \in E(M)$ is contained in at least $4$ of them. 

We may assume that neither $M_1$ nor $M_2$ is graphic. In particular, both ranks $d_i \geq 4$. Since these ranks satisfy $6 = d = d_1 + d_2 - k + 1$, we find that $d_1 = d_2 = 4$ and $k = 3$. In particular, neither $M_i$ is the rank five sporadic matroid. In fact, all rank four matroids are graphic or cographic, so we find that both $M_1$ and $M_2$ are cographic.
Since \(M_1\) and \(M_2\) are both not graphic, we may assume that \(M_i=M^*[G_i]\), \(i=1,2\), where for \(i=1,2\), \(G_i\) is some subdivision of \(K_{3,3}\).
In this case, there is essentially only one way to perform the \(3\)-sum and $M_1 \oplus_k M_2$ is also cographic (see Remark \ref{rem:sums} c)) and the result follows as above.
\end{proof}

\bigskip
\section{From positive curvature to a rational cohomology CROSS}
\label{sec:t4orLess}

This section reviews and refines the results we need for the proof of Theorem \ref{thm:t9}. Section \ref{sec:Preliminaries} contains the statements of the Connectedness and Periodicity Lemmas of the third author, the Four-Periodicity Theorem of the first author, and the $b_3$ lemma from \cite{KWW}. Together with the $\gS^1$-splitting theorem and a lemma to rule out the case of rational type $\s^p \times \HP^k$ for $p \in \{2,3\}$ in \cite{KWW}, these results imply that a fixed-point component $F^f$ of an isometric $\gT^5$-action is a rational cohomology $\s$, $\CP$, or $\HP$. In fact, Nienhaus \cite{Nienhaus-PhD} proved that this also holds for $\gT^4$-actions. He does this with an improvement of the Four-Periodicity Theorem, and we state these results as well, as they are used throughout the paper.

Section \ref{sec:PushingAndPulling} defines periodicity up to a fixed degree and contains proofs of statements that relate to moving periodicity down to, or up from, submanifolds. These are required for the proof of Theorem \ref{thm:t9}.

\subsection{Connectedness lemma and periodicity}
\label{sec:Preliminaries} 
We start with preliminaries. For simplicity, we state that all cohomology groups are taken with rational coefficients.

\begin{theorem}[Connectedness Lemma, \cite{Wilking03}]\label{thm:CL}
Let $M^n$ be a closed, positively curved Riemannian manifold.
\begin{enumerate}
\item If $N^{n-k} \subseteq M^n$ is a fixed-point component of an isometric circle action on $M^n$, then the inclusion is $(n-2k+2)$-connected.
\item If $N_1^{n-k_1}$ and $N_2^{n-k_2}$ are two such submanifolds, and if $k_1 \leq k_2$, then the inclusion $N_1 \cap N_2 \subseteq N_2$ is $(n-k_1-k_2)$-connected.
\end{enumerate}
\end{theorem}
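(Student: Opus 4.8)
The plan is to prove both statements by Morse theory of the energy functional on suitable path spaces, following the classical variational approach to connectedness results in positive curvature. For (1), fix a point $p\in N$ and let $P$ be the space of $H^1$-paths $\gamma\colon[0,1]\to M$ with $\gamma(0)\in N$ and $\gamma(1)\in N$. The energy $E\colon P\to\R$ satisfies condition (C); its set of absolute minima is the space of constant paths at points of $N$, which is a nondegenerate critical manifold diffeomorphic to $N$ (nondegeneracy transverse to $N$ uses that $N$ is totally geodesic, so the second variation at a constant path is $\int|V'|^2$ on fields with $V(0),V(1)\in TN$, which has no kernel transverse to the constants); and its remaining critical points are the nonconstant geodesics meeting $N$ orthogonally at both endpoints. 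By Morse--Bott theory, $P$ is homotopy equivalent to a CW-complex obtained from $N$ by attaching cells of dimension at least $\mu_0$, where $\mu_0$ is the minimal index among nonconstant critical geodesics. Since $P$ is the homotopy pullback of $N\hookrightarrow M\hookleftarrow N$, and since the endpoint-evaluation fibration compares $P$ with the homotopy fiber of $N\hookrightarrow M$, this forces $\pi_i(M,N)=0$ for $i\le\mu_0$, so it suffices to show $\mu_0\ge n-2k+2$.

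The crux is therefore an index estimate. Let $\gamma$ be a nonconstant geodesic orthogonal to $N$ at both ends, of length $\ell$. The index form $I$ on variation fields $V$ with $V(0)\in T_{\gamma(0)}N$ and $V(1)\in T_{\gamma(1)}N$ carries no second-fundamental-form boundary term because $N$ is totally geodesic. Parallel transport along $\gamma$ identifies $T_{\gamma(0)}N$ and $T_{\gamma(1)}N$ with two $(n-k)$-dimensional subspaces of $T_{\gamma(1)}M$, whose intersection has dimension at least $n-2k$; on the corresponding space $W_0$ of parallel fields one has $I(V,V)=-\int_0^\ell\langle R(V,\dot\gamma)\dot\gamma,V\rangle<0$ by positivity of the curvature, using that $V$ and $\dot\gamma$ are pointwise independent since $\dot\gamma(0)\perp N$. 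This already gives $\mu_0\ge n-2k$. To gain the remaining two dimensions, one uses the Killing field $X$ of the circle action: it vanishes on $N$, so $J:=X\circ\gamma$ is a Jacobi field along $\gamma$ with $J(0)=J(\ell)=0$, with $J\perp\dot\gamma$ (as $\langle X,\dot\gamma\rangle$ is constant and vanishes at $0$), and $J$ is $I$-orthogonal to $W_0$ because $\nabla X$ maps $\nu N$ to $\nu N$ at points of $N$. Combining the resulting nonpositive direction and a conjugate-point/nullity comparison with a further endpoint-variation refinement (the standard "$+1$" in the totally geodesic case) upgrades the estimate to $\mu_0\ge n-2k+2$.

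For (2) the argument is parallel: work with the path space $P'$ of $\gamma\colon[0,1]\to M$ with $\gamma(0)\in N_1$, $\gamma(1)\in N_2$, which is the homotopy pullback of $N_1\hookrightarrow M\hookleftarrow N_2$. Its absolute minima form the critical manifold $N_1\cap N_2$, which is nonempty whenever the assertion is non-vacuous (i.e. $n-k_1-k_2\ge0$) by Frankel's theorem, and its nonconstant critical points are geodesics orthogonal to $N_1$ at $0$ and to $N_2$ at $\ell$. The same parallel-field computation gives an $(n-k_1-k_2)$-dimensional $I$-negative subspace, and the endpoint-variation refinement supplies one more negative direction, so the minimal index is at least $n-k_1-k_2+1$; feeding this into the Morse--Bott description of $P'$ and the evaluation fibration $P'\to N_2$ yields that $N_1\cap N_2\hookrightarrow N_2$ is $(n-k_1-k_2)$-connected.

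I expect the sharp index estimate to be the main obstacle — in particular, rigorously extracting the last one or two dimensions beyond the naive parallel-field count via the conjugate-point produced by the Killing-field Jacobi field and the endpoint variation. Secondary technical points that need care are: verifying condition (C) and the Morse--Bott nondegeneracy of the minimum critical manifold; the case in which a critical geodesic passes through another component of the fixed-point set, so that $J$ could vanish identically (this should be excludable, or handled by noting such a geodesic decomposes); and bookkeeping the translation between the homotopy-pullback path spaces, the homotopy fibers, and the homotopy groups of the pairs $(M,N)$ and $(N_2,N_1\cap N_2)$.
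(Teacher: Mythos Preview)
The paper does not prove this theorem; it is quoted from \cite{Wilking03} and used as a black box throughout. Your outline is the Morse-theoretic argument from that source, so there is nothing to compare on the paper's side.

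Two remarks on the points you flag. First, $J\equiv 0$ is genuinely excluded: since $\dot\gamma(0)\in\nu_{\gamma(0)}N$ and $N$ is a full connected component of the fixed-point set, $\gamma(t)$ lies outside the fixed-point set for all small $t>0$, so $J(t)=X(\gamma(t))\neq 0$ there. Second, your boundary computation actually shows that $J$ lies in the \emph{full} null space of the index form with the $N$--$N$ boundary conditions, not merely in $W_0^\perp$: for any admissible $V$ one has $I(J,V)=\langle J',V\rangle\big|_0^\ell$, and $J'(0),J'(\ell)\in\nu N$ while $V(0),V(\ell)\in TN$. Thus every nonconstant critical geodesic is degenerate, and converting this nullity into an honest extra unit of index is exactly the subtle step; it is carried out in \cite{Wilking03} and does require more than the phrase ``conjugate-point/nullity comparison'', so you are right to single it out as the main obstacle.
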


\begin{theorem}[Periodicity Lemma, \cite{Wilking03}]\label{thm:PL}
If $N^{n-k} \subseteq M^n$ is a $(n - k - l)$-connected inclusion of closed, orientable manifolds, then there exists $e \in H^k(M)$ such that the map $H^i(M) \to H^{i+k}(M)$ given by multiplication by $x$ is surjective for $l \leq i < n - l - k$ and injective for $l < i \leq n - l - k$. 
\end{theorem}

The periodicity in Theorem \ref{thm:PL} may be viewed as starting in degree $l$ and going up to degree $n-l$. 

\begin{remark}\label{rem:GroveSearleCodim2}
When the codimension $k = 2$ in Part (1) of Theorem \ref{thm:CL}, then Theorem \ref{thm:PL} applies with $l = 0$. If additionally $M^n$ is oriented, then it follows from the two-periodic cohomology together with the finiteness of $\pi_1(M)$ that $M$ has the rational cohomology ring of $\s^n$ or $\CP^{\frac n 2}$. Although we do not require it here, Grove and Searle proved the stronger result that $M^n$ is diffeomorphic to $\s^n$, $\CP^{\frac n 2}$, $\RP^n$, or a lens space (see \cite[Theorem 1.2]{GroveSearle94}).
\end{remark}

Throughout this article, we will encounter periodicity starting in degree $0$ and ending at some point. We make the following definition:

\begin{definition}\label{def:UpToc}
A manifold $M$ is $k$-periodic in degrees from $b$ to $c$ if there exists $x \in H^k(M)$ with $2k \leq c - b$ such that the multiplication maps $H^i(M) \to H^{i+k}(M)$ by $x$ are surjective for $b \leq i < c - k$ and injective for $b < i \leq c - k$.

If $b = 0$, we say $M$ has $k$-periodic cohomology up to degree $c$, and if additionally $c = \dim M$, then we say that $M$ has $k$-periodic cohomology.

In the special case where $k \in \{1,2,4\}$, $H^1(M) = 0$, and $H^i(M) = 0$ for $0 < i < k$, we say that $M$ is a cohomology $\s$, $\CP$, or $\HP$ from degree $b$ to degree $c$.
\end{definition}

We note that $M$ is a rational sphere up to degree $c$ if and only if $M$ is rationally $(c-1)$-connected.

The passage from positive curvature to periodicity is via the Connectedness and Periodicity Lemmas. The next step from periodicity to rational cohomology spheres and projective spaces requires multiple additional tools. The first, in the simply connected case, is the Four-Periodicity Theorem of the first author, together with an improvement that removes the simply connected assumption and relaxes the codimension assumption in the case where the normal bundle to the submanifold admits a complex structure due to Nienhaus.

\begin{theorem}[Four-Periodicity Theorem, \cite{Kennard13,Nienhaus-PhD}]\label{thm:4PT}
Let $N^{n-k} \subseteq M^n$ be a $(n-k)$-connected inclusion of closed, orientable manifolds.
	\begin{enumerate}
	\item If $k \leq \tfrac n 3$, then $M$ has four-periodic rational cohomology.
	\item If $k \leq \tfrac n 2$ and if the normal bundle to $N$ admits a complex structure, then $M$ has four-periodic rational cohomology.
	\end{enumerate}
\end{theorem}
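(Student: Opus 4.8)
The plan is to reduce both statements to a purely cohomological assertion about periodicity and then to determine which periods can occur. First I would apply the Periodicity Lemma (Theorem~\ref{thm:PL}) with $l=0$: since the inclusion $N^{n-k}\hookrightarrow M^n$ is $(n-k)$-connected, there is a class $e\in H^k(M;\Q)$ such that multiplication by $e$ sends $H^i(M;\Q)$ onto $H^{i+k}(M;\Q)$ for $0\le i<n-k$ and injectively into it for $0<i\le n-k$; that is, $M$ has $k$-periodic rational cohomology in the sense of Definition~\ref{def:UpToc} (the inequality $3k\le n$ in part~(1), respectively $2k\le n$ in part~(2), is exactly what is needed for this to be literally meaningful). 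So it suffices to show that a closed orientable $M^n$ with $k$-periodic rational cohomology is four-periodic, assuming $k\le n/3$ for part~(1), and assuming $k\le n/2$ together with the normal-bundle hypothesis for part~(2).

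Next I would dispose of the easy periods. If $k$ is odd then $e^2=0$ by graded commutativity over $\Q$; if $H^k(M)\neq 0$, surjectivity in degree $0$ gives $H^k(M)=\Q\cdot e$ with $e\neq 0$, and since $k\le n-k$ the injectivity of multiplication by $e$ on $H^k(M)$ would give $e^2\neq 0$, a contradiction. Hence $H^k(M)=0$; feeding this back into the surjectivity statement and combining with Poincar\'e duality forces $H^j(M;\Q)=0$ for all $0<j<n$, so $M$ is a rational homology sphere, which is four-periodic. If $k$ is even and $k\le 4$ we are done: $k=4$ is the conclusion, and $k=2$ yields four-periodicity by taking $e^2\in H^4(M)$ as the new periodicity class and composing the two multiplication maps (using $n\ge 8$; for $n<8$ four-periodicity is degenerate and the statement is trivial).

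The remaining case --- $k$ even with $k\ge 6$ --- is the main obstacle. Iterating the injectivity of multiplication by $e$ (this is where the hypothesis $3k\le n$ is used) yields $e^m\neq 0$ with $m=\lfloor n/k\rfloor\ge 3$, so $\Q[e]\subseteq H^*(M;\Q)$ is a truncated polynomial algebra on a generator of even degree $k$ with at least three nonzero powers. One must show this is incompatible with $M$ being a closed manifold. This is a generalized-Hopf-invariant phenomenon: choosing a prime $p$ for which the finitely generated group $H^*(M;\Z)$ has no $p$-torsion in the relevant degrees, the mod-$p$ reduction of $e$ generates a truncated polynomial subalgebra of $H^*(M;\mathbb{F}_p)$ with at least three nonzero powers, and the Steenrod power operations together with the Adem relations --- supplemented, in the single remaining case $k=8$, by the nonexistence of $\mathbb{OP}^m$ for $m\ge 3$, a consequence of Adams' Hopf-invariant-one theorem --- force $k\in\{2,4\}$. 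Combined with the previous paragraph, $M$ is $2$- or $4$-periodic and hence four-periodic, which proves part~(1). I expect the careful execution of this Steenrod/Adem step --- including the reduction, when $A:=H^*(M;\Q)/(e)$ is larger than $\Q$, to the algebraically simplest case --- to be the bulk of the work.

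For part~(2), a complex structure on the normal bundle $\nu$ of $N$ forces its real rank $k$ to be even, so the odd case above is vacuous; the real content is to improve the first step so that $k\le n/2$ suffices in place of $k\le n/3$. Following Nienhaus, I would run the comparison through the projectivized bundle $\mathbb{P}(\nu)$ (or $\mathbb{P}(\nu\oplus\underline{\C})$): by the projective-bundle formula its cohomology is a free $H^*(N)$-module on the powers of the tautological degree-two class modulo the Chern-class relation, so it carries a genuine $2$-periodic direction along the fibre. Comparing $M$, $N$, and $\mathbb{P}(\nu)$ by the Gysin and Mayer--Vietoris sequences both relaxes the connectivity needed to run the argument of the first paragraph and, in the range $n/3<k\le n/2$ where one only controls $e^2\neq 0$ a priori, supplies the additional even-degree class needed to push the period down to at most $4$; the argument of the previous two paragraphs then applies. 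Arranging this refined comparison so that the Gysin bookkeeping interacts cleanly with the high-connectivity hypothesis is the secondary difficulty.
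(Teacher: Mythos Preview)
The paper does not prove this theorem; it is quoted from \cite{Kennard13} and \cite{Nienhaus-PhD} as a black box and used as a tool in Section~\ref{sec:t4orLess}. So there is no in-paper argument to compare your sketch against.

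That said, let me comment on your sketch relative to the actual proofs in those references. Your opening move (Periodicity Lemma with $l=0$ to obtain $k$-periodicity) and the disposal of odd $k$ and $k\le 4$ are correct and match Kennard's setup. The gap is in the main step. Your plan for even $k\ge 6$ is a Hopf-invariant-style classification: pass to $\mathbb{F}_p$, observe a truncated polynomial subalgebra $\mathbb{F}_p[e]/(e^{m+1})$ with $m\ge 3$, and invoke Adem relations (plus Adams for $k=8$) to force $k\in\{2,4\}$. This argument \emph{requires} $H^i(M)=0$ for $0<i<k$, i.e., that $H^*(M;\Q)/(e)=\Q$; otherwise the Adem decompositions of $\mathrm{Sq}^k$ or $P^{k/2}$ land in nonzero intermediate groups and yield no contradiction. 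You flag this (``the reduction, when $A$ is larger than $\Q$'') but offer no mechanism to achieve it, and in fact no such reduction exists in general. Kennard's argument proceeds differently: rather than ruling out degrees, he shows that if $e\in H^k(M;\mathbb{F}_2)$ induces periodicity, then some Steenrod square $\mathrm{Sq}^i e$ with $0<i<k$ \emph{also} induces periodicity, and iterating drives the period down to $\gcd(4,k)$. The key point is that Steenrod operations interact with the periodicity isomorphisms themselves (via the Cartan formula and the module structure), so the argument works regardless of what else lives in $H^*(M)$. This is the missing idea.

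For part~(2), your description of the Nienhaus refinement is too schematic to evaluate. The projectivized-bundle idea is in the right direction, but the actual argument does not run through a Mayer--Vietoris comparison as you suggest; it exploits the complex structure to realize the periodicity element as an Euler class and uses the resulting multiplicative structure more directly.
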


Note that one gets such highly connected inclusions from the Connectedness and Periodicity Lemmas in the presence of {\it transversely intersecting} submanifolds of {\it small codimension}. If one can arrange these submanifolds to be fixed-point components of circles, then one can moreover get by with the weaker assumption in (2).

For dimensions $n \not\equiv 2\bmod 4$, four-periodicity together with Poincar\'e duality and the assumption $H^1(M) = 0$ is sufficient to conclude that $M$ is a rational cohomology $\s^n$, $\CP^{\frac n 2}$, $\HP^{\frac n 4}$, or $\s^3 \times \HP^{\frac{n-3}{4}}$.  In the dimensions $n \equiv 2\bmod 4$, one has the model spaces $\s^n$, $\CP^{\frac n 2}$, and $\s^2 \times \HP^{\frac{n-2}{4}}$, as well as manifolds with non-vanishing third Betti number. These examples are ruled out by the $b_3$ lemma in \cite{KWW}. As for the remaining exotic examples of $\s^p \times \HP^{\frac{n-p}{4}}$ with $p \in \{2,3\}$, these are ruled out in \cite{KWW} for fixed-point components of isometric $\gT^5$-actions. Nienhaus proved the same conclusion but using only a $\gT^4$-action (see \cite{Nienhaus-PhD}):

\begin{theorem}\label{thm:t4}
If $F^f$ is a fixed-point component of an effective, isometric $\gT^4$-action on a connected, closed, orientable, positively curved Riemannian manifold $M^n$, then $F^f$ is a rational cohomology $\s^f$, $\CP^{\frac f 2}$, or $\HP^{\frac f 4}$.
\end{theorem}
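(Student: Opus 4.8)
The plan is to follow the strategy of \cite{KWW} for $\gT^5$-actions, replacing the original Four-Periodicity Theorem \ref{thm:4PT}(1) by its refinement \ref{thm:4PT}(2) so that rank four suffices. Since $\gT^4$ fixes $F$ pointwise, the slice representation $\nu$ of $\gT^4$ on the normal space $\R^{n-f}$ at a point of $F$ is faithful. For each circle $\gS^1 \subseteq \gT^4$, the fixed-point component $M^{\gS^1}_F \supseteq F$ is totally geodesic, hence again positively curved, it carries an action of $\gT^4/\gS^1 \cong \gT^3$ fixing $F$, and along $F$ its normal bundle carries a complex structure, being a sum of $\gS^1$-invariant subbundles with no trivial summand. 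I would first apply the $\gS^1$-splitting theorem of \cite{KWW} to $\nu$: either $F$ is already a rational cohomology sphere (and we are done, a rational sphere being one of the allowed types), or we obtain two circles $\gS^1_1, \gS^1_2 \subseteq \gT^4$, transverse at $F$, whose fixed-point components $N_i := M^{\gS^1_i}_F$ satisfy $\cod N_1 \leq \cod N_2$ and $\cod N_1 \leq \tfrac12 \dim N_2$. Note $N_2$ is closed and orientable, the latter because its normal bundle in $M$ is complex.

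By Theorem \ref{thm:CL}(2) the inclusion $N_1 \cap N_2 \hookrightarrow N_2$ is $(n - \cod N_1 - \cod N_2)$-connected, and $n - \cod N_1 - \cod N_2 = \dim N_2 - \cod N_1 = \dim(N_1 \cap N_2)$, which is exactly the connectedness hypothesis of Theorem \ref{thm:4PT}. The normal bundle of $N_1 \cap N_2$ in $N_2$ is $\gS^1_1$-invariant without trivial summand, hence complex, and of codimension $\cod N_1 \leq \tfrac12 \dim N_2$, so Theorem \ref{thm:4PT}(2) shows $N_2$ has four-periodic rational cohomology. Iterating this construction inside $N_2$ and the smaller manifolds it produces—choosing circles in the successive subtori of $\gT^4$ fixing $F$—yields a descending chain $M \supseteq N_2 \supseteq N_2' \supseteq \cdots \supseteq F$ of positively curved, totally geodesic, orientable submanifolds, each with four-periodic rational cohomology and with each inclusion highly connected by Theorem \ref{thm:CL}. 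Transporting periodicity down this chain via Theorem \ref{thm:PL}, and bookkeeping the degree ranges as in Definition \ref{def:UpToc} together with Poincaré duality at each stage (the passages between stages being handled as in Section \ref{sec:PushingAndPulling}), upgrades this to four-periodic rational cohomology of $F$ itself. Finally $H^1(F;\Q) = 0$, since the closed positively curved manifold $F$ has finite fundamental group by Bonnet--Myers.

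A closed orientable manifold with four-periodic rational cohomology and vanishing first Betti number is, by Poincaré duality, a rational cohomology $\s^f$, $\CP^{f/2}$, or $\HP^{f/4}$, unless either $f \equiv 2 \bmod 4$ and $b_3(F) \neq 0$, or $F$ has the rational cohomology of $\s^p \times \HP^{(f-p)/4}$ for some $p \in \{2,3\}$. The first possibility is excluded by the $b_3$ lemma of \cite{KWW}, and the two remaining product types by the $\s^p \times \HP^k$ lemma of \cite{KWW}; both lemmas use that $F$ is a fixed-point component of the $\gT^4$-action, and it is in the latter that Nienhaus's argument is needed to weaken the $\gT^5$-hypothesis of \cite{KWW} to a $\gT^4$-hypothesis, completing the proof of Theorem \ref{thm:t4}.

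\textbf{Main obstacle.} The crux is producing, at each stage of the chain, a transverse pair of circle fixed-point components of codimension at most half the ambient dimension, and then carrying four-periodicity all the way down to $F$ without losing degrees. For a faithful $\gT^4$-representation there is in general no lower bound on how many weights a circle fixes, so a small-codimension fixed-point component cannot be extracted from the slice representation alone; it must come from the positive-curvature constraints on the mutual positions of the totally geodesic fixed-point components—Theorem \ref{thm:CL} and Frankel-type intersection results—which is precisely what the $\gS^1$-splitting theorem packages. The margin afforded by Theorem \ref{thm:4PT}(2), namely codimension up to $\tfrac12$ rather than $\tfrac13$, is what lets the codimension bookkeeping close at rank four; with only Theorem \ref{thm:4PT}(1) available one genuinely needs the extra circle present in rank five, as in \cite{KWW}.
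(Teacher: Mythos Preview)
The paper does not prove Theorem~\ref{thm:t4}; it is quoted from Nienhaus's thesis \cite{Nienhaus-PhD}. Your overall strategy---run the $\gS^1$-splitting theorem of \cite{KWW}, feed the resulting transverse pair into the improved Four-Periodicity Theorem~\ref{thm:4PT}(2), and then exclude the $b_3 \neq 0$ and $\s^p \times \HP$ cases---is exactly the architecture the paper describes in prose, and your closing observation that the $\tfrac12$ bound in \ref{thm:4PT}(2) is what buys the drop from rank five to rank four is correct.

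The gap is in the middle paragraph. Theorem~\ref{thm:4PT} gives four-periodicity to the \emph{ambient} manifold in the highly connected pair, so your application to $N_1 \cap N_2 \hookrightarrow N_2$ yields four-periodicity of $N_2$, not of $F$. Iterating ``inside $N_2$'' then produces a chain $N_2 \supseteq N_2' \supseteq \cdots$ of ambients with four-periodic cohomology, but there is no reason this chain terminates at $F$: each step consumes one circle of the residual torus and outputs the \emph{larger} member of the new transverse pair, so you never reach the full $\gT^4$-fixed set. The phrase ``transporting periodicity down this chain via Theorem~\ref{thm:PL}'' is also off---the Periodicity Lemma produces $k$-periodicity on the ambient from a highly connected inclusion, not four-periodicity on the submanifold. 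What is actually needed (compare the alternate proof of (1) in Section~\ref{sec:t9}) is to arrange the splitting so that $F$ itself appears as the transverse intersection $N_1 \cap N_2$ inside some $N$; then $F \hookrightarrow N_i$ is $(\dim F)$-connected, Theorem~\ref{thm:4PT}(2) makes $N_i$ four-periodic, and Lemma~\ref{lem:uptoc} together with Poincar\'e duality on $F$ (as in Lemma~\ref{lem:UpToHalfPD}) pushes four-periodicity back down to $F$. Getting the codimension inequality $\dim F \geq \tfrac12 \dim N$ to hold at that step is the genuine content, and this is what your ``bookkeeping'' sentence hides.
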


In the remainder of this section, we provide useful tools for pushing down periodicity to, and pulling it up from, submanifolds.

\subsection{Pushing and pulling periodicity}
\label{sec:PushingAndPulling}

We start with pushing down, which follows for purely topological reasons. 

\begin{theorem}[Bredon]\label{thm:Bredon}
If $M$ is a smooth, closed, orientable manifold with singly generated, $g$-periodic rational cohomology, then for any fixed-point component $F$ of a smooth circle action on $M$, $F$ also has singly generated, $g'$-periodic cohomology for some $g' \leq g$ with $g' \equiv g \bmod 2$. 
\end{theorem}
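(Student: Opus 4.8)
The plan is to pass to the Borel construction $M_{S^1}=M\times_{S^1}ES^1$, write $H^*_{S^1}(M;\Q)=H^*(M_{S^1};\Q)$, and run the localization theorem for circle actions, so that the statement reduces to graded commutative algebra over the graded field $\Q[t,t^{-1}]$ with $t\in H^2(BS^1;\Q)$. As a preliminary, ``singly generated and $g$-periodic'' pins down the cohomology ring: $H^*(M;\Q)\cong\Q[x]/(x^{k+1})$ with $\deg x=g$ and $\dim M=gk$, where $k=1$ (and $M$ is a rational homology $g$-sphere) whenever $g$ is odd, since then $x^2=-x^2=0$. Also, each component $F'$ of $\mathcal F=M^{S^1}$ is a closed submanifold whose normal bundle is a sum of nontrivial rotation $2$-plane bundles, hence is oriented of even rank; since $M$ is oriented it follows that $F'$ is orientable and $\cod F'$ is even.

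First I would show that $H^*_{S^1}(M;\Q)$ is a free $\Q[t]$-module generated over $\Q[t]$ by a single element of degree $g$. In the Serre spectral sequence of $M\hookrightarrow M_{S^1}\to BS^1$ one has $E_2=\Q[t]\otimes H^*(M;\Q)$. When $g$ is even this is concentrated in even total degree, so every differential vanishes and $H^*_{S^1}(M;\Q)$ is $\Q[t]$-free of rank $k+1$. When $g$ is odd, the only possibly nonzero differential is $d_{g+1}$, determined by its value on $x$; if that value is nonzero then $H^*_{S^1}(M;\Q)$ is $t$-torsion and the localization theorem forces $\mathcal F=\emptyset$, so the statement is vacuous, while if it vanishes $H^*_{S^1}(M;\Q)$ is again $\Q[t]$-free, of rank $2$. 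Assuming freeness, choose $\xi\in H^g_{S^1}(M;\Q)$ restricting to $x$ along the fibre inclusion; the images of $1,\xi,\dots,\xi^k$ in $H^*_{S^1}(M;\Q)/(t)=H^*(M;\Q)$ form a basis, so by graded Nakayama $1,\xi,\dots,\xi^k$ is a $\Q[t]$-basis of $H^*_{S^1}(M;\Q)$, which is therefore generated as a $\Q[t]$-algebra by $\xi$.

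Next I would invoke the localization theorem: restriction along $\mathcal F\hookrightarrow M$ gives a ring isomorphism
\[
H^*_{S^1}(M;\Q)[t^{-1}]\ \xrightarrow{\ \sim\ }\ H^*_{S^1}(\mathcal F;\Q)[t^{-1}]=\bigoplus_{F'}\bigl(H^*(F';\Q)\otimes_\Q\Q[t,t^{-1}]\bigr).
\]
Comparing ranks over $\Q[t,t^{-1}]$ gives $\sum_{F'}\dim_\Q H^*(F';\Q)=k+1=\dim_\Q H^*(M;\Q)$; combining this with the standard identity $\sum_{F'}\chi(F')=\chi(M)$ (using $\chi(M)=\dim_\Q H^*(M;\Q)$ for $g$ even and $\chi(M)=0$ for $g$ odd) forces $\chi(F')=\dim_\Q H^*(F';\Q)$ for every component, i.e.\ $H^{\mathrm{odd}}(F';\Q)=0$ (resp., each $F'$ is a rational homology sphere of odd dimension). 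Now fix $F$. Under the displayed isomorphism the factor $H^*(F;\Q)\otimes\Q[t,t^{-1}]$ is a quotient ring of the single-generator algebra $H^*_{S^1}(M;\Q)[t^{-1}]$, hence is generated over $\Q[t,t^{-1}]$ by the image $\bar\xi$ of $\xi$. Since $\Q[t,t^{-1}]$ is a graded field and the ideal of $H^*(F;\Q)$ generated by its positive-degree part is nilpotent, a Nakayama argument over $\Q[t,t^{-1}]$ identifies the minimal number of $\Q[t,t^{-1}]$-algebra generators of $H^*(F;\Q)\otimes\Q[t,t^{-1}]$ with the minimal number of $\Q$-algebra generators of $H^*(F;\Q)$. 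Hence $H^*(F;\Q)\cong\Q[y]/(y^{k'+1})$ for a single generator $y$ of some degree $g'$, which is even (because $H^{\mathrm{odd}}(F;\Q)=0$) unless $H^*(F;\Q)=\Q$, in which case we set $g'=g$. Finally, writing $\bar\xi=\sum_{i\ge 0}\eta_i t^i$ with $\eta_i\in H^{g-2i}(F;\Q)$, if $g'>g$ then each $\eta_i$ lies in a degree $<g'$ and so vanishes unless it lies in $H^0(F;\Q)$, whence $\bar\xi\in\Q[t,t^{-1}]$ and the $\Q[t,t^{-1}]$-subalgebra it generates is all of $\Q[t,t^{-1}]$, a proper subring of $H^*(F;\Q)\otimes\Q[t,t^{-1}]$ unless $H^*(F;\Q)=\Q$. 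This contradiction gives $g'\le g$, and $g'\equiv g\bmod 2$ by the above; the odd case is handled identically.

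I expect the main obstacle to be the transfer step in the third paragraph---that being ``singly generated over $\Q[t,t^{-1}]$'' descends to being ``singly generated over $\Q$''---which is a genuine graded commutative-algebra point rather than a citation: one runs Nakayama for the nilpotent ideal generated by the positive-degree part of $H^*(F;\Q)$ and uses that tensoring with the graded field $\Q[t,t^{-1}]$ preserves the minimal number of module generators of the indecomposables. The remaining ingredients---equivariant formality from the vanishing of odd cohomology, the localization theorem, the identity $\sum_{F'}\chi(F')=\chi(M)$, and the structure of $H^*(M;\Q)$---are standard and can be cited from Bredon's book or from Allday--Puppe.
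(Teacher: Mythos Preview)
The paper does not prove this theorem: it is stated as a result of Bredon and used as a black box, with no argument supplied. So there is no ``paper's own proof'' to compare against; you have written out what is essentially the classical localization proof that one finds in Bredon's book or in Allday--Puppe.

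Your argument is correct. The equivariant formality step, the localization isomorphism, and the passage from ``singly generated over $\Q[t,t^{-1}]$'' to ``singly generated over $\Q$'' via the indecomposables $I/I^2$ all go through as you describe. The key algebraic point---that $(I/I^2)\otimes\Q[t,t^{-1}]$ is free of rank $\dim_\Q I/I^2$ over the graded field $\Q[t,t^{-1}]$, so cyclicity forces $\dim_\Q I/I^2\le 1$---is exactly right, and your degree argument for $g'\le g$ (using that $\xi|_F$ lands in $H^*(F;\Q)\otimes\Q[t]$ before localizing, so only nonnegative powers of $t$ appear) is the standard one.

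One small expository wrinkle: your sentence ``forces $\chi(F')=\dim_\Q H^*(F';\Q)$ for every component'' is only valid for $g$ even. For $g$ odd you have $\sum\chi(F')=\chi(M)=0$ while $\sum\dim_\Q H^*(F';\Q)=2$, so that equality fails. The correct reasoning in the odd case is simply that each $F'$ is odd-dimensional (even codimension in an odd-dimensional $M$), hence not a point, hence there is a single component with total Betti number $2$, which by Poincar\'e duality is a rational sphere of odd dimension $\dim F'\le g$. You state this conclusion parenthetically, so you clearly know it, but the logical connective ``forces \ldots, i.e.\ \ldots\ (resp., \ldots)'' suggests the same mechanism is at work in both parities, which it is not. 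This is a phrasing issue, not a mathematical gap.
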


For example, the fixed-point components of a circle action on a rational cohomology $\HP^n$ are rational cohomology $\CP^f$ and $\HP^f$ for some $f \leq n$. 

Note, in addition, that positive curvature might imply that the inclusion $F \subseteq M$ is highly connected by the Connectedness Lemma. If, in particular, the connectedness is at least $g$, then $H^i(N;\Q) \cong H^i(M;\Q) = 0$ for $0 < i < g$, and so $g' = g$ in Bredon's theorem.

Now we move to the issue of pulling up periodicity. In applications, Theorem \ref{thm:t4} will give rise to a submanifold that is a rational cohomology $\s$, $\CP$, or $\HP$. The Connectedness Lemma will also give us highly connected inclusions of $F$ and possibly other submanifolds into $M$. The following lemmas are used frequently. 

\begin{lemma}\label{lem:uptoc}
Let $N^m \subseteq M^n$ be a $c$-connected inclusion of closed, orientable manifolds for some integer $c$. If $0 < k < c$, then $N$ has $k$-periodic cohomology up to $c$ if and only if $M$ has $k$-periodic rational cohomology up to $c$.
\end{lemma}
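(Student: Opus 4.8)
The plan is to prove Lemma~\ref{lem:uptoc} by exploiting the $c$-connectedness of the inclusion $N^m \subseteq M^n$ to transfer periodicity in either direction. Recall that a $c$-connected inclusion $\iota\colon N \embedded M$ induces isomorphisms $H^i(M) \to H^i(N)$ for $i < c$ and an injection $H^c(M) \to H^c(N)$; this is the only structural input, so both implications become statements about a single graded vector space up to degree $c$. Concretely, write $V^i = H^i(M)$ and $W^i = H^i(N)$, and note that the hypotheses give $V^i \cong W^i$ for $0 \le i < c$ together with $V^c \embedded W^c$.

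First I would handle the direction ``$M$ is $k$-periodic up to $c$ $\Rightarrow$ $N$ is $k$-periodic up to $c$.'' By Definition~\ref{def:UpToc}, there is $x \in H^k(M)$ with $2k \le c$ such that multiplication by $x$ is surjective on $H^i(M)$ for $k \le i < c-k$ and injective for $0 < i \le c-k$ (the range $i < k$ being vacuous once we also use $H^1$ and low-degree vanishing appropriately; in fact the definition with $b=0$ demands surjectivity for $0 \le i < c-k$ and injectivity for $0 < i \le c-k$). Set $y = \iota^*(x) \in H^k(N)$. Because $\iota^*$ is a ring homomorphism, the square relating multiplication by $x$ on $H^i(M) \to H^{i+k}(M)$ and multiplication by $y$ on $H^i(N) \to H^{i+k}(N)$ commutes. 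For $i+k < c$ both vertical maps $\iota^*$ are isomorphisms, so multiplication by $y$ inherits surjectivity and injectivity in exactly the same ranges, giving $k$-periodicity of $N$ up to degree $c$ (note $i+k \le c-k+k = c$, and the one possibly-borderline case $i+k=c$ only requires injectivity of multiplication by $y$, which follows from injectivity of multiplication by $x$ on $H^{i}(M)$ together with injectivity of $\iota^*$ in degree $i < c-k \le c$).

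Second I would handle the converse, ``$N$ is $k$-periodic up to $c$ $\Rightarrow$ $M$ is $k$-periodic up to $c$.'' Pick $y \in H^k(N)$ realizing the periodicity of $N$. Since $k < c$, the map $\iota^*\colon H^k(M) \to H^k(N)$ is an isomorphism, so there is a unique $x \in H^k(M)$ with $\iota^*(x) = y$. Again by naturality of the ring structure, for each $i$ with $i+k < c$ the multiplication-by-$x$ map $H^i(M) \to H^{i+k}(M)$ is conjugate via isomorphisms to multiplication by $y$ on $H^i(N) \to H^{i+k}(N)$, hence is surjective for $0 \le i < c-k$ and injective for $0 < i \le c-k$; the only subtlety is again the endpoint $i = c-k$, where injectivity of multiplication by $x$ on $H^{c-k}(M)$ follows because $\iota^*$ is injective in degrees $c-k < c$ and $c-k+k = c$ (injective in degree $c$ as well) and multiplication by $y$ is injective there. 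This yields the required $k$-periodicity of $M$ up to degree $c$.

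The main obstacle, and the only place demanding care, is the behavior exactly at the top degree $c$ in the periodicity statement: $\iota^*$ is merely injective (not necessarily surjective) in degree $c$, so one must check that every use of ``multiplication is surjective/injective'' that the definition requires lands strictly below degree $c$ on the source side and at most degree $c$ on the target side, and that the target-degree-$c$ cases only ever need injectivity (which passes through an injection) rather than surjectivity (which would need $\iota^*$ surjective in degree $c$). A quick audit of the index ranges in Definition~\ref{def:UpToc} confirms this: surjectivity is demanded only for source degrees $i < c-k$, so the target degree is $< c$, where $\iota^*$ is an isomorphism; injectivity for source degrees $i \le c-k$ reaches target degree $\le c$, and in the borderline case the argument only transports injectivity across the injection $\iota^*$ in degree $c$. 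Everything else is a diagram chase using naturality of cup product, so no further difficulty arises. I would close by remarking that the element $x$ may be taken to be $\iota^*$-related to $y$ in both directions, making the correspondence canonical.
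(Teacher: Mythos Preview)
Your proposal is correct and follows essentially the same approach as the paper: use that $\iota^*$ is an isomorphism in degrees below $c$ and injective in degree $c$, lift or push the periodicity element through $\iota^*$ (well defined since $k<c$), and invoke naturality of cup products to compare the multiplication maps. Your treatment of the boundary case $i+k=c$ is in fact more explicit than the paper's, which dispatches it with ``similar remarks hold for the surjectivity statements''; the one small slip is the phrase ``injectivity of $\iota^*$ in degree $i<c-k\le c$'' in the first direction, where you presumably meant $i=c-k<c$ for the source and $i+k=c$ for the target.
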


\begin{proof}
We consider the restriction maps $H^i(M) \to H^i(N)$, which are isomorphisms for $i < c$ and which is an injective for $i = c$. Let $\tilde x \in H^k(M)$ be a generator that maps to a generator $x \in H^k(N)$. By the naturality of cup products, the composition of the map $H^{i-k}(M) \to H^i(M)$ given by multiplication by $\tilde x$ followed by restriction is the same as restriction followed by the map $H^{i-k}(N) \to H^i(N)$ given by multiplication by $x$. For $i \leq c$, the restriction map is an isomorphism in degree $i-k$ and at least an injection in degree $i$, so multiplication by $\tilde x$ is injective if and only if multiplication by $x$ is injective. Similar remarks hold for the surjectivity statements, so the conclusion follows.
\end{proof}

By Poincar\'e duality, one can recover the whole cohomology ring from roughly the first half. The precise statements we use in this paper are as follows.

\begin{lemma}\label{lem:UpToHalfPD}
Assume $M^n$ is a closed, oriented manifold.
	\begin{enumerate}
	\item If $M^n$ is a rational cohomology $\s$, $\CP$, or $\HP$ up to $\ceil{\tfrac{n+g}{2}}$, where $g$ is $1$, $2$, or $4$, respectively, then $g$ divides $n$ and $M$ is a rational $\s^n$, $\CP^{\frac n 2}$, or $\HP^{\frac n 4}$.
	\item If $N$ is a rational $\s$, $\CP$, or $\HP$ and has dimension $\dim N \geq \tfrac{n+g}{2}$, where $g$ is $1$, $2$, or $4$, respectively, and if there is a $\ceil{\tfrac{n-1}{2}}$-connected inclusion $N \subseteq M^n$, then $M$ is a rational $\s$, $\CP$, or $\HP$.
	\end{enumerate}
\end{lemma}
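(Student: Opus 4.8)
The strategy is to exploit Poincar\'e duality to extend the periodicity/highly-connectedness information from "roughly the first half" of the cohomology to the whole ring, and then read off the answer from Definition~\ref{def:UpToc}. I would treat the two statements in turn, noting that (1) is really the engine and (2) reduces to it via Lemma~\ref{lem:uptoc}.

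For part (1): suppose $M^n$ is a rational cohomology $\s$, $\CP$, or $\HP$ up to degree $c = \ceil{\tfrac{n+g}{2}}$, where $g \in \{1,2,4\}$. By definition this means $H^1(M)=0$, that $H^i(M)=0$ for $0 < i < g$, and that there is a generator $x \in H^g(M)$ (necessarily $x = 1$ when $g=1$, i.e.\ the sphere case is the statement that $M$ is $(c-1)$-connected rationally) such that multiplication by $x$ is surjective $H^i \to H^{i+g}$ for $0 \le i < c-g$ and injective for $0 < i \le c-g$. First I would record the consequence that $H^i(M;\Q) = \Q$ for $i \equiv 0 \bmod g$ with $0 \le i \le c$ and $H^i(M;\Q) = 0$ otherwise in that range, with the powers $x^j$ (for $jg \le c$) serving as generators. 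The key point is the choice of $c$: because $c \ge \tfrac{n+g}{2}$, every degree $i$ with $n/2 \le i$ has its Poincar\'e dual degree $n-i \le n/2 \le c$, and moreover the "overlap" is wide enough that the last surviving power $x^j$ with $jg \le c$ pairs nontrivially against the complementary power. Concretely, Poincar\'e duality gives a perfect pairing $H^i(M;\Q) \times H^{n-i}(M;\Q) \to \Q$; for $i > c$ we have $n - i < n - c \le c$ (using $c \ge n/2$, in fact $c > n-c$ unless $n$ is even and $c = n/2$, a case one checks directly), so $H^i(M;\Q) \cong H^{n-i}(M;\Q)^\vee$ is already determined: it is $\Q$ if $g \mid (n-i)$, equivalently $g \mid i$ and $g \mid n$, and $0$ otherwise. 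In particular $H^n(M;\Q) = \Q$ forces $g \mid n$ (pairing $H^0$ with $H^n$), and then the entire Betti number sequence is that of $\s^n$, $\CP^{n/2}$, or $\HP^{n/4}$. Finally, to upgrade from "same Betti numbers" to "rational cohomology equivalent", I would check the ring structure: $x^{n/g} \in H^n(M;\Q)$ is nonzero because it pairs as $\langle x^j, x^{n/g - j}\rangle \ne 0$ for a $j$ with $jg \le c$ and $(n/g - j)g = n - jg \le c$ (such $j$ exists precisely because $2c \ge n + g$, so the two constraints $jg \le c$ and $n - jg \le c$ have a common integer solution), hence all $x^j$, $0 \le j \le n/g$, are nonzero and by the Betti numbers they span, so $H^*(M;\Q) \cong \Q[x]/(x^{n/g+1})$ (or an exterior algebra when $g=1$), i.e.\ $M$ is a rational $\s^n$, $\CP^{n/2}$, or $\HP^{n/4}$.

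For part (2): given a $\ceil{\tfrac{n-1}{2}}$-connected inclusion $N \embedded M^n$ with $N$ a rational $\s$, $\CP$, or $\HP$ of dimension $\dim N \ge \tfrac{n+g}{2}$, set $c = \ceil{\tfrac{n-1}{2}}$. Since $N$ is a genuine rational $\s$, $\CP$, or $\HP$ and $\dim N \ge \tfrac{n+g}{2} > c$, it is in particular $g$-periodic up to degree $c$ (truncating its honest periodicity), and $0 < g < c$ holds for $n$ large enough — the small-$n$ cases are finite in number and checked by hand. By Lemma~\ref{lem:uptoc}, $M$ is then $g$-periodic up to degree $c = \ceil{\tfrac{n-1}{2}}$, and moreover inherits from $N$ the vanishing $H^i(M;\Q)=0$ for $0 < i < g$ and $H^1(M;\Q)=0$, so $M$ is a rational cohomology $\s$, $\CP$, or $\HP$ up to degree $\ceil{\tfrac{n-1}{2}}$. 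Now $\ceil{\tfrac{n-1}{2}} \ge \ceil{\tfrac{n+g}{2}}$ fails for $g \ge 2$, so I cannot quote part (1) directly; instead I would observe that Poincar\'e duality only needs the cohomology determined up to and including the "middle", and $\ceil{\tfrac{n-1}{2}}$ already covers degrees $\le (n-1)/2$, which together with their duals (degrees $\ge (n+1)/2$) cover everything except possibly the single degree $n/2$ when $n$ is even — and that degree is pinned down by self-duality of the pairing $H^{n/2} \times H^{n/2} \to \Q$ together with knowing $H^{n/2}$ has rank $0$ or $1$ from the periodicity pattern. So the same Poincar\'e-duality bookkeeping as in part (1) applies, and the ring-structure argument (nonvanishing of the top power of the generator) goes through because $N \embedded M$ being highly connected makes the generator of $H^g(M;\Q)$ restrict to the generator of $H^g(N;\Q)$, whose powers are nonzero up through $\dim N \ge \tfrac{n+g}{2}$, hence through the middle dimension of $M$.

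The main obstacle I anticipate is the careful bookkeeping at the exact middle degree — verifying that the ceiling $c$ is large enough that the Poincar\'e pairing closes up the one potentially-ambiguous degree, and that the common solution $j$ to $jg \le c$, $n - jg \le c$ exists so that the top power of the generator is genuinely nonzero rather than merely having the right Betti number. This is where the precise value $\ceil{\tfrac{n+g}{2}}$ (rather than, say, $\floor{\tfrac{n}{2}}$) is used, and getting the floor/ceiling inequalities exactly right for all residues of $n$ modulo $g$ and modulo $2$ is the fiddly part; it is routine but must be done with care, and is best handled by splitting into the cases $g = 1$ (exterior algebra, spheres), $g = 2$, and $g = 4$ separately.
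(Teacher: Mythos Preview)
Your overall strategy matches the paper's: Poincar\'e duality extends periodicity from the first half, and the ring structure follows once you know the top power of the generator is nonzero. However, two steps in your sketch are not correct as written and are precisely where the paper invests effort.

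In part (1), the claim ``$H^n(M;\Q)=\Q$ forces $g \mid n$ (pairing $H^0$ with $H^n$)'' is a non sequitur: $H^n \cong \Q$ holds automatically by orientability and says nothing about $g$. The divisibility $g \mid n$ comes instead from the \emph{overlap} region $n-c < i < c$, where both the periodicity description ($H^i \ne 0 \iff g \mid i$) and the Poincar\'e-dual description ($H^i \ne 0 \iff g \mid n-i$) apply and must agree. Making this precise requires exactly the residue-by-residue case analysis you defer to the end; the paper carries this out explicitly (e.g., for $g=4$ it treats $n \equiv 1,7 \bmod 8$, then $n \equiv 2 \bmod 4$, then $n \equiv 3,5 \bmod 8$, each yielding a contradiction of the form $H^a(M) \cong H^{a+4}(M) \cong H^{a'}(M)$ with $a, a'$ of opposite parity).

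In part (2), the assertion that the middle degree $H^{n/2}$ is ``pinned down by self-duality of the pairing'' is wrong: the self-pairing on $H^{n/2}$ is nondegenerate for any rank, so it imposes no constraint. What actually controls $H^{n/2}(M)$ is the injection $H^{n/2}(M) \hookrightarrow H^{n/2}(N)$ coming from the $\ceil{\tfrac{n-1}{2}}$-connected inclusion (recall $\ceil{\tfrac{n-1}{2}} = \tfrac n 2$ for $n$ even), together with your correct observation that powers of $\tilde x$ restrict to nonzero powers of $x$ in $N$. The paper organizes (2) slightly differently: rather than invoking Lemma~\ref{lem:uptoc} and then patching the gap, it directly proves injectivity and surjectivity of multiplication by $\tilde x$ up through degree $\ceil{\tfrac{n+g}{2}}$, handling the few degrees between $\ceil{\tfrac{n-1}{2}}$ and $\ceil{\tfrac{n+g}{2}}$ by a short case analysis (showing the cases $n$ odd with $g \in \{2,4\}$ and $n \equiv 2 \bmod 4$ with $g=4$ are vacuous, and the remaining case has target group zero), and then applies (1). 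Your route is viable but requires the same casework in the end.
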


We remark that (2) is used in the proof of \cite[Lemma 6.2]{Wilking03}, but we prove it here for completeness. 

We also note that the assumption in (2) implies that $M$ is a rational $\s$, $\CP$, or $\HP$ up to degree $\ceil{\frac{n-1}{2}}$ by Lemma \ref{lem:uptoc}, so (2) is an improvement of (1) in the situation where periodicity up to $c$ comes from a $c$-connected inclusion of a submanifold. We also note that the dimension assumption on $N$ in (2) is required since, for example, there is an $8$-connected inclusion $\s^8 \to \CaP$.

We also remark that there is no similar improvement of (1) for $\ceil{\tfrac{m-1}{2}}$-connected inclusions $N^m \subseteq M$ where this time $M$ is a rational $\s$, $\CP$, or $\HP$. Indeed, one can homotope the composition of the projection $\CP^m \times \s^{2m} \to \CP^m$ and the inclusion $\CP^m \subseteq \CP^l$ for all sufficiently large $l$ to obtain a $(2m)$-connected inclusion of the $(4m)$-manifold $\CP^m \times \s^{2m}$ into $\CP^l$ for all sufficiently large $l$. Similar examples exist where the ambient space is $\s^l$ or $\HP^l$.

\begin{proof}
The proof of (1) uses Poincar\'e duality. In the case of the sphere, the assumption implies that $H^i(M^n) = 0$ for $i \leq \ceil{\tfrac{n+1}{2}} - 1 = \ceil{\tfrac{n-1}{2}}$. Hence all Betti numbers vanish, and $M$ is a rational sphere.

For the case of $\CP$, we note that $n$ is even since otherwise $2$-periodicity up to degree $\frac{n+3}{2}$ and Poincar\'e duality implies
	\[H^{\frac{n-3}{2}}(M) \cong H^{\frac{n+1}{2}}(M) \cong H^{\frac{n-1}{2}}(M).\]
The degree of the first and last of these groups differ by one, so we have a contradiction to the assumption on the topology of $M$. Assuming then that $n$ is even, the periodicity goes up to degree $\tfrac n 2 + 1$ and hence the powers of a generator $x \in H^2(M)$ are non-zero in degree $\tfrac n 2$ if $n \equiv 0 \bmod 4$ and in degree $\tfrac{n+2}{2}$ if $n \equiv 2 \bmod 4$. In either case, this is sufficient to conclude that the top cohomology group is generated by $x^{\frac n 2}$ by Poincar\'e duality. We also have that the Betti numbers match those of $\CP$ by assumption and by Poincar\'e duality, so $M$ is a rational cohomology $\CP$ in this case.

The case of $\HP$ is similar to the $\CP$ case. If $n \equiv 1,7 \bmod 8$, then the  isomorphisms
	\[H^{\frac{n+1}{2} - 4}(M) \cong H^{\frac{n+1}{2}}(M) \cong H^{\frac{n-1}{2}}(M)\]
lead to a contradiction to the assumed topology of $M$. Similar contradictions are reached if $n \equiv 2 \bmod 4$ using the isomorphisms
	\[H^{\frac{n+2}{2} - 4}(M) \cong H^{\frac{n+2}{2}}(M) \cong H^{\frac{n-2}{2}}(M)\]
and if $n \equiv 3,5 \bmod 8$ using the isomorphisms
	\[H^{\frac{n+3}{2} - 4}(M) \cong H^{\frac{n+3}{2}}(M) \cong H^{\frac{n-3}{2}}(M)\]
For $n \equiv 0 \bmod 4$, the argument proceeds as in the $\CP$ case, noting that the periodicity up to $c \geq \frac{n+4}{2}$ is crucial when $n \equiv 4 \bmod 8$ to see that a power of a generator of $H^4(M)$ generates $H^{\frac{n+4}{2}}(M)$.

To prove (2), fix an element $x \in H^g(N)$ that induces $g$-periodicity. Let $\tilde x \in H^g(M)$ denote its preimage under the map induced by inclusion. Consider the map
	\[H^{j - g}(M) \to H^j(M)\]
induced by multiplication by $x$. For $g < j \leq \ceil{\tfrac{n+g}{2}}$, this map is injective since the corresponding map
	\[H^{j-g}(N) \to H^j(N)\]
given by multiplication by $x$ is injective in these degrees by the assumption on $N$ and since the map $H^{j-g}(M) \to H^{j-g}(N)$ induced by inclusion is an isomorphism in these degrees. 

We next claim that this map is surjective in degrees $g \leq j < \ceil{\tfrac{n+g}{2}}$. Given this claim, we have that $M$ is a rational cohomology $\s$, $\CP$, or $\HP$ in degree up to $\ceil{\frac{n+g}{2}}$, so (1) implies that $M$ is a rational $\s$, $\CP$, or $\HP$, which proves (2).

To prove the surjectivity claim, we first note that an argument similar to the injectivity proof carries through for $j \leq \ceil{\tfrac{n-1}{2}}$, since in these degrees the map $H^j(M) \to H^j(N)$ is injective. We may therefore assume that $\ceil{\tfrac{n-1}{2}} < j < \ceil{\tfrac{n+g}{2}}$. Hence that we are in one of the following cases:
	\begin{enumerate}
	\item $n \equiv 1 \bmod 2$ and $g \in \{2,4\}$,
	\item $n \equiv 2 \bmod 4$ and $g = 4$, or
	\item $n \equiv 0 \bmod 4$, $g = 4$, and $j = \tfrac{n+2}{2}$.
	\end{enumerate}

The first two cases do not actually occur. To see this, set $m = \ceil{\tfrac{n+1}{2g}}$ and note one on hand that $H^{gm}(M) \neq 0$ since the element $\tilde x^m \in H^{gm}(M)$ restricts to the generator $x^m$ of $H^{gm}(N)$, which is non-zero by the assumptions on the dimension and the rational cohomology of $N$. On the other hand, we have 
	\[H^{gm}(M) \cong H^{n - gm}(M) \cong H^{n - gm}(N) = 0\]
by Poincar\'e duality, the fact that $N\subseteq M$ is $\ceil{\frac{n-1}{2}}$-connected, and the conditions on $n$ and $g$, which imply that $n \not\equiv 0 \bmod g$.

In the third case, the map $H^{j-g}(M) \to H^j(M)$ is trivially surjective if we can show that the latter group is zero, and indeed this is the case since
	\[H^j(M) \cong H^{n-j}(M) = H^{n-j}(N) = 0,\]
where in the last step we used the fact that $n - j = \tfrac{n-2}{2}$ is odd. This completes the proof of the surjectivity claim and hence of (2) as explained above.
\end{proof}

\begin{lemma}\label{lem:k1plus2k2}
Let $M^n$ be a closed, orientable, positively curved Riemannian manifold. Suppose that $N^{n-k}$ is a fixed-point component of an isometric circle action on $M$. Assume one of the following:
	\begin{enumerate}
	\item $M$ is a rational $\s$ up to degree $k + 1$.
	\item $M$ is a rational $\s^3 \times \HP$ up to degree $k + 1$, $H^{k+1}(M) = 0$, and $k \equiv 0 \bmod 4$.
	\item $M$ is a rational $\CP$, $\HP$, or $\s^p \times \HP$ for some $p \in \{2,3\}$ up to degree $k + 2$.
	\end{enumerate}
Then $M$ is a rational $\s$, $\CP$, $\HP$, $\s^2 \times \HP$, or $\s^3 \times \HP$ up to degree $n - 2k + 3$. If additionally $k \leq \tfrac{n+5}{4}$, then $M^n$ is a rational cohomology $\s^n$, $\CP^{\frac n 2}$, $\HP^{\frac n 4}$, or $\s^p \times \HP^{\frac{n-p}{4}}$ for some $p \in \{2,3\}$.
\end{lemma}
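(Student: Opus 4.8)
The plan is to use the Connectedness Lemma to obtain that the inclusion $N^{n-k}\subseteq M^n$ is $(n-2k+2)$-connected, and then combine this with the hypotheses on the rational cohomology of $M$ in low degrees to push periodicity up from $N$ (or rather, to transport structure between $M$ and $N$). Concretely, by Theorem~\ref{thm:CL}(1) the inclusion $\iota\co N\embedded M$ is $(n-2k+2)$-connected, so the restriction maps $H^i(M)\to H^i(N)$ are isomorphisms for $i<n-2k+2$ and injective for $i=n-2k+2$. Set $c=n-2k+3$; the first task is to show $M$ is a rational $\s$, $\CP$, $\HP$, $\s^2\times\HP$, or $\s^3\times\HP$ up to degree $c$. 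In cases (1)--(3) the hypothesis already gives us the relevant periodicity (or sphere-type vanishing) for $M$ up to degree $k+1$ or $k+2$. Since $N\subseteq M$ is highly connected, Theorem~\ref{thm:Bredon} (Bredon) applies: $N$, being a fixed-point component of a circle action on a space that is rationally a CROSS-type space up to the relevant degree, inherits $g'$-periodic cohomology with $g'\le g$, $g'\equiv g\bmod 2$; and since the inclusion is at least $g$-connected in the relevant range, $H^i(N)=0$ for $0<i<g$, forcing $g'=g$. Thus $N$ has $g$-periodic cohomology up to its full dimension, i.e. $N$ is a rational $\s$, $\CP$, $\HP$ (or a product with a sphere factor in the $\s^p\times\HP$ cases). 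Now Lemma~\ref{lem:uptoc} transports this periodicity back up: since $0<g<c$ and the inclusion is $c$-connected, $M$ has $g$-periodic cohomology up to degree $c=n-2k+3$, which is the first claimed conclusion.

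\textbf{Second half: upgrading to the global statement.} If in addition $k\le\tfrac{n+5}{4}$, then $n-2k+3\ge n-\tfrac{n+5}{2}+3=\tfrac{n+1}{2}$, so periodicity in fact holds up to degree at least $\ceil{\tfrac{n+1}{2}}$, and with a little care up to $\ceil{\tfrac{n+g}{2}}$ — here one checks the arithmetic in each residue class of $n$ modulo $g$, using that $c=n-2k+3\ge\tfrac{n+1}{2}$ together with $2g\le c-0$ (which is part of Definition~\ref{def:UpToc} and must be verified, but follows since $g\le 4$ and $c\ge\tfrac{n+1}{2}$ with $n$ large enough in the regime of interest; for small $n$ the statement is checked directly). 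Then Lemma~\ref{lem:UpToHalfPD}(1) applies: a closed oriented manifold that is a rational $\s$, $\CP$, or $\HP$ up to $\ceil{\tfrac{n+g}{2}}$ is globally a rational $\s^n$, $\CP^{n/2}$, or $\HP^{n/4}$. In the $\s^p\times\HP$ cases, one needs the analogous half-to-whole recovery for the cohomology ring of $\s^p\times\HP^{(n-p)/4}$, which again follows from Poincaré duality once the ring structure is pinned down through degree $\ceil{\tfrac{n+4}{2}}$; here the element $\tilde x\in H^4(M)$ pulled back from $N$ together with a degree-$p$ class generate, and the Betti numbers are forced by duality. Thus $M^n$ is a rational cohomology $\s^n$, $\CP^{n/2}$, $\HP^{n/4}$, or $\s^p\times\HP^{(n-p)/4}$ with $p\in\{2,3\}$, as claimed.

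\textbf{Remarks on the case distinctions.} Case (1) is cleanest: $M$ rationally $(k)$-connected plus $(n-2k+2)$-connected inclusion of $N$ gives $N$ rationally $(n-2k+2)$-connected below degree $k+1$, hence $N$ a rational sphere up to a degree past the middle (using $k\le\tfrac{n+5}{4}$), so $N$ is a rational $\s^{n-k}$ and Lemma~\ref{lem:UpToHalfPD}(2) finishes. Case (2) carries the parity constraint $k\equiv0\bmod4$ and the extra vanishing $H^{k+1}(M)=0$ precisely so that the $4$-periodicity coming from the $\HP$-factor is not obstructed at the junction degree; one must check that $\s^3\times\HP$ is the only new model appearing and that the $b_3$-type class behaves correctly. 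Case (3) is handled uniformly: the $2$- or $4$-periodic generator lives in degree $\le k+2$, within the isomorphism range, and the same Bredon–plus–\hyperref[lem:uptoc]{Lemma~\ref{lem:uptoc}} machinery applies.

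\textbf{Main obstacle.} The hard part will be the bookkeeping in the second paragraph: verifying in each congruence class of $n$ modulo $g\in\{1,2,4\}$ that the inequality $k\le\tfrac{n+5}{4}$ really does push periodicity up to the exact degree $\ceil{\tfrac{n+g}{2}}$ required by Lemma~\ref{lem:UpToHalfPD}(1), and handling the $\s^p\times\HP$ models (which are not covered verbatim by Lemma~\ref{lem:UpToHalfPD} and need their own Poincaré-duality argument for recovering the ring from the bottom half). The potential off-by-one issues at the boundary degree $c=n-2k+3$, and the need in case (2) to ensure the vanishing hypothesis $H^{k+1}(M)=0$ propagates correctly, are where the proof must be written carefully rather than sketched.
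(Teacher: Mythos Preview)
Your plan has a genuine gap: you are invoking Bredon's theorem (Theorem~\ref{thm:Bredon}) in a situation where its hypothesis is not satisfied. Bredon's result requires the ambient manifold $M$ to have \emph{globally} singly generated $g$-periodic rational cohomology; the hypotheses of the lemma only tell you that $M$ looks like a CROSS (or $\s^p\times\HP$) up to degree $k+1$ or $k+2$. You cannot conclude from this that the fixed-point component $N$ is globally a rational $\s$, $\CP$, or $\HP$, so the step ``$N$ has $g$-periodic cohomology up to its full dimension'' is unjustified. Without this, the subsequent use of Lemma~\ref{lem:uptoc} to pull periodicity back up to $M$ has no input.

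The tool you are missing is the Periodicity Lemma (Theorem~\ref{thm:PL}), which you never invoke. The paper's argument does not pass through $N$ at all: from the $(n-2k+2)$-connected inclusion $N\subseteq M$ one obtains an element $e\in H^k(M)$ inducing periodicity in degrees from $k-2$ to $n-k+2$. In case (1) this $e$ is forced to be zero and injectivity of multiplication by $e=0$ yields vanishing of $H^i(M)$ up to degree $n-2k+3$. In the other cases one uses the low-degree hypothesis to factor $e=xy$ with $x\in H^g(M)$ the assumed generator, and then a short algebraic argument with the chain $H^i(M)\xrightarrow{x}H^{i+g}(M)\xrightarrow{y}H^{i+k}(M)\xrightarrow{x}H^{i+k+g}(M)$ shows that multiplication by $x$ alone is injective/surjective in the required range. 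The extra hypotheses in case (2) are needed precisely to handle two boundary maps in this chain. The final upgrade under $k\le\tfrac{n+5}{4}$ is then a direct Poincar\'e duality argument extending injectivity of multiplication by $x$ beyond degree $n-2k+2$, not an appeal to Lemma~\ref{lem:UpToHalfPD}.
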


Note that, for $k = 2$, the stronger conclusion that $M$ is diffeomorphic to $\s^n$ or $\CP^{\frac n 2}$ holds by a theorem of Grove and Searle (see Remark \ref{rem:GroveSearleCodim2}). In any case, it follows that the rational cohomology type in the assumption matches that in the conclusion.

\begin{proof}
Fix $e \in H^{k}(M)$ as in the Periodicity Lemma, so that $e$ induces periodicity in degrees from $k-2$ to $n-(k-2)$.

First, in the case where $M$ is a rational $\s$ up to degree $k+1$, we have $H^i(M) = 0$ for $1 \leq i \leq k$. In particular, $e = 0$ and hence the injectivity of the multiplication map $H^i(M) \to H^{i+k}(M)$ for $i \leq n - 2k + 2$ implies that $H^i(M) = 0$ for all $0 < i < n - 2k+3$. This is equivalent to the conclusion that $M$ is a rational sphere up to degree $n - 2k + 3$.

Second, assume we we are in the $\CP$, $\HP$, or $\s^p \times \HP$ for some $p \in \{2,3\}$, and let $x \in H^g(M)$ be a generator, where $g = 2$ in the $\CP$ case and $g = 4$ in the other cases. Note that $k \geq g$, since otherwise $k = 2$ and hence that $M$ is a $\CP$ in all degrees by Grove and Searle's theorem (see Remark \ref{rem:GroveSearleCodim2}). Since multiplication by $x$ is surjective into degree $k$, we obtain a factorization of the form $e = xy$ for some $y \in H^{k-g}(M)$. 

Consider the maps
	\[H^i(M) \stackrel{x}{\longrightarrow} H^{i+g}(M) \stackrel{y}{\longrightarrow} H^{i+k}(M) \stackrel{x}{\longrightarrow} H^{i+k+g}(M)\]
given by multiplication by $x$, $y$, and $x$, respectively. The composition of the first two maps equals $e$, and likewise for the composition of the last two. In particular, for 
	\[k - 2 \leq i \leq (n - 2k + 2) - g,\] 
the composition of the first two maps is surjective and the composition of the last two is injective. Putting these claims together, we find that the middle map is an isomorphism and hence that the first map is surjective in this range. Similarly, on the indices 
	\[k - 2 < i \leq (n - 2k + 2),\]
we obtain injectivity for the composition of the first two maps and hence for the first map. In particular, the injectivity holds for $i \leq (n - 2k + 3) - g$, so combining with the surjectivity statement shows that $M$ is a rational $\CP$, $\HP$, or $\s^2 \times \HP$ up to degree $n - 2k + 3$.

Third, assume we are in the $\s^3 \times \HP$ case up to degree $k+1$, and let $x \in H^4(M)$ be a generator. The proof for $g = 4$ above almost shows that multiplication by $x$ induces periodicity. We only need show in addition that the maps
	\[H^{k-3}(M) \to H^{k+1}(M) \hspace{.3in} \mathrm{and}\hspace{.3in} H^{k-2}(M) \to H^{k+2}(M)\]
are surjective and injective, respectively. The first of these claims follows trivially by the assumption $H^{k+1}(M) = 0$, and the second follows trivially by the assumption that $k \equiv 0 \bmod 4$, which implies that $H^{k-2}(M) = 0$.

Finally, we prove the last claim under the assumption $k \leq \tfrac{n+5}{4}$. Since $n - 2k + 3 \geq \tfrac{n+1}{2}$, the Betti numbers agree with the appropriate model in degrees up to $\floor{\tfrac n 2}$ and hence in all degrees by Poincar\'e duality. In particular, that we are already done in the sphere case.

For the remaining cases, let $g \in \{2,4\}$ as above. We have shown that the maps $H^i(M) \to H^{i+g}(M)$ induced by multiplication by $x$ are injective for all 
	\[0 < i \leq n - 2k + 2.\]
We claim next that these maps are also injective for all
	\[n - 2k + 3 \leq i \leq n - g.\]
If we can do this, then the injectivity part of $g$-periodicity would be established, and the surjectivity part would follow since we already know the Betti numbers.

Fix an $i$ in this range, and suppose $y \in H^i(M)$ is non-zero. Using Poincar\'e duality, choose $z \in H^{n-i}(M)$ such that $yz \neq 0$. Note that the degree of $z$ satisfies
	\[n - i \leq 2k - 3 \leq n - 2k + 2\]
by the assumed upper bound on $k$, so periodicity up to degree $n - 2k + 3$ implies that $z = x z'$ for some $z' \in H^{n - i - g}$. But now $0 \neq yz = (xy)z'$, so $xy \neq 0$. This completes the proof of the injectivity claim and hence the proof of (2).
\end{proof}

\bigskip
\section{Proof of Theorem \ref{thm:t9}}
\label{sec:t9}

In this section, we prove Theorem \ref{thm:t9}. We are given a closed, oriented, positively curved Riemannian manifold $M^n$, and we assume $\gT^d$ acts effectively by isometries on $M^n$ with a fixed point and the property that all isotropy groups in a neighborhood of the fixed point have an odd number of components. The theorem is implied by the following claims:
	\begin{enumerate}
	\item If $d = 9$, then $M$ is a rational cohomology \(\s^n,\CP^{\frac n 2}\), or \(\HP^{\frac n 4}\).
	\item If $d = 6$, then $M$ is a rational cohomology \(\s^n,\CP^{\frac n 2}\), or \(\HP^{\frac n 4}\) up to degree $n - 4\floor{\tfrac n 6} + 2$. 
	\end{enumerate}

We recall that $M$ has $g$-periodic rational cohomology up to degree $c$ if there is an element $x \in H^g(M;\Q)$ such that the map $H^i(M;\Q) \to H^{i+g}(M;\Q)$ induced by multiplication by $x$ is surjective for $0 \leq i < c - g$ and injective for $0 < i \leq c - g$. The conclusions are equivalent to showing, for some $g \in \{1,2,4\}$, that $H^i(M;\Q) = 0$ for $0 < i < g$ and that $H^*(M;\Q)$ is $g$-periodic up to degree $n$ or $n - 4\floor{\tfrac n 6} + 2$.

\subsection{Proof of (1) given (2)}

Considering the subaction by any $\gT^6 \subseteq \gT^9$, we get from Conclusion (2) that $M^n$ is a rational cohomology $\s^n$, $\CP^{\frac n 2}$, or $\HP^{\frac n 4}$ up to degree $n - 4\floor{\frac n 6} + 2$ and hence up to degree $\floor{\tfrac n 4} + 2$. At the same time, Theorem \ref{thm:cd} implies the existence of a fixed-point component $N \subseteq M$ of a circle in $\gT^9$ with codimension $\cod N \leq \floor{\tfrac n 4}$. By Lemma \ref{lem:k1plus2k2}, it follows that $M$ is a rational cohomology $\s^n$, $\CP^{\frac n 2}$, or $\HP^{\frac n 4}$ in all degrees.

\subsection{Proof of (2)}

We have a $\gT^6$-action with a fixed point $x \in M^n$ and the property that all isotropy groups have an odd number of components. Theorem \ref{thm:6involutions} implies that there exist non-trivial, pairwise distinct involutions $\iota_1,\ldots,\iota_6 \in \gT^6$ whose fixed-point components $N_i^{n-k_i} = M^{\iota_i}_x$ satisfy
	\[k_1 + \ldots + k_6 \leq 2n.\]
Without loss of generality, we may assume $k_1 \leq \ldots \leq k_6$. We begin the proof with two reductions.

\bigskip
\noindent
{\bf Claim 1:} The theorem holds if $N_1$ is a rational $\s$, $\CP$, or $\HP$.

\begin{proof}[Proof of Claim 1]
The inclusion $N_1 \subseteq M$ is $(n - 2k_1 + 2)$-connected by the Connectedness Lemma, so $M$ is a rational $\s$, $\CP$, or $\HP$ up to degree $n - 2k_1 + 2$. Since $k_1$ is even and satisfies $k_1 \leq \tfrac n 3$, we find that $M$ is a rational $\s$, $\CP$, or $\HP$ up to degree $n - 4 \floor{\frac n 6} + 2$.

It suffices to show $n \equiv 0 \bmod 2$ in the $\CP$ case and $n\equiv 0 \bmod 4$ in the $\HP$ case. First note that $n$ is odd only if $\dim N_1$ is odd, which holds only if $N_1$ is a rational sphere, so we only need to rule out the possibility that $n = 4m+2$ and $N_1$ is a rational $\HP$. If this were the case, then a generator $x \in H^4(M)$ would satisfy $x^{\frac{k_1+2}{4}} \neq 0$ since this element would restrict non-trivially in $H^{k_1+2}(N_1) \cong \Q$. On the other hand, the Periodicity Lemma gives rise to an injection $H^{k_1+2}(M) \to H^{2k_1 + 2}(M)$ given by multiplication by some $e \in H^{k_1}(M)$, which would be isomorphic to $H^{k_1}(N_1) = 0$, a contradiction.
\end{proof}

\bigskip
\noindent
{\bf Claim 2:} The theorem holds if $N_i \cap N_j$ is a rational $\s$, $\CP$, or $\HP$ for some $1 \leq i < j \leq 3$.

\begin{proof}[Proof of Claim 2]
First suppose $k_1 < \tfrac n 3$. The inclusion $N_i \cap N_j \subseteq N_j$ is $c$-connected with
	\[c \geq n - k_i - k_j \geq k_1 + 1,\]
where the first inequality comes from the Connectedness Lemma and the second uses the estimates $k_1 + \ldots + k_6 \leq 2n$ and $k_1 < \tfrac n 3$. Note also that $n - k_i - k_j$ is even if $n$ is even, so $c \geq k_1 + 2$ in this case. Lemma \ref{lem:uptoc} implies that $N_j$ is a rational $\s$, $\CP$ or $\HP$ up to $c$. Similarly since $N_j \subseteq M$ is $c'$-connected with
	\[c' \geq n - 2k_j + 2 \geq k_1 + 2,\]
$M$ too is a rational $\s$, $\CP$, or $\HP$ up to degree $k_1+1$ if $n$ is odd dimensions and $k_1+2$ if $n$ is even. Lemma \ref{lem:k1plus2k2} now implies that $M$ is a rational $\s$, $\CP$, or $\HP$ up to degree $n - 2k_1 + 3$. Finally, since $N_1 \subseteq M$ is $(n - 2k_1 + 2)$-connected, Lemma \ref{lem:uptoc} implies that $N_1$ is a rational $\s$, $\CP$, or $\HP$ up to degree $n - 2k_1 + 2$ and hence up to $\tfrac 1 2 \dim(N_1) + 2$. By Poincar\'e duality, $N_1$ is a rational $\s$, $\CP$, or $\HP$ (see Lemma \ref{lem:UpToHalfPD}), so the theorem follows by Claim 1.

Now suppose that $k_1 = \tfrac n 3$. Since the $k_i$ are increasing and sum to at most $2n$, we have $k_i = \tfrac n 3$ for all $1 \leq i \leq 6$. In particular, $n = 6m$ and $\dim N_j = 4m$ for some integer $m$ since the codimensions $k_i$ are even. In addition, if we can show that $N_j$ is a rational $\s$, $\CP$, or $\HP$, then we may swap the roles of $\iota_1$ and $\iota_j$ if $j > 1$ and conclude the theorem by Claim 1.

If $N_i$ and $N_j$ intersect transversely, then the Four-Periodicity Theorem implies that $N_j$ has four-periodic rational cohomology. Since $\dim N_j = 4m$, Poincar\'e duality implies that $N_j$ is a rational $\s$, $\CP$, or $\HP$, as needed.

If instead $N_i$ and $N_j$ do not intersect transversely, then the dimension of the intersection satisfies
	\[\dim(N_i \cap N_j) \geq (n - k_i - k_j) + 2 = 2m +2,\]
and the inclusion of the intersection into $N_j$ is $(2m)$-connected. Since $2m = \tfrac 1 2 \dim(N_j)$, Lemma \ref{lem:UpToHalfPD} implies that $N_j$ is a rational $\s$, $\CP$, or $\HP$, so the proof is complete.
\end{proof}

By Claim 2, the second part of the next lemma finishes the proof of the theorem in the case where the three involutions in $\gT^6$ whose fixed-point components have minimal codimension are linearly independent. The first part of the lemma is used to prove the second part, and it will be used later in the linearly dependent case.

\begin{lemma}\label{lem:LinearlyIndependent}
Let $\sigma_1,\sigma_2,\sigma_3 \in \gT^6$ be linearly independent involutions, where $\gT^6$ acts isometrically without finite isotropy groups of even order near a fixed point $x \in M$. Assume that the fixed-point components $P_i^{n - l_i} = M^{\sigma_i}_x$ satisfy $l_1 + l_2 + l_3 \leq n$. 
\begin{enumerate}
\item The intersection $P_1 \cap P_2 \cap P_3$ is a rational cohomology $\s^f$, $\CP^{\frac f 2}$, or $\HP^{\frac f 4}$.
\item If $\sigma_i = \iota_i$ for $i \in \{1,2,3\}$, where the $\iota_i$ are chosen as above, then $P_i \cap P_j = N_i \cap N_j$ is a rational cohomology $\s$, $\CP$, or $\HP$ for some $1 \leq i < j \leq 3$.
\end{enumerate}
\end{lemma}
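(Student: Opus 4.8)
The plan is to prove (1) by exhibiting $F:=P_1\cap P_2\cap P_3$ as a fixed‑point component of an effective isometric torus action of rank at least four, so that Theorem~\ref{thm:t4} applies, and then to deduce (2) from (1) using the Connectedness Lemma together with the Poincar\'e‑duality statement of Lemma~\ref{lem:UpToHalfPD}. For (1): first, $F$ is a connected component of the fixed‑point set of $\Gamma:=\langle\sigma_1,\sigma_2,\sigma_3\rangle\cong\Z_2^3\subseteq\gT^6$, hence totally geodesic and positively curved, and it contains $x$. Since $\gT^6$ is connected, abelian and fixes $x$, it preserves $F$; let $T_0$ be the identity component of the kernel of $\gT^6\curvearrowright F$. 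The first step is to show $F=M^{T_0}_x$: writing $T_xM=\bigoplus_w V_w$ for the weight decomposition of the isotropy representation and $U=\sigma_1^\perp\cap\sigma_2^\perp\cap\sigma_3^\perp\subseteq\Z_2^6$, one has $T_xF=\bigoplus_{\bar w\in U}V_w$ (with $\bar w$ the reduction of $w$ mod $2$) and $\mathfrak t_0=W_U^\perp$, where $W_U$ is the $\R$‑span of $\{w:\bar w\in U\}$, so $V^{T_0}=\bigoplus_{w\in W_U}V_w$; here the odd‑isotropy hypothesis enters through total unimodularity of the weight matrix, which forces a weight lying in the rational span of other weights to be a $\{0,\pm1\}$‑combination of them, whence $w\in W_U$ iff $\bar w\in U$, i.e. $V^{T_0}=T_xF$ and $M^{T_0}_x=F$. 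If $\rank T_0\ge 4$, choose a rank‑four subtorus $\gT^4\subseteq T_0$ with $M^{\gT^4}_x=F$ (possible since $T_0$ acts on the normal space of $F$ with no trivial summand, so a generic $\gT^4$ works): then $\gT^4$ acts effectively on the closed, oriented, positively curved $M^n$ with fixed‑point component $F$, and Theorem~\ref{thm:t4} shows $F$ is a rational $\s$, $\CP$ or $\HP$.

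It remains to treat $\rank T_0\le 3$, i.e. $\dim W_U\ge 3$. Here $F$ still carries an almost effective isometric action of the torus $\gT^6/T_0$ of rank $\dim W_U\ge 3$ with $x$ a fixed point, and $F=M^{T_0}_x$ is a fixed‑point component of the positive‑rank torus $T_0$ unless $\dim W_U=6$. I would finish this case by combining this residual torus symmetry with the positive curvature of $F$, the hypothesis $l_1+l_2+l_3\le n$ (which bounds $\cod_M F$, hence bounds $\dim F$ from below), Theorem~\ref{thm:cd}, the Connectedness and Periodicity Lemmas, and—in the base case where $\gT^6$ acts almost effectively on $F$—the lower‑dimensional instances of the theorem under proof (an induction on $\dim M$), and then upgrading a CROSS conclusion on $F$ valid up to roughly $\tfrac12\dim F$ to the full statement by Poincar\'e duality (Lemma~\ref{lem:UpToHalfPD}(1)). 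I expect this case—controlling how much effective torus symmetry survives on $F$, and pushing the CROSS conclusion all the way up rather than only up to about $\tfrac13\dim F$—to be the main obstacle of the whole lemma.

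For (2): now $\sigma_i=\iota_i$, so $P_i=N_i$, $l_i=k_i$, and $k_1\le k_2\le k_3$ together with $\sum_{i=1}^6 k_i\le 2n$ gives $k_1+k_2+k_3\le n$, so (1) applies and $F=N_1\cap N_2\cap N_3$ is a rational $\s$, $\CP$ or $\HP$. For $\{i,j,l\}=\{1,2,3\}$ the component $N_i\cap N_j=M^{\langle\iota_i,\iota_j\rangle}_x$ is positively curved, contains $F$, and $F=(N_i\cap N_j)^{\sigma_l}_x$ is a fixed‑point component of $\sigma_l$ restricted to $N_i\cap N_j$, of codimension at most $k_l$ there, while $\dim(N_i\cap N_j)\ge n-k_i-k_j$. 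Choosing the pair $\{i,j\}$ suitably (using that $2(k_1+k_2+k_3)\le 2n$ forces some pair‑sum $k_i+k_j$ to be small, and the property from Theorem~\ref{thm:6involutions} that each weight contributes to at most two of the $k_i$), the Connectedness Lemma makes $F\hookrightarrow N_i\cap N_j$ highly connected; one then checks that the connectedness is at least $\lceil\tfrac12(\dim(N_i\cap N_j)-1)\rceil$ and that $\dim F\ge\tfrac12(\dim(N_i\cap N_j)+g)$ for the relevant $g\in\{1,2,4\}$, so Lemma~\ref{lem:UpToHalfPD}(2) (or, if preferred, Lemma~\ref{lem:k1plus2k2} after using Lemma~\ref{lem:uptoc}) yields that $N_i\cap N_j$ is a rational $\s$, $\CP$ or $\HP$. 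The finitely many configurations in which these inequalities are tight (e.g. $\dim(N_i\cap N_j)$ very small, which by the Connectedness Lemma forces some $N_i$ of very small codimension in $M$) are disposed of directly, using that a closed positively curved orientable surface is a rational $2$‑sphere and similar low‑dimensional facts, and, when applicable, Claim~1.
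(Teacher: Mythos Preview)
Your reduction to Theorem~\ref{thm:t4} when $\rank T_0\ge 4$ is correct and matches the paper (the identification $F=M^{T_0}_x$ is immediate once one notes that the odd-isotropy hypothesis forces each $\sigma_i$ into the identity component $T_0$, so $M^{T_0}_x\subseteq\bigcap P_i=F$). The genuine gap is the case $\rank T_0=3$, which you yourself flag as the main obstacle but do not resolve. The suggested remedy---residual $\gT^3$-symmetry on $F$ plus induction on $\dim M$---does not work: $F$ carries only a $\gT^3$-action, which is below the threshold of Theorem~\ref{thm:t4}, and nothing in the paper lets you upgrade a rational CROSS structure ``up to $\tfrac13\dim F$'' to the full statement.

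The paper's approach to this case is entirely different and is the real content of the lemma. Instead of studying the action \emph{on} $F$, it studies the $\Z_2^3$-representation on the normal space $\nu_xF$. When $d=3$ the odd-isotropy hypothesis (via Example~\ref{exa:cigUpTo5}: simple $\gT^3$-representations with connected isotropy sit inside $\rho_{K_4}$, which has only six weights) forces at least one of the seven multiplicities $m_1,m_2,m_3,m_{12},m_{13},m_{23},m_{123}$ to vanish. A detailed case analysis on which multiplicities vanish then produces, inside some auxiliary fixed-point component (such as $P_{ij}=M^{\sigma_i\sigma_j}_x$ or $P_{123}=M^{\sigma_1\sigma_2\sigma_3}_x$), a pair of transversely intersecting circle-fixed submanifolds meeting in $F$ with the smaller codimension at most half the ambient dimension; Theorem~\ref{thm:4PT} and Lemma~\ref{lem:NoSxHP} then give the conclusion. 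For part~(2) the paper first shows---using the minimality hypothesis $\sigma_i=\iota_i$ and, for $d\in\{4,5,6\}$, bounds on the number of weights of regular matroids---that even when $d\ge4$ one may still assume some multiplicity vanishes, and then proves~(2) within the same case analysis rather than by lifting from $F$.

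Your proposed proof of (2) by pulling the CROSS structure up from $F$ to $N_i\cap N_j$ via Lemma~\ref{lem:UpToHalfPD}(2) also fails in general: the codimension of $F$ in $N_i\cap N_j$ is $m_l$ (with $\{i,j,l\}=\{1,2,3\}$), and the needed inequality is roughly $m_l\le\tfrac13 f$, which the hypothesis $l_1+l_2+l_3\le n$ (equivalently $\sum m_{rs}+2m_{123}\le f$) does not control at all. For instance, if $m_{rs}=m_{123}=0$ and $m_1=m_2=m_3=n/4$, then $f=n/4$ while each $m_l=n/4=f$. The paper handles exactly this configuration directly (its Case~1), using the pairwise transversality to apply Theorem~\ref{thm:4PT} to $P_1\cap P_2$ and then Bredon's theorem.
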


\begin{proof}
To prove this lemma, set
	\[F^f = P_1 \cap P_2 \cap P_3,\]
and let $\gT^d$ denote the identity component 
of the kernel of the $\gT^6$-action on $F$. By assumption $\sigma_i\in \gT^d$ and thus $d\ge 3$.
Moreover, \(\gT^d\) acts without finite isotropy groups of even order near \(F\).

We consider the restriction 
\[\rho:\mathbb{Z}_2^3 \to \gO(\nu_x F)\]
of the isotropy representation to the subgroup of involutions $\Z_2^3 = \langle \sigma_1,\sigma_2,\sigma_3\rangle$. If \(d=3\), then not all seven non-trivial irreducible representations of $\Z_2^3$ occur as subrepresentations of $\rho$.
If \(d>3\), then the first part of the lemma holds by Theorem~\ref{thm:t4}.
Our first claim is that if  the conditions of (2) hold and \(d>3\), then also not all seven non-trivial irreducible representations of \(\Z_2^3\) occur as subrepresentations of \(\rho\).

First, we prove the claim by contradiction for $d = 4$. If, for some $i < j$, the intersection $P_i \cap P_j$ were fixed by a three-torus, then Lemma~\ref{lem:6involutions-BuildingBlocks} would imply the existence of two involutions for which the codimensions of the fixed-point sets add 
up to at most $n-\dim(P_i\cap P_j)$.  By assumption the same must hold for  $\sigma_i$ and $\sigma_j$. This is only possible if $P_i$ and $P_j$ intersect transversely, but then not all seven irreducible representations of $\Z_2^3$ can show up. If, instead, the intersection $P_i \cap P_j$ is not fixed by a three-torus for all $i < j$, then the number of non-trivial subrepresentations of $\Z_2^d\subset \gT^d$ in $P_i\cap P_j$ is at least two. In particular, the isotropy representation of $\gT^d$ has at least $7 + 3$ irreducible subrepresentations.  This is the maximum number possible. If we let $W\subset (\Z_2^4)^*$ denote the set of weights of these subrepresentations 
then the complement of $W\cup \{0\}$ in $(\Z_2^{4})^*$ is given by 
five vectors in general linear position. 
The permutation group $\gS_5$ on these five vectors 
induce linear isomorphisms of $(\Z_2^4)^*$ leaving $W$ invariant.
One of these vectors must be the element $\tau$ whose 
kernel is given by $\Z_2^3\subset \gT^3$.
Up to a an isomorphism the situation is now completely determined.
We can  choose a basis of $\Z_2^4$ such that 
 $W$ is given by the elements of weights $\le 2$ and 
 $\tau$ is given by the element of weight $4$.
 The weights of the representation of $\Z_2^3$ are obtained by projecting 
 $\pr\colon (\Z_2^4)^*\to (\Z_2^4)^*/\tau$.
 It is now easy to see that there are indeed three elements 
 in $\pr(W)$ that have two preimages. However these three elements 
 are linearly dependent, so we again have a contradiction. This proves the claim for $d = 4$.  	

Finally, we prove the claim for $d \geq 5$. By a similar argument, we find that the number of non-trivial irreducible subrepresentations of $\Z_2^d\subset \gT^d$ in $P_i\cap P_j$ is at least three for all $i < j$. In particular, the isotropy representation of $\gT^d$ has at least $7 + 2(3) = 13$ irreducible subrepresentations. Using our classification from Section~\ref{sec:classification}, this implies either  that $d = 6$ or that $d = 5$ and the representation of $\gT^5$ is graphic. Applying Theorem~\ref{thm:6involutions} and Lemma~\ref{lem:6involutions-BuildingBlocks},  we can find three involutions $\bar{\sigma}_1,\bar{\sigma}_2,\bar{\sigma}_3\in \gT^d$ such that the codimensions of the fixed-point set add up to at most $n-f$. By the choice of $\sigma_i$ in (2) the same holds for $\sigma_i$. Since the intersection of $P_1, P_2,P_3$ has codimension $n-f$, equality must hold and the $P_i$ and $P_j$ intersect pairwise perpendicularly a contradiction to the fact that all $7$ non-trivial irreducible representations of $\Z_2^3$ show up.

With the claim proven, we come to the main part of the proof. We label the multiplicities of the weights $e_i$ by $m_i$ for $1 \leq i \leq 3$, the weights $e_i + e_j$ by $m_{ij}$ for $1 \leq i < j \leq 3$, and the weight $e_1 + e_2 + e_3$ by $m_{123}$. By the above claim, we may assume that at least one of these multiplicities is zero. Note also that the multiplicities are even since \(\rho\) is the restriction of a torus representation. We summarize these definitions as follows: 

\[
	\begin{array}{c|ccccccccc}
	~  & m_1 & m_2 & m_3	&~ &m_{23}&m_{13}&m_{12}&~&m_{123}\\\hline
	\sigma_1 & 1 & 0 & 0		&~	& 0 & 1 & 1	&~& 1 \\
	\sigma_2 & 0 & 1 & 0		&~	& 1 & 0 & 1	&~& 1 \\
	\sigma_3 & 0 & 0 & 1		&~	& 1 & 1 & 0	&~& 1 \\
	\end{array}
\]
Note that the condition $l_1 + l_2 + l_3 \leq n$ implies the following estimate on the multiplicities of the weights:
	\[\sum_i m_i + 2 \sum_{i<j} m_{ij} + 3 m_{123} \leq n.\]
We first prove the lemma in two special cases.

\bigskip
\noindent
{\bf Case 1:} $\rho$ has only three irreducible subrepresentations. 

\medskip
\noindent
The weights of \(\rho\) form a basis of \((\mathbb{Z}_2^3)^*\), so we can replace $\sigma_1$, $\sigma_2$, and $\sigma_3$ by a dual basis, if necessary, so that $m_{ij} = 0$ for all $i < j$ and $m_{123} = 0$. In particular, all $m_i > 0$. 

We look at the transverse intersection of $P_1$ and $P_2$, which satisfies $\dim(P_1 \cap P_2) \geq \frac{1}{2} \dim P_2$. Additionally, $P_1$ and $P_2$ are fixed-point components of circles by the assumption on the number of components of isotropy groups. Hence the Connectedness-Lemma \ref{thm:CL} and the Four-Periodicity-Theorem \ref{thm:4PT} imply that \(P_1\cap P_2\) has four-periodic rational cohomology, and Lemma \ref{lem:NoSxHP} implies that $P_1 \cap P_2$ is a rational cohomology $\s$, $\CP$, or $\HP$. In each of these cases, Bredon's theorem implies that $F^f$ is also a rational $\s$, $\CP$, or $\HP$, so both parts of the lemma follow in this case. 

\bigskip

\noindent{\bf Case 2:} There exists $h \in \{1,2,3\}$ such that exactly two of the three multiplicities $m_h$, $m_{ij}$, and $m_{123}$ are non-zero, where $\{h,i,j\} = \{1,2,3\}$.

\medskip
\noindent
To prove the lemma in Case 2, we look at the fixed-point component $P_{ij} = M^{\sigma_i\sigma_j}_x$ of the product of the involutions $\sigma_i$ and $\sigma_j$. We also consider $P_{ij} \cap P_h$, $P_{ij} \cap P_i$, and $P_{ij} \cap P_{hi}$, which as submanifolds of $P_{ij}$ have codimensions $m_h + m_{123}$, $m_{ij} + m_{123}$, and $m_h + m_{ij}$, respectively. By the assumption in Case 2, some pair of these submanifolds of $P_{ij}$  intersect transversely and have positive codimension in $P_{ij}$. Moreover, the intersection equals $F^f = M^{\gT^3}_x$, $F^f$ arises as the fixed-point component of a circle by the assumption on the number of components of isotropy groups, and the smaller of the two codimensions is at most $\max(m_{ij}, m_{123})$, which satisfies
	\[\max(m_{ij}, m_{123}) \leq \sum m_{rs} + 2m_{123} \leq n - \sum m_r - \sum m_{rs} - m_{123} = f.\]
In particular, $F^f = M^{\gT^3}_x$ has four-periodic rational cohomology by the Four-Periodicity Theorem (Theorem \ref{thm:4PT}). By Lemma \ref{lem:NoSxHP} below, \(F^f\) is a rational cohomology $\s^f$, $\CP^{\frac f 2}$, or $\HP^{\frac f 4}$, which proves the first part of the lemma in Case 2.

We proceed to the proof of the second part of the lemma under the assumptions of Case 2. First, if $m_h = 0$, then $M^{\gT^3}_x = P_i \cap P_j$ and we are already done.

Second, assume that $m_{123} = 0$. 
If $m_h \leq m_{ij}$, then the inclusion $F^f \subseteq P_i \cap P_j$ is $f$-connected with $f \geq \tfrac 1 2 \dim(P_i \cap P_j)$, so the Four-Periodicity Theorem and Lemma \ref{lem:NoSxHP} imply that $P_i \cap P_j$ is a rational $\s$, $\CP$, or $\HP$. If instead $m_{ij} < m_h$, then we similarly have that $F^f \subset P_{ij} \cap P_h$ is $f$-connected with $f \geq \tfrac 1 2 \dim(P_{ij} \cap P_h)$ and hence that $P_{ij} \cap P_h$ is a rational $\s$, $\CP$, or $\HP$. Now the minimality assumption of the second statement of the lemma gives us $\cod(P_h) \leq \cod(P_{ij})$, so the Connectedness Lemma implies that the inclusion $P_{ij} \cap  P_h \subseteq P_{ij}$ is $c$-connected with
	\[c \geq n - \sum m_r - 2(m_{hi} + m_{hj}) \geq 2m_{ij} \geq m_{ij} + 2.\]
By Lemma \ref{lem:uptoc}, $P_{ij}$ is a rational $\s$, $\CP$, or $\HP$ up to degree $m_{ij} + 2$. Since $P_i \cap P_j \subseteq P_{ij}$ has codimension $m_{ij}$, Lemma \ref{lem:k1plus2k2} implies that $P_{ij}$ is a rational $\s$, $\CP$, or $\HP$ up to degree $\dim(P_{ij}) - 2m_{ij} + 3$. Now the Connectedness Lemma implies that the inclusion $P_i \cap P_j \subseteq P_{ij}$ is $(\dim P_{ij} - 2 m_{ij} + 2)$-connected, so $P_i \cap P_j$ is a rational $\s$, $\CP$, or $\HP$ up to degree $\dim P_{ij} - 2m_{ij} + 2$ by Lemma \ref{lem:uptoc}. Finally, since
	\[\dim P_{ij} - 2m_{ij} + 2 = \dim(P_i \cap P_j) - m_{ij} + 2 \geq \tfrac 1 2 \dim(P_i \cap P_j) + 2,\]
Lemma \ref{lem:UpToHalfPD} implies that $P_i \cap P_j$ is a rational $\s$, $\CP$, or $\HP$.

Finally, assume that $m_{ij} = 0$. Note that $F^f = P_{ij} \cap P_h$ in this case, so the inclusion $F^f \subseteq P_{ij}$ is $c_1$-connected with
	\[c_1 \geq n - \sum m_r - 2(m_{hi} + m_{hj}) - m_{123} \geq 2m_{123} \geq m_{123} + 2.\]
Since $P_i \cap P_j \subseteq P_{ij}$ has codimension $m_{123}$, we argue as in the previous paragraph to conclude that $P_{ij}$ and hence $P_i \cap P_j$ is a rational $\s$, $\CP$, or $\HP$ up to degree $c_2$ with 
	\[c_2 \geq \dim(P_{ij}) - 2m_{123} + 2 \geq \frac 1 2 \dim(P_i \cap P_j) + 2\]
and hence in all degrees by Lemma \ref{lem:UpToHalfPD}. This concludes the proof of the lemma in Case 2.	

\bigskip

We now finish the proof of the lemma assuming that neither Case 1 nor Case 2 occurs. In particular, we may assume $m_{123} \neq 0$. Additionally we may assume that $m_h$ and $m_{ij}$ are both zero or both non-zero for all $h \in \{1,2,3\}$ where $\{h,i,j\} = \{1,2,3\}$. Since there are at least four non-zero weights, we get non-zero values for at least two values of $h$. Finally since the representation has no finite isotropy groups with even order, we may assume (after permuting the $\sigma_i$) that $m_1 = m_{23} = 0$ and that the other five multiplicities are nonzero.

Now we look inside the fixed-point component $P_{123} = M^{\sigma_1\sigma_2\sigma_3}_x$. Since $m_1 = 0$, intersecting with $P_2$ and $P_3$ give submanifolds of $P_{123}$ that intersect transversely and that have codimension $m_{12}$ and $m_{13}$, respectively. Since $\dim(M^{\gT^3}_x) \geq \sum m_{rs} + 2m_{123}$, the Four-Periodicity Theorem and Lemma \ref{lem:NoSxHP} imply that $M^{\gT^3}_x$ is a rational $\s$, $\CP$, or $\HP$. Since $m_1 = 0$, $M^{\gT^3}_x = P_2 \cap P_3$, so the proof of the lemma is complete. 
\end{proof}

By the above discussion we have proved the theorem in the case that the involutions \(\iota_1,\iota_2,\iota_3\) with minimal codimensional fixed-point sets are linearly independent. We assume therefore that \(\iota_1,\iota_2,\iota_3\) are linearly dependent, which is equivalent to the property that their product is the identity element. We assume moreover that $N_4 = M^{\iota_4}_x$ has codimension strictly larger than $k_3$, since otherwise we could swap the roles of $\iota_3$ and $\iota_4$ and proceed as in the linearly independent case because the $\iota_i$ are pairwise distinct.

Linear dependence implies that $\iota_1$, $\iota_2$, and $\iota_3$ are contained in a two-torus. Hence there is a four-torus \(\gT^4\) that acts without even order finite isotropy groups and is complementary to the two-torus. We may assume the $\gT^4$-action on \(N_1\cap N_2\) is almost effective, since otherwise $N_1 \cap N_2$ is fixed by a $\gT^3$-action, which would imply that we could replace $\iota_3$ by another involution that is linearly independent from $\iota_1$ and $\iota_2$ and whose fixed-point set has codimension at most $\tfrac 1 2 \cod(N_1 \cap N_2) < \tfrac 2 3 \cod(N_1 \cap N_2) \leq k_3$.
In particular the two-torus also acts without finite isotropy groups of even order on \(M\).

By Theorem~\ref{thm:cd}, there exists a non-trivial involution \(\iota_4' \in \gT^4\) with \(\cod M^{\iota_4'} \leq \frac{4}{9}n\). Replacing $\iota_4$ by $\iota_4'$, if necessary, we may assume that $N_4^{n-k_4}$ satisfies 
	\[k_4 \leq \tfrac 4 9 n.\]
Since $\iota_1$, $\iota_2$, and $\iota_4$ are linearly independent, and since the codimensions of their fixed-point sets satisfy
	\[k_1 + k_2 + k_4 \leq \tfrac 1 2(k_1 + \ldots + k_6) \leq n,\]
the first part of Lemma \ref{lem:LinearlyIndependent} implies that $N_1 \cap N_2 \cap N_4$ is a rational cohomology $\s$, $\CP$, or $\HP$. By Claim 2 at the beginning of the proof of the theorem, it suffices to show that $N_1 \cap N_2$ is a rational cohomology $\s$, $\CP$, or $\HP$.

To do this, we need more notation. Since $\iota_1$, $\iota_2$, and $\iota_3$ are linearly dependent, the two-fold intersections $N_i \cap N_j$ for $1 \leq i < j \leq 3$ coincide. Moreover, the codimensions
	\[m'_i = \cod(N_1 \cap N_2 \subseteq N_i)\]
for $i \in \{1,2,3\}$ satisfy \(m'_1\geq m'_2\geq m'_3\) and are the multiplicities of the three irreducible subrepresentations of the \(\mathbb{Z}^2_2\)-representation on the normal space to \(N_1\cap N_2\). Note in particular that
	\[m'_3 \leq \frac 1 3 \sum m'_i = \frac 1 3 k,\]
where we define 
	\[k = \cod(N_1 \cap N_2) = m'_1 + m'_2 + m'_3.\]
With this notation, we prove the following claims:

\bigskip
\noindent
{\bf Claim 3:} If $m'_3 \in \{0,2\}$, then $N_1 \cap N_2$ is a rational $\s$, $\CP$, or $\HP$.

\begin{proof}[Proof of Claim 3]
If $m'_3 = 2$, then $N_1 \cap N_2$ is a codimension-two fixed-point component of a circle action on $N_3$, so the claim holds by Remark \ref{rem:GroveSearleCodim2}. If $m'_3 = 0$, then $N_3$ is the transverse intersection of $N_1$ and $N_2$, and we have $2\cod(N_1 \cap N_2) = k_1 + k_2 + k_3 \leq n$. Hence the Connectedness-Lemma \ref{thm:CL}, the Four-Periodicity-Theorem \ref{thm:4PT} and Lemma \ref{lem:NoSxHP} imply the claim.
\end{proof}

\bigskip
\noindent
{\bf Claim 4:} If $m'_3 \geq 4$, then $N_1 \cap N_2$ is a rational $\s$, $\CP$, or $\HP$ up to degree $m'_3 + 1$ if $n$ is odd and up to degree $m'_3+2$ if $n$ is even.

\begin{proof}[Proof of Claim 4]
To begin, we need the following estimate:
	\[n - k - k_4 \geq m'_3 + 1.\]
To prove it, we use the equality $2k = \sum_{i=1}^3 k_i$ and the inequality $k_4 \leq k_5 \leq k_6$ to estimate
	\[n - k - k_4 \geq n - \frac 1 2 \sum_{i=1}^3 k_i - \frac 1 3 \sum_{i=4}^6 k_i.\]
Next, we use the inequalities $k_i < k_{i+3}$ for $i \in \{1,2,3\}$ to obtain
	\[n - k - k_4 \geq n - \frac 1 3 \sum_{i=1}^3 k_i - \frac 1 2 \sum_{i=4}^6 k_i + 1.\]
Finally, we use the inequalities $\sum_{i=1}^6 k_i \leq 2n$ and $k_i \geq k_1 \geq 2m'_3$ for $i \in \{1,2,3\}$ to obtain the desired estimate:
	\[n - k - k_4 \geq \frac 1 6 \sum_{i=1}^3 k_i + 1 \geq m'_3 + 1.\]
We now finish the proof of the claim.

\bigskip
\noindent
{\bf Case 1:} $k_4 \leq k$.

The inclusion $N_1 \cap N_2 \cap N_4 \subseteq N_1 \cap N_2$ is $c$-connected with
	\[ c \geq n - k - k_4 \geq m'_3 + 1.\]
Moreover $c \geq m'_3 + 2$ if $n$ is even since $k$, $k_4$, and $m'_3$ are even. Hence $N_1 \cap N_2$ is a rational $\s$, $\CP$, or $\HP$ up to degree $m'_3+1$ and up to $m'_3+2$ if $n$ is even.

\bigskip
\noindent
{\bf Case 2:} $k \leq k_4$.

This time, we prove that the three inclusions $N_1 \cap N_2 \cap N_4 \subseteq N_4$, $N_4 \subseteq M$, and $N_1 \cap N_2 \subseteq M$ are $(m'_3+1)$-connected if $n$ is odd and $(m'_3+2)$-connected if $n$ is even. This suffices to pull the $\s$, $\CP$, or $\HP$ cohomological type in degrees up to $m'_3+1$ in odd dimensions or $m'_3+2$ in even dimensions up to $N_4$ and $M$ and then push it back down to $N_1 \cap N_2$. We step through the required estimates one at a time.

The first inclusion is $c_1$-connected with
	\[c_1 \geq n - k - k_4,\]
so this is at least $m'_3+1$ in general and $m'_3 + 2$ if $n$ is even just as it was in Case 1. The second inclusion is $c_2$-connected with
	\[c_2 \geq n - 2k_4 + 2
		\geq n - \frac 3 2 \of{\frac 4 9 n} - \frac 1 2 \of{\frac 1 3 \sum_{i=4}^6 k_i} + 2,\]
where we have partially estimated $k_4$ using the upper bound $\tfrac 4 9 n$ and partially using the upper bound $\tfrac 1 3 \sum_{i=4}^6 k_i$. Using again that $3k_1 + \sum_{i=4}^6 k_i \leq \sum_{i=1}^6 k_i \leq 2n$ and $k_1 \geq 2m'_3$, this implies
	\[c_2 \geq \frac 1 3 n - \frac 1 6 \of{2n - 3k_1} + 2 \geq m'_3 + 2,\]
as required. Finally, the third inclusion $N_1 \cap N_2 \subseteq M$ has codimension $k$, which is at most $k_4 = \cod(N_4 \subseteq M)$, so this inclusion is also $(m'_3+2)$-connected by the same estimate as for $N_4$. This concludes the proof of the claim.
\end{proof}

We now finish the proof of the theorem given Claims 3 and 4. By Claim 2 at the beginning of the proof, it suffices to show that $N_1 \cap N_2$ is a rational $\s$, $\CP$, or $\HP$. In the situation of Claim 3, this holds immediately, so we may assume we are in the situation of Claim 4. 

The inclusion $N_1 \cap N_2 \subseteq N_3$ is $c$-connected with
	\[c \geq \dim(N_3) - 2m'_3 + 2 = (n-k) - m'_3 + 2 \geq 2m'_3 + 2,\]
where in the last step we used that $3m'_3 \leq \sum m'_i = k$ and that $2k = k_1 + k_2 + k_3 \leq n$. In addition, we have
	\[4m'_3 \leq 2k_1 \leq n - k_3 = \dim(N_3),\]
so Lemma \ref{lem:k1plus2k2} implies that $N_3$ is a rational $\s$, $\CP$, or $\HP$. By Bredon's theorem, the same conclusion holds for $N_1 \cap N_2$. This completes the proof of the theorem.

\subsection{Upgrading four-periodicity to standard cohomology}

The following lemma uses the assumption on isotropy groups, together with the fact that fixed-point components of $\gT^4$-actions are rational cohomology $\s$, $\CP$, or $\HP$ to upgrade four-periodicity in the proof of Theorem \ref{thm:t9}.

\begin{lemma}\label{lem:NoSxHP}
Let $M^n$ be a closed, oriented, positively curved Riemannian manifold. Assume that $\gT^6$ acts isometrically on $M$, has a fixed point $x$, and has the property that all isotropy groups near $x$ have an odd number of components. For any subtorus $\gT^d \subseteq \gT^6$, if the \(\gT^d\)-fixed-point component $F^f$ at $x$ has $f \leq 6$ or four-periodic rational cohomology, then $F^f$ is a rational cohomology $\s^f$, $\CP^{\frac f 2}$, or $\HP^{\frac f 4}$.
\end{lemma}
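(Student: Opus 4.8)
The plan is to reduce to the case where $F^f$ already has four-periodic rational cohomology, and then to rule out the exotic model spaces $\s^p \times \HP^{\frac{f-p}{4}}$ for $p \in \{2,3\}$ and the non-vanishing-$b_3$ examples. First I would handle the hypothesis $f \le 6$: if the $\gT^d$-fixed-point component $F^f$ at $x$ has dimension at most $6$, I would exhibit it as a fixed-point component of an action of a large enough torus to invoke Theorem \ref{thm:t4}. Indeed, since $\gT^6$ acts on $M$ with $x\in M^{\gT^6}$, the component $F^f$ is invariant under $\gT^6/\gT^d \cong \gT^{6-d}$, and the isotropy assumption is inherited near $x$. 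If $d \le 2$ this gives a $\gT^4$-action on $F^f$ and Theorem \ref{thm:t4} applies directly. If $d \ge 3$ I would instead argue that a low-dimensional positively curved manifold with a large enough torus acting (effectively) must already be a rational cohomology $\s$, $\CP$, or $\HP$: the relevant facts here are that a positively curved manifold of dimension $f\le 6$ admitting an isometric torus action of rank $\ge \lfloor f/4\rfloor + 1$ has the rational cohomology of a CROSS, which follows from the classification of positively curved manifolds of low dimension with symmetry (Grove--Searle, Fang--Rong, Wilking's half-maximal symmetry rank theorem, and the explicit classification in dimensions $\le 5$). I would check that the torus acting almost effectively on $F^f$ indeed has rank at least $\lfloor f/4 \rfloor + 1$; if it does not, then $\gT^d$ had a large ineffective kernel, and passing to the effective quotient makes $F^f$ a fixed-point component of an even larger effective torus action on itself, so the rank bound holds. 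Since $f$ is even unless $F$ is a rational sphere (normal bundle of a totally geodesic submanifold fixed by an involution), the conclusion follows.

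Next, in the case where $F^f$ has four-periodic rational cohomology, I would combine four-periodicity with Poincaré duality and the vanishing $H^1(F^f;\Q)=0$ (which holds by positive curvature and, say, Synge's theorem or the Connectedness Lemma applied within $F$) to conclude that $F^f$ is a rational cohomology $\s^f$, $\CP^{f/2}$, $\HP^{f/4}$, or one of the exotic types $\s^p \times \HP^{\frac{f-p}{4}}$ with $p \in \{2,3\}$, or (when $f \equiv 2 \bmod 4$) a space with $b_3 \ne 0$; this is exactly the dichotomy recalled in Section \ref{sec:t4orLess} after Theorem \ref{thm:4PT}. To kill the $b_3 \ne 0$ case I would apply the $b_3$ lemma of \cite{KWW}, which rules this out for fixed-point components of isometric torus actions of sufficient rank. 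The remaining task is to eliminate $\s^p \times \HP^{\frac{f-p}{4}}$ for $p \in \{2,3\}$; here I would invoke precisely Theorem \ref{thm:t4}: the $\gT^6$-action restricts to a $\gT^4$-action (after passing to a complementary $\gT^4$ to $\gT^d$ if $d \le 2$, or — if $d$ is larger — first reducing $f$ via a smaller subtorus so as to land in the previous paragraph's range) which has $F^f$ as a fixed-point component, and Theorem \ref{thm:t4} forbids the $\s^p \times \HP$ types outright.

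The one subtlety I expect to be the main obstacle is organizing the reduction so that, no matter the value of $d$, we can arrange $F^f$ to be a fixed-point component of an \emph{effective} isometric $\gT^4$-action on $F^f$ (or of a still-larger torus), since Theorem \ref{thm:t4} is stated for effective $\gT^4$-actions and four-periodicity alone does not supply the exotic-space exclusion. When $d \le 2$, the complementary torus $\gT^{6-d} \supseteq \gT^4$ does the job, but I must check its action on $F^f$ is almost effective — if not, $F^f$ is fixed by an even larger torus and we are in the $f \le 6$ regime or can iterate. When $3 \le d \le 6$, the complementary torus has rank $\le 3$, so instead I would use the $f \le 6$ branch: a positively curved $F^f$ fixed by a $\gT^d$-action with $d \ge 3$ inside a $\gT^6$-action has small dimension relative to its symmetry rank, forcing $f \le 6$ (because a fixed-point component of an effective isometric torus action of rank $r$ has codimension at least $2r$ in general, hence $f \le n - 2(6-d)$ is not by itself enough — rather I would use that $\gT^6$ acts with a fixed point, so the isotropy representation at $x$ decomposes $\R^n$ into $\ge $ enough weight spaces, bounding $f = \dim F^f$ by counting weights trivial on $\gT^d$). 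Making this dimension count precise, and cross-referencing it against the classification inputs, is where the real work lies; the rest is bookkeeping with the tools already assembled in Sections \ref{sec:cogirth} and \ref{sec:t4orLess}.
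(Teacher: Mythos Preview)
Your proposal has a genuine gap in the main case $d \le 3$. You repeatedly try to exhibit $F^f$ itself as a fixed-point component of an effective $\gT^4$-action so that Theorem~\ref{thm:t4} applies directly, but this is impossible: once you normalize $\gT^d$ to be the full kernel of the $\gT^6$-action on $F$, no larger torus fixes $F$, so for $d\le 3$ there is no $\gT^4$ whose fixed-point component is $F$. The complementary torus $\gT^{6-d}$ acts \emph{on} $F$; it does not have $F$ as a fixed-point component. Your fallback, that $d\ge 3$ forces $f\le 6$, is also false: there is no dimension bound of that kind coming from the isotropy representation, and the weight-counting argument you sketch does not yield one.

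The paper's argument goes the opposite direction. For $d\le 3$ it uses Theorem~\ref{thm:cd} to build a descending chain $F=F_d\supseteq F_{d+1}\supseteq\cdots\supseteq F_4$ with $\cod(F_{i+1}\subseteq F_i)\le \tfrac12\dim F_i$, so that $F_4$ \emph{is} a $\gT^4$-fixed-point component and hence a rational $\s$, $\CP$, or $\HP$ by Theorem~\ref{thm:t4}. The Connectedness Lemma then makes each inclusion $F_{i+1}\subseteq F_i$ at least $2$-connected (and $3$-connected if $n$ is odd). This is enough to pull the injectivity of the squaring map $H^2\to H^4$ from $F_4$ up to $F$, which simultaneously kills the $\s^2\times\HP$ model and the $b_3\neq 0$ possibility in even dimensions; in odd dimensions $3$-connectedness gives $H^3(F)\cong H^3(F_4)=0$ directly. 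The case $f\le 6$ (or rather $f\le 7$) falls out of the same chain, since then $F_4\subseteq F$ has codimension at most two and Grove--Searle applies, rather than from any separate low-dimensional classification.
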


\begin{proof}
Let $F^f$ be the fixed-point component at $x$ of a subtorus $\gT^d$ with $0 \leq d \leq 6$. We may assume that $\gT^d$ is the kernel of the induced $\gT^6$-action on $F^f$.

If $d \geq 4$, then $F^f$ is a rational cohomology $\s^f$, $\CP^{\frac f 2}$, or $\HP^{\frac f 4}$ by Theorem \ref{thm:t4}. Assuming then that $d \leq 3$, we can apply Theorem \ref{thm:cd} to choose subtori 
	\[\gT^d \subseteq \gT^{d+1} \subseteq \ldots \subseteq \gT^4\]
such that the corresponding fixed-point components
	\[F^f = F_d \supseteq F_{d+1} \supseteq \ldots \supseteq F_4\]
have the property that
	\[\cod(F_{i+1} \subseteq F_i) \leq c(3) \dim F_i = \tfrac 1 2 \dim F_i.\]
By making our choices to have minimal codimension at each step, we may assume without loss of generality that the induced $\gT^6$-action on $F_i$ has kernel isomorphic to $\gT^i$. Since, in addition, the $\gT^6$-action has a fixed-point, the induced action by $\gT^2 \cong \gT^6/\gT^4$ on $F_4$ is effective, so we have $\dim(F_4) \geq 4$. 

Moreover, we may assume that $f \geq 8$, since otherwise $F_4 \subseteq F$ has codimension two and hence the result follows from Remark \ref{rem:GroveSearleCodim2}. By four-periodicity and Poincar\'e duality, it suffices to show that $b_3(F) = 0$ in even dimensions and that $F$ is not $\s^p \times \HP$ for some $p \in \{2,3\}$.

The Connectedness Lemma implies that the inclusions $F_{i+1} \subseteq F_i$ are $2$-connected in general and $3$-connected if $n$ is odd. Recall for the last statement that $n \equiv \dim F_i \bmod 2$ for all $i$ since the codimensions are even. In particular, in odd dimensions, we conclude that $H^1(F) \cong H^1(F_4) = 0$ and $H^3(F) \cong H^3(F_4) = 0$ and hence that $F$ is a rational sphere. In even dimensions, we conclude that the squaring map $H^2(F) \to H^4(F)$ is injective. In particular, the cohomology in even degrees cannot look like $\s^2 \times \HP^{\frac{f-2}{4}}$, so this excludes both the possibility that $F^f$ is a rational $\s^2 \times \HP^{\frac{f-2}{4}}$ and the possibility that $b_3(F) \neq 0$. Hence $F^f$ is a rational $\s^f$, $\CP^{\frac f 2}$, or $\HP^{\frac f 4}$.
\end{proof}

\subsection{Alternate proof of (1)}

Theorem \ref{thm:t9} for a $\gT^9$-action admits a short, direct proof that does not require the more involved result for $\gT^6$-actions. We present it here.

We are given a closed, oriented, positively curved Riemannian manifold $M^n$ with an isometric $\gT^9$-action such that there is a fixed point $x \in M$ and no finite isotropy groups of even order near $x$. By repeated applications of Theorem \ref{thm:cd}, we obtain a sequence of fixed-point components
	\[F_3 \subseteq F_2 \subseteq F_1 \subseteq F_0 = M\]
the property that
	\[k_{i+1} = \cod\of{F_{i+1} \subseteq F_{i}} \leq c(9-i)\dim F_i\]
for $0 \leq i \leq 2$, where $c(9) = \tfrac 1 4$, $c(8) = \tfrac 2 7$, and $c(7) = \tfrac{3}{10}$ as in Theorem \ref{thm:cd}. We may assume moreover that each $F_i$ is chosen to have maximal dimension in $F_{i-1}$ so that, in particular, the kernel of the induced $\gT^9$-action on $F_3$ equals a subtorus $\gT^3$.

We apply the $\gS^1$-Splitting Theorem to the isotropy representation of $\gT^3$ on the normal space to $F_3$. This gives rise to a circle $\gS^1 \subseteq \gT^3$ such that the induced action by $\gT^3/\gS^1 \cong \gT^2$ is effective and splits as a product action on the fixed-point component $N = M^{\gS^1}_x$. In particular, $F_3$ is the transverse intersection of two fixed-point components inside $N$. 

By maximality of $\dim F_1$, we have $\dim F_1 \geq \dim N$. On the other hand, we have
	\[\dim F_3 \geq \tfrac{7}{10} \dim F_2 \geq \tfrac 1 2 \dim F_1,\]
so $\dim F_3 \geq \tfrac 1 2 \dim N$. This is sufficient for the Four-Periodicity Theorem, and we find that $F_3$ has four-periodic rational cohomology. By Lemma \ref{lem:NoSxHP}, we find moreover that $F_3$ is a rational cohomology $\s$, $\CP$, or $\HP$.

We now use the Connectedness Lemma to pull this cohomological information up to $M$. First, the inclusion $F_3 \subseteq F_2$ is $c$-connected with 
	\[c \geq \dim F_2 - 2k_3 + 2 \geq \max(k_3,k_2) + 2.\]
Hence Lemmas \ref{lem:uptoc} and \ref{lem:k1plus2k2} imply that $F_2$ is a rational $\s$, $\CP$, or $\HP$ up to degree $k_2 + 2$. 

Repeating this argument for the inclusion $F_2 \subseteq F_1$, which is $c'$-connected with
	\[c' \geq \dim F_1 - 2k_2 + 2 \geq \max(k_2,k_1) + 2,\]
we find that $F_1$ is a rational $\s$, $\CP$, or $\HP$ up to degree $k_1 + 2$.

Finally since the inclusion $F_1 \subseteq M$ is $c''$-connected with
	\[c'' \geq n - 2k_1 + 2 \geq \max\of{k_1+2, \tfrac n 2 + 2},\]
we find that $M$ is a rational $\s$, $\CP$, or $\HP$ up to degree $\tfrac n 2 + 2$. By Lemma \ref{lem:UpToHalfPD}, this is sufficient by Poincar\'e duality to conclude that $M$ is a rational cohomology $\s$, $\CP$, or $\HP$ in all degrees.

%%%%% Bibliography %%%%%
%\bibliographystyle{alpha}
%\bibliography{myrefs}

\begin{thebibliography}{GHW79}

\bibitem[Arc81]{archdeacon}
D.~Archdeacon.
\newblock A {K}uratowski theorem for the projective plane.
\newblock {\em J. Graph Theory}, 5(3):243--246, 1981.

\bibitem[BS02]{BollobasSzemeredi02}
B.~Bollob\'{a}s and E.~Szemer\'{e}di.
\newblock Girth of sparse graphs.
\newblock {\em J. Graph Theory}, 39(3):194--200, 2002.

\bibitem[CCD07]{ChoChenDing07}
J.J.~Cho, Y.~Chen, and Y.~Ding.
\newblock On the (co)girth of a connected matroid.
\newblock {\em Discrete Appl. Math.}, 155(18):2456--2470, 2007.

\bibitem[CM95]{conder_morton95}
M.~Conder and M.~Morton.
\newblock Classification of trivalent symmetric graphs of small order.
\newblock {\em Australas. J. Combin.}, 11:139--149, 1995.

\bibitem[CO]{CrenshawOxley-pre}
C.~Crenshaw and J.~Oxley.
\newblock On the cogirth of binary matroids.
\newblock {\em Adv. in Appl. Math.}, 147, 2023.

\bibitem[GH75]{GloverHuneke75}
H.H.~Glover and J.P.~Huneke.
\newblock Cubic irreducible graphs for the projective plane.
\newblock {\em Discrete Math.}, 13(4):341--355, 1975.

\bibitem[GHW79]{Glover_et_al}
H.H.~Glover, J.P.~Huneke, and C.S.~Wang.
\newblock 103 graphs that are irreducible for the projective plane.
\newblock {\em J. Combin. Theory Ser. B}, 27(3):332--370, 1979.

\bibitem[Gro96]{Gromov96}
M.~Gromov.
\newblock Systoles and intersystolic inequalities.
\newblock In {\em Actes de la {T}able {R}onde de {G}\'{e}om\'{e}trie
  {D}iff\'{e}rentielle ({L}uminy, 1992)}, volume~1 of {\em S\'{e}min. Congr.},
  pages 291--362. Soc. Math. France, Paris, 1996.

\bibitem[GS94]{GroveSearle94}
K.~Grove and C.~Searle.
\newblock {Positively curved manifolds with maximal symmetry rank}.
\newblock {\em J. Pure Appl. Algebra}, 91(1):137--142, 1994.

\bibitem[Hea90]{heawood1890}
P.J.~Heawood.
\newblock Map colour theorem.
\newblock {\em Quarterly J. Pure and Appl. Math.}, 24:332--339, 1890.

\bibitem[Ken13]{Kennard13}
L.~Kennard.
\newblock {On the Hopf conjecture with symmetry}.
\newblock {\em Geom. Topol.}, 17:563--593, 2013.

\bibitem[KWW]{KWW}
L.~Kennard, M.~Wiemeler, and B.~Wilking.
\newblock Splitting of torus representations and applications in the grove
  symmetry program.
\newblock {\em preprint}, arXiv:2106.14723.

\bibitem[Mil73]{Milgram73}
M.~Milgram.
\newblock Irreducible graphs. {II}.
\newblock {\em J. Combinatorial Theory Ser. B}, 14:7--45, 1973.

\bibitem[MT01]{MoharThomassen-book}
B.~Mohar and C.~Thomassen.
\newblock {\em Graphs on surfaces}.
\newblock Johns Hopkins Studies in the Mathematical Sciences. Johns Hopkins
  University Press, Baltimore, MD, 2001.

\bibitem[Nie22]{Nienhaus-PhD}
J.~Nienhaus.
\newblock {\em An improved four-periodicity theorem and a conjecture of Hopf
  with symmetry}.
\newblock PhD thesis, WWU M\"unster, 2022.
\newblock arXiv:2211.13151.

\bibitem[Oxl11]{Oxley_matroids}
J.~Oxley.
\newblock {\em Matroid theory}, volume~21 of {\em Oxford Graduate Texts in
  Mathematics}.
\newblock Oxford University Press, Oxford, second edition, 2011.

\bibitem[Sab08]{Sabourau08}
S.~Sabourau.
\newblock Asymptotic bounds for separating systoles on surfaces.
\newblock {\em Comment. Math. Helv.}, 83(1):35--54, 2008.

\bibitem[Sey80]{Seymour80}
P.D.~Seymour.
\newblock Decomposition of regular matroids.
\newblock {\em J. Combin. Theory Ser. B}, 28(3):305--359, 1980.

\bibitem[Sey95]{seymour_survey}
P.D.~Seymour.
\newblock Matroid minors.
\newblock In {\em Handbook of combinatorics, {V}ol. 1, 2}, pages 527--550.
  Elsevier Sci. B. V., Amsterdam, 1995.

\bibitem[Wel95]{welsh_matroids}
D.J.A.~Welsh.
\newblock Matroids: fundamental concepts.
\newblock In {\em Handbook of combinatorics, {V}ol. 1, 2}, pages 481--526.
  Elsevier Sci. B. V., Amsterdam, 1995.

\bibitem[Wil03]{Wilking03}
B.~Wilking.
\newblock {Torus actions on manifolds of positive sectional curvature}.
\newblock {\em Acta Math.}, 191(2):259--297, 2003.

\end{thebibliography}

\end{document}